\newcommand{\cache}[1]{}
\newcommand{\modif}[1]{\marginpar{modif}\red{#1}}
\newcommand{\red}[1]{{\color{red}#1}}
\def\choixcompteur{subsection}
\newtheorem{theo}[\choixcompteur]{Theorem}
\newtheorem{prop}[\choixcompteur]{Proposition}
\newtheorem{lemm}[\choixcompteur]{Lemma}
\newtheorem{coro}[\choixcompteur]{Corollary}
\theoremstyle{definition}
\newtheorem{defi}[\choixcompteur]{Definition}
\newtheorem{remas}[\choixcompteur]{Remarks}
\newtheorem*{exem*}{Example}
\newtheorem*{exems*}{Examples}
\newtheorem*{exam*}{Example}
\newtheorem*{exams*}{Examples}
\newtheorem*{rema*}{Remark}
\newtheorem*{remas*}{Remarks}
\newtheorem*{NB}{N.B}
\theoremstyle{definition}
\newtheorem*{defi*}{Definition}
\newtheorem*{defiprop*}{Definition-Proposition}
\theoremstyle{plain}
\newtheorem*{prop*}{Proposition}
\newtheorem*{lemm*}{Lemma}
\newtheorem*{coro*}{Corollary}
\newtheorem*{theo*}{Theorem}
 \def\cdr@enoncedef{%
 \newenvironment{enonce*}[2][plain]%
 {\let\cdrenonce\relax \theoremstyle{##1}%
 \newtheorem*{cdrenonce}{##2}%
 \begin{cdrenonce}}%
 {\end{cdrenonce}}   }%
\def\cf{{\it cf.\/}\ }
\def\ie{{\it i.e.\/}\ }
\def\eg{{\it e.g.\/}\ }
\def\etc{{\it etc.\/}\ }
\def\lc{{\it l.c.\/}\ }
\def\truc{\unskip\kern 3pt\penalty 500
\hbox{\vrule\vbox to 5pt{\hrule width 4pt\vfill\hrule}\vrule}\kern
3pt}
\def\qed{\nobreak\hfill $\truc$\par\goodbreak}
\def\vect{\overrightarrow}
\def\parni{\par\noindent}
\def\eds{ editors}
\def\N{{\mathbb N}}    
\def\Z{{\mathbb Z}}
\def\Q{{\mathbb Q}}
\def\R{{\mathbb R}}
\def\C{{\mathbb C}}
\def\F{{\mathbb F}}
\def\A{{\mathbb A}}
\def\M{{\mathbb M}}
\newcommand{\g}[1]{\mathfrak{#1}} 
\def\qa{\alpha}     
\def\qb{\beta}
\def\qd{\delta}
\def\qe{\varepsilon}
\def\qf{\varphi}
 \def\ql{\lambda}
\def\qm{\mu}
\def\qn{\nu}
\def\qo{\omega}
\def\qp{\pi}
\def\qr{\rho}
\def\qs {\sigma}
\def\qt{\tau}
\def\qx{\xi}
 \def\qz{\zeta}
\def\QD{\Delta}
\def\QF{\Phi}
\def\QG{\Gamma}
\def\QL{\Lambda}
\def\QO{\Omega}
\def\QP{\Pi}
\def\sha{{\mathcal A}}   
\def\shc{{\mathcal C}}
\def\shh{{\mathcal H}}
\def\shk{{\mathcal K}}
\def\shm{{\mathcal M}}
\def\shq{{\mathcal Q}}
\def\sht{{\mathcal T}}
\def\SHC{{\mathscr C}}
\def\SHI{{\mathscr I}}
\begin{document}

\title{Iwahori-Hecke algebras for Kac-Moody groups \goodbreak over local fields}
\author{Nicole Bardy-Panse, St\'ephane Gaussent and Guy Rousseau}

\date{May 10, 2016}

%


\subjclass{20G44, 20C08, 17B67 
(primary), 20G05, 22E65, 22E50, 20G25, 20E42, 51E24, 33D80}   
\keywords{Hovels, Hecke algebras, Bernstein--Lusztig relations, Kac--Moody groups, local fields}  


\maketitle



\begin{abstract}
We define the Iwahori-Hecke algebra $^I\shh$ for an almost split Kac-Moody group $G$ over a local non-archimedean field.
We use the hovel $\SHI$ associated to this situation, which is the analogue of the Bruhat-Tits building for a reductive group.
 The fixer $K_I$ of some chamber in the standard apartment plays the role of the Iwahori subgroup.
 We can define $^I\shh$ as the algebra of some $K_I-$bi-invariant functions on $G$ with support consisting of a finite union of double classes.
 As two chambers in the hovel are not always in a same apartment, this support has to be in some large subsemigroup $G^+$ of $G$.
 In the split case, we prove that the structure constants of $^I\shh$ are polynomials in the cardinality of the residue field, with integer coefficients depending on the geometry of the standard apartment.
We give a presentation of this algebra $^I\shh$, similar to the Bernstein-Lusztig presentation in the reductive case, and embed it in a greater algebra $^{BL}\shh$, algebraically defined by the Bernstein-Lusztig presentation.
In the affine case, this algebra contains the Cherednik's double affine Hecke algebra. Actually, our results apply to abstract ``locally finite'' hovels, so that we can define the Iwahori-Hecke algebra with unequal parameters.

\end{abstract}


\setcounter{tocdepth}{1}    
\tableofcontents

\section*{Introduction}
\label{seIntro}

\subsection*{A bit of history}

Iwahori-Hecke algebras were first introduced in arithmetics by Erich Hecke in the '30s \cite{He37}. He defined an algebra, now called the Hecke algebra, generated by some operators on modular forms. Then in the late '50s, based on an idea of AndrÈ Weil, Goro Shimura \cite{Shi59} defined an algebra attached to a group containing a subgroup (under some hypotheses) as the algebra spanned by some double cosets and recovered Hecke's algebra. In 1964, Nagayoshi Iwahori \cite{Iwa64} showed that, in the case of a Chevalley group over a finite field containing a Borel subgroup, Shimura's algebra can be defined in terms of bi-invariant functions on the group. He further gave a presentation by generators and relations of this algebra. Examples of such groups containing a suitable subgroup are given by BN-pairs and the theory of buildings. Nagayoshi Iwahori and Hideya Matsumoto \cite{IM65} found a famous instance in a Chevalley group over a $p$-adic field corresponding to the Bruhat-Tits building associated to the situation. In fact, it is possible to define these algebras only in terms of building theory, see \eg \cite{P06}, for a contemporary treatment. 

In a previous article \cite{GR08}, the last two authors introduced the analogue of the Bruhat-Tits building in Kac-Moody theory, and called it, a hovel. Guy Rousseau developed the notion further and gave in \cite{R11} an axiomatic definition allowing to deal with a broader context.

In this paper, we first define, in terms of the hovel, the Iwahori-Hecke algebra associated to a Kac-Moody group over a local field containing the equivalent of the Iwahori subgroup. Then, we study the properties of this algebra, like the structure constants of the product, some presentations by generators and relations, and an interesting example where we recover the Double Affine Hecke Algebras.

In the remaining of the introduction, we give a more detailed account of our work.

\subsection*{The case of simple algebraic groups}

To begin, we recall the situation in the finite dimensional case. Let $\shk$ be a local non-archimedean field, with residue field $\F_q$.
Suppose $G$ is a split, simple and simply connected algebraic group over $\shk$ and  $K$  an open compact subgroup.
The space $\shh_{K}$ of complex functions on $G$, bi-invariant by $K$ and with compact support, is an algebra for the natural convolution product.
 Ichiro Satake \cite{Sa63} studied such algebras to define the spherical functions and proved, in particular, that $\shh_{K}$ is commutative for a good choice $K_s$ of $K$, maximal compact. The corresponding convolution algebra $\shh_{K_s}={^s\shh}(G)$ is now called the spherical Hecke algebra.
 From the work of Nagayoshi Iwahori and Hideya Matsumoto \cite{IM65}, we know that there exists an interesting open subgroup $K_I$, so called the Iwahori subgroup, of $K_s$ with a Bruhat decomposition $G=K_I.W.K_I$, where $W$ is an infinite Coxeter group.
 The corresponding convolution algebra $\shh_{K_I}={^I\shh}(G)$, called the Iwahori-Hecke algebra,  may be described as the abstract Hecke algebra associated to this Coxeter group and the parameter $q$.
 There is another presentation of this Hecke algebra, stated by Joseph  Bernstein and proved in the most general case by George Lusztig \cite{Lu89}. This presentation emphasizes the role of the translations in $W$ and uses  new relations, now often called the Bernstein-Lusztig relations.
In the building-like definition of these algebras, the group $K_s$ (resp. $K_I$) is the fixer of a special vertex (resp. a chamber) for the action of $G$ on the Bruhat-Tits building $\SHI$, \cite{BrT72}. 
 
\subsection*{The Kac-Moody setting}

Kac-Moody groups are interesting generalizations of semisimple groups, hence it is natural to define the Iwahori-Hecke algebras also in the Kac-Moody setting.


 So, from now on, let $G$ be a Kac-Moody group over $\shk$, assumed minimal or ``algebraic'', \ie as studied by Jacques Tits \cite{T87} in the split case and by Bertrand R\'emy \cite{Re02} in the almost split case.
  Unfortunately there is, up to now, no good topology on $G$ and no good compact subgroup, so the ``convolution product'' has to be defined by other means.
  Alexander Braverman and David Kazhdan \cite{BrK11} succeeded in defining geometrically such a spherical Hecke algebra, when $G$ is split and untwisted affine, see also the survey \cite{BrK14} by the same authors.
  We were able, in \cite{GR13}, to generalize their construction to any Kac-Moody group over $\shk$.
  In \cite{BrKP14}, using  results of \cite{Ga95} and \cite{BrGKP14}, Alexander Braverman, David Kazhdan and Manish Patnaik  construct the spherical Hecke algebra and the Iwahori-Hecke algebra by algebraic computations in the Kac-Moody group, still assumed split and untwisted affine (and even simply laced for some statements).
  Those algebras are convolution algebras of functions on $G$ bi-invariant under some analogue group $K_s$ or $K_I$ ($\subset K_s$), but there are two new features: the support has to be in a subsemigroup $G^+$ of $G$ and the description of the Iwahori-Hecke algebra has to use Bernstein-Lusztig type relations since $W$ is no longer a Coxeter group.

\subsection*{Iwahori-Hecke algebras in the Kac-Moody setting}

 As in \cite{GR13}, our idea is to define the Iwahori-Hecke algebra using the hovel associated to the almost split Kac-Moody group $G$ that we built in \cite{GR08}, \cite{R11} and \cite{R12}.
  This hovel $\SHI$ is a set with an action of $G$ and a covering by subsets called apartments. They are in one-to-one correspondence with the maximal split subtori, hence permuted transitively by $G$.
  Each apartment $A$ is a finite dimensional real affine space. Its stabilizer $N$ in $G$ acts on $A$ via a generalized affine Weyl group $W=W^v\ltimes Y$, where $Y\subset\vect A$ is a discrete subgroup of translations. The group $W$ stabilizes a set $\shm$ of affine hyperplanes called walls.
  So, $\SHI$ looks much like the Bruhat-Tits building of a reductive group. But as the root system $\QF$ is infinite, the set of walls $\shm$ is not locally finite. Further, two points in $\SHI$ are not always in a same apartment. This is why $\SHI$ is called a hovel. 
  However, there exists on $\SHI$ a $G-$invariant preorder $\leq$ which induces on each apartment $A$ the preorder given by the Tits cone $\sht\subset\vect A$.

  \par Now, we consider the fixer $K_I$ in $G$ of some (local) chamber $C_0^+$ in a chosen standard apartment $\A$; it is our Iwahori subgroup. Fix a ring $R$.
  The Iwahori-Hecke algebra $^I\shh_R$ will be defined as the space of some $K_I$-bi-invariant functions on $G$ with values in $R$.
  In other words, it will be the space $^I\shh_R^\SHI$ of some $G$-invariant functions on $\shc^+_0\times\shc^+_0$, where $\shc^+_0=G/K_I$ is the orbit of $C_0^+$ in the set $\shc$ of chambers of $\SHI$.
  The convolution product is easy to guess from this point of view:
$$
(\qf*\psi)(C_x,C_y)=\sum_{C_z\in\shc^+_0}\,\qf(C_x,C_z)\psi(C_z,C_y)
$$ (if this sum means something).
As for points two chambers in $\SHI$ are not always in a same apartment, \ie the Bruhat-Iwahori decomposition fails: $G\neq K_I.N.K_I$. So, we have to consider pairs of chambers $(C_x,C_y)\in\shc^+_0\times_\leq\shc^+_0$, \ie $C_x$ (resp. $C_y$) $\in \shc^+_0$ has $x$ (resp. $y$) for vertex and  $x\leq y$. This implies that $C_x,C_y$ are in a same apartment.
  For $^I\shh_R$, this means that the support of $\qf\in\shh_R$ has to be in $K_I\backslash G^+/K_I$ where $G^+=\{g\in G\mid 0\leq g.0\}$ is a semigroup. We suppose moreover this support to be finite.
  In addition, $K_I\backslash G^+/K_I$ is in bijective correspondence with the subsemigroup $W^+=W^v\ltimes Y^+$ of $W$, where $Y^+=Y\cap \sht$.

  \par With this definition we are able to prove that $^I\shh_R$ is really an algebra, which generalizes the known Iwahori-Hecke algebras in the semi-simple case (see \S \ref{s2}).

\subsection*{The structure constants}

The structure constants of $^I\shh_R$ are the non-negative integers $a_{\mathbf w, \mathbf v}^{\mathbf u}$, for $\mathbf w, \mathbf v,\mathbf u\in W^+$, such that
$$
T_{\mathbf w} * T_{\mathbf v} = \sum_{\mathbf u\in W^+} a_{\mathbf w, \mathbf v}^{\mathbf u} T_{\mathbf u},
$$ where $T_{\mathbf w} $ is the characteristic function of $K_I.{\mathbf w}.K_I$ and the sum is finite.
  Each chamber in $\SHI$ has only a finite number of adjacent chambers along a given panel. These numbers are called the parameters of $\SHI$ and form a finite set $\shq$.
  In the split case, there is only one parameter $q$: the number of elements of the residue field of $\shk$.
  We conjecture that each $a_{\mathbf w, \mathbf v}^{\mathbf u}$ is a polynomial in these parameters with integral coefficients depending only on the geometry of the model apartment $\A$ and on $W$.
  We prove this only partially: this is true if $G$ is split or if we replace ``polynomial'' by ``Laurent polynomial'' (\cf \ref{5.7}); this is also true for $\mathbf w, \mathbf v$ ``generic'' (\cf \ref{sc8}).
  Actually in the generic case, we give, in section \ref{sc}, an explicit formula for $a_{\mathbf w, \mathbf v}^{\mathbf u}$.

\subsection*{Generators and relations}

If the parameters in $\shq$ are invertible in the ring $R$, we are able, in section \ref{s3},
 to deduce from the geometry of $\SHI$ a set of generators and some relations in $^I\shh_R$.
 The family $(T_\ql* T_w)_{\ql\in Y^+, w\in W^v} $ is an $R$-basis of $^I\shh_R$. And the subalgebra $\sum_{w\in W^v}\,R.T_w$ is the abstract Hecke algebra $\shh_R(W^v)$ associated to the Coxeter group $W^v$, generated by the $T_i=T_{r_i}$, where the $r_i$ are the fundamental reflections in $W^v$.
 So, $^I\shh_R$ is a free right $\shh_R(W^v)$-module.
 We get also some commuting relations between the $T_\ql$ and the $T_w$, including some relations of Bernstein-Lusztig type (see Theorem \ref{3.8}).

 \par From all these relations, we deduce algebraically in section \ref{s4} that there exists a new basis $(X^\ql* T_w)_{\ql\in Y^+, w\in W^v} $  of $^I\shh_R$, associated to some new elements $X^\ql\in{^I\shh_R}$.
 These elements satisfy $X^\ql=T_\ql$ for $\ql\in Y^{++}=Y\cap \overline{C^v_f}$, where $C^v_f$ is the fundamental Weyl chamber, and $X^\ql*X^\qm=X^{\ql+\qm}=X^\qm*X^\ql$ for $\ql,\qm\in Y^+$.
 As, for any $\ql\in Y^+$, there is $\qm\in Y^{++}$ with $\ql+\qm\in Y^{++}$, these $X^\ql$ are some quotients of some elements $T_\qm$.
 The Bernstein-Lusztig type relations may be translated to this new basis.
 When $R$ contains sufficiently high roots of the parameters in $\shq$ (\eg if $R\supset\R$), we may replace the $T_w$ and $X^\ql$ by some $R^\times$-multiples $H_w$ and $Z^\ql$.
 We get a new basis $(Z^\ql* H_w)_{\ql\in Y^+, w\in W^v} $  of $^I\shh_R$, satisfying a set of relations very close to the Bernstein-Lusztig presentation in the semi-simple case (\cf \ref{4.7}).

 \par In section \ref{s5}, we define algebraically the Bernstein-Lusztig-Hecke algebra $^{BL}\shh_{R_1}$: it is the free module with basis written $(Z^\ql H_w)_{\ql\in Y^+, w\in W^v} $ over the algebra $R_1=\Z[({\qs_i}^{\pm 1},{\qs_i'}^{\pm 1})_{i\in I}]$, where $\qs_i,\qs_i'$ are indeterminates (with some identifications). The product $*$ is given by the same relations as above for the $Z^\ql* H_w$, one just extends $\ql\in Y^+$ to $\ql\in Y$ and replace $\sqrt{q_i},\sqrt{q_i'}$ by $\qs_i,\qs_i'$.
 We prove then that, up to a change of scalars, $^I\shh_R$ may be identified to a subalgebra of $^{BL}\shh_{R_1}$.
 This Bernstein-Lusztig algebra may be considered as a ring of quotients of the Iwahori-Hecke algebra.

\subsection*{Ordered affine hovel}

Actually, this article is written in a more general framework (explained in \S \ref{s1}): we work with $\SHI$ an abstract ordered affine hovel (as defined in \cite{R11}), and we take $G$ to be a strongly transitive group of (positive, ``vectorially Weyl'') automorphisms.
 In section \ref{s7}, we drop the assumption that $G$ is vectorially Weyl to define extended versions $^I{\widetilde\shh}$ and $^{BL}{\widetilde\shh}$ of $^I{\shh}$ and $^{BL}{\shh}$.
In the affine case, we prove that they are graded algebras and that the summand of degree $0$ of $^{BL}{\widetilde\shh}$ is very close to Cherednik's double affine Hecke algebra.

\section{General framework}\label{s1}

\subsection{Vectorial data}\label{1.1}  We consider a quadruple $(V,W^v,(\qa_i)_{i\in I}, (\qa^\vee_i)_{i\in I})$ where $V$ is a finite dimensional real vector space, $W^v$ a subgroup of $GL(V)$ (the vectorial Weyl group), $I$ a finite set, $(\qa^\vee_i)_{i\in I}$ a family in $V$ and $(\qa_i)_{i\in I}$ a free family in the dual $V^*$.
 We ask these data to satisfy the conditions of \cite[1.1]{R11}.
  In particular, the formula $r_i(v)=v-\qa_i(v)\qa_i^\vee$ defines a linear involution in $V$ which is an element in $W^v$ and $(W^v,\{r_i\mid i\in I\})$ is a Coxeter system.

  \par To be more concrete, we consider the Kac-Moody case of [\lc; 1.2]: the matrix $\M=(\qa_j(\qa_i^\vee))_{i,j\in I}$ is a generalized Cartan matrix.
  Then $W^v$ is the Weyl group of the corresponding Kac-Moody Lie algebra $\g g_\M$ and the associated real root system is
$$
\QF=\{w(\qa_i)\mid w\in W^v,i\in I\}\subset Q=\bigoplus_{i\in I}\,\Z.\qa_i.
$$ We set $\QF^\pm{}=\QF\cap Q^\pm{}$ where $Q^\pm{}=\pm{}(\bigoplus_{i\in I}\,(\Z_{\geq 0}).\qa_i)$ and $Q^\vee=(\bigoplus_{i\in I}\,\Z.\qa_i^\vee)$, $Q^\vee_\pm{}=\pm{}(\bigoplus_{i\in I}\,(\Z_{\geq 0}).\qa_i^\vee)$.
   We have  $\QF=\QF^+\cup\QF^-$ and, for $\qa=w(\qa_i)\in\QF$, $r_\qa=w.r_i.w^{-1}$ and $r_\qa(v)=v-\qa(v)\qa^\vee$, where the coroot $\qa^\vee=w(\qa_i^\vee)$ depends only on $\qa$.

\par The set $\QF$ is an (abstract, reduced) real root system in the sense of \cite{MP89}, \cite{MP95} or \cite{Ba96}.
We shall sometimes also use the set $\QD=\QF\cup\QD^+_{im}\cup\QD^-_{im}$ of all roots (with $-\QD^-_{im}=\QD^+_{im}\subset Q^+$,  $W^v-$stable) defined in \cite{K90}.
 It is an (abstract, reduced) root system in the sense of \cite{Ba96}.

  \par The {\it fundamental positive chamber} is $C^v_f=\{v\in V\mid\qa_i(v)>0,\forall i\in I\}$.
   Its closure $\overline{C^v_f}$ is the disjoint union of the vectorial faces $F^v(J)=\{v\in V\mid\qa_i(v)=0,\forall i\in J,\qa_i(v)>0,\forall i\in I\setminus J\}$ for $J\subset I$. We set $V_0 = F^v(I)$.
    The positive (resp. negative) vectorial faces are the sets $w.F^v(J)$ (resp. $-w.F^v(J)$) for $w\in W^v$ and $J\subset I$.
    The support of such a face is the vector space it generates.
    The set $J$ or the face $w.F^v(J)$ or an element of this face is called {\it spherical} if the group $W^v(J)$ generated by $\{r_i\mid i\in J\}$ is finite.
    An element of a vectorial chamber $\pm w.C^v_f$ is called {\it regular}.

    \par The {\it Tits cone}  $\sht$ (resp. its interior $\sht^\circ$) is the (disjoint) union of the positive (resp. and spherical) vectorial faces. It is a $W^v-$stable convex cone in $V$.

  \par
  We say that $\A^v=(V,W^v)$ is a {\it vectorial apartment}.
  A {\it positive automorphism} of $\A^v$ is a linear bijection $\qf:\A^v\to\A^v$ stabilizing $\sht$ and permuting the roots and corresponding coroots; so it normalizes $W^v$ and permutes the vectorial walls $M^v(\qa)=Ker(\qa)$.
 As $W^v$ acts simply transitively on the positive (resp. negative) vectorial chambers, any subgroup $\widetilde W^v$ of the group $Aut^+(\A^v)$ (of positive automorphisms of $\A^v$) containing $W^v$ may be written $\widetilde W^v=\QO\ltimes W^v$, where $\QO$ is the stabilizer in $\widetilde W^v$ of $C^v_f$ (and $-C^v_f$).
 This group $\QO$ induces a group of permutations of $I$ (by $\qo(\qa_i)=\qa_{\qo(i)}$, $\qo(\qa_i^\vee)=\qa_{\qo(i)}^\vee$); but it may be greater than the whole group of permutations in general (even infinite if $(\cap Ker\qa_i)\not=\{0\}$).

\subsection{The model apartment}\label{1.2} As in \cite[1.4]{R11} the model apartment $\A$ is $V$ considered as an affine space and endowed with a family $\shm$ of walls. 
 These walls  are affine hyperplanes directed by Ker$(\qa)$ for $\qa\in\QF$.

 \par We ask this apartment to be {\bf semi-discrete} and the origin $0$ to be {\bf special}.
  This means that these walls are the hyperplanes defined as follows:
$$M(\qa,k)=\{v\in V\mid\qa(v)+k=0\}\qquad\text{for }\qa\in\QF\text{ and } k\in\QL_\qa,$$ with $\QL_\qa=k_\qa.\Z$ a non trivial discrete subgroup of $\R$. 
Using Lemma 1.3 in \cite{GR13} (\ie replacing $\QF$ by another system $\QF_1$) we may (and shall) assume that $\QL_\qa=\Z, \forall\qa\in\QF$.


  \par For $\qa=w(\qa_i)\in\QF$, $k\in\Z$ and $M=M(\qa,k)$, the reflection $r_{\qa,k}=r_M$ with respect to $M$ is the affine involution of $\A$ with fixed points the wall $M$ and associated linear involution $r_\qa$.
   The affine Weyl group $W^a$ is the group generated by the reflections $r_M$ for $M\in \shm$; we assume that $W^a$ stabilizes $\shm$.
We know that $W^a=W^v\ltimes Q^\vee$ and we write $W^a_\R=W^v\ltimes V$; here $Q^\vee$ and $V$ have to be understood as groups of translations.

   \par An automorphism of $\A$ is an affine bijection $\qf:\A\to\A$ stabilizing the set of pairs $(M,\qa^\vee)$ of a wall $M$ and the coroot associated with $\qa\in\QF$ such that $M=M(\qa,k)$, $k\in\Z$. The group $Aut(\A)$ of these automorphisms contains $W^a$ and normalizes it.
We consider also the group $Aut^W_\R(\A)=\{\qf\in Aut(\A)\mid\vect{\qf}\in W^v\}=Aut(\A)\cap W^a_\R$.

   \par For $\qa\in\QF$ and $k\in\R$, $D(\qa,k)=\{v\in V\mid\qa(v)+k\geq 0\}$ is an half-space, it is called an {\it half-apartment} if $k\in\Z$. We write  $D(\alpha,\infty) = \mathbb A$.


The Tits cone $\mathcal T$ 
and its interior $\mathcal T^o$ are convex and $W^v-$stable cones, therefore, we can define two $W^v-$invariant preorder relations  on $\mathbb A$: 
$$
x\leq y\;\Leftrightarrow\; y-x\in\mathcal T
; \quad x\stackrel{o}{<} y\;\Leftrightarrow\; y-x\in\mathcal T^o.
$$
 If $W^v$ has no  fixed point in $V\setminus\{0\}$ and no finite factor, then they are orders; but, in general, they are not.


\subsection{Faces, sectors, chimneys...}
\label{suse:Faces}

 The faces in $\mathbb A$ are associated to the above systems of walls
and half-apartments
. As in \cite{BrT72}, they
are no longer subsets of $\mathbb A$, but filters of subsets of $\mathbb A$. For the definition of that notion and its properties, we refer to \cite{BrT72} or \cite{GR08}.

If $F$ is a subset of $\mathbb A$ containing an element $x$ in its closure,
the germ of $F$ in $x$ is the filter $\mathrm{germ}_x(F)$ consisting of all subsets of $\mathbb A$ which contain intersections of $F$ and neighbourhoods of $x$. In particular, if $x\neq y\in \mathbb A$, we denote the germ in $x$ of the segment $[x,y]$ (resp. of the interval $]x,y]$) by $[x,y)$ (resp. $]x,y)$).

Given $F$ a filter of subsets of $\mathbb A$, its {\it enclosure} $cl_{\mathbb A}(F)$ (resp. {\it closure} $\overline F$) is the filter made of  the subsets of $\mathbb A$ containing an element of $F$ of the shape $\cap_{\alpha\in\Delta}D(\alpha,k_\alpha)$, where $k_\alpha\in\mathbb Z\cup\{\infty\}$ (resp. containing the closure $\overline S$ of some $S\in F$).

\medskip

A {\it local face} $F$ in the apartment $\mathbb A$ is associated
 to a point $x\in \mathbb A$, its vertex, and a  vectorial face $F^v$ in $V$, its direction. It is defined as $F=germ_x(x+F^v)$ and we denote it by $F=F^\ell(x,F^v)$.
 Its closure is $\overline{F^\ell}(x,F^v)=germ_x(x+\overline{F^v})$

There is an order on the local faces: the assertions ``$F$ is a face of $F'$ '',
``$F'$ covers $F$ '' and ``$F\leq F'$ '' are by definition equivalent to
$F\subset\overline{F'}$.
 The dimension of a local face $F$ is the smallest dimension of an affine space generated by some $S\in F$.
  The (unique) such affine space $E$ of minimal dimension is the support of $F$; if $F=F^\ell(x,F^v)$, $supp(F)=x+supp(F^v)$.
 A local face $F=F^\ell(x,F^v)$ is spherical if the direction of its support meets the open Tits cone (\ie when $F^v$ is spherical), then its  pointwise stabilizer $W_F$ in $W^a$ is finite.

 \par We shall actually here speak only of local faces, and sometimes forget the word local.

\medskip
 Any point $x\in \mathbb A$ is contained in a unique face $F(x,V_0)\subset cl_\A(\{x\})$ which is minimal of positive and negative direction (but seldom spherical). 
  For any local face $F^\ell=F^\ell(x,F^v)$, there is a unique face $F$ (as defined in \cite{R11}) containing $F^\ell$.
 Then $\overline{F^\ell}\subset\overline F=cl_\A(F^\ell)=cl_\A(F)$ is also the enclosure of any interval-germ $]x,y)=germ_x(]x,y])$ included in $F^\ell$.

 A {\it local chamber}  is a maximal local face, \ie a local face $F^\ell(x,\pm w.C^v_f)$ for $x\in\A$ and $w\in W^v$.
 The {\it fundamental local chamber} is $C_0^+=germ_0(C^v_f)$.

A {\it (local) panel} is a spherical local face maximal among local faces which are not chambers, or, equivalently, a spherical face of dimension $n-1$. Its support is a wall.

\medskip
 A {\it sector} in $\mathbb A$ is a $V-$translate $\mathfrak s=x+C^v$ of a vectorial chamber
$C^v=\pm w.C^v_f$, $w \in W^v$. The point $x$ is its {\it base point} and $C^v$ its  {\it direction}.  Two sectors have the same direction if, and only if, they are conjugate
by $V-$translation,
 and if, and only if, their intersection contains another sector.

 The {\it sector-germ} of a sector $\mathfrak s=x+C^v$ in $\mathbb A$ is the filter $\mathfrak S$ of
subsets of~$\mathbb A$ consisting of the sets containing a $V-$translate of $\mathfrak s$, it is well
determined by the direction $C^v$. So, the set of
translation classes of sectors in $\mathbb A$, the set of vectorial chambers in $V$ and
 the set of sector-germs in $\mathbb A$ are in canonical bijection.
  We denote the sector-germ associated to the negative fundamental vectorial chamber $-C^v_f$ by $\g S_{-\infty}$.

 A {\it sector-face} in $\mathbb A$ is a $V-$translate $\mathfrak f=x+F^v$ of a vectorial face
$F^v=\pm w.F^v(J)$. The sector-face-germ of $\mathfrak f$ is the filter $\mathfrak F$ of
subsets containing a translate $\mathfrak f'$ of $\mathfrak f$ by an element of $F^v$ ({\it i.e.} $\mathfrak
f'\subset \mathfrak f$). If $F^v$ is spherical, then $\mathfrak f$ and $\mathfrak F$ are also called
spherical. The sign of $\mathfrak f$ and $\mathfrak F$ is the sign of $F^v$.

\medskip
A {\it chimney} in $\mathbb A$ is associated to a face $F=F(x, F_0^v)$, called its basis, and to a vectorial face $F^v$, its direction, it is the filter
$$
\mathfrak r(F,F^v) = cl_{\mathbb A}(F+F^v).
$$ A chimney $\mathfrak r = \mathfrak r(F,F^v)$ is {\it splayed} if $F^v$ is spherical, it is {\it solid} if its support (as a filter, i.e. the smallest affine subspace containing $\mathfrak r$) has a finite  pointwise stabilizer in $W^v$. A splayed chimney is therefore solid. The enclosure of a sector-face $\mathfrak f=x+F^v$ is a chimney.

 \par A  ray $\delta$ with origin in $x$ and containing $y\not=x$ (or the interval $]x,y]$, the segment $[x,y]$) is called {\it preordered} if $x\leq y$ or $y\leq x$ and {\it generic} if $x\stackrel{o}{<} y$ or $y\stackrel{o}{<} x$.
 With these new notions, a chimney can be defined as the enclosure of a preordered  ray and a preordered segment-germ sharing the same origin. The chimney is splayed if, and only if, the  ray is generic.

 \subsection{The hovel}\label{1.3}

 In this section, we recall the definition and some properties of an ordered affine hovel given by Guy Rousseau in \cite{R11}.

\parni{\bf 1)} An apartment of type $\mathbb A$ is a set $A$ endowed with a set $Isom^W\!(\mathbb A,A)$ of bijections (called Weyl-isomorphisms) such that, if $f_0\in Isom^W\!(\mathbb A,A)$, then $f\in Isom^W\!(\mathbb A,A)$ if, and only if, there exists $w\in W^a$ satisfying $f = f_0\circ w$.
An isomorphism (resp. a Weyl-isomorphism, a vectorially-Weyl isomorphism) between two apartments $\varphi :A\to A'$ is a bijection such that
, for any $f\in Isom^W\!(\mathbb A,A)$, $f'\in Isom^W\!(\mathbb A,A')$, $f'^{-1}\circ\qf\circ f\in Aut(\A)$ (resp. $\in W^a$, $\in Aut^W_\R(\A)$); the group of these isomorphisms is written $Isom(A,A')$ (resp. $Isom^W(A,A')$, $Isom^W_\R(A,A')$).
 As the filters in $\A$ defined in \ref{suse:Faces} above (\eg local faces, sectors, walls,..) are permuted by $Aut(\A)$, they are well defined in any apartment of type $\A$ and exchanged by any isomorphism.

\begin{defi*}
\label{de:AffineHovel}
An ordered affine hovel of type $\mathbb A$ is a set $\SHI$ endowed with a covering $\mathcal A$ of subsets  called apartments such that:
\begin{enumerate}
\item[{\bf (MA1)}] any $A\in \mathcal A$ admits a structure of an apartment of type $\mathbb A$;

\item[{\bf (MA2)}] if $F$ is a point, a germ of a preordered interval, a generic  ray or a solid chimney in an apartment $A$ and if $A'$ is another apartment containing $F$, then $A\cap A'$ contains the
enclosure $cl_A(F)$ of $F$ and there exists a Weyl-isomorphism from $A$ onto $A'$ fixing $cl_A(F)$;

\item[{\bf (MA3)}] if $\mathfrak R$ is  the germ of a splayed chimney and if $F$ is a face or a germ of a solid chimney, then there exists an apartment that contains $\mathfrak R$ and $F$;

\item[{\bf (MA4)}] if two apartments $A,A'$ contain $\mathfrak R$ and $F$ as in {\bf (MA3)}, then their intersection contains $cl_A(\mathfrak R\cup F)$ and there exists a Weyl-isomorphism from $A$ onto $A'$ fixing $cl_A(\mathfrak R\cup F)$;

\item[{\bf (MAO)}] if $x,y$ are two points contained in two apartments $A$ and $A'$, and if $x\leq_A y$ then the two line segments $[x,y]_A$ and $[x,y]_{A'}$ are equal.
\end{enumerate}
\end{defi*}

  \par We ask here $\SHI$ to be thick of {\bf finite thickness}: the number of local chambers  containing a given (local) panel has to be finite $\geq 3$.
     This number is the same for any panel in a given wall $M$ \cite[2.9]{R11}; we denote it by $1+q_M$.

  \par An automorphism (resp. a Weyl-automorphism, a vectorially-Weyl automorphism) of $\SHI$ is a bijection $\qf:\SHI\to\SHI$ such that $A\in\sha\iff \qf(A)\in\sha$ and then $\qf\vert_A:A\to\qf(A)$ is an isomorphism (resp. a Weyl-isomorphism, a vectorially-Weyl isomorphism).
\medskip
\parni{\bf 2)} For $x\in\SHI$, the set $\sht^+_x\SHI$ (resp. $\sht^-_x\SHI$) of segment germs $[x,y)$ for $y>x$ (resp. $y<x$) may be considered as a building, the positive (resp. negative) tangent building. The corresponding faces are the local faces of positive (resp. negative) direction and vertex $x$. The associated Weyl group is $W^v$.
 If the $W-$distance (calculated in $\sht^\pm_x\SHI$) of two local chambers is $d^W(C_x,C'_x)=w\in W^v$, to any reduced decomposition $w=r_{i_1}\cdots r_{i_n}$ corresponds a unique minimal gallery from $C_x$ to $C'_x$ of type $(i_1,\cdots,i_n)$. We shall say, abusively, that this gallery is of type $w$.

 \par The buildings $\sht^+_x\SHI$ and $\sht^-_x\SHI$ are actually twinned. The codistance $d^{*W}(C_x,D_x)$ of two opposite sign chambers $C_x$ and $D_x$ is the $W-$distance $d^W(C_x, op D_x)$, where $op D_x$ denotes the opposite chamber to $D_x$ in an apartment containing $C_x$ and $D_x$.

 \begin{lemm*} \cite[2.9]{R11} Let $D$ be an half-apartment in $\SHI$ and $M=\partial D$ its wall (\ie its boundary).
 One considers a panel $F$ in $M$ and a local chamber $C$ in $\SHI$ covering $F$.
 Then there is an apartment containing $D$ and $C$.
 \end{lemm*}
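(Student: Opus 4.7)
The approach is to apply \textbf{(MA3)} and then \textbf{(MA4)} with a splayed chimney germ chosen inside $D$ and pointing away from the wall $M$, together with the panel $F$.

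First, I would pick an apartment $A$ containing $D$. Under an identification of $A$ with $\A$, write $D = D(\qa,0)$ and $M = M(\qa,0)$ for some $\qa \in \QF$; let $x$ denote the vertex of $F$, so $x \in M$. Choose a vectorial chamber $C^v$ with $\qa(C^v) > 0$ and a base-point $x_0 \in A$ with $\qa(x_0) > 0$, and set $\mathfrak s = x_0 + C^v$. Then $\mathfrak s \subset D$, and the sector-germ $\mathfrak S$ of $\mathfrak s$ is a splayed chimney germ sitting inside $A$ (vectorial chambers being spherical).

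Next, \textbf{(MA3)} applied to $\mathfrak S$ and the local chamber $C$ (as a face) produces an apartment $A'$ containing both $\mathfrak S$ and $C$. Since $F \subset \overline{C} \subset A'$ and $F \subset M \subset A$, both $A$ and $A'$ contain the splayed chimney germ $\mathfrak S$ and the spherical face $F$, and \textbf{(MA4)} then yields $cl_A(\mathfrak S \cup F) \subset A \cap A'$ together with a Weyl-isomorphism $A \to A'$ fixing this enclosure.

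Finally, I would observe that the half-apartment $D$ itself belongs to the enclosure $cl_A(\mathfrak S \cup F)$: it is a half-apartment of $A$ containing the sector $\mathfrak s$ (because $\qa(C^v) > 0$ and $\qa(x_0) > 0$) and the panel $F$ (because $F \subset M \subset D$). This should place $D$ inside $A \cap A'$, so that $A'$ contains both $D$ and $C$, as wanted. The main obstacle will be precisely this last step, namely transferring the filter-level conclusion of \textbf{(MA4)} into the set-theoretic inclusion $D \subset A \cap A'$. The whole reason for choosing $\mathfrak S$ to point strictly into $D$ and for anchoring the panel $F$ on the bounding wall $M = \partial D$ is to make the enclosure of $\mathfrak S \cup F$ large enough to contain the entire half-apartment $D$, rather than only a smaller intersection of half-apartments.
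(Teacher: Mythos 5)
This lemma is quoted from \cite[2.9]{R11} and the paper offers no proof of its own, so your attempt can only be judged on its internal logic; unfortunately the last step, which you yourself flag as the main obstacle, is a genuine gap and not a technicality. The conclusion of \textbf{(MA4)} is a statement about filters: ``$A\cap A'$ contains $cl_A(\mathfrak S\cup F)$'' means that $A\cap A'$ is an \emph{element} of the filter $cl_A(\mathfrak S\cup F)$, i.e.\ that $A\cap A'$ contains \emph{some} set of the shape $\bigcap_{\qb}D(\qb,k_\qb)$ containing $\mathfrak S\cup F$ --- in effect the minimal enclosed set $E$ containing $\mathfrak S\cup F$. You correctly note that $D$ is also an element of this filter (it is a single half-apartment containing $\mathfrak s$ and $F$), but two sets that each contain $E$ need not contain one another, so $D\subset A\cap A'$ does not follow. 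Worse, $E$ is genuinely much smaller than $D$: for every root $\qb$ with $\qb(C^v)>0$ that is not a positive multiple of $\qa$, the half-apartment $D(\qb,k)$ contains a translate of $\mathfrak s$ for every $k$ and contains $F$ for all sufficiently large finite $k$, so $E$ lies inside $D(\qa,0)\cap\bigcap_{\qb}D(\qb,k_\qb)$ with $k_\qb$ finite for all such $\qb$; as soon as the rank is at least $2$ this is a proper polyhedral region inside $D$, nowhere near all of $D$. Choosing $\mathfrak S$ to point into $D$ cannot make the enclosure large: enclosures are intersections of half-apartments and only shrink as you require them to contain more objects.

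Steps 1--4 of your argument are fine (a sector-germ is indeed the germ of a splayed chimney, so \textbf{(MA3)} and \textbf{(MA4)} apply and give an apartment $A'$ containing $C$ and meeting $A$ along at least $E$), but promoting $E\subset A\cap A'$ to $D\subset A'$ requires an additional mechanism --- in \cite{R11} this rests on a finer analysis of the intersection of two apartments sharing a sector germ (showing the intersection is enclosed and then arguing chamber by chamber along the wall $M$), not on a single application of the axioms. As written, your proof only establishes that $C$ and some enclosed neighbourhood of $F\cup\mathfrak S$ inside $D$ lie in a common apartment.
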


\parni{\bf 3)}  We assume that $\SHI$ has a strongly transitive group of automorphisms $G$, \ie all isomorphisms involved in the above axioms are induced by elements of $G$, \cf \cite[4.10]{R13} and \cite{CiR15}.
  We choose in $\SHI$ a fundamental apartment which we identify with $\A$.
   As $G$ is strongly transitive, the apartments of $\SHI$ are the sets $g.\A$ for $g\in G$. The stabilizer $N$ of $\A$ in $G$ induces a group $W=\qn(N)\subset Aut(\A)$ of affine automorphisms of $\A$ which permutes the walls, local faces, sectors, sector-faces... and contains the affine Weyl group $W^a=W^v\ltimes Q^\vee$ \cite[4.13.1]{R13}.

\par   We denote the  stabilizer of $0\in\A$ in $G$ by $K$ and the pointwise stabilizer (or fixer) of $C_0^+$ by $K_I$; this group $K_I$ is called the {\it Iwahori subgroup}.

\medskip
\parni{\bf 4)}  We ask  $W=\qn(N)$ to be {\bf positive} and {\bf vectorially-Weyl} for its action on the vectorial faces.
      This means that the associated linear map $\vect w$ of any $w\in\qn(N)$ is in $W^v$.
      As $\qn(N)$ contains $W^a$ and stabilizes $\shm$, we have $W=\qn(N)=W^v\ltimes Y$, where $W^v$ fixes the origin $0$ of $\A$ and $Y$ is a group of translations such that:
  \quad    $Q^\vee\subset Y\subset P^\vee=\{v\in V\mid\qa(v)\in\Z,\forall\qa\in\QF\}$.
 An element $\mathbf{w}\in W$ will often be written $\mathbf{w}=\ql.w$, with $\ql\in Y$ and $w\in W^v$.


  \par We ask $Y$ to be {\bf discrete} in $V$. This is clearly satisfied if $\QF$ generates $V^*$ \ie $(\qa_i)_{i\in I}$ is a basis of $V^*$.

\medskip
\parni{\bf 5)} Note that there is only a finite number of constants $q_M$ as in the definition of thickness. Indeed, we must have $q_{wM}=q_M$, $\forall w\in\qn(N)$ and $w.M(\qa,k)=M(w(\qa),k),\forall w\in W^v$. So now, fix $i\in I$, as $\qa_i(\qa_i^\vee)=2$ the translation by $\qa_i^\vee$ permutes the walls $M=M(\qa_i,k)$ (for  $k\in \Z$) with two orbits.
  So, $Q^\vee\subset W^a$ has at most two orbits in the set of the constants $q_{M(\qa_i,k)}$ : one containing the $q_i=q_{M(\qa_i,0)}$ and the other containing the $q_i'=q_{M(\qa_i,\pm{}1)}$.
  Hence, the number of (possibly) different $q_M$ is at most $2.\vert I\vert$. We denote this set of parameters by $\shq=\{q_i,q'_i\mid i\in I\}$.

\par If $\qa_i(\qa_j^\vee)$ is odd for some $j\in I$, the translation by $\qa_j^\vee$ exchanges the two walls $M(\qa_i,0)$ and $M(\qa_i,-\qa_i(\qa_j^\vee))$; so $q_i=q'_i$.
More generally, we see that $q_i=q'_i$ when $\qa_i(Y)=\Z$, \ie $\qa_i(Y)$ contains an odd integer.
If $\qa_i(\qa_j^\vee)=\qa_j(\qa_i^\vee)=-1$, one knows that the element $r_ir_jr_i$ of $W^v(\{i,j\})$ exchanges $\qa_i$ and $-\qa_j$, so $q_i=q'_i=q_j=q'_j$.

\par Actually many of the following results (in sections 2, 3) are true without assuming the existence of $G$: we have only to assume that the parameters $q_M$ satisfy the above conditions.

 \medskip
  \par\noindent{\bf 6) Examples.} The main examples of all the above situation are provided by the hovels of almost split Kac-Moody groups over fields complete for a discrete valuation and with a finite residue field, see \ref{7.2} below.

 \cache{
\par These Kac-Moody examples are type-preserving. We get a non trivial group $D$ of diagram automorphisms when considering a loop group $\widetilde G$ associated to a non simply connected semi simple group over a local field; then $G$ is a Kac-Moody group.
} 

\medskip
  \par\noindent{\bf 7) Remarks.}
  a) In the following, we sometimes use results of \cite{GR08} even though, in this paper we deal with split Kac-Moody groups and residue fields containing $\C$. But the cited results are easily generalizable to our present framework, using the above references.

  \par b)
  All isomorphisms in \cite{R11} are Weyl-isomorphisms, and, when $G$ is strongly transitive, all isomorphisms constructed in \lc are induced by an element of $G$.


  \subsection{Type $0$ vertices}\label{1.4}

  The elements of $Y$, through the identification $Y=N.0$, are called {\it vertices of type $0$} in $\A$; they are special vertices. We note $Y^+=Y\cap\sht$ and $Y^{++}=Y\cap \overline{C^v_f}$.
   The type $0$ vertices in $\SHI$ are the points on the orbit $\SHI_0$ of $0$ by $G$. This set $\SHI_0$ is often called the affine Grassmannian as it is equal to $G/K$, where $K =$ Stab$_G(\{0\})$. But in general, $G$ is not equal to $KYK=KNK$ \cite[6.10]{GR08} \ie $\SHI_0\not=K.Y$.

   \par We know that $\SHI$ is endowed with a $G-$invariant preorder $\leq $ which induces the known one on $\A$ and satisfies $x\leq y\implies\exists A\in\sha$ with $x,y\in A$ \cite[5.9]{R11}.
   We set $\SHI^+=\{x\in\SHI\mid0\leq x\}$ , $\SHI^+_0=\SHI_0\cap\SHI^+$ and $G^+=\{g\in G\mid0\leq g.0\}$; so $\SHI^+_0=G^+.0=G^+/K$.
   As $\leq $ is a $G-$invariant preorder, $G^+$ is a semigroup.

 \par  If $x\in\SHI^+_0$ there is an apartment $A$ containing $0$ and $x$ (by definition of $\leq$) and all apartments containing $0$ are conjugated to $\A$ by $K$ (axiom (MA2)); so $x\in K.Y^+$ as $\SHI^+_0\cap\A=Y^+$.
    But $\qn(N\cap K)=W^v$ and $Y^+=W^v.Y^{++}$, with uniqueness of the element in $Y^{++}$. 
     So $\SHI^+_0=K.Y^{++}$, more precisely $\SHI^+_0=G^+/K$ is the union of the $KyK/K$ for $y\in Y^{++}$.
 This union is disjoint, for the above construction does not depend on the choice of $A$ (\cf \ref{1.11}.a).

    \par Hence, we have proved that the map $Y^{++}\to K\backslash G^+/K$ is one-to-one and onto.

\subsection{Vectorial distance and $Q^\vee-$order}\label{1.5}

    For $x$ in the Tits cone $\sht$, we denote by $x^{++}$ the unique element in $\overline{C^v_f}$ conjugated by $W^v$ to $x$.

    \par  Let $\SHI\times_\leq \SHI=\{(x,y)\in\SHI\times\SHI\mid x\leq y\}$ be the set of increasing pairs in $\SHI$.
    Such a pair $(x,y)$ is always in a same apartment $g.\A$; so $(g^{-1}).y-(g^{-1}).x\in\sht$ and we define the {\it vectorial distance} $d^v(x,y)\in  \overline{C^v_f}$ by $d^v(x,y)=((g^{-1}).y-(g^{-1}).x)^{++}$.
    It does not depend on the choices we made (by \ref{1.11}.a below).

    \par For $(x,y)\in \SHI_0\times_\leq \SHI_0=\{(x,y)\in\SHI_0\times\SHI_0\mid x\leq y\}$, the vectorial distance $d^v(x,y)$ takes values in $Y^{++}$.
     Actually, as $\SHI_0=G.0$, $K$ is the  stabilizer of $0$ and $\SHI^+_0=K.Y^{++}$ (with uniqueness of the element in $Y^{++}$), the map $d^v$ induces a bijection between the set $\SHI_0\times_\leq \SHI_0/G$ of $G-$orbits in $\SHI_0\times_\leq \SHI_0$ and $Y^{++}$.

     \par Further, $d^v$ gives the inverse of the  map $Y^{++}\to K\backslash G^+/K$, as any $g\in G^+$ is in $K.d^v(0,g.0).K$.

     \par For $x,y\in\A$, we say that $x\leq _{Q^\vee}\,y$ (resp. $x\leq _{Q^\vee_\R}\,y$) when $y-x\in Q^\vee_+$ (resp. $y-x\in Q^\vee_{\R+}=\sum_{i\in I}\,\R_{\geq 0}.\qa_i^\vee$).
     We get thus a preorder which is an order at least when $(\qa_i^\vee)_{i\in I}$ is free or $\R_+-$free (\ie $\sum a_i\qa_i^\vee=0,a_i\geq 0\Rightarrow a_i=0,\forall i$).

\subsection{Paths}
\label{suse:Paths}

We consider piecewise linear continuous paths
$\pi:[0,1]\rightarrow \mathbb A$ such that each (existing) tangent vector $\pi'(t)$
belongs to an orbit $W^v.\lambda$ for some $\lambda\in {\overline{C^v_f}}$. Such a path is called a {\it $\lambda-$path}; it is
increasing with respect to the preorder relation $\leq$ on $\mathbb A$.

 For any $t\neq 0$ (resp. $t \neq1$), we let
$\pi'_-(t)$ (resp. $\pi'_+(t)$) denote the derivative of $\pi$ at $t$ from the left
(resp. from the right). Further, we define $w_\pm(t)\in W^v$ to be the smallest
element  in its
$(W^v)_\lambda-$class such that $\pi'_\pm(t)=w_\pm(t).\lambda$ (where $(W^v)_\lambda$ is the  stabilizer in
$W^v$ of $\lambda$).

\par Hecke paths of shape $\ql$ (with respect to the sector germ $\g S_{-\infty}=germ_\infty(-C^v_f)$) are  $\ql-$paths satisfying some further precise conditions, see \cite[3.27]{KM08} or \cite[1.8]{GR13}. For us their interest will appear just below in \ref{1.10}.

But to give a formula for the structure constants of the forthcoming Iwahori-Hecke algebra, we will need slight different Hecke paths whose definition is detailed in Section \ref{sc3}.






  \subsection{Retractions onto $Y^+$}\label{1.10} 

For all $x\in \SHI^+$ there is an apartment containing $x$ and $C_0^-=germ_0(-C^v_f)$ \cite[5.1]{R11} and this apartment is conjugated to $\A$ by an element of $K$ fixing $C_0^-$ (axiom (MA2) ).
    So, by the usual arguments and [\lc, 5.5], see below \ref{1.12}.a), we can define the retraction $\qr_{C_0^-}$ of $\SHI^+$ into $\A$ with center $C_0^-$; its image is $\qr_{C_0^-}(\SHI^+)=\sht=\SHI^+\cap\A$ and $\qr_{C_0^-}(\SHI^+_0)=Y^+$.

    \par Using axioms (MA3), (MA4), \cf  \cite[4.4]{GR08}, we may also define the retraction $\qr_{-\infty}$ of $\SHI$ onto $\A$ with center the sector-germ $\g S_{-\infty}$.

    \par More generally, we may define the retraction $\qr$ of $\SHI$ (resp. of the subset $\SHI_{\geq z}=\{y\in\SHI\mid y\geq z\}$, for a fixed $z$) onto an apartment $A$ with center any sector germ (resp. any local chamber of negative direction with vertex $z$).
 For any such retraction $\qr$, the image of any segment $[x,y]$ with $(x,y)\in\SHI\times_\leq \SHI$ and $d^v(x,y)=\ql\in\overline{C^v_f}$ (resp. and moreover $x,y\in\SHI_{\geq
 z}$) is a $\ql-$path 
    \cite[4.4]{GR08}. In particular, $\qr(x)\leq \qr(y)$.

     \par Actually, the image by $\qr_{-\infty}$ of any segment $[x,y]$ with $(x,y)\in\SHI\times_\leq \SHI$ and $d^v(x,y)=\ql\in Y^{++}$ is a Hecke path of shape $\ql$ with respect to $\g S_{-\infty}$ \cite[th. 6.2]{GR08}, and we have the following.



 \begin{lemm*}  a) For $\ql\in Y^{++}$ and $w\in W^v$, $w.\ql\in\ql-Q^\vee_+$, \ie $w.\ql\leq _{Q^\vee}\,\ql$.

 \par b) Let $\qp$ be a Hecke path of shape $\ql\in Y^{++}$ with respect to $\g S_{-\infty}$, from $y_0\in Y$ to $y_1\in Y$.
  Then, for $0\leq t<t'<1$,
$$
\begin{array}{l}\ql=\qp'_+(t)^{++}=\qp'_-(t')^{++};\\
\qp'_+(t)\leq _{Q^\vee}\qp'_-(t')\leq _{Q^\vee}\qp'_+(t')\leq _{Q^\vee}\qp'_-(1);\\
\qp'_+(0)\leq _{Q^\vee}\,\ql;\\ 
\qp'_+(0)\leq _{Q^\vee_\R}\,(y_1-y_0)\leq _{Q^\vee_\R}\,\qp'_-(1)\leq _{Q^\vee}\,\ql;\\
y_1-y_0 \leq _{Q^\vee}\,\ql.
\end{array}
$$

  \par Moreover $y_1-y_0$ is in the convex hull $conv(W^v.\ql)$ of all $w.\ql$ for $w\in W^v$, more precisely in the convex hull $conv(W^v.\ql,\geq\qp'_+(0))$ of all $w'.\ql$ for $w'\in W^v$, $w'\leq w$, where $w$ is the element with minimal length such that $\qp'_+(0)=w.\ql$.

\par c) If moreover $(\qa_i^\vee)_{i\in I}$ is free, we may replace above $\leq _{Q^\vee_\R}$ by $\leq _{Q^\vee}$.


\par d) If  $x\leq z\leq y$ in $\SHI_0$, then $d^v(x,y)\leq _{Q^\vee}d^v(x,z)+d^v(z,y)$.

 \end{lemm*}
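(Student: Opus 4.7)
\textbf{Part (a)} is the classical fact that a dominant element is maximal in its $W^v$-orbit for the coroot order. I would argue by induction on $\ell(w)$: if $w=s_iw'$ with $\ell(s_iw')=\ell(w')+1$, the induction hypothesis gives $w'.\lambda\leq_{Q^\vee}\lambda$, and $s_iw'.\lambda=w'.\lambda-\alpha_i(w'.\lambda)\alpha_i^\vee$. The length condition forces $(w')^{-1}(\alpha_i)$ to be a positive real root, so by dominance of $\lambda$ one has $\alpha_i(w'.\lambda)=((w')^{-1}\alpha_i)(\lambda)\geq 0$, yielding $s_iw'.\lambda\leq_{Q^\vee}w'.\lambda\leq_{Q^\vee}\lambda$.

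\textbf{Part (b)} is the substantive part. The shape identity $\pi'_\pm(t)^{++}=\lambda$ is by definition of a $\lambda$-path. For the chain of $\leq_{Q^\vee}$ inequalities, I would note that between consecutive break times $\pi$ is linear so its one-sided derivatives agree, while at each break $t_0$ the Hecke-path axiom provides a finite sequence $\pi'_-(t_0)=\eta_0,\dots,\eta_s=\pi'_+(t_0)$ with $\eta_{j+1}=r_{\beta_j}(\eta_j)$ for positive real roots $\beta_j$ satisfying $\beta_j(\eta_j)<0$; each step adds $-\beta_j(\eta_j)\beta_j^\vee\in Q^\vee_+$, so $\pi'_-(t_0)\leq_{Q^\vee}\pi'_+(t_0)$, and transitivity closes the chain. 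The inequality $\pi'_+(0)\leq_{Q^\vee}\lambda$ follows from (a) since $\pi'_+(0)\in W^v.\lambda$. For the two $\leq_{Q^\vee_\R}$ inequalities framing $y_1-y_0$, I would write $y_1-y_0=\sum_i(t_i-t_{i-1})\pi'_+(t_{i-1})$ as a convex combination and apply the chain: each relation is then a convex combination of $Q^\vee_+$-relations, giving $Q^\vee_\R$-relations. The convex-hull description results from the same decomposition, and the Bruhat-type chain at each break ensures that each $w_i$ appearing satisfies $w\leq w_i$, where $w$ is minimal with $\pi'_+(0)=w.\lambda$. The final sharp statement $y_1-y_0\leq_{Q^\vee}\lambda$ is the most delicate and I would prove it by induction on the number of breaks, peeling off the last linear piece and using that the Bruhat-chain condition at each break preserves the integrality of the expansion on simple coroots.

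\textbf{Part (c)} is linear algebra: when $(\alpha_i^\vee)_{i\in I}$ is free, every element of $Q^\vee_\R$ has unique real coordinates in this basis, and a non-negative real combination that lies in the lattice $Q^\vee$ has non-negative integer coefficients, hence lies in $Q^\vee_+$. Since the differences appearing in the $\leq_{Q^\vee_\R}$ relations of (b) lie in $Y\cap Q^\vee_\R=Q^\vee$, they upgrade to $\leq_{Q^\vee}$ relations.

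\textbf{Part (d)} follows from (b) by a retraction argument. Since $x\leq y$, there is an apartment $A$ containing both $x$ and $y$; I would pick a Weyl isomorphism $\phi\colon A\to\A$ sending $x\mapsto 0$ and $y\mapsto\lambda:=d^v(x,y)$, and then use the retraction $\rho_{-\infty}$, which fixes $A$ pointwise under this identification, so $\rho(x)=0$ and $\rho(y)=\lambda$. By \cite[Th.~6.2]{GR08}, $\rho([x,z])$ is a Hecke path of shape $d^v(x,z)$ from $0$ to $\rho(z)$ and $\rho([z,y])$ is a Hecke path of shape $d^v(z,y)$ from $\rho(z)$ to $\lambda$. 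Applying the final line of (b) to each yields $\rho(z)\leq_{Q^\vee}d^v(x,z)$ and $\lambda-\rho(z)\leq_{Q^\vee}d^v(z,y)$, and summing gives $\lambda\leq_{Q^\vee}d^v(x,z)+d^v(z,y)$. The principal obstacle throughout is thus the sharp $\leq_{Q^\vee}$ at the end of (b): all the other assertions are either formal, follow from (a), or reduce to this one; in particular (d) succeeds without a freeness hypothesis precisely because (b) yields an integer rather than a real coroot inequality.
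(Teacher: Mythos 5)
Your route is genuinely different from the paper's: the paper disposes of everything except the convex-hull refinement by citing \cite[2.4]{GR13}, and then gets that refinement in one line from the fact that $w_+(t)$ is \emph{decreasing} for the Bruhat order, whereas you reprove the lemma from scratch. Parts (a), the chain of $\leq_{Q^\vee}$ inequalities at the folds, the two $\leq_{Q^\vee_\R}$ inequalities via the convex-combination formula, and the reduction of (d) to (b) are all correct as you argue them (in (d), the phrase ``$\rho_{-\infty}$ fixes $A$ pointwise under this identification'' should be replaced by first moving $A$ onto $\A$ by an element of $G$ and invoking $G$-invariance of $d^v$ and $\leq$, but that is cosmetic). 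Three points, however, are genuine gaps.

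First, your Bruhat inequality in the convex-hull refinement is backwards: at a fold the chain replaces $\xi_{j-1}=u.\lambda$ ($u$ minimal) by $r_{\beta_j}\xi_{j-1}$ with $\beta_j\in\Phi^+$ and $\beta_j(u.\lambda)<0$, so $u^{-1}\beta_j\in\Phi^-$ and $\ell(r_{\beta_j}u)<\ell(u)$; hence the minimal representatives \emph{decrease} along the path and every $w_i$ satisfies $w_i\leq w=w_+(0)$ --- which is what $conv(W^v.\lambda,\geq\pi'_+(0))$ requires --- not $w\leq w_i$. Second, your sketch of $y_1-y_0\leq_{Q^\vee}\lambda$ does not go through: truncating at the last break does not yield a Hecke path of shape $\lambda$ between two points of $Y$ (the break point $\pi(t_\ell)$ need not lie in $Y$, and the reparametrized piece has shape $t_\ell\lambda$), so the induction has no valid hypothesis to invoke. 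The real inequality is free, since $\lambda-(y_1-y_0)=(\lambda-\pi'_-(1))+\sum_k t_k\bigl(\pi'_+(t_k)-\pi'_-(t_k)\bigr)\in Q^\vee_{\R+}$, but integrality genuinely requires the condition $\beta_j(\pi(t_k))\in\Z$ from the definition of a Hecke path (e.g.\ by unfolding the path through the integral affine walls at each fold, which moves the endpoint by elements of $\Z\beta_j^\vee$); your appeal to ``the Bruhat-chain condition'' never uses this. Third, your proof of (c) rests on $Y\cap Q^\vee_\R=Q^\vee$, which is false in general: for $SL_2$ with $Y=P^\vee$ one has $P^\vee\cap\R\alpha^\vee=\textstyle\frac12\Z\alpha^\vee\neq\Z\alpha^\vee$. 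The correct argument is that the relevant differences, e.g.\ $(y_1-y_0)-\pi'_+(0)=(\lambda-\pi'_+(0))-(\lambda-(y_1-y_0))$, already lie in $Q^\vee$ by the (integral) last line of (b), and $Q^\vee\cap Q^\vee_{\R+}=Q^\vee_+$ exactly when $(\alpha_i^\vee)_{i\in I}$ is free.
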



\begin{NB}  In the following\cache{(except perhaps in section \ref{s6})}, we always assume $(\qa_i^\vee)_{i\in I}$ free.
\end{NB}

\begin{proof} Everything is proved in \cite[2.4]{GR13}, except the second paragraph of b).
 Actually we see in \lc that $y_1-y_0$ is the integral of the locally constant vector-valued function $\qp'_+(t)=w_+(t).\ql$, where $w_+(t)$ is decreasing for the Bruhat order \cite[5.4]{GR13}, hence the result.
\end{proof}

\subsection{Chambers of type $0$}\label{1.11}

Let $\mathscr C_0^{\pm}$ be the set of all local chambers with vertices of type $0$ and positive or negative direction.
An element of vertex $x\in\SHI_0$  in this set (resp. its direction) will often be written  $C_x$ (resp. $C_x^v$).
We consider $\mathscr C_0^+\times_\leq \mathscr C_0^+=\{(C_x,C_y)\in \mathscr C_0^+\times \mathscr C_0^+\mid x\leq y\}$.
 We sometimes write $C_x\leq C_y$ when $x\leq y$.

\begin{prop*} \cite[5.4 and 5.1]{R11} Let $x,y\in\SHI$ with $x\leq y$.
We consider two local faces $F_x,F_y$ with respective vertices $x,y$.

\par a) $\{x,y\}$ is included in an apartment and two such apartments $A,A'$ are isomorphic by a Weyl-isomorphism in $G$, fixing $cl_A(\{x,y\})=cl_{A'}(\{x,y\})\supset[x,y]$.

\par b) There is an apartment containing $F_x$ and $F_y$, if we assume moreover $x\stackrel{o}{<} y$ or $x=y$ when $F_x$ and $F_y$ are respectively of positive and negative direction.
\end{prop*}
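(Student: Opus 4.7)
For part (a), the existence of an apartment containing $\{x,y\}$ is precisely the $G$-invariant preorder property recalled in \ref{1.4}: $x\leq y$ implies $\exists A\in\sha$ with $x,y\in A$. Only the Weyl-isomorphism assertion then requires work. Given two such apartments $A,A'$, I would first invoke axiom (MAO) to obtain $[x,y]_A=[x,y]_{A'}$; hence the whole segment, and in particular the germ $[x,y)$ of the preordered interval at $x$, lies in $A\cap A'$. Axiom (MA2) applied to this germ supplies a Weyl-isomorphism $\varphi:A\to A'$ fixing $cl_A([x,y))$. By strong transitivity (\ref{1.3}.3), $\varphi$ is induced by an element of $G$.

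To upgrade the fixed set to the full enclosure $cl_A(\{x,y\})$, I would also apply (MA2) to the symmetric end-germ $[y,x)$ at $y$, producing a second Weyl-isomorphism fixing $cl_A([y,x))$. A direct check on half-apartments $D(\alpha,k)$ of the semi-discrete model apartment $\A$ shows that $D(\alpha,k)$ contains $\{x,y\}$ if and only if it contains both germs $[x,y)$ and $[y,x)$; consequently $cl_A(\{x,y\})$ is covered by these two enclosures in a sense sufficient to deduce that the two Weyl-isomorphisms agree on a local chamber of $[x,y]$. Rigidity of Weyl-isomorphisms — two such maps coinciding on a local chamber are equal — forces them to be the same $\varphi$, and hence $\varphi$ fixes $cl_A(\{x,y\})=cl_{A'}(\{x,y\})$, which contains $[x,y]$ by (MAO).

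For part (b), I would split into cases according to the signs of the directions $F_x^v$ and $F_y^v$. If they are both positive (the doubly negative case is symmetric), then, since $F_y$ is a local face at $y$, I can produce a splayed chimney-germ with basis near $y$ containing $F_y$ and with generic direction; axiom (MA3) applied to this splayed chimney-germ and to the face $F_x$ yields an apartment containing both. When the signs of $F_x^v$ and $F_y^v$ are opposite (say $F_x^v$ positive and $F_y^v$ negative), the strict hypothesis $x\stackrel{o}{<}y$ is exactly what allows one to pick an auxiliary generic vectorial chamber so that either $F_x$ or $F_y$ sits in a splayed chimney-germ pointing away from the other face along a preordered generic ray inside $[x,y]$'s ambient apartment; (MA3) then provides the common apartment. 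Finally, the case $x=y$ with $F_x,F_y$ of opposite sign is handled by the twinning of the tangent buildings $\sht^\pm_x\SHI$ (\ref{1.3}.2): any pair of opposite-sign local chambers lies in a common twin-apartment, and arbitrary faces are covered by local chambers.

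The main delicate step is the enclosure-upgrade in part (a): turning the germ-level fixing from (MA2) into fixing of the strictly larger enclosure $cl_A(\{x,y\})$. This is where one genuinely uses both endpoint applications of (MA2) together with the rigidity of Weyl-isomorphisms; part (b) is essentially a careful case analysis reducing everything to the splayed-chimney hypothesis of (MA3).
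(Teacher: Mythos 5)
The statement you are proving is not actually proved in this paper: it is imported verbatim from \cite[5.4 and 5.1]{R11}, so there is no in-paper argument to match your plan against, and your attempt has to stand on its own. Its easy ingredients are fine — existence of an apartment through $\{x,y\}$ is indeed just the property of $\leq$ recalled in \ref{1.4}, (MAO) does identify the two segments, and the $x=y$ cases of b) do reduce to the (twin) building structure on $\sht^{\pm}_x\SHI$. But the two steps that carry the real content both have genuine gaps. In a), the ``enclosure upgrade'' does not close: (MA2) applied to the end-germs $[x,y)$ and $[y,x)$ gives two Weyl-isomorphisms fixing (some element of) the filters $cl_A([x,y))$ and $cl_A([y,x))$, which are localized at $x$ and at $y$ respectively. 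A priori these two maps agree only on the segment $[x,y]$, whose affine span is a line, and an affine map of $\A$ is not determined by its restriction to a line (if the line lies in a wall, composing with the corresponding reflection gives a second Weyl-isomorphism fixing $[x,y]$). There is no local chamber contained in $[x,y]$ on which to invoke rigidity, so nothing forces the two maps to coincide; and even a single Weyl-isomorphism fixing both end-germ enclosures need not fix the set $cl_A(\{x,y\})=\bigcap\{D(\alpha,k)\mid x,y\in D(\alpha,k)\}$, which is in general a full-dimensional region much larger than the union of the two germ enclosures. Producing an isomorphism fixing all of $cl_A(\{x,y\})$ is precisely the nontrivial content of \cite[5.4]{R11}, and the proof there goes through the chimney axioms (decomposing the direction $y-x$ into its vectorial face and working with solid chimneys based at $x$), not through a rigidity argument.

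In b) the use of (MA3) is also off target: the germ $\mathfrak R$ of a splayed chimney is a filter ``at infinity'' along the chimney — it consists of translates $\mathfrak f'\subset\mathfrak f$ far out — so it does not contain the local face at the base point. Hence an apartment containing $\mathfrak R$ and $F_x$ furnished by (MA3) need not contain $F_y$; one must either pass through (MA2)/(MA4) for a solid chimney whose \emph{enclosure} contains both faces, or argue as in \cite[5.1]{R11} via the preordered segment germ $[x,y)$ and projections. The mixed-sign case with $x\stackrel{o}{<}y$ is likewise only gestured at. So the skeleton is reasonable, but both delicate steps need the actual machinery of \cite{R11} rather than the shortcuts proposed.
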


\parni{\bf Consequences.} 1) We define $W^+ = W^v\ltimes Y^+$ which is a subsemigroup of $W$.

\par If $C_x\in\SHC_0^+$, we know by b) above, that there is an apartment $A$ containing $C_0^+$ and $C_x$.
But all apartments containing $C_0^+$ are conjugated to $\A$ by $K_I$ (Axiom (MA2)), so there is $k\in K_I$ with $k^{-1}.C_x\subset\A$.
 Now the vertex $k^{-1}.x$ of $k^{-1}.C_x$ satisfies $k^{-1}.x\geq0$, so there is $\mathbf{w}\in W^+$ such that $k^{-1}.C_x=\mathbf{w}.C_0^+$.

\par When $g\in G^+$, $g.C_0^+$ is in $\SHC_0^+$ and there are $k\in K_I$, $\mathbf{w}\in W^+$ with $g.C_0^+=k.\mathbf{w}.C_0^+$, \ie $g\in K_I.W^+.K_I$.
We have proved the {\it Bruhat decomposition} $G^+=K_I.W^+.K_I$.
\medskip
\par 2) Let $x\in \SHI_0$, $C_y\in\SHC_0^+$ with $x\leq y$, $x\not=y$.
 We consider an apartment $A$ containing $x$ and $C_y$ (by b) above) and write $C_y=F(y,C^v_y)$ in $A$.
 For $y'\in y+C^v_y$ sufficiently near to $y$, $\qa(y'-x)\not=0$ for any root $\qa$.
 So $]x,y')$ is in a unique local chamber $pr_x(C_y)$ of vertex $x$; this chamber satisfies $[x,y)\subset \overline{pr_x(C_y)}\subset cl_A(\{x,y'\})$ and does not depend of the choice of $y'$.
 Moreover, if $A'$ is another apartment containing $x$ and $C_y$, we may suppose $y'\in A\cap A'$ and $]x,y')$, $cl_A(\{x,y'\})$, $pr_x(C_y)$ are the same in $A'$.
 The local chamber $pr_x(C_y)$ is well determined by $x$ and $C_y$, it is the projection of $C_y$ in
$\sht^+_x\SHI$.

 \par The same things may be done changing accordingly $+$ to $-$ and $\leq$ to $\geq$.
 But, in the above situation, if $C_x\in\SHC_0^+$, we have to assume $x\stackrel{o}{<} y$ to define the analogous $pr_y(C_x)\in\SHC_0^+$.

\begin{prop}\label{1.12} In the situation of Proposition \ref{1.11},

\par a) If $x\stackrel{o}{<} y$ or $F_x$ and $F_y$ are respectively of negative and positive direction,  any two apartments $A,A'$ containing $F_x$ and $F_y$ are isomorphic by a Weyl-isomorphism in $G$ fixing the convex hull of $F_x$ and $F_y$ (in $A$ or $A'$).

\par b) If $x=y$ and the directions of $F_x,F_y$ have the same sign, any two apartments $A,A'$ containing $F_x$ and $F_y$ are isomorphic by a Weyl-isomorphism in $G$, $\qf:A\to A'$, fixing $F_x$ and $F_y$.
 If moreover $F_x$ is a local chamber, any minimal gallery from $F_x$ to $F_y$ is fixed by $\qf$ (and in $A\cap A'$).

 \par c) If $F_x$ and $F_y$ are of positive directions and $F_y$ is spherical, any two apartments $A,A'$ containing $F_x$ and $F_y$ are isomorphic by a Weyl-isomorphism  in $G$ fixing $F_x$ and $F_y$.

\end{prop}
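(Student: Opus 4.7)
The plan is to deduce each part from the hovel axioms (MA2) and (MA4) together with Proposition \ref{1.11}. In every case one identifies a germ of splayed chimney $\g R$ (or a preordered segment germ) visibly contained in $A \cap A'$ and applies the uniqueness clause of the relevant axiom, producing a Weyl-isomorphism $\varphi : A \to A'$ fixing the prescribed enclosure. Strong transitivity (\S\ref{1.3}.3) then upgrades $\varphi$ to an element of $G$. Since Proposition \ref{1.11} already provides existence of a common apartment, the new content of each statement is the uniqueness/fixing assertion.

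For (a) with $x \stackrel{o}{<} y$: genericity of $y - x \in \sht^\circ$ lets me pick a spherical vectorial face $F^v_0$ with $y - x \in F^v_0$, so that the chimney $\g r(F_x, F^v_0)$ is splayed. Its germ at infinity $\g R$ lies in both $A$ and $A'$ (every apartment contains sector-germs in every direction). Applying (MA4) with $\g R$ and $F = F_y$ then gives a Weyl-iso fixing $cl_A(F_y \cup \g R)$, and a straightforward enclosure argument shows this set contains the convex hull of $F_x$ and $F_y$. For the opposite-sign sub-case ($F_x$ negative, $F_y$ positive), a symmetric construction applies: the change of sign between the two faces forces any direction from $F_x$ towards $F_y$ to cross the open Tits cone in an essentially spherical direction, permitting the analogous splayed chimney.

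For (b), $x = y$ with $F_x, F_y$ of the same sign: the entire configuration transports to the tangent twin building $\sht^\pm_x \SHI$ of \S\ref{1.3}.2. There $A$ and $A'$ restrict to two apartments containing the same two faces at a common vertex, and the classical Tits building result provides a Weyl-iso fixing both; strong transitivity lifts it to $\SHI$ and $G$. The minimal-gallery claim is then the standard building fact that a minimal gallery between two chambers is contained in every apartment containing both chambers, and is pointwise fixed by every automorphism fixing the two endpoints.

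For (c), the sphericality of $F_y$ is precisely what makes $\g r(F_y, F^v_y)$ a splayed chimney, with germ $\g R$ contained in both $A$ and $A'$ for the same reason as above. Axiom (MA4) applied with $\g R$ and $F = F_x$ yields a Weyl-iso fixing $cl_A(F_x \cup \g R)$, and an enclosure computation (using that $F_y$ lies in the chimney based on itself) shows this set contains both $F_x$ and $F_y$. The main obstacle I expect is the opposite-sign sub-case of (a): one must verify carefully that the splayed-chimney germ built from the mixed-sign data really sits in $A \cap A'$, whereas the remaining sub-case of (a), part (b), and part (c) are essentially bookkeeping on top of the axioms.
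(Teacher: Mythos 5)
There is a genuine gap, and it sits at the heart of your argument for the only part the paper actually proves, namely c). Your key claim is that the germ $\g R$ of the splayed chimney $\g r(F_y,F_y^v)$ (or, in a), of $\g r(F_x,F_0^v)$) ``lies in both $A$ and $A'$'', justified by the remark that every apartment contains sector-germs in every direction. That remark is true but proves the wrong thing: each of $A$ and $A'$ contains \emph{some} chimney germ of the given direction, but the germ at infinity of $cl_A(F_y+F_y^v)$ and that of $cl_{A'}(F_y+F_y^v)$ are a priori different filters on $\SHI$, because $A$ and $A'$ may branch apart immediately beyond the local face $F_y$. (This already fails for two apartments of an ordinary affine building sharing a single chamber: they need not share any sector based at that chamber.) Establishing that $A\cap A'$ contains a common representative of such a germ is essentially equivalent to the statement you are trying to prove, so the appeal to (MA4) with a single $\g R$ contained in both apartments is circular. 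The same objection applies to your treatment of the mixed-sign sub-case of a), which you yourself flag as the delicate point; the difficulty is not the enclosure bookkeeping but the existence of the common germ.

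For comparison: the paper does not reprove a) and b) at all --- they are quoted as Propositions 5.5 and 5.2 of \cite{R11} --- and its proof of c) avoids germs at infinity entirely. It first reduces to the case where $F_x$ is a local chamber, takes an isomorphism $\qf:A\to A'$ fixing some element $\QO_x$ of the filter $C_x$ contained in $A\cap A'$, and then shows $\qf$ fixes each $y'$ in a suitable $\QO_y\subset A\cap A'$: sphericality of $F_y$ gives $x\stackrel{o}{<}y'$, hence $x'\leq y'$ for $x'\in\QO_x$, so by \ref{1.11}.a the segment $[x',y']$ is a well-defined subset of $A\cap A'$ independent of the apartment, and the affine map $\qf\vert_{[x',y']}$, being the identity on the open neighbourhood $[x',y']\cap\QO_x$ of $x'$, is the identity on all of $[x',y']$. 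Note that sphericality of $F_y$ enters only to produce the generic inequality $x\stackrel{o}{<}y'$, not to produce a splayed chimney. Your reduction of b) to the tangent twin building is reasonable in spirit, but the lifting of the resulting building isomorphism back to an element of $G$ acting on the hovel apartments also needs an argument rather than a one-line appeal to strong transitivity.
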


\begin{rema*} The conclusion in c) above is less precise than in a) or in \ref{1.11}.a.
We may actually improve it when the hovel is associated to a very good family of parahorics, as defined in \cite{R13} and already used in \cite{GR08}.
Then, using the notion of half good fixers, we may assume that the isomorphism in c) above fixes some kind of enclosure of $F_x$ and $F_y$ (containing the convex hull).
This particular case includes the case of an almost split Kac-Moody group over a local field.
\end{rema*}

\begin{proof} The assertion a) (resp. b)) is Proposition 5.5 (resp. 5.2) of \cite{R11}.
To prove c) we improve a little the proof of 5.5 in \lc and use the classical trick that says that it is enough to assume that, either $F_x$ or $F_y$ is a local chamber.
We assume now that $F_x=C_x$ is a local chamber; the other case is analogous.

\par We consider an element $\QO_x$ (resp. $\QO_y$) of the filter $C_x$ (resp. $F_y$) contained in $A\cap A'$.
We have $x\in\overline{\QO_x}$, $y\in\overline{\QO_y}$ and one may suppose $\QO_x$ open and  $\QO_y$ open in the support of $F_y$.
There is an isomorphism $\qf:A\to A'$ fixing $\QO_x$.
Let $y'\in\QO_y$, we want to prove that $\qf(y')=y'$.
As $F_y$ is spherical, $x\leq y\stackrel{o}{<}y'$, hence $x\stackrel{o}{<}y'$.
So $x'\leq y'$ for any $x'\in \QO_x$ ($\QO_x$ sufficiently small).
Moreover $[x',y']\cap\QO_x$ is an open neighbourhood of $x'$ in $[x',y']$. By the following lemma, we get $\qf(y')=y'$.
\end{proof}

\begin{lemm*} Let us consider two apartments $A,A'$ in $\SHI$, a subset $\QO\subset A\cap A'$, a point $z\in A\cap A'$ and an isomorphism $\qf:A\to A'$ fixing (pointwise) $\QO$. We assume that there is $z'\in\QO$ with $z'\leq z$ or $z'\geq z$ and $[z',z]\cap\QO$ open in $[z',z]$; then $\qf(z)=z$.
\end{lemm*}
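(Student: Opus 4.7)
The plan is to use axiom (MAO) together with the fact that $\qf$ is an affine isomorphism between the apartment structures of $A$ and $A'$.

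First, without loss of generality I would assume $z'\leq z$; the case $z'\geq z$ is symmetric (swap the roles of the endpoints throughout). If $z=z'$ there is nothing to prove since $z'\in\QO$, so assume $z'\neq z$. Since $z'\in [z',z]\cap\QO$ and this intersection is open in $[z',z]$, there exists some $t_0\in (0,1]$ such that every point of the form $z'+t(z-z')$ in $A$ for $t\in[0,t_0]$ lies in $\QO$. Pick $z'':=z'+t_0(z-z')\in\QO$, so $z''\neq z'$.

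Next I would invoke axiom (MAO): as $z'\leq z$ and both points lie in $A\cap A'$, the segments $[z',z]_A$ and $[z',z]_{A'}$ coincide as subsets of $\SHI$. In particular the parameterization $t\mapsto z'+t(z-z')$, computed in either apartment, gives the same point; so $z''$ is at affine parameter $t_0$ along $[z',z]$ whether we read it in $A$ or in $A'$. The isomorphism $\qf\colon A\to A'$ is affine (it is a Weyl-isomorphism, hence differs from an element of $W^a$ by composition with charts), fixes $z'$ and fixes $z''$. Consequently $\qf$ sends $[z',z]_A$ to a segment of $A'$ issuing from $\qf(z')=z'$ and passing through $\qf(z'')=z''$, terminating at $\qf(z)$.

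Finally, I would compare affine parameters in $A'$. Because $\qf$ is affine and fixes both endpoints $z'$ and $z''$ of the subsegment $[z',z'']$, the image of the point at parameter $t_0$ of $[z',z]_A$ equals the point at parameter $t_0$ of $[z',\qf(z)]_{A'}$; thus $z''=z'+t_0(\qf(z)-z')$ in $A'$. Combined with $z''=z'+t_0(z-z')$ in $A'$ (from (MAO)) and $t_0\neq 0$, this forces $\qf(z)-z'=z-z'$ in $A'$, i.e.\ $\qf(z)=z$.

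The only real delicacy is to ensure that the affine parameterization of the segment is intrinsic enough to make sense simultaneously in $A$ and in $A'$; this is precisely what axiom (MAO) provides, so it carries the entire weight of the argument. Everything else is then a one-line affine-geometry computation.
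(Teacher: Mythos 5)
Your overall strategy is the right one and is essentially the paper's: restrict $\qf$ to the segment $[z',z]$, observe that it is affine and agrees with the identity on an initial piece $[z',z'']$ with $z''\neq z'$, and conclude by affine rigidity that $\qf(z)=z$. There is, however, one step whose justification is not what you claim. You deduce from (MAO) that ``$z''$ is at affine parameter $t_0$ along $[z',z]$ whether we read it in $A$ or in $A'$'', and you state that (MAO) carries the entire weight of the argument. But (MAO) only asserts that $[z',z]_A$ and $[z',z]_{A'}$ coincide \emph{as subsets} of $\SHI$; it does not by itself say that the two affine parameterizations agree. Set equality (even applied to all subsegments) only tells you that the identity map of $[z',z]$ is an order-preserving bijection between the two induced segment structures, and such a map need not be affine. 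Without that compatibility, your two expressions for $z''$ in $A'$ only yield $\qf(z)=z'+(s_0/t_0)(z-z')$ for some unknown parameter $s_0$ of $z''$ in $A'$, i.e.\ that $\qf(z)$ lies on the ray from $z'$ through $z$, not that it equals $z$.

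The missing ingredient is Proposition \ref{1.11}.a: since $z'\leq z$ and both points lie in $A\cap A'$, there is a Weyl-isomorphism $\psi:A\to A'$ in $G$ fixing $cl_A(\{z',z\})\supset[z',z]$ pointwise. As $\psi$ is affine, the identity of $[z',z]$ is an affine bijection for the affine structures induced by $A$ and by $A'$, so the parameter-$t_0$ point is indeed the same in both apartments. With this substituted for your appeal to (MAO), your computation closes, and the argument becomes the paper's own proof, which compares $\qf\vert_{[z',z]}$ with the identity of $[z',z]$ as two affine bijections that agree in a neighbourhood of $z'$.
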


\begin{NB} This lemma tells, in particular, that any isomorphism $\qf:A\to A'$ fixing a local facet $F\subset A\cap A'$ fixes $\overline F$.
\end{NB}

\begin{proof} $\qf\vert_{[z',z]}$ is an affine bijection of $[z',z]$ onto its image in $A'$, which is the identity in a neighbourhood of $z'$.
But \ref{1.11}.a) tells that $[z',z]\subset A\cap A'$ and the identity of $[z',z]$ is an affine bijection (for the affine structures induced by $A$ and $A'$).
Hence $\qf\vert_{[z',z]}$ coincides with this affine bijection; in particular $\qf(z)=z$.
\end{proof}

\subsection{$W-$distance}\label{1.13}

Let $(C_x,C_y)\in\mathscr C_0^+\times_\leq \mathscr C_0^+$, there is an apartment $A$ containing $C_x$ and $C_y$.
We identify $(\A,C_0^+)$ with $(A,C_x)$ \ie we consider the unique $f\in Isom^W_\R(\A,A)$ such that $f(C_0^+)=C_x$.
 Then $f^{-1}(y)\geq 0$ and there is $\mathbf{w}\in W^+$ such that  $f^{-1}(C_y)=\mathbf{w}.C_0^+$.
 By \ref{1.12}.c, $\mathbf{w}$ does not depend on the choice of $A$.

\par We define the {\it $W-$distance} between the two local chambers $C_x$ and $C_y$ to be this unique element: $d^W(C_x,C_y) = \mathbf{w}\in W^+ = Y^+\rtimes W^v$.
 If $\mathbf{w}=\ql.w$, with $\ql\in Y^+$ and $w\in W^v$, we write also $d^W(C_x,y)=\ql$.
 As $\leq$ is $G-$invariant, the $W-$distance is also $G-$invariant.
 When $x=y$, this definition coincides with the one in \ref{1.3}.2.

 \par If $C_x,C_y,C_z\in\SHC_0^+$, with $x\leq y\leq z$, are in a same apartment, we have the Chasles relation: $d^W(C_x,C_z)=d^W(C_x,C_y).d^W(C_y,C_z)$.

\par When $C_x=C_0^+$ and $C_y=g.C_0^+$ (with $g\in G^+$), $d^W(C_x,C_y)$ is the only $\mathbf{w}\in W^+$ such that $g\in K_I.\mathbf{w}.K_I$.
We have thus proved the uniqueness in Bruhat decomposition: $G^+=\coprod_{\mathbf{w}\in W^+}\,K_I.\mathbf{w}.K_I$.

The $W-$distance classifies  the orbits of $K_I$ on $\{C_y\in\SHC_0^+\mid y\geq 0\}$, hence also the orbits of $G$ on $\mathscr C_0^+\times_\leq \mathscr C_0^+$.


\section{Iwahori-Hecke Algebras}\label{s2}

Throughout this Section, we assume that $(\alpha_i^\vee)_{i\in I}$ is free and we consider any ring $R$. 
 To each $\mathbf w\in W^+$, we associate a function $T_{\mathbf w}$ from  $\mathscr C_0^+\times_\leq \mathscr C_0^+$ to $R$ defined by
$$
T_{\mathbf w}(C,C') =
\left\{
\begin{array}{l}
1 \quad\hbox{ if } d^W(C,C') = \mathbf w,\\
0 \quad\hbox{ otherwise.}
\end{array}
\right.
$$
Now we  consider the following free $R-$module
$$^I\shh_R^\SHI = \{\qf = \sum_{\mathbf w\in W^+} a_{\mathbf w}T_{\mathbf w}\mid a_{\mathbf w}\in R,\ a_{\mathbf w} = 0 \hbox{ except for a finite number of } \mathbf w\},
$$ We endow this $R-$module with the convolution product:

$$
(\qf*\psi)(C_x,C_y)=\sum_{C_z}\,\qf(C_x,C_z)\psi(C_z,C_y).
$$ where $C_z\in \mathscr C^+_0$ is such that $x\leq z \leq y$. It is clear that this product is associative and $R-$bilinear.  We prove below 
that this product is well defined.

\par As in \cite[2.1]{GR13}, we see easily that $^I\shh_R^\SHI $ can be identified with the natural convolution algebra of the functions $G^+\to R$, bi-invariant under $K_I$ and with finite support.



\begin{lemm}\label{LeFinite}
Let $\g S^-\subset A$ be a sector-germ with negative direction in an apartment $A$, $\rho_- : \SHI\to A$ the corresponding retraction, and $\mathbf w\in W^+$. Then the set
$$
P = \{d^W(\rho_- (C_x), \rho_-(C_y))\in W^+\mid \forall (C_x,C_y)\in \mathscr C_0^+\times_\leq \mathscr C_0^+,\ d^W(C_x,C_y) = \mathbf w\}
$$
is finite and included in a finite subset $P'$ of $W^+$ depending only  on $\mathbf w$ and on the position of $C_x$ with respect to $\g S^-$ (i.e. on the codistance $w_x\in W^v$ from $C_x$ to the local chamber $C_x^-$ in $x$ of direction $\g S^-$).

Let us write $\mathbf w = \lambda.w$ for $\lambda \in Y^+$ and $w\in W^v$. If we assume $C_x$ and $\g S^-$ opposite (i.e. $w_x = 1$), then any $\mathbf v = \mu.v\in P'$ satisfies $\lambda\leq_{Q^\vee}\mu\leq_{Q^\vee} \lambda^{++}$ and $\qm$ is in $conv(W^v.\ql^{++})$.
More precisely $\qm$ is in the convex hull $conv(W^v.\ql^{++},\geq\ql)$ of all $w'.\ql^{++}$ for $w'\in W^v$, $w'\leq w_\ql$, where $w_\ql$ is the element with minimal length such that $\ql=w_\ql .\ql^{++}$.

\par If moreover $\lambda\in Y^{++}$, then $\mu = \lambda$ and $v\leq w$. In particular, for $\mathbf w = \lambda\in Y^{++}$, $P = \{\mathbf w\} = \{\lambda\}$.
\end{lemm}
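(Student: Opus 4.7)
The plan is to compute $d^W(\qr_-(C_x), \qr_-(C_y))$ by analyzing $\qr_-$ separately on the segment $[x,y]$ and on the two local chambers $C_x, C_y$, then combining. By \S \ref{1.10}, the retracted segment $\qp := \qr_- \circ [x,y]$ is a $\ql^{++}$-path in $A$ from $\qr_-(x)$ to $\qr_-(y)$, where $\ql^{++} = d^v(x,y)$; the bounds of Lemma \ref{1.10}.b apply here after conjugating from $\g S_{-\infty}$ to $\g S^-$ by an element of $W^v$. To locate $\qr_-(C_x)$ and $\qr_-(C_y)$ in $A$, axiom (MA3) provides apartments $A_x \supset C_x \cup \g S^-$ and $A_y \supset C_y \cup \g S^-$. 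Identifying each with $\A$ so that $\g S^-$ becomes the germ of $-C^v_f$, the definition of the codistance gives $C_x = F^\ell(x, w_x^{-1}.C^v_f)$ in $A_x$ and $C_y = F^\ell(y, w_y^{-1}.C^v_f)$ in $A_y$, where $w_y$ is the codistance at $y$. The Weyl-iso $A_x \to A$ (resp.\ $A_y \to A$) fixing $\g S^-$ is the identity under these identifications, so $\qr_-(C_x) = F^\ell(\qr_-(x), w_x^{-1}.C^v_f)$ and $\qr_-(C_y) = F^\ell(\qr_-(y), w_y^{-1}.C^v_f)$ in $A$. Computing $d^W$ by identifying $(\A, C_0^+)$ with $(A, \qr_-(C_x))$ via the Weyl-iso $z \mapsto \qr_-(x) + w_x^{-1}.z$, I find $\mathbf{v} = \qm.v$ with $\qm = w_x(\qr_-(y) - \qr_-(x))$ and $v = w_x w_y^{-1}$.

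For the finiteness of $P$ inside a set $P'$ depending only on $\mathbf{w}$ and $w_x$: Lemma \ref{1.10}.b (in the $\leq_{Q^\vee}$ version valid since $(\qa_i^\vee)$ is free) bounds $\qr_-(y) - \qr_-(x)$ between $\qp'_+(0)$ and $\ql^{++}$, and the set $\{\nu\in Y : a \leq_{Q^\vee} \nu \leq_{Q^\vee} b\}$ is finite. The initial tangent $\qp'_+(0)$ is determined by the local $W$-isomorphism at $x$ between $A_1$ (any apartment containing $C_x \cup C_y$) and the apartment used for the retraction: the tangent $\ql$ of $[x,y)$ in $A_1$'s identification transports to $w_x^{-1}.\ql$, giving $\qp'_+(0) = w_x^{-1}.\ql$. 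An analogous argument at $y$ using $\qp'_-(1)$ bounds $w_y$, and hence $v$, by a finite set depending only on $\mathbf{w}$ and $w_x$.

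In the opposite case $w_x = 1$, the formulas simplify: $\qm = \qr_-(y) - \qr_-(x)$ and $\qp'_+(0) = \ql$. Lemma \ref{1.10}.b then gives $\ql \leq_{Q^\vee} \qm \leq_{Q^\vee} \ql^{++}$ and $\qm \in \mathrm{conv}(W^v.\ql^{++}, \geq \ql)$. If moreover $\ql \in Y^{++}$, then $\ql = \ql^{++}$, so $\qm = \ql$; since $\ql$ is an extreme point of $\mathrm{conv}(W^v.\ql)$ (being dominant), the $\ql$-path $\qp$ must have constant tangent $\ql$, i.e., $\qp$ is a straight segment. Using this rigidity together with (MA3), one finds an apartment containing $\g S^-$, $C_x$, and the segment $[x,y]$. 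Following a minimal gallery of type $w$ from $C_x$ to $C_y$ in this apartment and tracking its image under $\qr_-$ yields $v \leq w$ by a standard gallery-shortening argument (at each panel $\qr_-$ either crosses the wall or folds back, producing a gallery whose type is a subword of the reduced expression for $w$). In particular, for $\mathbf{w} = \ql \in Y^{++}$ one has $w = 1$, forcing $v = 1$, so $P = \{\ql\}$.

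The main obstacle is the Bruhat inequality $v \leq w$ in the last step, which demands a careful panel-by-panel analysis of a minimal gallery under the retraction $\qr_-$, analogous to the classical fact that retractions in Tits buildings shorten galleries; here the straightness of the retracted segment in the $\ql \in Y^{++}$ case is what makes the adaptation to the hovel setting go through.
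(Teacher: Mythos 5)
Your reduction of $d^W(\rho_-(C_x),\rho_-(C_y))$ to the pair $(\rho_-(y)-\rho_-(x),\,w_y)$ via codistances is a legitimate reformulation, and for the translation part your route is essentially the paper's steps a)--b): retract a minimal gallery from $C_x$ to $[x,y)$ and invoke Lemma \ref{1.10}~b). One intermediate claim is wrong, though: $\pi'_+(0)$ is \emph{not} equal to $w_x^{-1}.\ql$. The restriction of $\rho_-$ to the tangent twin building at $x$ is a retraction, not an isomorphism, so the retracted gallery may fold and one only gets $\pi'_+(0)=w'(\ql^{++}_A)$ for some $w'\leq w_\ql$ (whence $\pi'_+(0)\geq_{Q^\vee}\ql$ when $w_x=1$). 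Your stated bounds on $\qm$ survive this correction, but the asserted equality does not.

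The genuine gap is the sentence ``an analogous argument at $y$ using $\pi'_-(1)$ bounds $w_y$.'' That is precisely where the paper's proof concentrates all of its effort (part~c)), and the naive symmetry with the argument at $x$ fails. To bound $w_y$ this way you would retract a gallery from some local chamber $D_y$ at $y$ containing $[y,x)$ to $C_y$; but $\rho_-(D_y)$ is not controlled by $\rho_-([y,x))=\pi_-(1)$ alone. When $\ql$ is not spherical the germ $[y,x)$ lies in infinitely many local chambers of the tangent building, so fixing its retraction leaves infinitely many candidates for the retraction of the starting chamber, and even in the spherical case one has to argue. The paper instead works with the forward translate $C'_y=C_x+(y-x)$, identified as the unique local chamber containing the \emph{generic} germ $[y,y')$ for $y'=y+\xi$ with $\xi$ regular dominant; controlling $\rho_-(C'_y)$ then requires the auxiliary points $x'=x+\xi$, $y'=y+\xi$, the successive control of $\rho_-(x')$, $\rho_-([x',y'))$, $\rho_-(y')$, and the finiteness of Hecke paths of shape $\xi$ with prescribed endpoints (\cite{GR08}, Cor.~5.9). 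None of this is replaceable by a one-line appeal to symmetry. (Two smaller points: in the case $\ql\in Y^{++}$ the type-$w$ minimal gallery runs from $C'_y$ to $C_y$ inside the tangent building at $y$, not ``from $C_x$ to $C_y$,'' which are local chambers at different vertices; and the unique-lifting step there rests on the dual-dimension-$0$ argument of \cite{GR08}~6.3, not merely on (MA3).)
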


\begin{proof}
We consider an apartment $A_1$ containing $C_x$ and $C_y$. We set $C'_y = C_x +(y-x)$ in $A_1$. Identifying $(\mathbb A, C_0^+)$ with $(A_1,C_x)$ (resp. $(A_1, C'_y)$), we have $y = x+ \lambda$ (resp. $C_y = wC'_y$).

We have to prove that the possibilities for $\rho_-(C_y)$ vary in a finite set determined by $\rho_-(C_x)$, $\mathbf w$, and $w_x$. We shall prove this by successively showing the same kind of result for $\rho_-([x,y))$, $\rho_-(y)$ and $\rho_- (C'_y)$.
 Up to isomorphism, one may suppose $C_x\subset A$.

a) Fixing a reduced decomposition for $w_\lambda$ gives a minimal gallery between $C_x$ and $[x,y)$. By retraction, we get a gallery with the same type from $\rho_-(C_x)$ to $\rho_- ([x,y))$. The possible foldings of this gallery determine the possibilities for $\rho_- ([x,y))$. More precisely, $\rho_- ([x,y)) = x + w'(\lambda^{++}_A)[0,1)$ for $w'\leq w_\lambda$ and $\lambda^{++}_A$ the image in $A$ of $\lambda^{++}$ by the identification of $(\mathbb A, C_0^+)$ with $(A,C_x)$.

b) We fix now $\rho_- ([x,y))$. By Lemma \ref{1.10} b), $\rho_- ([x,y])$ is a Hecke path $\pi$ of shape $\lambda^{++}$ (with respect to $\g S^-$). Its derivative $\pi'_+(0)$ is well determined by $\rho_- ([x,y))$. We identify $A$ with $\mathbb A$ in such a way that $\g S^-$ has direction $-C_f^v$. Then $\lambda_A^{++} = w_x(\lambda^{++})$ and $\pi'_+(0) = w'w_x(\lambda^{++})$, with $w'$ as above. By Lemma \ref{1.10} b), $\pi'_+(0)\leq_{Q^\vee} \rho_-(y) - \rho_-(x) \leq_{Q^\vee} \lambda^{++}$. So there is a finite number of possibilities for $\rho_-(y)$.

c) Now we fix $\rho_- ([x,y))$, $\qr_-(y)$ and investigate the possibilities for $\rho_- (C'_y)$. Let $\xi\in Y^{++}$ and in the interior of the fundamental chamber $C_f^v$. In the apartment $A_1$, with $(A_1, C_x)$ identified with $(\mathbb A, C_0^+)$, we consider $x' = x + \xi$ and $y' = y+\xi$ (hence $y' = x' + \lambda$).

As in a) and b) above, we get that there is a finite number of possibilities for $\rho_-(x')$.

c1) On one side, we may also enlarge the segment $[x,x']$ with $[x',x'')$, where $x'' = x' + \qe\xi$. On the other side, $[x,x']$ can be described as a path $\pi_1:[0,1]\to A_1$, $\pi_1(t) = x + t\xi$. The retracted path $\pi = \rho_-(\pi_1)$ satisfies $\rho_-(x') - \rho_-(x)\leq_{Q^\vee} \pi'_+(1)\leq_{Q^\vee} \lambda^{++}$, again by Lemma \ref{1.10}.
So there is a finite number of possibilities for $\pi'_+(1)$, i.e. for $[x',x'')$. But there exists (in $A_1$) a minimal gallery of the type of a reduced decomposition of $w_\lambda$ from the unique local chamber $(C_x + \xi)$ containing $[x',x'')$ to $[x',y')$. Hence, there exists a gallery of the same type between (a local chamber containing) $\rho_-([x',x''))$ and $\rho_-([x',y'))$. Therefore, there is a finite number of possibilities for $\rho_-([x',y'))$.

As in b) above, we deduce that there is a finite number of possibilities for $\rho_-(y')$.

c2) The path $\rho_-([y,y'])$ is a Hecke path of shape $\xi$ from $\rho_-(y)$ to $\rho_-(y')$. By \cite{GR08} Corollary 5.9, there exists a finite number of such paths. In particular, there is a finite number of possibilities for the segment-germ $\rho_-([y,y'))$ and for $\rho_-(C'_y)$.

d) Next, we fix $\rho_-(C'_y)$. Fixing a reduced decomposition for $w$ gives a minimal gallery between $C'_y$ and $C_y$, hence a gallery of the same type between $\rho_-(C'_y)$ and $\rho_-(C_y)$. So, the number of possible $\rho_-(C_y)$ is finite and $d^W(\rho_-(C'_y),\rho_-(C_y))\leq w$.

e) Finally, let us consider the case $w_x = 1$, then $\lambda^{++}_A = \lambda^{++}$. So, in b), we get $\pi'_+(0) = w'(\lambda^{++})$ with $w'\leq w_\lambda$, hence $\pi'_+(0)\geq_{Q^\vee} w_\lambda(\lambda^{++}) = \lambda$ and $\lambda\leq_{Q^\vee} \pi'_+(0)\leq_{Q^\vee} \rho_-(y) - \rho_-(x) = \mu\leq_{Q^\vee} \lambda^{++}$.
If moreover $\lambda$ is in $Y^{++}$, then $\lambda = \lambda^{++}$ and $\mu = \lambda$. The Hecke path of shape $\lambda$ $\rho_-([x,y])$ is the segment $[\rho_-(x),\rho_-(x) + \lambda]$. Its dual dimension is $0$ (see \cite{GR08} 5.7). By \cite{GR08} 6.3, there is one and only one segment in $\SHI$ with end $y$ that retracts onto this Hecke path: any apartment containing $y$ and $\g S^-$ contains $[x,y]$.
But $C_x$ is the enclosure of $x$ and $C'_y=C_y$ (computation in $A_1$). So, any apartment containing $\g S^-$ and $C_y'$ contains $C_x$. Therefore, we have $\lambda = d^W(C_x,C'_y) = d^W(\rho_-(C_x),\rho_-(C'_y))$.

The end of the proof of the lemma follows then from d) above.
\end{proof}

\begin{prop}\label{PrFinite1}

Let $C_x, C_y, C_z\in \mathscr C_0^+$ be such that $x\leq z \leq y$ and $d^W(C_x,C_z) = \mathbf w\in W^+$, $d^W(C_z,C_y) = \mathbf v\in W^+$. Then $d^W(C_x,C_y)$ varies in a finite subset $P_{\mathbf w,\mathbf v}$ of $W^+$, depending only on $\mathbf w$ and $\mathbf v$.

Let us write $\mathbf w = \lambda.w$ and $\mathbf v = \mu.v$ for $\lambda,\mu \in Y^+$ and $w,v\in W^v$. If we assume $\lambda = \lambda^{++}$ and $w=1$, then any $\mathbf w' = \nu.u\in P_{\mathbf w,\mathbf v}$ satisfies $\lambda + \mu \leq_{Q^\vee} \nu\leq_{Q^\vee} \lambda + \mu^{++}$ and $\qn-\ql\in conv(W^v.\qm^{++},\geq\qm)\subset conv(W^v.\qm^{++})$.

\par  If, moreover, $\mu = \mu^{++}\in Y^{++}$, then $\nu = \lambda + \mu$ and $u\leq v$. In particular, for $\mathbf w = \lambda$, $\mathbf w' = \mu$ in $Y^{++}$, $P_{\mathbf w,\mathbf v}=\{\lambda + \mu\}$.

\end{prop}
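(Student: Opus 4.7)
The plan is to exploit the retraction $\rho_{-\infty}$ centered at the sector germ $\mathfrak S_{-\infty}$ in order to reduce everything to computations inside the standard apartment $\mathbb A$, and then to invoke Lemma \ref{LeFinite} twice: first to the pair $(C_x,C_z)$ and next to the pair $(C_z,C_y)$.

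By $G$-invariance of $d^W$ and Proposition \ref{1.11} b) applied to the pair $(x,y)$, I may assume the apartment containing $C_x$ and $C_y$ is $\mathbb A$, with $C_x=C_0^+$ and $C_y=\mathbf u.C_0^+$ where $\mathbf u=d^W(C_x,C_y)$ is the element to be bounded. Since $\rho_{-\infty}$ fixes $\mathbb A$ pointwise and preserves the preorder (see \ref{1.10}), one has $\rho_{-\infty}(C_x)=C_x$, $\rho_{-\infty}(C_y)=C_y$, and $0\leq\rho_{-\infty}(z)\leq y$ in $\mathbb A$. Applying Lemma \ref{LeFinite} to $(C_x,C_z)$ with the retraction $\rho_{-\infty}$: the chamber $C_x=C_0^+$ is opposite $\mathfrak S_{-\infty}$, so $w_x=1$, and there are finitely many possibilities for $d^W(C_x,\rho_{-\infty}(C_z))$, hence for $\rho_{-\infty}(C_z)=\mathbf w_1.C_0^+$, with all the explicit $Q^\vee$-bounds provided by the lemma.

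To apply Lemma \ref{LeFinite} to $(C_z,C_y)$ with the same retraction, I need to control the codistance $w_z\in W^v$ from $C_z$ to the local chamber $C_z^-$ at $z$ of direction $\mathfrak S_{-\infty}$. By axiom (MA3) there is an apartment $A''$ containing $C_z$ and $\mathfrak S_{-\infty}$, and the restriction $\rho_{-\infty}|_{A''}$ is a Weyl-isomorphism $A''\to\mathbb A$ fixing $\mathfrak S_{-\infty}$; consequently $w_z$ is computed inside $\mathbb A$ as the codistance between $\rho_{-\infty}(C_z)$ and $\rho_{-\infty}(z)+\mathrm{germ}(-C_f^v)$, so $w_z$ is determined by $\rho_{-\infty}(C_z)$ and varies in a finite set depending only on $\mathbf w$. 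Lemma \ref{LeFinite} then shows that $d^W(\rho_{-\infty}(C_z),C_y)$ lies in a finite set depending only on $\mathbf w$ and $\mathbf v$. The Chasles relation in $\mathbb A$ (applicable because $C_x,\rho_{-\infty}(C_z),C_y$ all lie in $\mathbb A$ with the right order) gives
\[
\mathbf u\;=\;d^W(C_x,\rho_{-\infty}(C_z))\cdot d^W(\rho_{-\infty}(C_z),C_y),
\]
completing the finiteness statement.

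For the refinement, suppose $\mathbf w=\lambda\in Y^{++}$, i.e.~$\lambda=\lambda^{++}$ and $w=1$. The last clause of Lemma \ref{LeFinite} forces $d^W(C_x,\rho_{-\infty}(C_z))=\lambda$, hence $\rho_{-\infty}(C_z)=\lambda.C_0^+$, and in $\mathbb A$ the chamber $\lambda.C_0^+$ is opposite $\lambda+\mathrm{germ}(-C_f^v)$, so $w_z=1$. Applying Lemma \ref{LeFinite} to $(C_z,C_y)$ with $w_z=1$ and $d^W(C_z,C_y)=\mu.v$ yields $d^W(\lambda.C_0^+,C_y)=(\nu-\lambda).u$ with $\mu\leq_{Q^\vee}\nu-\lambda\leq_{Q^\vee}\mu^{++}$ and $\nu-\lambda\in\mathrm{conv}(W^v.\mu^{++},\geq\mu)$; after translating by $\lambda$ and combining with Chasles, these become the stated bounds on $\nu$. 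If in addition $\mu=\mu^{++}\in Y^{++}$, the same clause pins down $\nu-\lambda=\mu$ and $u\leq v$, so $\mathbf u=(\lambda+\mu).u$ as required, and $P_{\lambda,\mu}=\{\lambda+\mu\}$ when $v=1$ too.

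The main obstacle will be the justification that $w_z$ depends only on $\rho_{-\infty}(C_z)$; once this is established via (MA3) and the Weyl-isomorphism property of retractions onto apartments containing a splayed sector germ, the rest is a clean combination of two applications of Lemma \ref{LeFinite} with the Chasles relation.
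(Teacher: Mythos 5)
Your proposal is correct and follows essentially the same route as the paper: two successive applications of Lemma \ref{LeFinite} with the retraction centred at the negative sector-germ opposite $C_x$, the observation that the relative position $w_z$ of $C_z$ with respect to that sector-germ is read off from $\rho_-(C_z)$ (i.e.\ equals the $W^v$-part of $d^W(C_x,\rho_-(C_z))$), and the Chasles relation in the apartment to conclude; the refinement for $\mathbf w=\lambda\in Y^{++}$ is handled identically via $P_1=\{\lambda\}$ and $w_z=1$. The only cosmetic difference is that you fix from the outset an apartment containing both $C_x$ and $C_y$, whereas the paper works with an apartment containing $C_x$ and $\mathfrak S^-$ and only switches at the very end; both are legitimate.
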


\begin{proof}
Now we consider any apartment $A$ containing $C_x$, the sector-germ $\g S^-$ opposite $C_x$ and the retraction $\rho_-$ as in Lemma \ref{LeFinite}. Then $\rho_-(C_x) = C_x$ and $d^W(C_x,\rho_-(C_z))$ varies in a finite subset $P_x$ of $W^+$ depending on $\mathbf w$, by Lemma \ref{LeFinite}.
If $d^W(C_x,\rho_-(C_z)) = \lambda'.w'$, then the relative position $w_z\in W^v$ of $C_z$ and $\g S^-$ is equal to $w'$. Applying once more Lemma \ref{LeFinite} to $C_z$ and $C_y$, we get that $d^W(\rho_-(C_z),\rho_-(C_y))$ varies in a finite subset $P_{w'}$ of $W^+$ depending only on $\mathbf v$ and $w'$.
Finally, $d^W(C_x,\rho_-(C_y))$ varies in the finite subset $P_{\mathbf w,\mathbf v} = \{\mathbf w'.\mathbf v'\in W^+\mid \mathbf w' = \lambda'.w'\in P_x,\ \mathbf v'\in P_{w'}\}$. Taking now $A$ containing $C_x$ and $C_y$, we get $d^W(C_x,C_y) = d^W(C_x,\rho_-(C_y))\in P_{\mathbf w,\mathbf v}$.

To finish, suppose that $\lambda = \lambda^{++}$ and $w=1$. By Lemma \ref{LeFinite}, $P_1 = \{ \lambda\}$, hence $w' = w_z = 1$. Applying again Lemma \ref{LeFinite}, any $\mathbf v' = \mu'.v'\in P_{w'}$ satisfies $\mu\leq_{Q^\vee} \mu'\leq_{Q^\vee} \mu^{++}$. So any $\mathbf w'' = \nu.u$ in $P_{\mathbf w,\mathbf v}$ is equal to $(\lambda + \mu').v'$ for $\mu'.v'\in P_{w'} = P_1$, hence  $\lambda + \mu \leq_{Q^\vee} \nu = \lambda + \mu'\leq_{Q^\vee} \lambda + \mu^{++}$. If moreover $\mu\in Y^{++}$, then $\nu = \lambda + \mu$ and $u\leq v$. The last particular case is now clear.

\end{proof}

\begin{prop}\label{PrFinite2}

Let us fix two local chambers $C_x$ and $C_y$ in $\mathscr C_0^+$ with $x\leq y$ and $d^W(C_x,C_y) = \mathbf u\in W^+$. We consider $\mathbf w$ and $\mathbf v$ in $W^+$. Then the number $a_{\mathbf w,\mathbf v}^{\mathbf u}$ of $C_z\in \mathscr C_0^+$ with $x\leq z\leq y$, $d^W(C_x,C_z) = \mathbf w$ and $d^W(C_z,C_y) = \mathbf v$ is finite (i.e. in $\mathbb N$).

If we assume $\mathbf w = \lambda$, $\mathbf v = \mu$ and $\mathbf u = \nu$, then $a_{\mathbf w,\mathbf v}^{\mathbf u} = a_{\lambda,\mu}^\nu \geq 1$ (resp. $=1$) when $\ql\in Y^{++}$, $\qm\in Y^{+}$ (resp. $\ql,\qm\in Y^{++}$) and $\qn=\ql+\qm$.
\end{prop}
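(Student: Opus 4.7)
The plan is to establish finiteness first via a retraction argument modelled on Lemma~\ref{LeFinite} and Proposition~\ref{PrFinite1}, then to construct an explicit intermediate chamber in the dominant case and pin it down as unique.

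For finiteness, I would choose an apartment $A$ containing both $C_x$ and $C_y$ (by~\ref{1.11}) and let $\g S^-\subset A$ be a sector-germ opposite $C_x$, with associated retraction $\rho_-\colon\SHI\to A$ fixing $A$ pointwise. Lemma~\ref{LeFinite} applied to the pair $(C_x, C_z)$ with $d^W(C_x, C_z) = \mathbf{w}$ forces $d^W(C_x, \rho_-(C_z))$ into a finite subset of $W^+$, so $\rho_-(C_z)$ lies in a finite set $\mathcal{F}\subset A$ (since a $W$-distance from $C_x$ uniquely determines a chamber in the apartment $A$). The main obstacle is to bound, for each $C_z^0\in\mathcal{F}$, the fiber $\rho_-^{-1}(C_z^0)\cap\{C_z : d^W(C_z, C_y) = \mathbf{v}\}$: for this I would introduce a second retraction $\rho'$ centered at a sector-germ opposite $C_y$, apply the symmetric version of Lemma~\ref{LeFinite} to $(C_z, C_y)$ so $\rho'(C_z)$ also sits in a finite set, and then show via axiom~(MA4) and a gallery-lifting argument controlled by the thickness parameters $q_M$ that only finitely many $C_z$ can share a given pair $(\rho_-(C_z), \rho'(C_z))$.

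For the lower bound $a_{\lambda,\mu}^\nu\geq 1$ when $\lambda\in Y^{++}$, $\mu\in Y^+$ and $\nu=\lambda+\mu$, identify $(\A, C_0^+)$ with $(A, C_x)$ as in~\ref{1.13}; since $\mathbf{u} = \nu$ has trivial $W^v$-part, one has $C_y = F^\ell(x+\nu, C^v_f)$ in $A$. Set $z := x+\lambda\in A$ and $C_z := F^\ell(z, C^v_f)$. Then $\lambda\in\overline{C^v_f}$ gives $x\leq z$ and $y-z = \mu\in\sht$ gives $z\leq y$, while a direct computation in $A$ yields $d^W(C_x, C_z) = \lambda$ and $d^W(C_z, C_y) = \mu$, producing a valid intermediate chamber.

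For uniqueness when $\lambda,\mu\in Y^{++}$, let $C_z$ be any valid intermediate chamber. Step~(e) in the proof of Lemma~\ref{LeFinite} applied to $(C_x, C_z)$ shows that the dominance of $\lambda$ forces the retracted Hecke path $\rho_-([x,z])$ to be the straight segment $[x, x+\lambda]$, so $\rho_-(z) = x+\lambda$; the analogous analysis of $(C_z, C_y)$ with $\mu$ dominant yields a compatible constraint from the other side. Combining these with the uniqueness of segment liftings (as in step~(e)) and axiom~(MA4) applied to $\g S^-$ with the preordered segment $[x, z]$, the resulting enclosure relations force $[x,z]\subset A$, hence $z = x+\lambda\in A$. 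The trivial $W^v$-part of $\mathbf{w} = \lambda$ then pins down $C_z = F^\ell(x+\lambda, C^v_f)$, coinciding with the chamber constructed above, so $a_{\lambda,\mu}^{\lambda+\mu} = 1$.
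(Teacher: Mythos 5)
Your construction of the intermediate chamber $C_z=F^\ell(x+\lambda,C^v_f)$ in an apartment containing $C_x$ and $C_y$, and your uniqueness argument via step (e) of Lemma \ref{LeFinite} (any apartment containing $C_z$ and $\g S^-$ contains $C_x$, and symmetrically $C_y$, so $C_z$ lies in the enclosure of $C_x$ and $C_y$), are essentially the paper's own treatment of the second assertion and are fine. The problem is the finiteness argument.

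The step ``show via axiom (MA4) and a gallery-lifting argument controlled by the thickness parameters $q_M$ that only finitely many $C_z$ can share a given pair $(\rho_-(C_z),\rho'(C_z))$'' is precisely the crux, and the tools you name do not reach it. Finite thickness and gallery-lifting only control local chambers \emph{sharing a vertex}: two local chambers $C_x$, $C_z$ with $x\ne z$ are not joined by a gallery, so there is nothing to lift, and knowing $\rho_-(C_z)$ and $\rho'(C_z)$ does not by any elementary argument bound the number of possible vertices $z$. The missing input is the finiteness of the set of vertices $z$ with $d^v(x,z)=\lambda^{++}$ and $d^v(z,y)=\mu^{++}$, which is \cite[2.5]{GR13} and rests on the Hecke-path machinery (finitely many Hecke paths of a given shape between two fixed points, finitely many segment liftings of each, cf.\ \cite{GR08}). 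The paper's proof invokes this result directly and then handles the chambers over each fixed $z$ by noting that $C'_z=pr_z(C_y)$ is determined by $z$ and $C_y$ (convexity), that $d^W(C'_z,C_z)$ is determined by $\mathbf v$, and that a gallery of fixed type issued from the fixed chamber $C'_z$ has only finitely many possible ends --- the only place where a thickness count actually occurs. Your two-retraction scheme could probably be completed, but only by reproving that Hecke-path counting; as written, the key finiteness claim is asserted rather than established.
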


\begin{proof}
 We have $d^v(x,z) = \lambda^{++}$ and $d^v(z,y) = \mu^{++}$. So, by \cite{GR13}, 2.5, the number of possible $z$ is finite. Hence, we fix $z$ and count the possible $C_z$.

Let $C'_z$ be the local chamber in $z$ containing $[z,y)$ and $[z,y')$ for $y'$ in a sufficiently small element of the filter $C_y$. By convexity, $C'_z$ is well determined by $z$ and $C_y$. But in an apartment containing $C_y, C_z$ (hence also $C'_z$), we see that $d^W(C'_z, C_z)$ is well determined by $\mathbf v$. So there is a gallery (of a fixed type) from $C'_z$ to $C_z$, thus the number of possible $C_z$ is finite.

Assume now that $\mathbf w= \lambda\in Y^{++}$, $\mathbf v = \mu\in Y^{+}$ and $\mathbf u = \lambda + \mu$. Taking an apartment $A_1$ containing $C_x$ and $C_y$, it is clear that the local chamber $C_z$ in $A_1$ such that $d^W(C_x,C_z) = \lambda$ satisfies also $d^W(C_z, C_y) = \mu$ (as $d^W(C_x, C_y) = \lambda + \mu$). So $a_{\lambda,\mu}^{\lambda + \mu} \geqslant 1$. We consider now any $C_z$ satisfying the conditions, with moreover $\mu\in Y^{++}$.

As in Proposition \ref{PrFinite1}, we choose $A$ containing $C_x$ and $\g S^-$ opposite $C_x$. We saw in Lemma \ref{LeFinite} e) that any apartment containing $C_z$ and $\g S^-$ contains $C_x$ and $d^W(C_x, \rho_-(C_z)) = \lambda$. With the same Lemma applied to $C_z$ and $C_y$, we see that any apartment  containing $C_z$ and $\g S^-$ contains $C_y$. In particular, there is an apartment $A_1$ containing $C_x, C_z, C_y$ and $d^W(C_x, C_z) = \lambda$, $d^W(C_z, C_y) = \mu$, $d^W(C_x, C_y) = \lambda + \mu$. But $\lambda,\mu\in Y^{++}$, so $C_z$ is in the enclosure of $C_x$ and $C_y$.
Therefore, $C_z$ is unique: any other apartment $A_2$ containing $C_x$ and $C_y$ contains $x,y$ (with $x\leq y$) and $x'=x+\qx$, $y'=y+\qx$ (with $x'\leq y'$), for $\qx\in C^v_x=C^v_y$ small; by \ref{1.11}.a, $A_2$ contains $z\in cl_{A_1}(\{x,y\})$, $z'=z+\qx\in cl_{A_1}(\{x',y'\})$, hence also $C_z\subset cl_{A_1}(]z,z'))$.

\end{proof}

\begin{theo}\label{ThAlgebra}

For any ring $R$, $^I\mathcal H_R^\SHI$ is an algebra with identity element $Id = T_1$ such that
$$
T_{\mathbf w} * T_{\mathbf v} = \sum_{\mathbf u\in P_{\mathbf w,\mathbf v}} a_{\mathbf w, \mathbf v}^{\mathbf u} T_{\mathbf u}
$$ and $T_\lambda * T_\mu = T_{\lambda + \mu}$, for $\ql,\qm\in Y^{++}$.

\end{theo}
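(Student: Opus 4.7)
The plan is to bootstrap the algebra structure from Propositions \ref{PrFinite1} and \ref{PrFinite2}, which provide exactly the finiteness control needed. First I would check that the convolution is well-defined on $^I\shh_R^\SHI$: for $\qf=\sum a_{\mathbf w}T_{\mathbf w}$ and $\psi=\sum b_{\mathbf v}T_{\mathbf v}$ with finite support, it suffices by bilinearity to examine $T_{\mathbf w}*T_{\mathbf v}$. For a fixed pair $(C_x,C_y)\in\shc_0^+\times_\leq\shc_0^+$, Proposition \ref{PrFinite2} guarantees that the set of $C_z$ with $x\leq z\leq y$, $d^W(C_x,C_z)=\mathbf w$ and $d^W(C_z,C_y)=\mathbf v$ is finite, so the defining sum is actually a finite sum of integers.

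Next I would identify the coefficients. By construction $(T_{\mathbf w}*T_{\mathbf v})(C_x,C_y)$ is the cardinality just mentioned, which a priori depends on $(C_x,C_y)$. However, the very last sentence of \ref{1.13} says that $d^W$ classifies the $G$-orbits on $\shc_0^+\times_\leq\shc_0^+$; combined with strong transitivity of $G$ (axiom of \ref{1.3}.3) and the $G$-invariance of $d^W$, any $g\in G$ with $g\cdot(C_x,C_y)=(C_{x'},C_{y'})$ induces a bijection $C_z\mapsto g\cdot C_z$ between the sets of witnesses for $(C_x,C_y)$ and for $(C_{x'},C_{y'})$. Hence the count depends only on $\mathbf u=d^W(C_x,C_y)$, and equals the integer $a_{\mathbf w,\mathbf v}^{\mathbf u}$ of Proposition \ref{PrFinite2}. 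Proposition \ref{PrFinite1} then gives $a_{\mathbf w,\mathbf v}^{\mathbf u}=0$ outside the finite set $P_{\mathbf w,\mathbf v}$, so
\[
T_{\mathbf w}*T_{\mathbf v}=\sum_{\mathbf u\in P_{\mathbf w,\mathbf v}}a_{\mathbf w,\mathbf v}^{\mathbf u}\,T_{\mathbf u}\in{}^I\shh_R^\SHI.
\]

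It remains to dispose of the identity, associativity, and the special product in $Y^{++}$. The element $T_1$ is supported on pairs with $d^W(C_x,C_z)=1$, forcing $C_z=C_x$; a one-line check gives $T_1*\qf=\qf=\qf*T_1$. Associativity follows from the standard convolution argument: both $(\qf*\psi)*\chi$ and $\qf*(\psi*\chi)$ evaluated at $(C_x,C_y)$ equal $\sum_{C_z,C_{z'}}\qf(C_x,C_z)\psi(C_z,C_{z'})\chi(C_{z'},C_y)$, the double sum being finite by iterating Proposition \ref{PrFinite2}. Finally, the particular case at the end of Proposition \ref{PrFinite2} applied with $\ql,\qm\in Y^{++}$ yields $P_{\ql,\qm}=\{\ql+\qm\}$ and $a_{\ql,\qm}^{\ql+\qm}=1$, giving $T_\ql*T_\qm=T_{\ql+\qm}$.

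The conceptual heart of the proof, namely the finiteness of the structure constants and the control of which $\mathbf u$ can appear, has already been absorbed by the two preceding propositions through a careful retraction argument based on $\qr_-$ and Hecke paths. Consequently the only remaining obstacle here is the orbit-invariance of the coefficient $a_{\mathbf w,\mathbf v}^{\mathbf u}$, which I would dispatch immediately using strong transitivity; everything else is formal verification.
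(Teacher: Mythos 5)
Your proposal is correct and follows exactly the paper's route: the paper's proof of Theorem \ref{ThAlgebra} is the one-line observation that everything derives from Propositions \ref{PrFinite1} and \ref{PrFinite2} together with the $G$-invariance of $T_{\mathbf w}*T_{\mathbf v}$, which is precisely the argument you spell out (finiteness of the witness set, orbit-invariance of the count via strong transitivity and the classification of $G$-orbits by $d^W$, and the special case $P_{\ql,\qm}=\{\ql+\qm\}$ with $a_{\ql,\qm}^{\ql+\qm}=1$ for $\ql,\qm\in Y^{++}$). Your version merely makes explicit the routine verifications of the identity and associativity that the paper leaves implicit.
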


\begin{proof}
It derives from Propositions \ref{PrFinite1} and \ref{PrFinite2}, as the function $T_{\mathbf w} * T_{\mathbf v}:\SHC_0^+\times_{\leq}\SHC_0^+\to R$ is clearly $G-$invariant.
\end{proof}

\begin{defi}\label{DeAlgebra}
The algebra $^I\mathcal H_R^\SHI$ is the Iwahori-Hecke algebra associated to $\SHI$ with coefficients in $R$.
\end{defi}

\par The structure constants $a_{\mathbf w, \mathbf v}^{\mathbf u}$ are non-negative integers.
We conjecture that they are  polynomials in the parameters $q_i,q_i'$ with coefficients in $\Z$ and that these polynomials depend only on $\A$ and $W$.
 We prove this in the following section for  $\mathbf w, \mathbf v$ generic, see the precise hypothesis just below.
 We get also this conjecture for some $\A,W$ when all $q_i,q_i'$ are equal; in the general case we get only that they are Laurent polynomials, see \ref{5.7}.

Geometrically, it is possible to get more informations about $T_\ql*T_\qm$ when $\ql\in Y^{++},\qm\in Y^+$, but we shall obtain them algebraically (Corollary \ref{4.3}).

\section{Structure constants}\label{sc}

In this section, we compute the structure constants $a_{\mathbf w, \mathbf v}^{\mathbf u}$ of the Iwahori-Hecke algebra $^I\mathcal H_R^\SHI$, assuming that $\mathbf v =\mu . v$ is regular and $\mathbf w = \lambda . w$ is spherical, \ie $\mu$ is regular and $\lambda$ spherical (see 1.1 for the definitions). We will adapt some results obtained in the spherical case in \cite{GR13} to our situation.

These structure constants depend on the shape of the standard apartment $\A$ and on the numbers $q_M$ of \ref{1.3}.
 Recall that the number of (possibly) different parameters is at most $2.\vert I\vert$. We denote by $\shq=\{q_1,\cdots,q_l,q'_1=q_{l+1},\cdots,q'_l=q_{2l}\}$ this set of parameters.

\subsection{Centrifugally folded galleries of chambers}
\label{sc1}

Let $z$ be a point in the standard apartment $\mathbb A$. We have twinned buildings $\mathcal T_z^+\SHI$ (resp. $\mathcal T_z^-\SHI$).
 We consider their unrestricted structure, so the associated Weyl group is $W^v$ and the chambers (resp. closed chambers) are the local chambers $C=germ_z(z+C^v)$ (resp. local closed chambers $\overline{C}=germ_z(z+\overline{C^v})$), where $C^v$ is a vectorial chamber, \cf \cite[4.5]{GR08} or \cite[{\S{}} 5]{R11}.
  The distances (resp. codistances) between these chambers are written $d^W$ (resp. $d^{*W}$).
  To $\A$ is associated a twin system of apartments $\A_z = (\A_z^-,\A_z^+)$.

  \par  We choose in $\A^-_z$ a negative (local) chamber $C^-_z$ and denote by $C^+_z$ its opposite in $\A^+_z$.
  We consider the system of positive roots $\QF^+$ associated to $C^+_z$. Actually, $\QF^+=w.\QF^+_f$, if $\QF^+_f$ is the system $\QF^+$ defined in \ref{1.1} and $C^+_z=germ_z(z+w.C^v_f)$.
   We denote by $(\qa_i)_{i\in I}$ the corresponding basis of $\QF$ and by $(r_i)_{i\in I}$ the corresponding generators of $W^v$. Note that this change of notation is limited to Section 3.

\par Fix a reduced decomposition of  an element $w\in W^v$, $w = r_{i_1}\cdots r_{i_r}$ and let
$\mathbf i = (i_1,..., i_r)$ be the type of the decomposition.
 We consider now galleries of (local) chambers $\mathbf c = (C_z^-,C_1,...,C_r)$ in the
apartment $\mathbb A_z^-$ starting at $C_z^-$ and of type $\mathbf i$.

The set of all these galleries is in bijection with the set $\Gamma (\mathbf i) = \{1,r_{i_1}\}\times\cdots\times \{1,r_{i_r}\}$ via the map $(c_1,...,c_r)\mapsto (C_z^-, c_1 C_z^-,...,c_1\cdots c_r C_z^-)$.
Let $\beta_j = -c_1\cdots c_j (\alpha_{i_j})$, then $\beta_j$ is the root corresponding to the common
limit hyperplane $M_j = M({\beta_j},-\qb_j(z))$ of type $i_j$ of $C_{j-1} =c_1\cdots c_{j-1} C_z^- $ and $C_j = c_1\cdots c_j C_z^-$ and satisfying $\qb_j(C_j)\geq{}\qb_j(z)$.

\begin{defi*}
Let $\mathfrak Q$ be a chamber in $\mathbb A_z^+$.
A gallery $\mathbf c = (C_z^-,C_1,...,C_r)\in\Gamma(\mathbf i)$ is said to be centrifugally folded with respect to $\mathfrak Q$ if $C_j = C_{j-1}$ implies that  $M_j $ is a wall and separates $\mathfrak Q$ from $C_j = C_{j-1}$. We denote this set of centrifugally folded galleries by $\Gamma_{\mathfrak Q}^+(\mathbf i)$.

\end{defi*}

\subsection{Liftings of galleries}
\label{sc2}

Next, let $\qr_{\mathfrak Q} : \SHI_z \to \mathbb A_z$ be the retraction centered at $\mathfrak Q$. To a gallery of chambers $\mathbf c = (C_z^-,C_1,...,C_r)$ in $\Gamma(\mathbf i)$,
one can associate the set of all galleries of type $\mathbf i$ starting at $C_z^-$ in $\SHI_z^-$ that retract onto $\mathbf c$, we denote this set by $\mathcal C_{\mathfrak Q}(\mathbf c)$.
We denote the set of minimal galleries (i.e. $C_{j-1}\ne C_j$) in $\mathcal C_{\mathfrak Q}(\mathbf c)$ by $\mathcal C^m_{\mathfrak Q}(\mathbf c)$. Recall from \cite{GR13}, Proposition 4.4, that the set $\mathcal C^m_{\mathfrak Q}(\mathbf c)$ is nonempty if, and only if, the gallery $\mathbf c$ is centrifugally folded with respect to $\mathfrak Q$. Recall also from loc. cit., Corollary 4.5, that if $\mathbf c \in \Gamma_{\mathfrak Q}^+(\mathbf i)$, then the number of elements in $\mathcal C^m_{\mathfrak Q}(\mathbf c)$ is:

$$
\sharp \mathcal C^m_{\mathfrak Q}(\mathbf c) = \prod_{j\in J_1} (q_{j} - 1) \times \prod_{j\in J_2} q_{j}
$$
 where $q_j=q_{M_j}\in\shq$, $J_1=\{j\in\{1,\cdots,r\}\mid c_j=1\}$ and $J_2=\{j\in\{1,\cdots,r\}\mid c_j =r_{i_j} \mathrm{\ and\ } M_j \mathrm{\ is\ a\ wall\ separating\ } {\mathfrak Q} \mathrm{\ from\ } C_j\}$.

\cache{
$$
\sharp \mathcal C^m_{\mathfrak Q}(\mathbf c) = \prod_{k=1}^{t(\mathbf c)} q_{j_k} \times \prod_{l=1}^{r(\mathbf c)} (q_{j_l} - 1)
$$
 where $q_j=q_{z,\qb_j}=q_{z,\qa_{i_j}}\in\shq$ and $t(\mathbf c)$ and
$r(\mathbf c) $ are easy statistics on the gallery (see \cite{GR13} Corollary 4.5).
}

\subsection{Liftings of Hecke paths}
\label{sc3}

The Hecke paths we consider here are slight modifications of those used in \cite{GR13}. Let us fix a local positive chamber $C_x\in \mathscr C_0^+\cap \mathbb A$.
Namely, a Hecke path of shape $\mu^{++}$ with respect to $C_x$ in $\A$ is a $\mu^{++}-$path in $\mathbb A$ that we denote by $\pi = [z'=z_0,z_1,...,z_{\ell_\pi},y]$ and that satisfies the following assumptions.
 For all $z=\pi(t)$, $z\not= z_0=\pi(0)$, we ask $x \stackrel{o}{<} z$ and then we choose the local negative chamber $C^-_z$ as $C^-_z=pr_z(C_x)$ such that $\overline{C^-_z}$ contains $[z,x)$ and $[z,x')$ for $x'$ in a sufficiently small element of the filter $C_x$.
 Then we assume moreover
 that for all $k\in\{1,...,\ell_\pi\}$, there exists a $(W^v_{z_k}, C^-_{z_k})-$chain from $\pi'_-(t_k)$ to $\pi'_+(t_k)$, where $z_k=\pi(t_k)$.
  \cache{ where $C^-_{z_k}$ is the local chamber in $z_k = \pi(t_k)$ containing $[z_k,x)$ and $[z_k,x')$ for $x'$ in a sufficiently small element of the filter $C_x$.}
 More precisely, this means that, for all $k\in\{1,...,\ell_\pi\}$, there exist finite sequences
$(\xi_0=\pi_-'(t),\xi_1,\dots,\xi_s=\pi_+'(t))$ of vectors in $V$ and
$(\beta_1,\dots,\beta_s)$  of  real roots such that, for all $j=1,\dots,s$:
\begin{itemize}
\item [i)] $r_{\beta_j}(\xi_{j-1})=\xi_j$,
\item[ii)] $\beta_j(\xi_{j-1})<0$,
\item[iii)] $r_{\beta_j}\in{W^v_{\pi(t_k)}}$ {\it i.e.} $\beta_j(\pi(t_k))\in\mathbb Z$,
\item[iv)] each $\qb_j$ is positive  with respect to $C_x$ \ie $\qb_j(z_k - C_x)>0$.
\end{itemize}

The centrifugally folded galleries are related to the lifting of Hecke paths by the following lemma that we proved in \cite{GR13} Lemma 4.6.

Suppose $z\in \A, x \stackrel{o}{<} z$. 
Let $\xi$ and $\eta$ be two segment germs in $\A_z^+$. Let $-\eta$ and $-\xi$ opposite respectively $\eta$ and $\xi$ in $\A_z^-$.
 Let $\mathbf i$ be the type of a minimal gallery between $C_z^-$ and $C_{-\xi}$, where $C_{-\xi}$ is the negative (local) chamber containing $-\xi$ such that $d^W(C_z^-, C_{-\xi})$ is of minimal length.
  Let $\mathfrak Q$ be a chamber of $\A_z^+$ containing $\eta$. We suppose $\qx$ and $\eta$ conjugated by $W^v_z$.

\begin{lemm*} The following conditions are equivalent:
\par (i) There exists an opposite $\zeta$ to $\eta$ in $\SHI_z^-$ such that $\rho_{\A_z, C^-_z} ( \zeta) = -\xi$.

\par (ii)  There exists a gallery $\mathbf c \in \Gamma_{\mathfrak Q}^+(\mathbf i)$ ending in $-\eta$.

\par (iii) There exists a $(W^v_z, C_z^-)-$chain from $\xi$ to $\eta$.

\par  Moreover the possible $\qz$ are in one-to-one correspondence with the disjoint union of the sets  $\mathcal C^m_{\mathfrak Q}(\mathbf c)$ for $\mathbf c$ in the set $\Gamma_{\mathfrak Q}^+(\mathbf i,-\eta)$ of galleries in $\Gamma_{\mathfrak Q}^+(\mathbf i)$ ending in $-\eta$.
\end{lemm*}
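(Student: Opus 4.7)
The plan is to prove the cycle (iii) $\Rightarrow$ (ii) $\Rightarrow$ (i) $\Rightarrow$ (iii), the bijective counting in the ``moreover'' part following immediately from the (ii) $\Leftrightarrow$ (i) step, which will exhibit the possible $\qz$'s as the endpoints of the minimal lifts of each centrifugally folded gallery ending in $-\eta$.

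For (iii) $\Rightarrow$ (ii), given a $(W^v_z, C_z^-)$-chain $\qx = \qx_0, \dots, \qx_s = \eta$ with reflections $r_{\qb_j}$, I would start from the unique minimal gallery of type $\mathbf i$ in $\A_z^-$ from $C_z^-$ to $C_{-\qx}$ and then apply successive foldings at the walls $M(\qb_j, -\qb_j(z))$, in the order dictated by the chain, each folding replacing the current end-subgallery by its image under $r_{\qb_j}$. Condition iv) is tailored precisely so that each such wall separates $\mathfrak Q$ from the chamber being folded, which is the centrifugality condition; conditions i)--iii) ensure that the updated end segment-germ moves in the orbit $W^v_z.\eta$ and that the final chamber contains $-\eta$. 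Conversely, for (ii) $\Rightarrow$ (iii), given $\mathbf c \in \Gamma^+_{\mathfrak Q}(\mathbf i)$ ending in $-\eta$, the foldings occur at walls $M_j$ whose directions pass through $z$, so the associated $r_{\qb_j}$ lie in $W^v_z$; reading them off in order produces the required chain, and the four defining conditions translate one-by-one from centrifugality and from the fact that each fold reverses progress along a root on which the gallery had just moved.

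For (ii) $\Leftrightarrow$ (i), one direction is a direct application of Proposition~4.4 and Corollary~4.5 of \cite{GR13} recalled in \S\ref{sc2}: a centrifugally folded gallery $\mathbf c$ admits exactly $\sharp \mathcal C^m_{\mathfrak Q}(\mathbf c)$ minimal lifts in $\mathcal T_z^-\SHI$ starting at $C_z^-$, and each such lift ends in a chamber $C_\qz$ whose bounding segment germ $\qz$ is opposite to $\eta$ via the twinning at $z$ and retracts to $-\qx$ under $\rho_{\A_z, C_z^-}$. For the reverse direction, given $\qz$ satisfying (i), I would select a chamber $C_\qz$ containing $\qz$ with $d^{*W}(C_\qz, \mathfrak Q)$ realized by a minimal gallery from $C_z^-$, retract this gallery via $\rho_{\A_z, C_z^-}$, and argue (using that $\qx$ and $\eta$ lie in a single $W^v_z$-orbit so the two length-minimal gallery types coincide) that the resulting $\mathbf c$ has type exactly $\mathbf i$ and lies in $\Gamma^+_{\mathfrak Q}(\mathbf i, -\eta)$. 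Distinct minimal lifts end in distinct chambers, hence yield distinct $\qz$, giving the bijection in the final assertion.

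The main obstacle will be the sign book-keeping in the (ii) $\Leftrightarrow$ (iii) part: the chain lives in the positive tangent building $\mathcal T_z^+\SHI$ (where $\eta, \qx, \mathfrak Q$ sit) while the gallery lives in $\mathcal T_z^-\SHI$ (where $-\eta, -\qx, C_z^-$ sit), so one must check carefully that ``centrifugal w.r.t.\ $\mathfrak Q$'' in $\A_z^-$ corresponds exactly to condition iv) ($\qb_j(z_k - C_x) > 0$) for the chain, via the opposition $-C^-_z = \mathfrak Q$ on the twinned side. A secondary subtlety is to verify that a minimal lift of $\mathbf c$ necessarily ends at a chamber containing a segment germ \emph{opposite} to $\eta$; this uses that minimal galleries toward opposites of $\mathfrak Q$ retract to centrifugally (rather than arbitrarily) folded galleries, which is where the twin building axioms at $z$ enter essentially.
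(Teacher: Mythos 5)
The paper itself gives no proof of this lemma: it is imported, with the base chamber changed to $C_z^-=pr_z(C_x)$, from Lemma 4.6 of \cite{GR13}, so the only comparison available is with that cited proof. Your overall architecture --- (ii)$\Leftrightarrow$(iii) via the folding/unfolding dictionary between chains and centrifugally folded galleries, and (i)$\Leftrightarrow$(ii) via minimal lifts counted by Proposition 4.4 and Corollary 4.5 of \cite{GR13} --- is indeed the architecture of that proof. But two steps, as you have written them, do not work.

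First, in your passage from a $\qz$ as in (i) to a gallery as in (ii), you retract the minimal gallery from $C_z^-$ to a chamber containing $\qz$ by $\rho_{\A_z,C_z^-}$. That retraction is centered at the \emph{initial} chamber of the gallery, hence preserves the $W$-distance from $C_z^-$ and sends the minimal gallery to the unique minimal gallery of type $\mathbf i$ from $C_z^-$ in $\A_z^-$: the unfolded one, independent of $\qz$, ending at $C_{-\qx}$ and in $-\qx$ rather than $-\eta$. The retraction that produces the centrifugal folds is $\rho_{\mathfrak Q}$, centered at the opposite-sign chamber $\mathfrak Q\ni\eta$: it fixes $C_z^-$, maps the minimal gallery to a type-$\mathbf i$ gallery of $\A_z^-$ ending in $\rho_{\mathfrak Q}(\qz)=-\eta$, and the ``only if'' half of Proposition 4.4 of \cite{GR13} is exactly what makes this image centrifugally folded. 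Second, your reconciliation of chain condition iv) with centrifugality rests on ``$-C_z^-=\mathfrak Q$'', which is not a hypothesis: $\mathfrak Q$ is an arbitrary chamber of $\A_z^+$ containing $\eta$, and in the intended application ($\mathfrak Q_k\ni\eta_k=\pi_+(t_k)$) it is in general not opposite $C_{z_k}^-$; the translation has to be made wall by wall, using only that $\mathfrak Q$ contains $\eta$ and that each $\qb_j$ is negative on the direction of $C_z^-$. Finally, for the ``moreover'' bijection, ``distinct minimal lifts end in distinct chambers'' gives only injectivity in one direction; you still need that $\qz$ determines the lift, i.e.\ that the end chamber of a lift is the chamber of the residue of $\qz$ closest to $C_z^-$ and that the lift is recovered as the unique minimal gallery to it.
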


 \par For an Hecke path as above and for $k\in\{1,...,\ell_\pi\}$, we define the segment germs $\eta_k = \pi_+(t_k)=\pi(t_k)+\pi'_+(t_k).[0,1)$ and $-\qx_k=\qp_-(t_k)=\pi(t_k)-\pi'_-(t_k).[0,1)$.
 As above  $\mathbf i_k$ is the type of a minimal gallery between $C_{z_k}^-$ and $C_{-\xi_k}$, where $C_{-\xi_k}$ is the negative (local) chamber such that  $-\xi_k\subset \overline{C_{-\xi_k}}$ and $d^W(C_{z_k}^-, C_{-\xi_k})$ is of minimal length.
 Let $\mathfrak Q_k$ be a fixed chamber  in $\A^+_{z_k}$ containing $\eta_k$
and $\Gamma_{\mathfrak Q_k}^+(\mathbf i_k, -\eta_k)$ be the set of all the galleries $(C^-_{z_k}, C_1,...,C_r )$  of type $\mathbf i_k$ in $\A^-_{z_k}$, centrifugally folded with respect to $\mathfrak Q_k$ and with $-\eta_k\in \overline{C_r}$.

\cache{Recall also that, for all $k\in\{1,...,\ell_\pi\}$, we define $w_\pm(t_k)\in W^v$ to be the smallest
element  in its $(W^v)_\lambda-$class such that $\pi'_\pm(t_k)=w_\pm(t_k).\mu^{++}$.
Let $\mathbf i_k$ be a reduced decomposition of $w_-(t_k)$ and let $\mathfrak Q_k$ be a fixed chamber  \modif{in $\A^+_{z_k}$ containing $\eta_k = \pi_+(t_k)=\pi(t_k)+\pi'_+(t_k).[0,1)$.
We set $-\qx_k=\qp_-(t_k)=\pi(t_k)-\pi'_-(t_k).[0,1)$.}
 Let $\Gamma_{\mathfrak Q_k}^+(\mathbf i_k, -\eta_k)$ be the set of all the galleries $(C^-_{z_k}, C_1,...,C_r = C_{\ell(w_-(t_k))})$ \modif{  in $\A^-_{z_k}$,} centrifugally folded with respect to $\mathfrak Q_k$ and with $-\eta_k\in \overline{C_r}$.} 

\medskip
Let us denote the retraction $\rho_{\mathbb A, C_x} : \SHI_{\geq x} \to \mathbb A$ simply by $\rho$  and recall that $y=\pi(1)$.
 Let $S_{C_x}(\pi, y)$ be the set of all segments $[z,y]$ such that $\rho ([z,y]) = \pi$, in particular, $\rho(z) = z'$. The following two theorems are proved in the same way as Theorem 4.8 and Theorem 4.12 of \cite{GR13}, in particular, we lift the path $\pi$ step by step starting from the end of $\pi$.

\begin{theo}\label{sc4} The set $S_{C_x}(\pi, y)$ is non empty if, and only if,  $\pi$ is a Hecke path with respect to $C_x$. Then, we have a bijection
$$
S_{C_x}(\pi, y)\simeq \prod_{k=1}^{\ell_\pi} \coprod_{\mathbf c\in\Gamma_{\mathfrak Q_k}^+(\mathbf i_k,-\eta_k)} \mathcal C^m_{\mathfrak Q_k} (\mathbf c).
$$

\par In particular, the number of elements in this set is a polynomial  in the numbers $q\in\shq$ with coefficients in $\Z$ depending only on $\A$.
\end{theo}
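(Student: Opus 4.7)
The plan mirrors the argument of Theorem~4.8 of \cite{GR13}: lift $\pi$ piece by piece backwards, starting from $\tilde z_{\ell_\pi+1} := y = \pi(1)$, which is fixed by $\rho := \rho_{\mathbb A, C_x}$. Inductively, assume a lift $[\tilde z_k, y] \subset \SHI$ of the tail $\pi([t_k,1])$ has been constructed with $\rho(\tilde z_k) = z_k$. The forward segment germ $\eta_k := \tilde z_k + \pi'_+(t_k).[0,1)$ at $\tilde z_k$ is then determined by the induction, and extending the lift one step further reduces to choosing a segment germ $\zeta \subset \SHI_{\tilde z_k}^-$ that is opposite to $\eta_k$ and retracts onto $-\xi_k = \pi_-(t_k)$ under the local retraction centered at $C^-_{\tilde z_k} = pr_{\tilde z_k}(C_x)$. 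Once such a $\zeta$ is fixed, $\tilde z_{k-1}$ is forced to be the point of $\overline{\zeta}$ at distance $\|z_k - z_{k-1}\|$ from $\tilde z_k$, and the straight segment $[\tilde z_{k-1}, \tilde z_k]$ retracts linearly onto $[z_{k-1}, z_k]$, thanks to the compatibility of the local retraction at $\tilde z_k$ with the global retraction $\rho$.

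Counting the valid $\zeta$ at step $k$ is exactly what the Lemma of \S\ref{sc3} provides: such a $\zeta$ exists iff there is a $(W^v_{z_k}, C^-_{z_k})$-chain from $\xi_k$ to $\eta_k$, which is precisely the defining condition for $\pi$ to be a Hecke path with respect to $C_x$ at the breakpoint $z_k$; and in that case the possible $\zeta$ are in bijection with $\coprod_{\mathbf c \in \Gamma^+_{\mathfrak Q_k}(\mathbf i_k, -\eta_k)} \mathcal C^m_{\mathfrak Q_k}(\mathbf c)$. Assembling these local bijections across $k = 1, \ldots, \ell_\pi$ yields both the non-emptiness equivalence and the claimed product decomposition of $S_{C_x}(\pi, y)$. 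Polynomiality is then immediate from the formula $\#\mathcal C^m_{\mathfrak Q_k}(\mathbf c) = \prod_{j \in J_1}(q_j - 1) \prod_{j \in J_2} q_j$ recalled in \S\ref{sc2}: each factor is a monomial in the $q_M \in \mathcal Q$ and in $q_M - 1$, hence a $\Z$-polynomial in $\mathcal Q$, and the combinatorial data $\mathbf i_k$, $J_1$, $J_2$, and $\Gamma^+_{\mathfrak Q_k}(\mathbf i_k, -\eta_k)$ depend only on $\pi$ and the apartment $\mathbb A$.

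The main obstacle will be to verify that at each step the local count is intrinsic to $\pi$ (and $C_x$), independent of auxiliary choices made in the induction. Three points deserve care: (i) the auxiliary chamber $\mathfrak Q_k \supset \eta_k$ entering the Lemma of \S\ref{sc3} should not affect the total $\sum_{\mathbf c} \#\mathcal C^m_{\mathfrak Q_k}(\mathbf c)$, even though it does change which galleries are centrifugally folded; (ii) the segment germ $\eta_k$ in $\SHI_{\tilde z_k}^+$ must be canonically identified with $\pi'_+(t_k).[0,1)$ so that the type $\mathbf i_k$ of the minimal gallery from $C^-_{\tilde z_k}$ to $C_{-\xi_k}$ is well-defined; and (iii) the local retraction $\rho_{\mathbb A_{\tilde z_k}, C^-_{\tilde z_k}}$ must agree with $\rho$ in the relevant germ near $\tilde z_k$, which follows from $C^-_{\tilde z_k} = pr_{\tilde z_k}(C_x)$ and the standard factorization of retractions through tangent buildings. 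These are routine once the inductive setup is pinned down and form the real content of the adaptation from \cite{GR13}.
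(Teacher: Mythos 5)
Your proposal follows exactly the paper's route: the authors prove Theorem \ref{sc4} by adapting Theorem 4.8 of \cite{GR13}, lifting $\pi$ step by step backwards from $y=\pi(1)$ and invoking the Lemma of \S\ref{sc3} at each breakpoint to count the opposite segment germs via centrifugally folded galleries, with polynomiality read off from the formula for $\sharp\mathcal C^m_{\mathfrak Q}(\mathbf c)$ in \S\ref{sc2}. Your reconstruction, including the points needing care in the adaptation (the role of $C^-_{z_k}=pr_{z_k}(C_x)$ and the compatibility of local and global retractions), matches the intended argument.
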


\begin{theo}\label{sc5} Let $\ql,\qm,\qn\in Y^{++}$ with $\ql$ spherical. Then, the number $m_{\ql,\qm}(\qn)$ of triangles $[0,z,\qn]$ in $\SHI$ with $d^v(0,z)=\ql$ and $d^v(z,\qn)=\qm$ is equal to:

\begin{equation}
\label{eq:SC}
m_{\ql,\qm}(\qn)=\sum_{w\in W^v/(W^v)_\lambda}\sum_\qp\prod_{k=1}^{\ell_\qp}\quad\sum _{\mathbf c\in\Gamma_{\mathfrak Q_k}^+(\mathbf i_k,-\eta_k)} \sharp \mathcal C^m_{\mathfrak Q_k} (\mathbf c)
\end{equation}
\par\noindent where $\qp$ runs over the set of Hecke paths of shape $\qm$ with respect to $C_x$ from $w.\ql$ to $\qn$ and $\ell_\qp$, $\Gamma_{\mathfrak Q_k}^+(\mathbf i_k,-\eta_k)$ and $\mathcal C^m_{\mathfrak Q_k} (\mathbf c)$ are defined as above for each such  $\qp$.

\end{theo}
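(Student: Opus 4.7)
The plan is to parametrize triangles $[0,z,\nu]$ satisfying the prescribed vectorial-distance conditions by triples consisting of a class $w\in W^v/(W^v)_\lambda$, a Hecke path $\pi$ ending at $\nu$, and a lifting of $\pi$ as a segment ending at $\nu$, and then to invoke Theorem \ref{sc4}. Throughout I take $x=0$, let $C_x:=C_0^+$ be the fundamental local chamber at $0$, and work with the retraction $\rho=\rho_{\A,C_x}\colon\SHI_{\geq 0}\to\A$.

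First I would associate to any such triangle $[0,z,\nu]$ the path $\pi:=\rho([z,\nu])$ in $\A$, which runs from $z':=\rho(z)$ to $\nu$. Since $\lambda$ is spherical we have $0\stackrel{o}{<} z$, so Proposition \ref{1.11}.b supplies an apartment containing $C_x$ and $z$; the Weyl-isomorphism given by (MA4) that identifies this apartment with $\A$ while fixing $C_x$ sends $[0,z]$ isometrically onto $[0,z']$. Consequently $d^v(0,z')=d^v(0,z)=\lambda$, so $z'=w.\lambda$ for a unique class $w\in W^v/(W^v)_\lambda$. Moreover, the analog of Theorem 6.2 of \cite{GR08} adapted to a retraction centered at a local chamber (which is exactly what underlies Theorem \ref{sc4}) shows that $\pi$ is a Hecke path of shape $\mu$ with respect to $C_x$ in the sense of Section \ref{sc3}.

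Next I would check that the assignment $[0,z,\nu]\mapsto(w,\pi,[z,\nu])$ is a bijection onto the set of triples $(w,\pi,\sigma)$ where $w\in W^v/(W^v)_\lambda$, $\pi$ is a Hecke path of shape $\mu$ w.r.t. $C_x$ from $w.\lambda$ to $\nu$, and $\sigma\in S_{C_x}(\pi,\nu)$. Injectivity is immediate because $z$ is recovered as the endpoint of $\sigma$ opposite $\nu$. For surjectivity, Theorem \ref{sc4} lifts the data to a segment $\sigma=[z,\nu]$ with $\rho(z)=w.\lambda$; running the argument of the previous paragraph in reverse gives $d^v(0,z)=(w.\lambda)^{++}=\lambda$, while $d^v(z,\nu)=\mu$ is automatic since $\sigma$ is a $\mu$-segment. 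Decomposing the count according to this bijection yields
$$
m_{\lambda,\mu}(\nu)=\sum_{w\in W^v/(W^v)_\lambda}\;\sum_\pi\;\# S_{C_x}(\pi,\nu),
$$
the inner sum running over Hecke paths $\pi$ of shape $\mu$ w.r.t. $C_x$ from $w.\lambda$ to $\nu$. Applying Theorem \ref{sc4} to rewrite each $\# S_{C_x}(\pi,\nu)$ as $\prod_{k=1}^{\ell_\pi}\sum_{\mathbf c\in\Gamma_{\mathfrak Q_k}^+(\mathbf i_k,-\eta_k)}\#\mathcal C^m_{\mathfrak Q_k}(\mathbf c)$ gives formula (\ref{eq:SC}).

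The main technical obstacle I anticipate is the isometry of $\rho$ on $[0,z]$, i.e. the identity $d^v(0,\rho(z))=\lambda$: this is where the hypothesis that $\lambda$ be spherical is indispensable, as it is precisely what lets Proposition \ref{1.11}.b produce an apartment containing $C_x$ and $z$ and then (MA4) produces the Weyl-isomorphism fixing $C_x$. A closely related subtlety is that without sphericality the starting point $z'=\rho(z)$ need not lie in $W^v.\lambda$, which would make the decomposition over $W^v/(W^v)_\lambda$ incomplete; sphericality rules this out.
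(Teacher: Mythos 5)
Your proposal is correct and follows essentially the same route as the paper: the authors prove Theorem \ref{sc5} by adapting Theorem 4.12 of \cite{GR13}, which is exactly your decomposition of triangles $[0,z,\nu]$ according to the class $w\in W^v/(W^v)_\lambda$ of $\rho(z)$ and the retracted Hecke path $\pi=\rho([z,\nu])$, followed by the count of liftings via Theorem \ref{sc4}. You also correctly isolate where sphericality of $\lambda$ enters (namely $0\stackrel{o}{<}z$, which makes the retraction centered at the positive local chamber $C_x$ and the identification $d^v(0,\rho(z))=\lambda$ legitimate), so no gap remains.
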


\begin{rema*} In theorems \ref{sc4}, \ref{sc5} above and in \cite{GR13}, it is interesting to precise that, if $t_{\ell_\pi}=1$, \ie $z_{\ell_\pi}=y$, then, in the above formulas, $-\eta_{\ell_\pi}$ and $\mathfrak Q_{\ell_\pi}$
are not well defined: $\pi_+(1)$ does not exist.
 We have to understand that $ \coprod_{\mathbf c\in\Gamma_{\mathfrak Q_{\ell_\pi}}^+(\mathbf i_{\ell_\pi},-\eta_{\ell_\pi})} \mathcal C^m_{\mathfrak Q_{\ell_\pi}} (\mathbf c)$ is the set of all minimal galleries of type $\mathbf i_{\ell_\pi}$ starting from $C_y^-$ (whose cardinality is $\prod_{j=1}^r
\,q_{i_j}$, if $\mathbf i_{\ell_\pi}=(i_1,\cdots,i_r)$).
\end{rema*}

\subsection{The formula}\label{sc6}

Let us fix two local chambers $C_x$ and $C_y$ in $\mathscr C_0^+$ with $x\leq y$ and $d^W(C_x,C_y) = \mathbf u\in W^+$. We consider $\mathbf w$ and $\mathbf v$ in $W^+$. Then we know that the number $a_{\mathbf w,\mathbf v}^{\mathbf u}$ of $C_z\in \mathscr C_0^+$ with $x\leq z\leq y$, $d^W(C_x,C_z) = \mathbf w$ and $d^W(C_z,C_y) = \mathbf v$ is finite, see Proposition \ref{PrFinite2}. In order to obtain a formula for that number, we first use equivalent conditions on the $W-$distance between the chambers.

\begin{lemm*}
1) Assume $\ql$ spherical. Let $C^-_z=pr_z(C_x)$ and let $w^+_\lambda$ be the longest element such that $w_\lambda^+.\lambda\in \overline{C_f^v}$. Then
$$
d^W(C_x,C_z) = \lambda . w \Longleftrightarrow
\left \{
\begin{array}{l}
d^W(C_x,z) = \lambda \\
d^{*W}(C^-_z,C_z) = w_\lambda^+ w .
\end{array}
\right.
$$

2) Assume $\mu$ regular. Let $C^+_z=pr_z(C_y)$ and let $w_\mu$ be the unique element such that $\mu^{++} = w_\mu.\mu\in \overline{C_f^v}$. Then
$$
d^W(C_z,C_y) = \mu . v \Longleftrightarrow
\left \{
\begin{array}{l}
d^W(C_z,C^+_z) = w_\mu^{-1} \\
d^{W}(C^+_z,C_y) = \mu^{++}w_\mu v.
\end{array}
\right.
$$

As we assume $\mu$ regular, then $C'_y=pr_y(C_z)$ (resp.
$C^+_z=pr_z(C_y)$) is the unique local chamber in $y$ (resp. $z$) containing
$[y,z)$ (resp. $[z,y)$) and we have :

   $$d^W(C^+_z,C_y)=\mu^{++}w_\mu v \iff d^v(z,y)=\mu^{++} \mathrm{\ and\ }
d^{*W}(C'_y,C_y)=w_\mu v.$$

\end{lemm*}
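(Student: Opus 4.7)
The plan is to compute everything in an apartment containing the relevant chambers, using the Weyl-isomorphism identifications with $(\A,C_0^+)$.

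For part (1), I first choose an apartment $A$ containing $C_x$ and $C_z$ (which exists by Proposition \ref{1.12}(c), since $C_z$ is a spherical local chamber of positive direction and $x\le z$), and identify $(\A,C_0^+)$ with $(A,C_x)$ via the unique $f\in\mathrm{Isom}^W_\R(\A,A)$ with $f(C_0^+)=C_x$. Under this identification, $d^W(C_x,C_z)=\ql w$ translates to $f^{-1}(z)=\ql$ and $f^{-1}(C_z)=\mathrm{germ}_\ql(\ql+w.C^v_f)$, so the condition $d^W(C_x,z)=\ql$ is automatic and $\mathrm{op}(C_z)=\mathrm{germ}_\ql(\ql-w.C^v_f)$. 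The crux is to identify the direction of $C^-_z=\mathrm{pr}_z(C_x)$: by definition, $C^-_z$ contains the segment germs $[z,x')$ for $x'=\qe\qx$ with $\qx\in C^v_f$ and $\qe>0$ small, \ie the direction $\qe\qx-\ql$. Writing $\ql=w_\ql^{-1}.\ql^{++}$ with $w_\ql$ of minimal length and setting $J=\{i\in I:\qa_i(\ql^{++})=0\}$, the minimality of $w_\ql$ gives that $w_\ql^{-1}(\qa_j)$ is a positive root for each $j\in J$, and a direct sign check then shows that the chamber containing $\qe w_\ql.\qx-\ql^{++}$ is $w_0(J).(-C^v_f)$. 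Pulling back by $w_\ql^{-1}$, the direction of $C^-_z$ is $-w_\ql^{-1}w_0(J).C^v_f=-(w_\ql^+)^{-1}.C^v_f$, as $w_\ql^+=w_0(J)w_\ql$ is the longest element of the coset $\{w\in W^v:w.\ql=\ql^{++}\}$. Using the standard formula $d^W(u_1.(-C^v_f),u_2.(-C^v_f))=u_1^{-1}u_2$ for the $W$-distance in the negative tangent building at $z$, one gets
\[
d^{*W}(C^-_z,C_z)=d^W(C^-_z,\mathrm{op}(C_z))=w_\ql^+\,w.
\]
The converse is obtained by running the identifications backwards: $d^W(C_x,z)=\ql$ fixes $z$ and hence $C^-_z$, and the codistance then forces the direction of $C_z$ to be $w.C^v_f$.

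For part (2), the regularity of $\qm$ removes the stabilizer subtlety. I pick an apartment containing $C_z$ and $C_y$, identified with $(\A,C_0^+)$ so that $C_z$ corresponds to $C_0^+$ at $0$; then $y=\qm$ and $C_y=\mathrm{germ}_\qm(\qm+v.C^v_f)$. Since $\qm=w_\qm^{-1}.\qm^{++}$ lies in the interior of $w_\qm^{-1}.C^v_f$, the projection $C^+_z=\mathrm{pr}_z(C_y)$ is immediately the positive chamber at $z$ of direction $w_\qm^{-1}.C^v_f$, giving $d^W(C_z,C^+_z)=w_\qm^{-1}$. Switching to the identification $f':(\A,C_0^+)\to(A,C^+_z)$ with linear part $w_\qm^{-1}$, $y$ is sent to $w_\qm.\qm=\qm^{++}$ and $C_y$ to $\mathrm{germ}_{\qm^{++}}(\qm^{++}+w_\qm v.C^v_f)$, whence $d^W(C^+_z,C_y)=\qm^{++}(w_\qm v)$. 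Running the identifications backwards gives the converse. For the supplementary equivalence, uniqueness of $C'_y=\mathrm{pr}_y(C_z)$ as the negative chamber at $y$ of direction $-w_\qm^{-1}.C^v_f$ follows from the regularity of $-\qm$, and the same $W$-distance computation with $\mathrm{op}(C_y)$ of direction $-v.C^v_f$ yields $d^{*W}(C'_y,C_y)=w_\qm v$; the equivalence merely separates the translation part $\qm^{++}=d^v(z,y)$ from the vectorial part $w_\qm v$ of a single datum.

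The hard part will be the identification of $\mathrm{pr}_z(C_x)$ in part (1): the longest element $w_\ql^+$ appears precisely to absorb the nontrivial stabilizer of $\ql$ in $W^v$ when $\ql$ is spherical but not regular, and determining the correct chamber requires the careful sign analysis of the simple roots $\qa_j$, $j\in J$, on the perturbation $w_\ql.\qx$.
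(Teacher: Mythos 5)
Your proof is correct and follows essentially the same route as the paper's: fix an apartment containing the two chambers, identify it with $(\A,C_0^+)$ via the appropriate Weyl-isomorphism, and conclude by the Chasles relation for the $W$-distance. The only real difference is that you spell out the sign analysis showing that $pr_z(C_x)$ has direction $-(w_\ql^+)^{-1}.C^v_f$, a step the paper compresses into the single unproved assertion $d^W(op_A\, C^-_z, z+C_x)=w_\ql^+$.
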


\begin{proof}

1) By convexity, $C^-_z$ is in any apartment containing $C_x$ and $C_z$. Let us fix such an apartment $A$ and identify $(A,C_x)$ with $(\mathbb A, germ_0(C_f^v))$. By definition, we have $d^W(C_x,z) = d^W(C_x, z + C_x)$. Then, of course, $d^W(C_x,z) = \lambda$. Next as $\lambda$ is supposed spherical, the stabilizer $(W^v)_\lambda$ is finite, so $w^+_\lambda$ is well defined  and $x \stackrel{o}{<} z$, so $C_z^-$ is well defined.
Moreover, $d^W(op_A C^-_z, z+C_x) = w^+_\lambda$ and $d^W(z+C_x, C_z) = w$. Therefore, by Chasles, we get $d^W(op_A C^-_z, C_z) = w^+_\lambda w$, but, by definition, $d^{*W}(C^-_z,C_z)  = d^W(op_A C^-_z, z+C_z)$.

2) The first assertion is the Chasles' relation, as $C_z, C_y, C^+_z$ (and $C'_y$) are in a same apartment $A'$.
The second comes from the fact that, if $\mu$ is regular, then $d^W(C^+_z,C^+_{zy}) = d^v(z,y)\in Y^{++}$, where $C^+_{zy}$ opposites $C'_y$ at $y$  in $A'$.
 Moreover, $d^{*W}(C'_y,C_y) = d^W(C^+_{zy}, C_y)\in W^v $ by definition, so we conclude by Chasles.
\cache{
The second comes from the fact that if $\mu$ is regular, then $d^W(C^+_{zy}, C_y) = d^W(C^+_z, C_{yz})$, where $C^+_{zy}$ opposites $C'_y$ at $y$ and $C_{yz}$ is in relative position $v$ with $C_z$, in any apartment containing all these local chambers. Moreover, $d^{*W}(C'_y,C_y) = d^W(C^+_{zy}, C_y)$ by definition and $d^W(C^+_z, C_{yz}) = w_\mu v$, by Chasles.
}
\end{proof}

\begin{theo}\label{sc7}
Assume $\mu^{}$ is regular and $\lambda$ is spherical.
We choose the standard apartment $\A$ containing $C_x$ and $C_y$. Then
$$
a_{\mathbf w,\mathbf v}^{\mathbf u} = \sum_{\qp, t_{\ell_\pi = 1}}\Bigg [\Bigg (\prod_{k=1}^{\ell_\qp - 1}\ \sum _{\mathbf c\in\Gamma_{\mathfrak Q_k}^+(\mathbf i_k,-\eta_k)} \sharp \mathcal C^m_{\mathfrak Q_k} (\mathbf c)\Bigg ) \Bigg (\sum _{\mathbf d\in\Gamma_{C_y}^+(\mathbf i_\ell,\tilde C_y)} \sharp \mathcal C^m_{C_y} (\mathbf d) \Bigg ) \Bigg (\sum _{\mathbf e\in\Gamma_{C^-_{z_0}}^+(\mathbf i,C'_{z_0})} \sharp \mathcal C^m_{C^-_{z_0}} (\mathbf e) \Bigg ) \Bigg ] +
$$
$$
+ \sum_{\qp, t_{\ell_\pi < 1}}\Bigg [\Bigg (\prod_{k=1}^{\ell_\qp}\ \sum _{\mathbf c\in\Gamma_{\mathfrak Q_k}^+(\mathbf i_k,-\eta_k)} \sharp \mathcal C^m_{\mathfrak Q_k} (\mathbf c)\Bigg )  \Bigg (\sum _{\mathbf e\in\Gamma_{C^-_{z_0}}^+(\mathbf i,C'_{z_0})} \sharp \mathcal C^m_{C^-_{z_0}} (\mathbf e) \Bigg ) \Bigg ],
$$
where the $\pi$, in the first sum, runs over the set of all Hecke paths in $\A$ with respect to $C_x$ of shape $\mu^{++}$ from $x+\lambda=z_0$ to $x+ \nu=y$ such that $t_{\ell_\pi} = 1$, whereas in the second sum, the paths have to satisfy $t_{\ell_\pi} <1$  and $d^{*W}(C'_y,C_y) = w_\mu v$,  where $C^-_y=pr_y(C_x)$ is the local chamber in $y$  containing $[y,x)$ and $[y,x')$ for $x'$ in a sufficiently small element of the filter $C_x$.

\par Moreover $\mathbf i$ is a reduced decomposition of $w_\mu $, $C'_{z_0}$ is the local chamber at ${z_0}$  in $\A$ defined by $d^{*W}(C^-_{z_0},C'_{z_0}) = w_\lambda^+ w$, $\mathbf i_\ell$ is
the type of a minimal gallery from $C^-_y$ to the local chamber $C^*_y$ at $y$ in $\A$ containing the segment germ $\pi_-(y)=y-\pi'_-(1).[0,1)$ and $\tilde C_y$ is the unique local chamber at $y$ in $\mathbb A$ such that $d^{*W}(\tilde C_y,C_y) = w_\mu v$.
 The rest of the notation is as defined above.

\cache{$C'_{z_0}$ is the local chamber at ${z_0}$ defined by $d^{*W}(C^-_{z_0},C'_{z_0}) = w_\lambda^+ w$ in a fixed apartment $A'$ containing $C_x$ and $C^+_z$}
\end{theo}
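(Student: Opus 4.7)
The plan is to reduce the count of chambers $C_z$ to a sum over retractions $\pi = \rho_{\mathbb{A},C_x}([z,y])$ and then to count preimages at each step, extending the path-lifting machinery used to prove Theorems \ref{sc4} and \ref{sc5}. First, I would invoke the lemma of \S\ref{sc6} to translate the two $W$-distance conditions into four separate ones: $d^W(C_x,z) = \lambda$, $d^{*W}(C^-_z,C_z) = w_\lambda^+ w$, $d^v(z,y) = \mu^{++}$, and $d^{*W}(C'_y,C_y) = w_\mu v$, where the regularity of $\mu$ ensures $C'_y = pr_y(C_z)$ and $C^+_z = pr_z(C_y)$ are well defined, and the sphericity of $\lambda$ that $w_\lambda^+$ exists. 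The first and third conditions together say exactly that $\rho(z) = z_0 := x+\lambda$ and that $\pi := \rho([z,y])$ is a Hecke path of shape $\mu^{++}$ from $z_0$ to $y$ with respect to $C_x$, in the sense of \S\ref{sc3}.

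Next, fix such a $\pi = [z_0,z_1,\ldots,z_{\ell_\pi},y]$ and count the pairs $([z,y],C_z)$ satisfying the four conditions and retracting onto $\pi$. Following the proof of Theorem \ref{sc4}, the segment is lifted step by step backwards from $y$ through the kinks. At each intermediate kink $z_k$ with $z_k \neq y$, the lifting lemma of \S\ref{sc3} applied with $\mathfrak{Q}_k$ a chamber containing $\eta_k = \pi_+(t_k)$ counts the local contribution as $\sum_{\mathbf{c}\in\Gamma^+_{\mathfrak{Q}_k}(\mathbf{i}_k,-\eta_k)} \sharp \mathcal{C}^m_{\mathfrak{Q}_k}(\mathbf{c})$, exactly as in Theorem \ref{sc5}. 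At the initial point $z_0 = z$, the lift of the germ $[z_0,z_1)$ already determines the negative chamber $C^-_{z_0} = pr_{z_0}(C_x)$ via the upward lifting, and the remaining freedom for $C_z$ is precisely the constraint $d^{*W}(C^-_{z_0},C_z) = w_\lambda^+ w$. Applying the lifting lemma once more, this time with $\mathfrak{Q} = C^-_{z_0}$ and with endpoint $C'_{z_0}$ chosen to encode the required codistance, yields the additional factor $\sum_{\mathbf{e}\in\Gamma^+_{C^-_{z_0}}(\mathbf{i},C'_{z_0})} \sharp \mathcal{C}^m_{C^-_{z_0}}(\mathbf{e})$.

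The endpoint $y$ is the subtle part and splits according to whether $\pi$ has a kink there. If $t_{\ell_\pi} < 1$, then $y$ is a smooth point of $\pi$ and the local chamber $C'_y = pr_y(C_z)$ is determined by the direction $\pi'_-(1)$; thus the fourth condition $d^{*W}(C'_y,C_y) = w_\mu v$ becomes a selection rule on the Hecke paths themselves, explaining the restriction in the second sum. If $t_{\ell_\pi} = 1$, the path kinks at $y$ and there is genuine extra freedom for $C'_y$: applying the lifting lemma at $y$ with $\mathfrak{Q} = C_y$ and with required endpoint $\tilde{C}_y$ characterised by $d^{*W}(\tilde{C}_y,C_y) = w_\mu v$ contributes the factor $\sum_{\mathbf{d}\in\Gamma^+_{C_y}(\mathbf{i}_\ell,\tilde{C}_y)} \sharp \mathcal{C}^m_{C_y}(\mathbf{d})$. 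Multiplying the local contributions from both endpoints and all intermediate kinks, then summing over the Hecke paths $\pi$ from $x+\lambda$ to $x+\nu$, produces the two displayed sums.

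The main obstacle is the bookkeeping at the two endpoints, where the lifting lemma has to be applied in configurations slightly different from the intermediate kinks (the initial point has no $\pi'_-$ and a smooth final point has no $\pi'_+$). Specifically, one must check: (i) that opposites at $z_0$ in the negative tangent building correspond bijectively under the retraction to chambers $C_z$ with the prescribed $d^{*W}$ to $C^-_{z_0}$; (ii) that the dichotomy $t_{\ell_\pi} < 1$ vs. $t_{\ell_\pi} = 1$ produces exactly one term for each admissible $(\pi, C_z)$ without double counting; and (iii) that the four conditions from the lemma of \S\ref{sc6} are captured precisely by the product formula, in particular that the codistance condition $d^{*W}(C'_y,C_y) = w_\mu v$ interacts correctly with the endpoint convention of the Remark following Theorem \ref{sc5}.
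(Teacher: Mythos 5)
Your proposal follows essentially the same route as the paper's proof: split the two $W$-distances into four conditions via the lemma of \ref{sc6}, retract $[z,y]$ onto a Hecke path $\pi$ of shape $\mu^{++}$ from $x+\lambda$ to $x+\nu$, count liftings backwards through the kinks by centrifugally folded galleries, treat the endpoint $y$ by the dichotomy $t_{\ell_\pi}=1$ versus $t_{\ell_\pi}<1$, and attach the extra factor at $z_0$ counting the chambers $C_z$. The one point the paper makes explicit that you leave implicit is the justification of the product structure itself, namely that because $\mu^{++}$ is regular, $\rho(C^+_z)$ is determined by $\pi$ alone, so the number of admissible $C_z$ depends only on $\pi$ and not on the chosen lifting of $\pi$ --- this is what legitimises writing $a_{\mathbf w,\mathbf v}^{\mathbf u}$ as $\sum_\pi(\hbox{liftings of }\pi)\times(\hbox{number of }C_z)$.
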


\begin{proof}

Recall that, to compute the structure constants, we use the retraction $\rho = \rho_{\mathbb A, C_x} : \SHI \to \mathbb A$, where $C_x$ and $C_y$ are fixed and in $\mathbb A$.
We have $y=\qr (y)=x+\nu$ and the condition $d^W(C_x,z)=\ql$ is equivalent to $\qr (z)=x+\ql=z_0$.
We want to prove a formula of the form
$$
a_{\mathbf w,\mathbf v}^{\mathbf u} = \sum_{\pi}\Bigg ( \hbox{number of liftings of }\pi\Bigg )\times \Bigg (\hbox{number of } C_z\Bigg ),
$$ where $\pi$ runs over some set of  Hecke paths with respect to $C_x$ of shape $\mu^{++}$ from $x+\lambda$ to $x+ \nu$. It is possible to calculate like that for, in the case of a regular $\mu^{++}$, $\rho(C^+_z)$ is well determined by $\pi$. Hence, the number of $C_z$ only depends on $\pi$ and not on the lifting of $\pi$.

The local chambers $C_z$ satisfying $d^{*W}(C^-_z,C_z) = w_\lambda^+ w$ and $d^W(C_z,C^+_z) = w_\mu^{-1}$ are at the end of a minimal gallery starting at $C_z^+$ of type $\mathbf i$ and retracting by $\qr_{A',C^-_z}$ onto the local chamber $C'_z$ at $z$ defined by $d^{*W}(C^-_{z},C'_{z}) = w_\lambda^+ w$ in a fixed apartment $A'$ containing $C_x$ and $C^+_z$.
So their number is given by the number of minimal galleries starting at $C^+_z$ of type $\mathbf i$ and retracting on a centrifugally folded gallery $\mathbf e$ of type $\mathbf i$ ending in $C'_z$.
In other words, their number is given by the cardinality of the set $\mathcal C^m_{C^-_z}(\mathbf e)$, for each $\mathbf e\in\Gamma^+_{C^-_z}(\mathbf i,C'_z)$.
  Using an isomorphism fixing $C_x$ and sending $A'$ to $\A$, we may replace in this formula $z, C^-_z, C'_z$ and $C^+_z$ by ${z_0}, C^-_{z_0}, C'_{z_0}$ and the unique local chamber $C^+_{z_0}$ in $\A$ containing the segment germ $\pi_+(0)=z_0+\pi'_+(0).[0,1)$. Hence:
$$
\hbox{number of } C_z = \sum _{\mathbf e\in\Gamma_{C^-_{z_0}}^+(\mathbf i,C'_{z_0})} \sharp \mathcal C^m_{C^-_{z_0}} (\mathbf e).
$$

\medskip
Now, we compute the number of liftings of a Hecke path $\pi$ starting from the formula in Theorem \ref{sc5} and according to the two conditions $d^W(C_x,z) = \lambda $ and $d^{W}(C^+_z,C_y) = \mu^{++}w_\mu v$. The first one fixes one element in the set $W^v/(W^v)_\lambda$, namely the coset of $w^+_\lambda$, \ie $\pi(0)=x+\ql$.
The second one is equivalent to the fact that the segment $[z,y]$ is of type $\mu^{++}$ and $d^{*W}(C'_y,C_y) = w_\mu v$, as we have seen in the Lemma above.

Further, we have that $t_{\ell_\pi}<1 \Longleftrightarrow \pi_-(y)\in C^-_y$.
\cache{where $C^-_y$ is the local chamber in $y$  containing $[y,x)$ and $[y,x')$ for $x'$ in a sufficiently small element of the filter $C_x$.} 
 If $\pi_-(y)\in C^-_y$ then $\rho(C'_y) = C'_y = C^-_y$, whence, $d^{*W}(C^-_y,C_y) = w_\mu v$.
Since we lift the Hecke path into a segment backwards starting with its behaviour at $y = \pi(1)$, there is nothing more to count.

If $t_{\ell_\pi}=1$, then $\pi_-(y)\in  C^*_y=\rho(C'_y)\ne C_y^-$.
We want to lift the path but with the condition that $d^{*W}(C'_y,C_y) = w_\mu v$, which may be translated in $\qr'(C'_y)=\tilde C_y$, for $\qr'=\qr_{\A,C_y}$.
\cache{So, we let $\mathbf i_\ell$ be the type of a minimal gallery from $C^-_y$ to $C^*_y=\rho(C'_y)$ and $\tilde C_y$ be the unique local chamber in $\mathbb A$ such that $d^{*W}(\tilde C_y,C_y) = w_\mu v$. Moreover,}
 Since $\mu^{++}$ is regular, to find $[y,z)$  it is enough to  find $C'_y$ \ie to lift  $\tilde C_y$ with respect to $\rho'$.
  The liftings of $\tilde C_y$ are then given by the liftings of all the centrifugally folded galleries in $\A$ with respect to $C_y$ of type $\mathbf i_\ell$ from $C^-_y$ to {$\tilde C_y$} to minimal galleries. Therefore, their number is given by the cardinality of the set $\mathcal C^m_{C_y}(\mathbf d)$, for each $\mathbf d\in\Gamma^+_{C_y}(\mathbf i_\ell,\tilde C_y)$. The rest of the lifting procedure is the same as in the proof of Theorem 4.12 in \cite{GR13}.
\end{proof}

\subsection{Consequence}\label{sc8}

\par The above explicit formula, together with the formula for $\sharp \mathcal C^m_{\mathfrak Q}(\mathbf c)$ in \ref{sc2}, tell us that the structure constant $a_{\mathbf w,\mathbf v}^{\mathbf u} $ is a polynomial in the parameters $q_i,q'_i\in\shq$ with coefficients in $\Z$ and that this polynomial depends only on $\A$, $W$, $\mathbf w$, $\mathbf v$ and $\mathbf u$.
So we have proved the conjecture following Definition \ref{DeAlgebra} in this generic case: when $\ql$ is spherical and $\mu$ regular.


\section{Relations}\label{s3}

Here we study the Iwahori-Hecke algebra $^I\mathcal H_R^\SHI$ as a module over $\mathcal H_R(W^v)$ and we prove the first instance of the Bernstein-Lusztig relation.
For short, we write $^I\mathcal H_R=\,^I\mathcal H_R^\SHI$ and $T_i=T_{r_i}$ (when $i\in I$).

\begin{prop}\label{3.1} Let $\ql\in Y^+$, $w\in W^v$ and $i\in I$, then,

\par 1) $T_{\ql . w}*T_i=T_{\ql . wr_i}$ if, and only if, either $(w(\qa_i))(\ql)<0$ or $(w(\qa_i))(\ql)=0$ and $\ell(wr_i)>\ell(w)$. Otherwise $T_{\ql .w}*T_i=(q_i-1)T_{\ql .w}+q_iT_{\ql .wr_i}$.

\par 2) $T_i*T_{\ql .w}=T_{r_i(\ql) . r_iw}$ if, and only if, either $\qa_i(\ql)>0$ or $\qa_i(\ql)=0$ and $\ell(r_iw)>\ell(w)$. Otherwise $T_i*T_{\ql .w}=(q_i-1)T_{\ql .w}+q_iT_{r_i(\ql).r_iw}$.
\end{prop}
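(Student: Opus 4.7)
The plan is to evaluate the convolution $(T_{\ql w}*T_i)(C_x,C_y)$ directly: it counts the $C_z$ with $x\le z\le y$, $d^W(C_x,C_z)=\ql w$ and $d^W(C_z,C_y)=r_i$. Since $r_i\in W^v$ has trivial translation part, the second condition forces $z=y$ and $C_z\ne C_y$ to be $i$-adjacent to $C_y$ in the tangent building $\sht^+_y\SHI$. By $G$-invariance, $T_{\ql w}*T_i$ depends only on $\mathbf u:=d^W(C_x,C_y)$, so the two potentially non-zero coefficients are read off by specialising $\mathbf u$ to $\ql w$ and to $\ql w r_i$.

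Work in an apartment $A$ containing $C_x$ and $C_y$ (provided by Proposition~\ref{1.11}.b, since $x\le y$) and identify $(\A,C_0^+)$ with $(A,C_x)$ by a Weyl-isomorphism, so $C_y=\mathbf u C_0^+$. The $1+q_M$ local chambers on the $i$-panel of $C_y$ are $C_y$, the in-apartment chamber $\mathbf u r_iC_0^+$, and $q_M-1$ chambers outside $A$; here $M$ denotes the supporting wall. The classical projection-on-a-panel dichotomy---applied in $\sht^+_y\SHI$, whose thickness is finite by the standing hypothesis of \ref{1.3}.1---asserts that, among these $1+q_M$ chambers, the projection of $C_x$ realises the shorter of the two possible values $\mathbf u,\mathbf u r_i$ of $d^W(C_x,\cdot)$, while the other $q_M$ chambers realise the longer one; here ``length'' means gallery length in $A$.

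The geometric heart of the argument is then to recognise ``$\ell(\ql w r_i)>\ell(\ql w)$'' as the proposition's first alternative. Under the identification, the $i$-panel wall of $C_y=\ql wC_0^+$ is $M=M(w\qa_i,-w\qa_i(\ql))$, and since $W=W^v\ltimes Y$ preserves the parameters by \ref{1.3}.5 we have $q_M=q_i$. A direct sign check shows that $\ql wC_0^+$ sits on the side $w\qa_i>w\qa_i(\ql)$ of $M$, while $C_0^+$ sits on the same side iff either $w\qa_i(\ql)<0$, or $w\qa_i(\ql)=0$ with $w\qa_i\in\QF^+$---the latter being equivalent to $\ell(wr_i)>\ell(w)$ by the Coxeter ascent criterion. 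Feeding this back: in the non-separating (``extending'') case all $q_M$ chambers $C_z\ne C_y$ satisfy $d^W(C_x,C_z)=\ql wr_i$, yielding $T_{\ql w}*T_i=T_{\ql wr_i}$; in the separating (``folding'') case the projection is the unique such $C_z$ with $d^W=\ql wr_i$, and the count splits as $q_i-1$ for the coefficient of $T_{\ql w}$ and $q_i$ for that of $T_{\ql wr_i}$.

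Part~2) proceeds by the mirror argument with $z=x$ and $C_z$ running over chambers $i$-adjacent to $C_x$. In $A$ the distinguished chamber is $r_iC_0^+$, and the direct computation $d^W(r_iC_0^+,\ql'w'C_0^+)=r_i(\ql')\cdot r_iw'$ (for $\mathbf u=\ql'w'$) explains why the non-trivial term in the expansion is $T_{r_i(\ql)\cdot r_iw}$. The separating wall is now $M(\qa_i,0)$, and the symmetric sign analysis (exploiting that reflection by $r_i$ flips the sign of $\qa_i$) yields the clean case exactly when $\qa_i(\ql)>0$, or $\qa_i(\ql)=0$ with $w^{-1}\qa_i\in\QF^+$, i.e.\ $\ell(r_iw)>\ell(w)$. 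The main technical point, both here and in part~1), is justifying the projection-on-a-panel statement in the hovel setting; this is handled by the abundance of apartments guaranteed by Propositions~\ref{1.11}--\ref{1.12} and the finite-thickness hypothesis, after which the Iwahori--Hecke-type coefficient extraction is mechanical.
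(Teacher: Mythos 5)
Your proof is correct and follows essentially the same route as the paper's: both reduce to counting the local chambers on the type-$i$ panel via the half-apartment/projection dichotomy (resting on the lemma in \ref{1.3}.2 and Proposition \ref{1.12}.a), translate the side condition into the sign of $(w(\alpha_i))(\lambda)$ together with the criterion $w(\alpha_i)\in\Phi^+\iff\ell(wr_i)>\ell(w)$, and then count $1$, $q_i$ or $q_i-1$ chambers accordingly. The only differences are cosmetic: the paper explicitly invokes the apartment containing a given half-apartment and a chamber on its wall where you cite the projection dichotomy as a black box, and neither argument dwells on the (harmless, by \ref{1.3}.5) even/odd issue in identifying $q_M$ with $q_i$.
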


\begin{proof} We consider local chambers $C_x,C_z,C_y$ with $x\leq z\leq y$ and $d^W(C_x,C_z)=\ql .w$, $d^W(C_z,C_y)=r_i$.
So there is an apartment $A$ containing $C_x,C_z$ and, if we identify $(A,C_x)$ to $(\A,C_0^+)$, we have $C_z=(\ql .w)(C_x)$.
Moreover $y=z$, $C_z\not=C_y$ and $C_z,C_y$ share a panel $F_i$ of type $i$.
 We write $D$ the half apartment of $A$ containing $C_x$ and with wall $\partial D$ containing $F_i$.

 \par We first note that
 $$
 C_z\subset D \Longleftrightarrow \big ((w(\qa_i))(\ql) < 0\big ) \hbox{ or } \bigg ((w(\qa_i))(\ql) = 0 \hbox{ and } \ell(wr_i)>\ell(w)\bigg ).
 $$ Then, by Lemma \ref{1.3}.2, there exists an apartment $A'$ containing $C_y$ and $D$, hence also $C_x,C_z,C_y$.
 So $d^W(C_x,C_y)=\ql .wr_i$.
 The panel $F_i=F^\ell(z,F_i^v)\subset A$ is a spherical local face, so, for any $p\in z+F_i^v\subset A$, we have $z \stackrel{o}{<} p$, hence $x \stackrel{o}{<} p$.
 By \ref{1.12}.a, any apartment $A''$ containing $C_x$ and $F_i$ contains $C_z$; moreover $C_z$ is well determined by $F_i$ and $C_x$.
 The number $a_{\ql .w,r_i}^{\ql .wr_i}$ of \ref{PrFinite2} is equal to $1$ and we have proved that $T_{\ql .w}*T_i=T_{\ql .wr_i}$.

 \par If $C_z$ is not in $D$, we denote by $C'_z$ the local chamber in $D$ with panel $F_i$. By the above argument, $C'_z$ is well determined by $F_i$ and $C_x$, moreover $d^W(C_x,C'_z)=\ql .wr_i$.
 There are two cases for $C_y$: either $C_y=C'_z$ or not.
  If $C_y=C'_z$, then $d^W(C_x,C_y)=\ql .wr_i$ and, if $C_x,C_y$ are given, there are $q_i$ possibilities for $C_z$ (all local chambers covering $F_i$ and different from $C'_z$): $a_{\ql .w,r_i}^{\ql .wr_i}=q_i$.
  If $C_y\not=C'_z$, then $d^W(C_x,C_y)=\ql .w$ and, if $C_x,C_y$ are given, there are $q_i-1$ possibilities for $C_z$ (all local chambers covering $F_i$ and different from $C'_z,C_y$): $a_{\ql .w,r_i}^{\ql .w}=q_i-1$.

  \par We have proved 1) and we leave to the reader the similar proof of 2).
\end{proof}

\subsection{The subalgebra $\shh_R(W^v)$}\label{3.2}

\par We consider the $R-$submodule $\shh_R(W^v)$ of $^I\shh_R$ with basis $(T_w)_{w\in W^v}$. As $d^W(C_x,C_y)\in W^v$ if and only if $x=y$, it is clearly  a subalgebra of $^I\shh_R$.
 Actually $\shh_R(W^v)$ is the Iwahori-Hecke algebra of the tangent building $\sht^+_x\SHI$ for any $x\in\SHI$.

\par By Proposition \ref{3.1}, we have:

\parni - $T_{w}*T_i=T_{wr_i}$ if $\ell(wr_i)>\ell(w)$ and $T_{w}*T_i=(q_i-1)T_{w}+q_iT_{wr_i}$ otherwise.

\parni - $T_i*T_{w}=T_{r_iw}$ if $\ell(r_iw)>\ell(w)$ and $T_i*T_{w}=(q_i-1)T_{w}+q_iT_{r_iw}$ otherwise.

\par In particular $T_i^2=(q_i-1)T_i+q_iId$ and, for any reduced decomposition $w=r_{i_1}\cdots r_{i_n}$, $T_w=T_{i_1}\cdots T_{i_n}$.

\par Therefore, the algebra $\shh_R(W^v)$ is the well known Hecke algebra associated to the Coxeter system $(W^v,\{r_i\mid i\in I\})$ with (in general unequal) parameters $(q_i)_{i\in I}$ and coefficients in the ring $R$.
It is generated, as an $R-$algebra, by the $T_i$, for $i\in I$.

\par Suppose each $q_i$ invertible in $R$, then, as well known, $T_i^{-1}=q_i^{-1}(T_i-(q_i-1)Id)\in\shh_R(W^v)$ is the inverse of $T_i$.
 In particular any $T_w$ is invertible: for any reduced decomposition $w=r_{i_1}\cdots r_{i_n}$, $T_w^{-1}=T_{i_n}^{-1}\cdots T_{i_1}^{-1}$.

\begin{rema*} If $q_i$ is invertible, it is easy to see from Proposition \ref{3.1} that, either $T_{\ql .wr_i}=T_{\ql .w}*T_i$ or $T_{\ql .wr_i}=T_{\ql .w}*T_i^{-1}$ and, either $T_{r_i(\ql).r_iw}=T_i*T_{\ql .w}$ or $T_{r_i(\ql).r_iw}=T_i^{-1}*T_{\ql .w}$.
\end{rema*}

\begin{coro}\label{3.3} Suppose each $q_i$ invertible in $R$ and consider $\ql\in Y^+$.
 We may write $\ql=w.\ql^{++}$, with $w\in W^v$. Then $T_\ql=T_w*T_{\ql^{++}}*T_w^{-1}$.
\end{coro}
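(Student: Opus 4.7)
The plan is to prove the identity by induction on $\ell(w)$, choosing $w$ to be the element of minimal length in $W^v$ satisfying $\ql=w.\ql^{++}$. The base case $\ell(w)=0$ forces $w=1$ and $\ql=\ql^{++}$, so the identity is tautological.

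For the inductive step, assume $\ell(w)\geq 1$, hence $\ql\neq\ql^{++}$; since $\ql\in Y^+\setminus Y^{++}$, there exists $i\in I$ with $\qa_i(\ql)<0$. I would set $\ql':=r_i.\ql\in Y^+$ and $w':=r_iw$; note $\qa_i(\ql')>0$ and $\ql'^{++}=\ql^{++}$. From $(w^{-1}\qa_i)(\ql^{++})=\qa_i(\ql)<0$ together with $\ql^{++}\in\overline{C^v_f}$, the real root $w^{-1}\qa_i$ must be negative, so the standard Coxeter criterion gives $\ell(r_iw)=\ell(w)-1$. A short argument then shows that $w'$ is still of minimal length among representatives of $\ql'$: any strictly shorter $w''$ with $\ql'=w''.\ql^{++}$ would yield $r_iw''$ as a representative of $\ql$ of length at most $\ell(w'')+1<\ell(w)$, contradicting the minimality of $w$.

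By the inductive hypothesis, $T_{\ql'}=T_{w'}*T_{\ql^{++}}*T_{w'}^{-1}$. Since $w=r_iw'$ is reduced, \ref{3.2} yields $T_w=T_i*T_{w'}$, so $T_w*T_{\ql^{++}}*T_w^{-1}=T_i*T_{\ql'}*T_i^{-1}$, and the proof reduces to the identity $T_\ql*T_i=T_i*T_{\ql'}$. This is immediate from Proposition \ref{3.1}: the first case of part (2) applies to $T_i*T_{\ql'}$ because $\qa_i(\ql')>0$, giving $T_i*T_{\ql'}=T_{r_i\ql'.r_i}=T_{\ql.r_i}$; and the first case of part (1) applies to $T_\ql*T_i$ because $\qa_i(\ql)<0$, giving $T_\ql*T_i=T_{\ql.r_i}$.

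The main subtlety lies in the choice of induction parameter. A naive induction on an arbitrary $w$ (not of minimal length) would at some stage force one to show that $T_u$ commutes with $T_{\ql^{++}}$ for $u$ in the stabilizer $(W^v)_{\ql^{++}}$, which would require invoking the fact that this stabilizer is generated by the simple reflections $r_j$ with $\qa_j(\ql^{++})=0$ and verifying commutation for each such generator via Proposition \ref{3.1}. Restricting to the minimal-length $w$ and choosing at each step a simple root $\qa_i$ negative on $\ql$ sidesteps this issue entirely, since the inductive reduction never moves $w$ into the stabilizer.
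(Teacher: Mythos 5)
Your argument is correct and runs on the same engine as the paper's proof: induction on $\ell(w)$, peeling one simple reflection off the left of $w$, writing $T_w*T_{\ql^{++}}*T_w^{-1}=T_i*T_{\ql'}*T_i^{-1}$, and closing with the two parts of Proposition \ref{3.1}. The difference is only in the setup. The paper takes an \emph{arbitrary} $w$ with $\ql=w.\ql^{++}$ and any reduced decomposition $w=r_{i_n}\cdots r_{i_1}$, sets $w'=r_{i_{n-1}}\cdots r_{i_1}$, $\ql'=w'.\ql^{++}$, and deduces $\qa_{i_n}(\ql')\geq 0$ from $\ell(r_{i_n}w')>\ell(w')$ and $\ql^{++}\in\overline{C^v_f}$; Proposition \ref{3.1} then gives $T_{i_n}*T_{\ql'}=T_{\ql.r_{i_n}}=T_\ql*T_{i_n}$ exactly as in your last step. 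This yields the identity for \emph{every} representative $w$ of $\ql$, whereas your induction only covers the minimal-length one; that weaker form is all that is used downstream (e.g.\ in Corollary \ref{4.3}), so nothing breaks, but it is a genuine restriction of scope. In exchange, your choice of $i$ with $\qa_i(\ql)<0$ obliges you to verify the length drop and the preservation of minimality, neither of which the paper needs. Finally, the ``subtlety'' you flag about the stabilizer $(W^v)_{\ql^{++}}$ is not actually an obstruction to the naive induction: in the borderline case $\qa_{i_n}(\ql')=0$ one has $r_{i_n}(\ql')=\ql'$, and parts 1) and 2) of Proposition \ref{3.1} (with $\ell(r_{i_n})>\ell(1)$) both give $T_{\ql'.r_{i_n}}$, i.e.\ $T_{i_n}*T_{\ql'}=T_{\ql'}*T_{i_n}$ directly — no appeal to the parabolic structure of the stabilizer is required.
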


\begin{proof} We consider a reduced decomposition $w=r_{i_n}\cdots r_{i_1}$ and argue by induction on $n$.
So, for $w'=r_{i_{n-1}}\cdots r_{i_1}$ and $\ql'=w'.\ql^{++}$, we have $T_{\ql'}=T_{w'}*T_{\ql^{++}}*T_{w'}^{-1}$.
 We consider $T_w*T_{\ql^{++}}*T_w^{-1}=T_{i_n}*T_{\ql'}*T_{i_n}^{-1}$.
 But $\ell(r_{i_n}w')>\ell(w')$ and $\ql^{++}\in Y^{++}\subset \overline{C^v_f}$, so $\qa_{i_n}(w'.\ql^{++})\geq 0$, \ie $\qa_{i_n}(\ql')\geq 0$.
  We get $T_{i_n}*T_{\ql'}=T_{r_{i_n}(\ql').r_{i_n}}$ by \ref{3.1}.2 and then $T_{i_n}*T_{\ql'}*T_{i_n}^{-1}=T_{r_{i_n}(\ql')}=T_\ql$ by \ref{3.1}.1 (and the above remark).
\end{proof}

\begin{coro}\label{3.4} Let $\ql\in Y^+$ and $w,w'\in W^v$, then we may write
 $$T_{\ql .w'}*T_w=\sum_{w''\leq w}\, a_{\ql .w',w}^{\ql .w'w''}T_{\ql .w'w''}$$
 where each $a_{\ql .w',w}^{\ql .w'w''}$ is a polynomial in the $q_i$ with coefficients in $\Z$ and, when $w'=1$, $a_{\ql,w}^{\ql .w}>0$ is a primitive monomial.
 This polynomial $a_{\ql .w',w}^{\ql .w'w''}$ depends only on $\A$ and on $W$.
\end{coro}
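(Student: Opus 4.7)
My plan is to argue by induction on $\ell(w)$, with the key engine being Proposition \ref{3.1}. For $\ell(w)=0$ the statement is trivial; for $\ell(w)=1$, say $w=r_i$, it is exactly Proposition \ref{3.1}.1): in each of the two cases distinguished there, the resulting coefficients are $1$, or $q_i-1$ and $q_i$, all of which are polynomials in $q_i$ with $\Z$-coefficients, depending only on $\A$ and on $W$ (through the test $(w'(\qa_i))(\ql)\lessgtr 0$ and the lengths in $W^v$). Moreover, when $w'=1$, the coefficient of $T_{\ql . r_i}$ that appears is either $1$ or $q_i$, both primitive monomials.

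For the induction step, I write $w=w_0 r_i$ with $\ell(w_0)=\ell(w)-1$, so that $T_w=T_{w_0}*T_i$ (\cf \ref{3.2}) and hence
\[
T_{\ql . w'}*T_w=\bigl(T_{\ql . w'}*T_{w_0}\bigr)*T_i
=\sum_{u\leq w_0} a_{\ql . w',w_0}^{\ql . w' u}\,T_{\ql . w' u}*T_i,
\]
by the induction hypothesis applied to $w_0$. Applying Proposition \ref{3.1}.1) to each factor $T_{\ql . w' u}*T_i$ yields a combination of $T_{\ql . w' u}$ and $T_{\ql . w' u r_i}$ with coefficients in $\{1,\,q_i-1,\,q_i\}$, depending only on the sign of $(w' u(\qa_i))(\ql)$ and on $\ell(w' u r_i)\lessgtr \ell(w' u)$. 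Collecting terms, $T_{\ql . w'}*T_w$ becomes an integer-polynomial combination of certain $T_{\ql . w' w''}$, and each step depends only on $\A$ and $W$.

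To see that the indices appearing satisfy $w''\leq w$, I invoke the standard lifting property of the Bruhat order on the Coxeter system $(W^v,\{r_i\mid i\in I\})$: since $w=w_0 r_i$ with $w>w_0$, any $u\leq w_0$ satisfies both $u\leq w$ and $u r_i\leq w$ (either $u r_i\leq w_0\leq w$, or $u r_i\leq w_0 r_i=w$). This is the only non-bookkeeping step I expect and is the main subtlety to pin down correctly.

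It remains to check, when $w'=1$, that $a_{\ql,w}^{\ql . w}$ is a primitive monomial. The only way a term $T_{\ql . w}$ can be produced from $T_{\ql . u}*T_i$ with $u\leq w_0$ is $u=w_0$ (since $u r_i=w=w_0 r_i$ forces $u=w_0$, and $u=w$ is excluded by $u\leq w_0<w$). According to Proposition \ref{3.1}.1), the coefficient of $T_{\ql . w}$ produced by $T_{\ql . w_0}*T_i$ is either $1$ or $q_i$. Hence
\[
a_{\ql,w}^{\ql . w}=a_{\ql,w_0}^{\ql . w_0}\cdot c\qquad\text{with } c\in\{1,q_i\},
\]
and by induction $a_{\ql,w_0}^{\ql . w_0}$ is a primitive monomial (a positive product of elements of $\shq$); multiplying by $1$ or $q_i$ keeps this property, concluding the induction.
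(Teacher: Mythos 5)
Your proof is correct and follows the same route as the paper: induction on a reduced decomposition $w=r_{i_1}\cdots r_{i_n}$, applying Proposition \ref{3.1}.1 at each step (the paper simply declares the induction ``clear'' and notes that $a_{\ql,w}^{\ql.w}$ is a product of some of the $q_{i_j}$, which matches your tracking of the coefficient as a product of factors in $\{1,q_i\}$). Your explicit verification of $w''\leq w$ via the lifting property of the Bruhat order is a detail the paper leaves implicit, and it is handled correctly.
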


\begin{proof} We write $w=r_{i_1}\cdots r_{i_n}$ and argue by induction on $n$. The result is then clear from Proposition \ref{3.1}.1. We get actually that $a_{\ql,w}^{\ql .w}$ is the product of some of the $q_{i_j}$ ($1\leq j\leq n$).
\end{proof}

\subsection{The Iwahori-Hecke algebra as a right $\shh_R(W^v)-$module}\label{3.5}

\par We assume here that each $q_i$ is invertible in $R$.

\par Given $\ql\in Y^+$, we see from Corollary \ref{3.4} that $\{T_\ql*T_w\mid w\in W^v\}$ and $\{T_{\ql. w}\mid w\in W^v\}$ are two bases of the same $R-$module.
The base-change matrix is triangular with respect to the Bruhat order on $W^v$ and the coefficients are Laurent polynomials in the $q_i$, with coefficients in $\Z$ (primitive Laurent monomials on the diagonal).
These polynomials depend only on $\A$ and $W$.

\par As $\{T_{\ql .w}\mid\ql\in Y^+, w\in W^v\}$ (resp. $\{T_w\mid w\in W^v\}$) is a $R-$basis of $^I\shh_R$ (resp. $\shh_R(W^v)$), this means in particular that  $^I\shh_R$ is a free right $\shh_R(W^v)-$module with basis $\{T_\ql\mid\ql\in Y^+\}$.

\par In particular the $R-$algebra $^I\shh_R$ is generated by the $T_i$ (for $i\in I$) and the $T_\ql$ (for $\ql\in Y^+$) and even by the $T_i$ (for $i\in I$) and the $T_\ql$ (for $\ql\in Y^{++}$), as we see from Corollary \ref{3.3}.

\begin{lemm}\label{3.6} Let $C_1,C_2\in\shc_0^+$, with vertices $x_1,x_2$ be such that $d^W(C_1,C_2)=\ql\in Y^{++}$.
 We consider $i\in I$, $F_1^i$ (resp. $F_2^i$) the panel of type $i$ of $C_1$ (resp. $C_2$).
 In an apartment $A_1$ (resp. $A_2$) containing $C_1$ (resp. $C_2$), we consider the sector panel $\g f_1^-$ (resp. $\g f_2^+$) with base point $x_1$ (resp. $x_2$) and direction opposite the direction of $F_1^i$ (resp. equal to the direction of $F_2^i$).

 \par Then there is an apartment $A$ containing $\g f_1^-$, $\g f_2^+$, $C_1,C_2$ and, in this apartment $A$, the directions of $\g f_1^-$ and $\g f_2^+$, $F_2^i$ and $\g f_1^-$ (resp. $F_1^i$ and $\g f_2^+$) are opposite (resp. equal).
\end{lemm}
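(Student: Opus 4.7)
The plan is to produce the required apartment $A$ as any single apartment containing both $C_1$ and $C_2$, to realize the two sector panels inside it, and then to reduce every direction comparison to an inspection in the model apartment $\A$.

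First, since $C_1$ and $C_2$ lie in $\shc_0^+$ with $x_1\leq x_2$ (automatic from the definition of $d^W$ on $\shc_0^+\times_\leq\shc_0^+$) and both are of positive direction, the ``respectively positive and negative'' hypothesis in Proposition \ref{1.11}.b) does not intervene, and that proposition produces an apartment $A$ containing $C_1$ and $C_2$. I would then identify $(A,C_1)$ with $(\A,C_0^+)$ through the unique $f\in Isom^W_\R(\A,A)$ with $f(C_0^+)=C_1$, as in \ref{1.13}. Since $d^W(C_1,C_2)=\ql\in Y^{++}$ has trivial $W^v$-component, the identification sends $C_2$ to $germ_\ql(C^v_f)$, so after identification $x_1=0$, $x_2=\ql$, and both $C_1,C_2$ have vectorial direction $C^v_f$. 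The panels of type $i$ therefore become $F_1^i=F^\ell(0,F^v(\{i\}))$ and $F_2^i=F^\ell(\ql,F^v(\{i\}))$, each of direction $F^v(\{i\})$.

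Second, I would take $A_1=A_2=A$; this is legitimate because, by Proposition \ref{1.12}.c), any other apartment containing $C_1$ (resp.\ $C_2$) is Weyl-isomorphic to $A$ via an isomorphism fixing that chamber, so the sector panels with the prescribed base point and direction transfer unambiguously between such apartments. Define $\g f_1^-$ as the sector panel $0+(-F^v(\{i\}))$ of $A$ and $\g f_2^+$ as the sector panel $\ql+F^v(\{i\})$ of $A$. Both lie in $A$ by construction, together with $C_1$ and $C_2$, so the first assertion of the lemma is settled.

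Finally, the three direction comparisons read off immediately in $A$: the directions $-F^v(\{i\})$ of $\g f_1^-$ and $F^v(\{i\})$ of $\g f_2^+$ are opposite; the direction $F^v(\{i\})$ of $F_2^i$ is opposite to the direction $-F^v(\{i\})$ of $\g f_1^-$; and the direction $F^v(\{i\})$ of $F_1^i$ coincides with the direction $F^v(\{i\})$ of $\g f_2^+$. The argument is essentially routine once one has the common apartment $A$, so the only real input is Proposition \ref{1.11}.b), applied with both faces positive and $\ql\in Y^{++}\subset Y^+\subset\sht$.
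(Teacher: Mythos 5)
There is a genuine gap, and it is in the very first move. In the statement, $A_1$ and $A_2$ are \emph{arbitrary given} apartments containing $C_1$ and $C_2$, and the sector panels $\g f_1^-\subset A_1$, $\g f_2^+\subset A_2$ are unbounded filters of $\SHI$ determined by those apartments. You replace them by sector panels drawn inside a single apartment $A$ containing $C_1$ and $C_2$, justifying this by Proposition \ref{1.12}.c). But that proposition only furnishes a Weyl-isomorphism fixing the two \emph{local} faces; it says nothing about any unbounded set. Two apartments containing the local chamber $C_1$ (a germ at $x_1$) will in general diverge away from $x_1$ — already in a tree, two apartments through a given edge need not contain the same ray issuing from one of its vertices — and here $\g f_1^-$ points in the direction \emph{opposite} to $F_1^i$, i.e.\ away from $C_1$, so there is no reason whatsoever for the apartment $A$ you produce to contain the given $\g f_1^-$ or $\g f_2^+$. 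Redefining the sector panels inside $A$ changes the objects and reduces the lemma to a tautology; it also breaks the intended application in Proposition \ref{3.7}, where the specific sector panels coming from $A_1$ and $A_4$ are what determine the wall direction $M_i^\infty$.

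The actual content of the lemma is precisely the gluing you have skipped. The paper's proof chooses $\ql_i\in F^v(\{i\})\cap Y\subset Y^{++}$, forms the splayed chimneys $\g r_1^-=\g r(C_1,F_1^{-v})$ in $A_1$ and $\g r_2^+=\g r(C_2,F_2^{+v})$ in $A_2$, and applies (MA3) to their germs at infinity to obtain an apartment $A$ containing both germs, hence the chambers $C_1(-n)=C_1-n\ql_i$ and $C_2(+n)=C_2+n\ql_i$ for $n$ large. The Chasles-type computation $d^W(C_1(-n),C_2(+n))=\ql+2n\ql_i\in Y^{++}$, combined with Proposition \ref{PrFinite1} and the enclosure argument at the end of the proof of Proposition \ref{PrFinite2}, is then what forces $C_1$ and $C_2$ themselves to lie in $A$; finally (MA4) puts the full sector panels $\g f_1^-\subset cl_{A_1}(C_1,\g R_1^-)$ and $\g f_2^+\subset cl_{A_2}(C_2,\g R_2^+)$ into $A$. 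Your argument contains none of these steps, and without them the conclusion does not follow.
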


\begin{proof} We choose $\ql_i\in F^v(\{i\})\cap Y\subset Y^{++}$.
We write $\g F_j^\pm$ the germ of $\g f_j^\pm$ and $F_j^{\pm v}$ its direction in $A_j$.
 In $A_1$ (resp. $A_2$) we consider the splayed chimney $\g r_1^-=\g r(C_1,F_1^{-v})$ (resp. $\g r_2^+=\g r(C_2,F_2^{+v})$) containing $\g f_1^-$ (resp. $\g f_2^+$) and, for $n\in\N$, the chamber of type $0$ $C_1(-n)=C_1-n\ql_i\subset\g r_1^-$ (resp. $C_2(+n)=C_2+n\ql_i\subset\g r_2^+$); actually we identify $(\A,C_0^+)$ with $(A_1,C_1)$ (resp. $(A_2,C_2)$) to consider $\ql_i$ in $\vect A_1$ (resp. $\vect A_2$).

 \par Then $d^W(C_1(-n),C_1)=d^W(C_2,C_2(+n))=n\ql_i\in Y^{++}$ and $d^W(C_1,C_2)=\ql\in Y^{++}$.
 By (MA3) there is an apartment $A$ containing the germs $\g R_1^-$ and $\g R_2^+$ of $\g r_1^-$ and $\g r_2^+$, hence $C_1(-n)$ and $C_2(+n)$ for $n$ great.
 By Proposition \ref{PrFinite1} and the last paragraph of the proof of \ref{PrFinite2}, $d^W(C_1(-n),C_2(+n))=\ql+2n\ql_i\in Y^{++}$ and $A$ contains $C_1,C_2$.
 By (MA4) $A$ contains also $\g f_1^-\subset\g r_1^-\subset cl_{A_1}(C_1,\g R_1^-)$ and $\g f_2^+\subset\g r_2^+\subset cl_{A_2}(C_2,\g R_2^+)$.
 So all assertions of the Lemma are satisfied.
\end{proof}

\begin{prop}\label{3.7} Let $C_1,C_2,C_3,C_4\in\shc_0^+$ be such that $d^W(C_1,C_2)=\ql\in Y^{++}$, $d^W(C_2,C_3)=r_i$ and $d^W(C_3,C_4)=\qm\in Y^{++}$.
 Then there is a direction of wall $M_i^\infty$ (\cf \cite[{\S{}} 4]{R11} or \cite[5.5]{GR13}), chosen accordingly to $C_1,C_2$ (but independently from $C_3,C_4$), such that $C_1,C_2,C_3,C_4$ are in the extended tree $\SHI(M_i^\infty)$.
\end{prop}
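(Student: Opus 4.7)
The plan is to apply Lemma \ref{3.6} twice (once to $(C_1,C_2)$, once to $(C_3,C_4)$) with the fixed index $i$, and to observe that the two apartments thus produced each contain a wall that carries the common type-$i$ panel shared by $C_2$ and $C_3$; these two walls have the same direction at infinity, which will be our $M_i^\infty$.

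First, I apply Lemma \ref{3.6} to the pair $(C_1,C_2)$ with direction $i$. This produces an apartment $A$ containing $C_1$, $C_2$, and in particular the sector-panel $\g f_2^+$ based at $x_2$ with direction $F_2^{+v}$ equal to the vectorial direction of the type-$i$ panel $F_2^i$ of $C_2$. The wall $M$ of $A$ that contains $\g f_2^+$ is exactly the wall supporting $F_2^i$ in $A$, and I define $M_i^\infty$ to be the direction at infinity of $M$ (in the sense of \cite[\S 4]{R11} or \cite[5.5]{GR13}). Since this construction only invokes $C_1,C_2$ and Lemma \ref{3.6}, $M_i^\infty$ is chosen accordingly to $(C_1,C_2)$ and does not involve $C_3,C_4$.

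Next, I apply Lemma \ref{3.6} to the pair $(C_3,C_4)$ with the same index $i$, which produces an apartment $A'$ containing $C_3,C_4$ and the sector-panel $\g f_3^-$ based at $x_3$ whose direction is opposite to the type-$i$ panel direction $F_3^{+v}$ of $C_3$. The wall $M'$ of $A'$ containing $\g f_3^-$ is the wall of $A'$ supporting the type-$i$ panel $F_3^i$ of $C_3$. Because $d^W(C_2,C_3)=r_i$, the chambers $C_2$ and $C_3$ share their type-$i$ panel, i.e. $F_2^i=F_3^i$ as filters in $\SHI$. Consequently the walls $M\subset A$ and $M'\subset A'$ both contain this common panel, hence they have the same vectorial direction and, up to translation inside this direction, the same infinity: the direction of $M'$ at infinity equals $M_i^\infty$.

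Thus $A$ and $A'$ are both apartments of the extended tree $\SHI(M_i^\infty)$, so $C_1,C_2\in A\subset\SHI(M_i^\infty)$ and $C_3,C_4\in A'\subset\SHI(M_i^\infty)$, as required. The main obstacle is the identification of the two wall directions; this relies on the fact that a wall in an apartment is determined up to its direction by any panel it contains, so that $M$ and $M'$, carrying the same panel $F_2^i=F_3^i$, must define the same point $M_i^\infty$ at infinity. Everything else is a direct packaging of Lemma \ref{3.6}.
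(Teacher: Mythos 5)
There is a genuine gap at the key step. You assert that the walls $M\subset A$ and $M'\subset A'$ ``carry the same panel $F_2^i=F_3^i$, hence \dots{} define the same point $M_i^\infty$ at infinity''. This is false in a hovel, and already in a thick affine building: a local panel is a germ at the point $x_2=x_3$, so it only determines each wall in a neighbourhood of that point. Outside $A\cap A'$ the two walls can branch apart (think of two apartments of an $\widetilde A_2$-building meeting in a half-apartment: the walls transverse to the boundary agree on a ray but have distinct second ends), and likewise two sector-panels with the same base point and the same germ at that base point need not be parallel at infinity. So the direction of wall $M_i^\infty$ read off from $A$ and the one read off from $A'$ are a priori different, and nothing in your argument puts $C_3,C_4$ into $\SHI(M_i^\infty)$ for the $M_i^\infty$ you defined from $(C_1,C_2)$.

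This identification of the two directions at infinity is precisely the content the paper's proof has to supply, and it is why Lemma \ref{3.6} is invoked not twice but four times, the last two with \emph{mixed} data: after producing $\g f_1^-$ (from $C_1,F_1^i$) and $\g f_4^+$ (from $C_4,F_4^i$) and transporting them to sector-faces $\g f_2^-,\g f_3^+$ based at $x_2=x_3$, one applies the Lemma to $C_1,C_2$ together with the pair $\g f_1^-,\g f_3^+$, and then to $C_3,C_4$ together with the pair $\g f_2^-,\g f_4^+$. Each application yields a single apartment in which the two sector-panels are actually opposite; this is what glues the germ at infinity coming from the $(C_1,C_2)$ side to the one coming from the $(C_3,C_4)$ side and exhibits both pairs of chambers inside apartments of the same $\SHI(M_i^\infty)$. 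To repair your proof you would need to add exactly this cross-over step (or an equivalent argument showing that the sector-panel of $A$ through $F_2^i$ and the one of $A'$ through $F_3^i$ share a germ at infinity); the purely local coincidence $F_2^i=F_3^i$ does not suffice.
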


\begin{proof} We denote by $x_1,x_2=x_3,x_4$ the three vertices of $C_1,C_2,C_3,C_4$ and by $F_1^i,F_2^i=F_3^i,F_4^i$ their panels of type $i$.
We choose $\g f_1^-$ associated to $C_1$ and $F_1^i$ in an apartment $A_1$ (resp. $\g f_4^+$ associated to $C_4$ and $F_4^i$ in an apartment $A_4$), as in Lemma \ref{3.6}.
 By this Lemma, using $C_1$ and $C_2$, the direction of $\g f_1^-$ opposites that of $F_2^i=F_3^i$ in some apartment $A_2$ and, using $C_3$ and $C_4$, the direction of $\g f_4^+$ is the same as that of $F_2^i=F_3^i$ in some apartment $A_3$.
 In $A_3$ (resp. $A_2$) we consider the sector face $\g f_3^+$ (resp. $\g f_2^-$) with base point $x_2=x_3$ and same direction as $\g f_4^+$ or $F_2^i=F_3^i$  (resp. same direction as $\g f_1^-$ and opposite $F_2^i=F_3^i$).

 \par We may use the Lemma for $C_1,C_2,\g f_1^-,\g f_3^+$; so the directions of $\g f_1^-$ (or $\g f_2^-$) and $\g f_3^+$ (or $\g f_4^+$) are opposite and $C_1,C_2$ are in a same apartment $A_5$ of $\SHI(M_i^\infty)$, if we consider the direction of wall $M_i^\infty$ associated to the directions of $\g f_1^-$ and $\g f_4^+$.
 Using now the Lemma for $C_3,C_4,\g f_2^-,\g f_4^+$, we see that these filters are in a same apartment $A_6$ of $\SHI(M_i^\infty)$.
\end{proof}

\begin{theo}\label{3.8} Let $\ql,\qm\in Y^{++}$ and $i\in I$.
We write $N=\inf(\qa_i(\ql),\qa_i(\qm))\in\N$ and, for $n\in\N$, $q_i^{*n}=q_iq_i'q_iq_i'\cdots$, with $n$ terms in this product.
\medskip
\par a) If $N=\qa_i(\qm)\leq\qa_i(\ql)$, then $T_\ql*T_i*T_\qm=T_{\ql+\qm}*T_i$ for $N=0$ and, for $N>0$,
\medskip
\parni $T_\ql*T_i*T_\qm=q_i^{*N}T_{\ql+\qm-N\qa_i^\vee}*T_i+(q_i^{*N}-q_i^{*N-1})T_{\ql+\qm-(N-1)\qa_i^\vee}+ \cdots$
\par\qquad\qquad\qquad\qquad\qquad\qquad\qquad\qquad\qquad\qquad
$\cdots + (q_i^{*2}-q_i)T_{\ql+\qm-\qa_i^\vee}+(q_i-1)T_{\ql+\qm}$
\medskip
\par b) If $N=\qa_i(\ql)\leq\qa_i(\qm)$, then $T_\ql*T_i*T_\qm=T_i*T_{\ql+\qm}$ for $N=0$ and, for $N>0$,
\medskip
\parni $T_\ql*T_i*T_\qm=q_i^{*N}T_i*T_{\ql+\qm-N\qa_i^\vee}+(q_i^{*N}-q_i^{*N-1})T_{\ql+\qm-(N-1)\qa_i^\vee}+ \cdots$
\par\qquad\qquad\qquad\qquad\qquad\qquad\qquad\qquad\qquad\qquad
$\cdots + (q_i^{*2}-q_i)T_{\ql+\qm-\qa_i^\vee}+(q_i-1)T_{\ql+\qm}$
\end{theo}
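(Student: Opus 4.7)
The overall approach is to reduce the computation of $T_\ql*T_i*T_\qm$ to a calculation in a rank-one building by invoking Proposition~\ref{3.7}: any admissible quadruple $(C_1,C_2,C_3,C_4)$ with $d^W(C_1,C_2)=\ql$, $d^W(C_2,C_3)=r_i$, $d^W(C_3,C_4)=\qm$ lies in a common extended tree $\SHI(M_i^\infty)$. Inside this tree the relevant walls are $M(\qa_i,k)$ for $k\in\Z$, with parameters alternating between $q_i$ and $q_i'$ as described in \ref{1.3}.5; this is the combinatorial origin of the alternating products $q_i^{*n}$ appearing in the statement. I would then argue by induction on $N=\inf(\qa_i(\ql),\qa_i(\qm))$.

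For the base case $N=0$ of part (a), $\qa_i(\qm)=0$ forces $r_i(\qm)=\qm$. Proposition~\ref{3.1}.1 (applied with $w=1$) gives $T_\qm*T_i=T_{\qm\cdot r_i}$ since $\qa_i(\qm)=0$ and $\ell(r_i)>0$, and Proposition~\ref{3.1}.2 gives $T_i*T_\qm=T_{r_i(\qm)\cdot r_i}=T_{\qm\cdot r_i}$. Hence $T_i$ and $T_\qm$ commute, and combined with $T_\ql*T_\qm=T_{\ql+\qm}$ from Theorem~\ref{ThAlgebra} this yields
\[
T_\ql*T_i*T_\qm \;=\; T_\ql*T_\qm*T_i \;=\; T_{\ql+\qm}*T_i .
\]
The $N=0$ case of (b) is symmetric, with left and right multiplications interchanged.

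For the induction step $N>0$, I would fix $(C_1,C_4)$ and enumerate the admissible intermediate pairs $(C_2,C_3)$ inside the tree $\SHI(M_i^\infty)$ provided by Proposition~\ref{3.7}. Algebraically, Proposition~\ref{3.1}.1 splits $T_\ql*T_i$ into either $T_{\ql\cdot r_i}$ (when $\qa_i(\ql)=0$) or $(q_i-1)T_\ql+q_iT_{\ql\cdot r_i}$ (when $\qa_i(\ql)>0$). In the latter case the constant term produces $(q_i-1)T_{\ql+\qm}$ via Theorem~\ref{ThAlgebra}, matching the last term of the formula, while $q_iT_{\ql\cdot r_i}*T_\qm$ reduces $N$ by one after a suitable reflection argument and falls under the inductive hypothesis. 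The leading coefficient $q_i^{*N}$ in front of $T_{\ql+\qm-N\qa_i^\vee}*T_i$ is precisely the cardinality $\sharp\mathcal{C}^m_{\mathfrak{Q}}(\mathbf c)$ computed in \ref{sc2} for a centrifugally folded gallery of length $N$ whose crossings alternate between walls $M(\qa_i,k)$ of the two parities, and the successive differences $q_i^{*k+1}-q_i^{*k}$ telescope the configurations already counted at lower inductive levels. The main obstacle is the bookkeeping in this inductive step: one must verify that the terms freshly produced by each expansion of $T_{\ql\cdot r_i}*T_\qm$ reassemble into the stated alternating combination and do not leak into $T_{\ql+\qm-k\qa_i^\vee}*T_w$ for $w\neq 1, r_i$. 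The cleanest way is to carry out the count explicitly in the rank-one tree using the folded-gallery formalism of Section~\ref{sc3}. Part (b) then follows by the symmetric argument, exchanging the roles of $\ql$ and $\qm$ and of left and right multiplications.
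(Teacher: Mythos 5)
Your setup is the same as the paper's: Proposition \ref{3.7} places every admissible quadruple $(C_1,C_2,C_3,C_4)$ in an extended tree $\SHI(M_i^\infty)$, and your treatment of the base case $N=0$ via Proposition \ref{3.1} (showing $T_i$ and $T_\qm$ commute when $\qa_i(\qm)=0$) is correct and matches the paper. The problem is the induction step, which as written is circular. Expanding $T_\ql*T_i=(q_i-1)T_\ql+q_iT_{\ql.r_i}$ does produce the tail term $(q_i-1)T_{\ql+\qm}$, but it leaves you with $q_iT_{\ql.r_i}*T_\qm$, and there is no ``suitable reflection argument'' that turns this into an instance of the inductive hypothesis: the hypothesis concerns products $T_{\ql'}*T_i*T_{\qm'}$ with $\ql',\qm'\in Y^{++}$, whereas $T_{\ql.r_i}*T_\qm$ is a product of two basis elements of a different shape. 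The only algebraic identities available relate $T_{\ql.r_i}$ back to $T_\ql*T_i^{-1}$ (Remark \ref{3.2}), and substituting that in merely reproduces the quantity $T_\ql*T_i*T_\qm$ you started with. So the entire content of the theorem --- where the alternating products $q_i^{*j}$ and the differences $q_i^{*(j+1)}-q_i^{*j}$ come from --- is concentrated in the one step you defer to ``bookkeeping.''

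The paper does not induct at all. It fixes $C_1$ and $C_4$, chooses apartments $A_1\supset C_1,C_2$ and $A_2\supset C_2,C_3,C_4$ of the extended tree, describes their closed chambers as stripes $S_1^k$, $S_2^k$ bounded by the walls $M(\qa_i,k)$, and stratifies the configurations by the largest $j$ such that $S_2^{-j}\subset A_1$. For each stratum it computes $\qr_1(C_4)$ (hence $d^W(C_1,C_4)=\ql+\qm-j\qa_i^\vee$), and counts the choices of $C_2$ directly: $q_i-1$ for $j=0$, $(q_i^{*(j+1)}-q_i^{*j})$ for $1\le j\le N-1$, and $q_i^{*N}$ times the value $(T_{\ql+r_i(\qm)}*T_i)(C_1,C_4)$ for $j\ge N$; the key building-theoretic input is the existence of an apartment containing a prescribed half-apartment of $A_1$ together with the remaining stripes of $A_2$ (\cite[2.9.2]{R11}). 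Your instinct to count in the rank-one tree is the right one, but you would need to carry out this stratified count explicitly (or find a genuinely non-circular recursion, e.g.\ by splitting $\qm$ rather than $\ql.r_i$, which raises its own existence issues in $Y^{++}$); as it stands the proposal does not prove the formula for $N>0$. Note also that the gallery counts of \ref{sc2} you invoke live in the residue twin building at a single vertex and do not directly supply the stripe-by-stripe count needed here.
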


\begin{remas*} 1) The case b)  is less interesting for us, as we try to express any element in the basis of \ref{3.5} for $^I\shh_R$ considered as a right $\shh_R(W^v)-$module.

\par 2) In the case a) we have $\qm-N\qa_i^\vee=r_i(\qm)$ and $\ql+\qm-N\qa_i^\vee\in Y^{++}$, as $\qa_i(\ql+\qm-N\qa_i^\vee)=\qa_i(\ql)-N$ and $\qa_j(\ql+\qm-N\qa_i^\vee)\geq\qa_j(\ql)+\qa_j(\qm)$ for $j\not=i$.
 So all $\qn$ such that $T_\qn$ appears on the right of the formula are in the $\qa_i^\vee-$chain between $\ql+\qm$ and $\ql+r_i(\qm)$; in particular they are all in $Y^{++}$.

 \par 3) We call relation a) or  relation b) the Bernstein-Lusztig relation for the $T_\ql$, (BLT) for short.
 We shall use it essentially when $\ql=\mu$.

 \par 4) When $\qa_i(\ql)$ or $\qa_i(\qm)$ is odd, we know that $q'_i=q_i$, \cf \ref{1.3}.5.
\end{remas*}

\begin{proof} We consider $C_1,C_2,C_3,C_4$ and $M_i^\infty$ as in Proposition \ref{3.7}.
When $N=0$ the results come from \ref{3.1}.
We concentrate on the case $0<N=\qa_i(\qm)\leq\qa_i(\ql)$; the other case is left to the reader.
 We have to evaluate $d^W(C_1,C_4)$ and, given $C_1,C_4$ satisfying $d^W(C_1,C_4)=\mathbf u$, to count the number of possible $C_2,C_3$.
 By Proposition \ref{3.7} everything is in the extended tree $\SHI(M_i^\infty)$, which is semi-homogeneous with thicknesses $1+q_i,1+q_i'$.
 By Proposition \ref{3.1}.2, $C_3$ is well determined by $C_2,C_4$ and lies in any apartment containing $C_2,C_4$; moreover $d^W(C_2,C_4)=r_i(\qm).r_i$.

 \par We consider an apartment $A_1$ (resp. $A_2$) of $\SHI(M_i^\infty)$ containing $C_1$ and $C_2$ (resp. $C_2$ and $C_4$, hence also $C_3$).
 We identify $(A_1,C_1)$ and $(A_2,C_2)$ with $(\A,C_0^+)$; we consider the retraction $\qr_1$ (resp. $\qr_2$) of $\SHI(M_i^\infty)$ onto $A_1$ (resp. $A_2$) with center $C_1$ (resp. $C_2$).
 The closed chambers in an apartment of $\SHI(M_i^\infty)$ are stripes limited by walls of direction $M_i^\infty$.
 In $A_1=\A$, these walls are $M(\qa_i,n)$, $n\in\Z$ and we write $S_1^k$ the stripe $S_1^k=\{x\mid k\leq\qa_i(x)\leq k+1\}$, in particular $C_1\subset S_1^0$ and $C_2\subset S_1^{\qa_i(\ql)}$.
 In $A_2=\A$, we get also stripes $S_2^k=\{x\mid k\leq\qa_i(x)\leq k+1\}$ such that $C_2\subset S_2^0=S_1^{\qa_i(\ql)}$, $C_3\subset S_2^{-1}$ and $C_4\subset S_2^{-N-1}$.

 \par We have $C_2=C_1+\ql$ in $A_1$ and $\qr_2(C_4)=C_3+r_i(\qm)$ in $A_2$.
 To find $d^W(C_1,C_4)$ we have to determine the image of $C_4$ under $\qr_1$.
 It depends actually on the highest number $j$ such that $S_2^{-j}$ (hence also $S_2^{0},\cdots,S_2^{-j+1}$) is in $A_1$.
 A classical result for affine buildings (clear for extended trees and generalized to hovels in \cite[2.9.2]{R11}) tells, then, that there is an apartment containing the stripes $S_2^{-j-1},\cdots,S_2^{-N-1}$ and the half apartment $\bigcup_{k\leq\qa_i(\ql)-j-1}\,S_1^k$.

 \par If $j=0$, $S_2^{-1}$ or $C_3$ is not in $A_1$, so $\qr_1(C_3)=C_2$ and, more generally, $\qr_1(S_2^{-k})=S_1^{\qa_i(\ql)+k-1}$, for $k\geq 1$ (see the picture below).
 We get $\qr_1(C_4)=C_2+\qm$ and $d^W(C_1,C_4)=\ql+\qm$.
 When $C_1$ and $C_4$ are fixed with this $W-$distance, we have to count the number of possible $C_2$.
 But $C_3\subset S_2^{-1}$ is in the enclosure of $C_1\subset S_1^0$ and $C_4\subset S_2^{-N-1}$: it is well determined by $C_1$ and $C_4$. Now $C_2$ has to share its panel of type $i$ with $C_3$ and to be neither in $S_2^{-1}$ nor in $S_1^{\qa_i(\ql)-1}$; so there are $q_i-1$ possibilities.

\begin{center}
 \setlength{\unitlength}{1cm}
\vskip-60pt
\begin{picture}(7,7)

\put(-0.1,0.6){$0$}
\put(0.3,0.6){$ C_1$}
\put(5.75,1.4){$\scriptstyle{C_3\subset S_2^{-1}}$}
\put(6.1,0.6){$\scriptstyle{C_2\subset S_2^0}$}
\put(-0.6,0.95){$A_1$}
\put(2.05,4.6){$A_2$}
\put(3,4.1){$C_4$}

\put(3.2,3.8){\circle*{0.1}}
\put(3.2,3.8){\circle*{0.1}}

\put(0,1){\circle*{0.1}}
\put(1,1){\circle*{0.1}}
\put(2,1){\circle*{0.1}}
\put(3,1){\circle*{0.1}}
\put(4,1){\circle*{0.1}}
\put(5,1){\circle*{0.1}}
\put(6,1){\circle*{0.1}}
\put(3.9,3.1){\circle*{0.1}}
\put(4.6,2.4){\circle*{0.1}}
\put(5.3,1.7){\circle*{0.1}}

\put(0,1){\line(1,0){6}}
\put(6,1){\line(-1,1){2.8}}

\thicklines
\put(0,1){\vector(1,0){1}}
\put(6,1){\vector(-1,1){0.7}}
\put(3.2,3.8){\vector(-1,1){0.7}}
\put(6,1){\vector(1,0){1}}

\end{picture}
\end{center}

 \par If $1\leq j\leq N-1$, then $A_1$ contains $S_2^{0}=S_1^{\qa_i(\ql)},S_2^{-1}=S_1^{\qa_i(\ql)-1},\cdots,S_2^{-j}=S_1^{\qa_i(\ql)-j}$ but not $S_2^{-j-1},\cdots,S_2^{-N-1}$ (see the picture below).
 So $\qr_1(S_2^{-k})=S_1^{\qa_i(\ql)-2j+k}$, for $k\geq j$.
 The image of the line segment $[x_2,x_4]=[x_2,x_2+\qm]$ under $\qr_1$ is $\qr_1([x_2,x_4])=[x_2,x_2+(j/N)r_i(\qm)]\cup[x_2+(j/N)r_i(\qm),x_2+(j/N)r_i(\qm)+((N-j)/N)\qm]$.
 As $N=\qa_i(\qm)$ and $r_i(\qm)=\qm-N\qa_i^\vee$, this means that $\qr_1(C_4)=C_2+\qm-j\qa_i^\vee$.
  When $C_1$ and $C_4$ are fixed with this $W-$distance, we have to count the number of possible $C_2$.
  As $S_1^{0},\cdots,S_1^{\qa_i(\ql)-j-1},S_2^{-j-1},\cdots,S_2^{-N-1}$ are well determined by $C_1,C_4$, we have to count the possibilities for $(S_1^{\qa_i(\ql)-j},\cdots,S_1^{\qa_i(\ql)})$.
  As above there are $q_i-1$ possibilities for $S_1^{\qa_i(\ql)-j}$ (or $q'_i-1$ if $j$ is odd) and then $q'_i$ (or $q_i$) possibilities for $S_1^{\qa_i(\ql)-j+1}$, \etc.
  Finally the total number of possibilities is $(q_i-1)q'_iq_iq'_i\cdots$ or $(q'_i-1)q_iq'_iq_i\cdots$ (according to $j$ being even or odd) with $j+1$ terms in the product.
  The last factor is necessarily $q_i$, so this total number is $(q_i^{*j+1}-q_i^{*j})$.

  \cache{J'ai enlev\'e la branche sans \'etiquette dans le dessin ci-dessous }
\cache{ 
\begin{center}
 \setlength{\unitlength}{1cm}
\vskip-60pt
\begin{picture}(7,7)

\put(-0.1,0.6){$0$}
\put(0.3,0.6){$ C_1$}
\put(4.9,0.6){$\scriptstyle{C_3\subset S_2^{-1}}$}
\put(6.15,0.6){$\scriptstyle{C_2\subset S_2^0=S_1^{\alpha_i(\lambda)}}$}
\put(-0.6,0.95){$A_1$}
\put(1.35,3.1){$A_2$}
\put(2.4,2.7){$C_4$}
\put(3.65,1.4){$\scriptstyle{S_2^{-j-1}}$}

\put(3.2,3.8){\circle*{0.1}}
\put(3.2,3.8){\circle*{0.1}}

\put(0,1){\circle*{0.1}}
\put(1,1){\circle*{0.1}}
\put(2,1){\circle*{0.1}}
\put(3,1){\circle*{0.1}}
\put(4,1){\circle*{0.1}}
\put(5,1){\circle*{0.1}}
\put(6,1){\circle*{0.1}}
\put(3.3,1,7){\circle*{0.1}}
\put(2.6,2.4){\circle*{0.1}}
\put(3.9,3.1){\circle*{0.1}}
\put(4.6,2.4){\circle*{0.1}}
\put(5.3,1.7){\circle*{0.1}}

\put(0,1){\line(1,0){6}}
\put(6,1){\line(-1,1){2.8}}
\put(4,1){\line(-1,1){1.4}}

\thicklines
\put(0,1){\vector(1,0){1}}
\put(6,1){\vector(-1,0){1}}
\put(2.6,2.4){\vector(-1,1){0.7}}
\put(6,1){\vector(1,0){1}}

\end{picture}
\end{center}
} 

\begin{center}
 \setlength{\unitlength}{1cm}
\vskip-60pt
\begin{picture}(7,7)

\put(-0.1,0.6){$0$}
\put(0.3,0.6){$ C_1$}
\put(4.9,0.6){$\scriptstyle{C_3\subset S_2^{-1}}$}
\put(6.15,0.6){$\scriptstyle{C_2\subset S_2^0=S_1^{\alpha_i(\lambda)}}$}
\put(-0.6,0.95){$A_1$}
\put(1.35,3.1){$A_2$}
\put(2.4,2.7){$C_4$}
\put(3.65,1.4){$\scriptstyle{S_2^{-j-1}}$}


\put(0,1){\circle*{0.1}}
\put(1,1){\circle*{0.1}}
\put(2,1){\circle*{0.1}}
\put(3,1){\circle*{0.1}}
\put(4,1){\circle*{0.1}}
\put(5,1){\circle*{0.1}}
\put(6,1){\circle*{0.1}}
\put(3.3,1,7){\circle*{0.1}}
\put(2.6,2.4){\circle*{0.1}}

\put(0,1){\line(1,0){6}}
\put(4,1){\line(-1,1){1.4}}

\thicklines
\put(0,1){\vector(1,0){1}}
\put(6,1){\vector(-1,0){1}}
\put(2.6,2.4){\vector(-1,1){0.7}}
\put(6,1){\vector(1,0){1}}

\end{picture}
\end{center}

 \par It is convenient to look at the cases $j=N$ or $j=N+1$ simultaneously.
 This means that $S_2^{-N}=S_1^{\qa_i(\ql)-N}$ is in $A_1$; in particular the panel $F_4^i$ of type $i$ of $C_4$ is in $A_1$, in the wall $\{x\mid\qa_i(x)=\qa_i(\ql)-N\}$.
 More precisely $F_4^i$ is the panel of type $i$ of $C'_4=C_1+\ql+r_i(\qm)\subset A_1$.
 This means that $(T_{\ql+r_i(\qm)}*T_i)(C_1,C_4)\geq 1$.
 Conversely if $C_1,C_4$ are fixed satisfying this condition, we can find $C_2,C_3$ with the required $W-$distances.
 We have now to count the number of possibilities for $C_2,C_3$ \ie for $C_2$ or for $(S_1^{\qa_i(\ql)-N},\cdots,S_1^{\qa_i(\ql)})$.
 The number of possibilities for $S_1^{\qa_i(\ql)-N}$ is exactly $(T_{\ql+r_i(\qm)}*T_i)(C_1,C_4)$.
 Then the number of possibilities for $S_1^{\qa_i(\ql)-N+1},\cdots,S_1^{\qa_i(\ql)}$ is alternatively $q_i$ or $q'_i$.
 Finally the total number of possibilities for $C_2$ is $q_i^{*N}(T_{\ql+r_i(\qm)}*T_i)(C_1,C_4)$ (as, when $N$ is odd, $q_i=q'_i$).
\end{proof}


\section{New basis}\label{s4}

In this section, we prove that left multiplication by $T_\mu$, for $\mu \in Y^{++}$, is injective. That allows us to introduce a new basis of the Iwahori-Hecke algebra $^I\shh_R$ in terms of $(T_w)_{w\in W^v}$ and $(X^\lambda)_{\lambda\in Y^+}$.

\medskip
We suppose $\Z\subset R$ and  each $q_i$, $q'_i$ in $R^\times$, the invertibles in $R$.
As we saw in 4.5,  $^I\shh_R$ is a free right $\shh_R(W^v)-$module with basis $\{T_\ql\mid\ql\in Y^{+}\}$.
For $\lambda\in Y^{++}$ and $H\in\shh_R(W^v)$, we say that $T_\ql*H$ 
is of degree $\ql$.

For $i\in I$ and $\Omega$ a subset of the model apartment $\mathbb A$, we write $c(i)(\Omega)$ the convex hull of $\Omega\cup r_i( \Omega)$. For $(i_1, i_2, \ldots, i_h)\in I^h$  and $(\ql_0, \ql_1, \ldots, \ql_h)\in  (Y^{++})^{h+1}$, we define :
$D(i_h) (\ql_{h-1} , \ql_h)= \ql_{h-1}+c(i_h) (\ql_h)$ and, by induction for $k$ from $h-1$ to 1,
$D(i_k,\ldots, i_h) (\ql_{k-1}, \ql_{k}, \ldots  , \ql_h)= \ql_{k-1}+c(i_k) (D(i_{k+1},\ldots, i_h) (\ql_{k}, \ql_{k+1}, \ldots  , \ql_h))$ (of course $c(i_h)(\lambda_h) = c(i_h)(\{\lambda_h\})$.

\begin{lemm}\label{4.1} With notation as above,
\medskip
\par a) if $\ql'_{h-1}\in D(i_h) (\ql_{h-1} , \ql_h)$, then

\noindent  $D(i_k,\ldots, i_{h-2}, i_{h-1}) (\ql_{k-1}, \ql_{k}, \ldots  ,\ql_{h-2}, \ql'_{h-1})\subset D(i_k,\ldots, i_{h-1}, i_h) (\ql_{k-1}, \ql_{k}, \ldots  , \ql_{h-1}, \ql_h)$;

\medskip
\par b) if $r_{i_1} r_{i_2}\cdots r_{i_h}$ is a reduced word in $W^v$ and $ \ql\in D(i_1,\ldots, i_h) (\ql_{0}, \ql_{1}, \ldots  , \ql_h)$, then $ \ql_{0}+r_{i_1} (\ql_1)+r_{i_1}r_{i_2} (\ql_{2})+\cdots +r_{i_1} r_{i_2}\cdots r_{i_h}(\ql_{h})\leq _{Q^\vee_\R}\, \ql$.
\end{lemm}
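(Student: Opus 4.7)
The plan is to handle (a) by a direct monotonicity argument and (b) by induction on $h$ after reducing to the extreme points of the iterated convex hull. For (a), the key observation is that $\Omega\mapsto c(i)(\Omega)=\mathrm{conv}(\Omega\cup r_i(\Omega))$ is monotone with respect to inclusion. Starting from the set inclusion $\{\lambda'_{h-1}\}\subseteq D(i_h)(\lambda_{h-1},\lambda_h)$ provided by the hypothesis, applying $\lambda_{h-2}+c(i_{h-1})(-)$ yields $D(i_{h-1})(\lambda_{h-2},\lambda'_{h-1})\subseteq D(i_{h-1},i_h)(\lambda_{h-2},\lambda_{h-1},\lambda_h)$; iterating this monotonicity at the outer levels $c(i_{h-2}),\ldots,c(i_k)$ produces the claimed inclusion.

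For (b), I induct on $h$, the case $h=0$ being trivial. Writing $D(i_1,\ldots,i_h)(\lambda_0,\ldots,\lambda_h)=\lambda_0+c(i_1)(E)$ with $E=D(i_2,\ldots,i_h)(\lambda_1,\ldots,\lambda_h)$, an inductive description of extreme points shows that every extreme point of the outer polytope has the form
\[
P_\epsilon=\lambda_0+\sum_{k=1}^{h} W_k^\epsilon(\lambda_k),\qquad W_k^\epsilon=w_1^\epsilon\cdots w_k^\epsilon,\ w_k^\epsilon=\begin{cases} r_{i_k} & \text{if }\epsilon_k=1,\\ 1 & \text{if }\epsilon_k=0,\end{cases}
\]
for $\epsilon=(\epsilon_1,\ldots,\epsilon_h)\in\{0,1\}^h$. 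Since $Q^\vee_{\R+}$ is a convex cone, it suffices to check $\xi\leq_{Q^\vee_\R} P_\epsilon$ for each extreme point, where $\xi=P_{(1,\ldots,1)}$. When $\epsilon_1=0$ this is easy: setting $\eta:=\lambda_1+r_{i_2}\lambda_2+\cdots+r_{i_2}\cdots r_{i_h}\lambda_h$, the inductive hypothesis for the reduced word $r_{i_2}\cdots r_{i_h}$ gives $\eta\leq_{Q^\vee_\R} P_\epsilon-\lambda_0$, and the identity $r_{i_1}(\eta)=\eta-\alpha_{i_1}(\eta)\alpha_{i_1}^\vee$ combined with $\alpha_{i_1}(\eta)\geq 0$---which follows from reducedness (each $r_{i_k}\cdots r_{i_2}(\alpha_{i_1})$ is a positive root for $k\geq 2$, hence non-negative on $\lambda_k\in\overline{C^v_f}$)---yields $\xi-\lambda_0=r_{i_1}(\eta)\leq_{Q^\vee_\R}\eta\leq_{Q^\vee_\R} P_\epsilon-\lambda_0$.

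The main obstacle is the case $\epsilon_1=1$, where $P_\epsilon-\lambda_0=r_{i_1}(\tilde P)$ for some extreme $\tilde P\in E$, reducing the problem to $r_{i_1}(\tilde P-\eta)\in Q^\vee_{\R+}$ while the induction only delivers $\tilde P-\eta\in Q^\vee_{\R+}$ (and $r_{i_1}$ does not preserve this cone). To get around this, I would interpolate one ``flip'' at a time between $v_k=r_{i_1}\cdots r_{i_k}$ and $W_k^\epsilon$ to derive the telescoping identity
\[
P_\epsilon-\xi=\sum_{j:\epsilon_j=0} W_{j-1}^\epsilon(\alpha_{i_j}^\vee)\cdot s_j,\qquad s_j=\sum_{k\geq j}\bigl(r_{i_k}\cdots r_{i_{j+1}}(\alpha_{i_j})\bigr)(\lambda_k).
\]
Reducedness of every consecutive subword $r_{i_j}r_{i_{j+1}}\cdots r_{i_k}$ makes $r_{i_k}\cdots r_{i_{j+1}}(\alpha_{i_j})$ a positive root, forcing $s_j\geq 0$; the individual coroots $W_{j-1}^\epsilon(\alpha_{i_j}^\vee)$ need not lie in $Q^\vee_{\R+}$, but a careful combinatorial regrouping---exploiting the Cartan inequalities $a_{i,j}\leq 0$ for $i\neq j$ together with the subword structure inherited from the reduced expression---should show that the total sum lies in $Q^\vee_{\R+}$. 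This regrouping is the delicate heart of the argument and the main use of the reduced word hypothesis.
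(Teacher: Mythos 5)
Part a) of your argument is fine and is the same monotonicity observation the paper relies on. In part b), the reduction to the extreme points $P_\epsilon=\ql_0+\sum_{k=1}^{h}W_k^\epsilon(\ql_k)$ is legitimate (membership in $\xi+Q^\vee_{\R+}$ is a convex condition, and the extreme points of the iterated hull are indeed among the $P_\epsilon$), and the case $\epsilon_1=0$ is correctly dispatched. But the case you yourself flag as ``the main obstacle'' is left unproved: you express $P_\epsilon-\xi$ as a sum of terms $W_{j-1}^\epsilon(\qa_{i_j}^\vee)\,s_j$ with $s_j\geq 0$, you correctly observe that the coroot factors $W_{j-1}^\epsilon(\qa_{i_j}^\vee)$ can fail to lie in $Q^\vee_{\R+}$ (for the reduced word $r_1r_2r_1$ in type $A_2$ and $\epsilon=(1,0,0)$, the $j=3$ term is $-\qa_1(\ql_3)\,\qa_1^\vee$), and the ``careful combinatorial regrouping'' that is supposed to rescue the total sum is never exhibited. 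As written, b) is not proved.

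The gap closes at once if you compare the two sums index by index rather than flip by flip. For each fixed $k$, $W_k^\epsilon$ is the product of a subword of the reduced expression $r_{i_1}\cdots r_{i_k}$, hence $W_k^\epsilon\leq_{_{B}}r_{i_1}\cdots r_{i_k}$ in the Bruhat order; since $\ql_k\in\overline{C^v_f}$, the standard fact that $w\leq_{_{B}}v$ implies $v(\ql)\leq_{Q^\vee_\R}w(\ql)$ for dominant $\ql$ (proved by climbing a chain of single-letter deletions from a reduced word, each of which adds a non-negative multiple of a positive coroot --- exactly the computation you already carried out to get $\qa_{i_1}(\eta)\geq 0$ in the $\epsilon_1=0$ case) gives $r_{i_1}\cdots r_{i_k}(\ql_k)\leq_{Q^\vee_\R}W_k^\epsilon(\ql_k)$ separately for every $k$; summing over $k$ yields $\xi\leq_{Q^\vee_\R}P_\epsilon$ with no cross-term analysis needed. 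This is in substance what the paper does: it first over-approximates $D(i_1,\ldots,i_h)(\ql_0,\ldots,\ql_h)$ by the Minkowski sum $\ql_0+\sum_{k}conv(\{w(\ql_k)\mid w\leq_{_{B}}r_{i_1}\cdots r_{i_k}\})$, which decouples the indices and reduces everything to that single dominance inequality.
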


\begin {rema*}
If the expression $r_{i_1} r_{i_2}\cdots r_{i_h}$ is reduced, we get $D(i_1,\ldots, i_h) (0,0, \ldots  , 0,\ql_h) =conv (\{w(\ql_h)\mid w\leq_{_{B}} r_{i_1} r_{i_2}\cdots r_{i_h}\})$ where $\leq_{_{B}}$ denotes the Bruhat order.
\end{rema*}
\begin{proof}

\par The proof of a) is easy.

\par b) We have   $D(i_1,\ldots, i_h) (\ql_{0}, \ql_{1}, \ldots  , \ql_h)\subset   \ql_{0}+c(i_1) (\ql_1)+c(i_1,i_2) (\ql_{2})+\cdots+ c(i_1,i_2,\ldots ,i_h)(\ql_{h})$, with $c(i_1,i_2,\ldots ,i_k)(\ql_{k}) =c(i_1)(c(i_2)(\ldots ( c(i_k)(\ql_{k}))\!\ldots\!))=conv (\{w(\ql_k)\mid w\leq_{_{B}} r_{i_1} r_{i_2}\cdots r_{i_k}\})$ where $0\leq k\leq h$ and $\leq_{_{B}}$ denotes the Bruhat order.
For  $w\leq_{_{B}} r_{i_1} r_{i_2}\cdots r_{i_k}$, there is a sequence $w=w_0$, $w_1,\ldots,w_r=r_{i_1} r_{i_2}\cdots r_{i_k}$ such that, for each $1\leq i<r$, there is a reduced decomposition
$w_{i+1}=r_{j_1} r_{j_2}\cdots r_{j_{p-1}}r_{j_p}r_{j_{p+1}} \cdots r_{j_q}$
with $w_{i}=r_{j_1} r_{j_2}\cdots r_{j_{p-1}}r_{j_{p+1}}\cdots r_{j_q}$.
 Then 
 $w_{i}(\ql_k)=w_{i+1}(\ql_k)+\qa_{j_p}\big (r_{j_{p+1}} \cdots r_{j_q}(\ql_k)\big )r_{j_1} r_{j_2}\cdots r_{j_{p-1}}(\qa_{j_p}^\vee)$ and $Q^\vee_+ $ contains the term $\big (r_{j_q} \cdots r_{j_{p+1}}(\qa_{j_p})\big ) (\ql_k)r_{j_1} r_{j_2}\cdots r_{j_{p-1}}(\qa_{j_p}^\vee)$ by minimality of the expressions $r_{j_1} r_{j_2}\cdots r_{j_{p-1}}r_{j_p}$ and $r_{j_q} \cdots r_{j_{p+1}}r_{j_p}$.
 So we get by induction that $w(\ql_k)\geq_{Q^\vee}r_{i_1} r_{i_2}\cdots r_{i_k}(\ql_{k})$ and $w(\qm)\geq_{Q^\vee_\R}r_{i_1} r_{i_2}\cdots r_{i_k}(\ql_{k})$ for any $\qm\in c(i_1,\ldots,i_k)(\ql_k)$.
 The expected result is now clear.
\end{proof}

\begin{prop}\label{4.2}
For any expression $H_k=T_{\ql_0}*T_{i_1}*T_{\ql_1}*T_{i_2}*\cdots *T_{\ql_{k-1}}*T_{i_k}*T_{\ql_k}*H$ with $\ql_i\in Y^{++} $, 
$H\in \shh_\Z(W^v)$ and any $\qm \in Y^{++} $ sufficiently great, the product $T_\qm* H_k$ may be written as a $R$-linear combination of elements $T_{\qn}*H_{\qn}$ with $\qn \in \qm+ D(i_1,\ldots, i_k) (\ql_{0}, \ql_{1}, \ldots  , \ql_k)$ and $H_{\qn}\in  \shh_R(W^v)$.

\par Moreover, if  $r_{i_1} r_{i_2}\cdots r_{i_k}$ is a reduced word and $\qn_{0}= \qm + \ql_{0}+r_{i_1} (\ql_1)+r_{i_1}r_{i_2} (\ql_{2})+\cdots +r_{i_1} r_{i_2}\cdots r_{i_k}(\ql_{k})$, then $H_{\qn_{0}}\in R^\times T_{i_1}*T_{i_2}*\dots *T_{i_k}*H$ and more precisely the constant in $R^\times$ is a primitive monomial in the $q_i, {q'_i}$.
Further, $H_{\qn_{0}}$ is the only $H_{\qn}$ in $R\,T_{i_1}*T_{i_2}*\dots *T_{i_k}*H$.
\end{prop}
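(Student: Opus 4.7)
The plan is to prove this by induction on $k \geq 0$. The base case $k = 0$ is immediate from Theorem \ref{ThAlgebra}: $T_\qm * T_{\ql_0} * H = T_{\qm + \ql_0} * H$ since $\qm, \ql_0 \in Y^{++}$, matching the claim with the unique value $\qn_0 = \qm + \ql_0$ and $H_{\qn_0} = H$ (a trivial primitive monomial).

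For the inductive step, write $H_k = T_{\ql_0} * T_{i_1} * T_{\ql_1} * K_{k-1}$ where $K_{k-1} := T_{i_2} * T_{\ql_2} * \cdots * T_{i_k} * T_{\ql_k} * H$ is itself of $H_{k-1}$-type (take $\widetilde{\ql}_0 = 0$, $\widetilde{i}_\ell = i_{\ell+1}$, $\widetilde{\ql}_\ell = \ql_{\ell+1}$, $\widetilde{H} = H$). For $\qm$ sufficiently large, $\qa_{i_1}(\qm + \ql_0) \geq N_1 := \qa_{i_1}(\ql_1)$, so Theorem \ref{3.8}(a) applied to $T_{\qm + \ql_0} * T_{i_1} * T_{\ql_1}$ produces a leading term $q_{i_1}^{*N_1} T_{\qm'} * T_{i_1}$ with $\qm' := \qm + \ql_0 + r_{i_1}(\ql_1)$, plus $N_1$ intermediate terms $(q_{i_1}^{*j+1} - q_{i_1}^{*j}) T_{\qn_j}$ with $\qn_j := \qm + \ql_0 + \ql_1 - j\qa_{i_1}^\vee$ for $0 \leq j \leq N_1 - 1$. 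Right-multiplying by $K_{k-1}$ decomposes $T_\qm * H_k$ accordingly. For each intermediate $\qn_j$, one verifies $\qn_j \in Y^{++}$ and remains sufficiently large (using $\qa_i(\qa_{i_1}^\vee) \leq 0$ for $i \neq i_1$), so the inductive hypothesis applies to $T_{\qn_j} * K_{k-1}$, yielding an expansion indexed by $\widetilde{\qn} \in \qn_j + D(i_2, \ldots, i_k)(0, \ql_2, \ldots, \ql_k)$. The containment $\widetilde{\qn} - \qm \in D(i_1, \ldots, i_k)(\ql_0, \ldots, \ql_k)$ reduces to showing $\mu - j\qa_{i_1}^\vee \in c(i_1)(D(i_2, \ldots, i_k)(\ql_1, \ldots, \ql_k))$ for each $\mu$ in $D(i_2, \ldots, i_k)(\ql_1, \ldots, \ql_k)$: since $\qa_{i_1}(\mu) \geq N_1 > j$ (using the Cartan-matrix negativity of $\qa_{i_1}(\qa_{i_\ell}^\vee)$ when $i_1 \neq i_\ell$, and a direct range check along $\qa_{i_1}^\vee$ when $i_1 = i_\ell$), the point $\mu - j\qa_{i_1}^\vee$ lies on the segment from $\mu$ to $r_{i_1}(\mu)$, hence inside $c(i_1)(\cdot)$.

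The delicate part is the leading term $q_{i_1}^{*N_1} T_{\qm'} * T_{i_1} * K_{k-1}$: applying the inductive hypothesis to $T_{\qm'} * K_{k-1}$ gives an expansion with extremal value $\widetilde{\qn}_0^{\mathrm{lead}} = \qm' + r_{i_2}(\ql_2) + \cdots + r_{i_2} \cdots r_{i_k}(\ql_k)$ and a primitive-monomial coefficient times $T_{i_2} * \cdots * T_{i_k} * H$. Reintroducing $T_{i_1}$ is done by commuting it through these terms via iterated application of Proposition \ref{3.1}; the extremal coefficient picks up the factor $q_{i_1}^{*N_1}$ and the reflection $r_{i_1}$ acts on the remaining translation contributions, producing exactly $\qn_0 = \qm + \ql_0 + r_{i_1}(\ql_1) + r_{i_1}r_{i_2}(\ql_2) + \cdots + r_{i_1}\cdots r_{i_k}(\ql_k)$ with coefficient a primitive monomial (the product of the per-step $q_{i_\ell}^{*N_\ell}$ factors) times $T_{i_1} * T_{i_2} * \cdots * T_{i_k} * H$. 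The uniqueness statement then follows from Lemma \ref{4.1}(b): every other $\qn$ in the expansion satisfies $\qn_0 \leq_{Q^\vee_\R} \qn$ strictly, which forces at least one non-leading BL selection somewhere and hence a strictly shorter product in the trailing spherical Hecke component, precluding membership in $R \cdot T_{i_1} * \cdots * T_{i_k} * H$. The main obstacle is exactly this leading-term bookkeeping: the extra $T_{i_1}$ wedged between $T_{\qm'}$ and the $H_{k-1}$-type expression $K_{k-1}$ blocks a direct reapplication of the inductive hypothesis, and correctly reintroducing it via Proposition \ref{3.1} while tracking the primitive monomial and certifying uniqueness of the extremal contribution is the core technical step.
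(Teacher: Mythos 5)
There is a genuine gap at exactly the point you single out as ``the core technical step''. After applying Theorem \ref{3.8} to the leftmost triple $T_{\qm+\ql_0}*T_{i_1}*T_{\ql_1}$, the leading term is $q_{i_1}^{*N_1}\,T_{\qm'}*T_{i_1}*K_{k-1}$ with $T_{i_1}$ wedged between two translation blocks, and your proposed resolution --- first expand $T_{\qm'}*K_{k-1}$ by induction and then ``reintroduce $T_{i_1}$ by commuting it through via Proposition \ref{3.1}'' --- is not a valid operation. The algebra is noncommutative, so knowing $T_{\qm'}*K_{k-1}$ gives no information about $T_{\qm'}*T_{i_1}*K_{k-1}$; and if instead you try to evaluate $(T_{\qm'}*T_{i_1})*K_{k-1}$ directly, Proposition \ref{3.1} turns $T_{\qm'}*T_{i_1}$ into a combination of $T_{\qm'}$ and $T_{\qm'.r_{i_1}}$, after which you are forced to compute products of the form $T_{\ql.w}*T_{\qm}$ with $w\neq 1$ and $\qm\in Y^{++}$ --- and no formula for such products is available at this stage (Proposition \ref{3.1} only covers multiplication by a single $T_i$, and Theorem \ref{3.8} only covers $T_\ql*T_i*T_\qm$ with $\ql,\qm\in Y^{++}$). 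Establishing those products in general is essentially what Proposition \ref{4.2} is for, so the argument is circular at its crucial point.

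This is precisely why the paper peels the expression from the \emph{right} rather than the left: applying (BLT) to the rightmost triple $T_{\ql_{k-1}}*T_{i_k}*T_{\ql_k}$ leaves the stray generator $T_{i_k}$ adjacent to the trailing factor $H\in\shh_R(W^v)$, where it is harmlessly absorbed as $T_{i_k}*H\in\shh_R(W^v)$, so every resulting term is again of the canonical length-$(k-1)$ form and the induction closes. To make the rightmost application of (BLT) legitimate at every stage one needs $\qa_{i_k}(\ql_{k-1})\geq\qa_{i_k}(\ql_k)$ and its analogues further in, which is the point of the paper's ``normalizable'' condition and of the elaborate choice of $\qm_h$ in part (II) of its proof; a single ``$\qm$ sufficiently great'' prepended on the left, as in your setup, does not propagate this control to the inner triples. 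Your treatment of the intermediate terms $T_{\qn_j}*K_{k-1}$, the convexity containment via $c(i_1)(\cdot)$, and the uniqueness argument via Lemma \ref{4.1}(b) are all in the right spirit, but without a correct handling of the leading term the induction does not go through.
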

\begin{NB} So one may write $T_\qm* H_k=\sum_{\qn,w}\,a_{\qn,w}T_\qn*T_w$, with $a_{\qn,w}\in R$, $\qn$ running in $\qm+ D(i_1,\ldots, i_k) (\ql_{0}, \ql_{1}, \ldots  , \ql_k)$ and $w$ in $W$.
Moreover we get from the following proof, that each $a_{\qn,w}$ is a Laurent polynomial in the parameters $q_i, q'_i$, with coefficients in $\Z$; these polynomials depend only on the expression $H_k$, on $\A$ and on $W$.
\end{NB}

\begin {proof}

\par  The proof is easy in the following special case  (I).

(I).  We say that  the expression of $H_k$ is {\bf normalizable of length $k$} when it satisfies the following properties:

(i) $\ql_{k-1}-\ql_{k}\in Y^{++}$,

(ii) For all $h$ from $k$ to 2, $\ql_{h-2}-D(i_{h},\ldots, i_k) (\ql_{h-1}, \ql_{h}, \ldots  , \ql_k)\subset \overline{C^v_f}
$.

 \noindent For such an expression,  we write $D(H_k)= D(i_1,\ldots, i_k) (\ql_{0}, \ql_{1}, \ldots  , \ql_k)$.

We will then prove that $T_{\ql_0}*T_{i_1}*T_{\ql_1}*T_{i_2}*\cdots *T_{\ql_{k-1}}*T_{i_k}*T_{\ql_k}*H$ is a $\Z[ q_i, {q'_i}]-$linear combination of normalizable elements $H'_{k-1}$  of length $k-1$ such that  $D(H'_{k-1})\subset D(H_k)$.

Using the fact $\ql_{k-1}-\ql_{k}\in Y^{++}$ and  Theorem \ref{3.8}, or (BLT), for $T_{\ql_{k-1}}*T_{i_k}*T_{\ql_k}$, we have:

\noindent $
\begin{array}{cccc}
  (E)\quad H_k&= & &  q_{i_k}^{*({\qa_{i_k}(\ql_{k}))}} T_{\ql_0}*T_{i_1}*T_{\ql_1}*\cdots *T_{i_{k-1}}*T_{\ql^{(\qa_{i_k}(\ql_k))}_{k-1}}*(T_{i_k}*H)
  \\
 & &  + &\!{\displaystyle\sum_{ h=0}^{\qa_{i_k}(\ql_k)-1} }\! (q_i^{*(h+1)}-q_i^{*(h)})T_{\ql_0}*T_{i_1}*T_{\ql_1}*\cdots*T_{i_{k-1}} *T_{{\ql^{(h)}_{k-1}}}*H     \\

\end{array}$

\noindent  with ${\ql^{(h)}_{k-1}}=\ql_{k-1}+\ql_{k}-h\qa_{i_k}^\vee$, in particular $\ql^{(\qa_{i_k}(\ql_k))}_{k-1}=\ql_{k-1}+r_{i_k}(\ql_{k})$.
 Let us consider, for each $0\leq h\leq \qa_{i_k}(\ql_k)$, $\ql'_i=\ql_i$ for $i\leq k-2$ and $\ql'_{k-1}={\ql^{(h)}_{k-1}}$, then $(\ql'_0, \ldots, \ql'_{k-1})$ satisfies
 $\ql'_{k-2}-\ql'_{k-1}\in Y^{++}$, by (ii) for $h=k$ and $\ql'_{k-1}\in D(i_k) (\ql_{k-1}, \ql_k)$,
and,  for all $h$ from $k-1$ to 2, $\ql'_{h-2}-D(i_{h},\ldots, i_{k-1}) (\ql'_{h-1}, \ldots  , \ql'_{k-1})\subset \overline{C^v_f}$.
This last result comes from (ii) $\ql'_{h-2}-D(i_{h},\ldots, i_k) (\ql_{h-1}, \ql_{h}, \ldots  , \ql_k)\subset \overline{C^v_f}$ and  the inclusion $D(i_{h},\ldots, i_{k-1}) (\ql'_{h-1}, \ql'_{h}, \ldots  , \ql'_{k-1})
 \subset D(i_{h},\ldots, i_k) (\ql_{h-1}, \ql_{h}, \ldots  , \ql_k)$, coming from Lemma \ref{4.1} a). We have $T_{i_k}*H\in  \shh_R(W^v)$, so every term of the right hand side of (E) is a normalizable element $H'_{k-1}$ of length $k-1$ with $D(H'_{k-1})\subset D(H_k)$.

  By induction on each term, after $k$ steps, we obtain  $H_k$ as a $\Z[q_i, q'_i]-$linear combination of $T_{\qn}*H_{\qn}$ with $\qn \in  D(H_k)$ and $H_{\qn}\in  \shh_R(W^v)$.

Moreover, if the decomposition  $r_{i_1} r_{i_2}\cdots r_{i_k}$ is  reduced,
we take $\qn_0= \ql_{0}+r_{i_1} (\ql_1)+r_{i_1}r_{i_2} (\ql_{2})+\cdots +r_{i_1} r_{i_2}\cdots r_{i_k}(\ql_{k})$ and  look  more carefully at the decomposition
$(E)$. For $0\leq h<\qa_{i_k}(\ql_k)$, we have $\qn_{0}\notin D(T_{\ql_0}*T_{i_1}*T_{\ql_1}*\cdots *T_{{\ql^{(h)}_{k-1}}}*H) \subset D(H_k)$  by Lemma \ref{4.1}b). Indeed, if $\ql\in  D(T_{\ql_0}*T_{i_1}*T_{\ql_1}*\cdots *T_{{\ql^{(h)}_{k-1}}}*H) $, then, by minimality of $r_{i_1} r_{i_2}\cdots r_{i_k}$,
  $\qn_{0} \leq _{Q^\vee}\,\,\qn_{0}^{(h)} \leq _{Q^\vee} \ql$ with $\qn_{0}^{(h)} = \ql_{0}+r_{i_1} (\ql_1)+r_{i_1}r_{i_2} (\ql_{2})+ \cdots +r_{i_1} r_{i_2}\cdots r_{{k-1}}(\ql_{i_{k-1}}^{(h)})  \not= \qn_{0} $.
   So the unique term of degree $\qn_{0}$  of the final decomposition comes from the term of first kind (i.e. obtained like the first term of the right hand side of (E)) in every step of the reduction and is also the only term containing all the $T_{i_j}$.
   And so, we prove that, in front of the term $T_{\qn_0}*T_{i_1}*T_{i_2}*\cdots *T_{i_{k}}*H $ obtained for $\qn_0$, the constant is equal to the primitive monomial $C= q_{i_k}^{*(\qa_{i_k} (\ql_{k}))}q_{i_{k-1}}^{*(\qa_{i_{k-1}}(\ql_{{k-1}}+r_{i_k}(\ql_{k}) ))}\cdots q_{i_1}^{*(\qa_{i_1}(\ql_{{1}}+r_{i_2} (\ql_{2})+\cdots +r_{i_2}\cdots r_{i_k} (\ql_{k})))}$.

\medskip

 Let us consider now the general case and first prove the following result (II).
\medskip
\par
 (II) If  $H_k=T_{\ql_0}*T_{i_1}*T_{\ql_1}*T_{i_2}*\cdots *T_{\ql_{k-1}}*T_{i_k}*T_{\ql_k}*H$ with $\ql_i\in Y^{++} $,  $H\in \shh_R(W^v)$, we can choose  $\mu_0 \in Y^{++} $ such that
$T_{\mu_0}* H_k$ can be written as a $R$-linear combination of normalizable  expressions $H'_k$ of length $\leq k$ and with $D(H'_k)\subset \qm_0+ D(i_1,\ldots, i_k) (\ql_{0}, \ql_{1}, \ldots  , \ql_k) $.
\medskip
\par
We prove this result for $H_{k-h}= T_{\ql_h}*T_{i_{h+1}}*T_{\ql_{h+1}}*\cdots *T_{\ql_{k-1}}*T_{i_k}*T_{\ql_k}*H$ by  decreasing induction on $0\leq h\leq k-1$.
For $h=k-1$, we have $H_{1}= T_{\ql_{k-1}}*T_{i_k}*T_{\ql_k}*H$. Choose $\qm_{k-1} =\ql_{k}$, then $T_{\qm_{k-1}}* H_1$ is normalizable of length 1 and  $D(T_{\qm_{k-1}}* H_1)\subset \qm_{k-1}+D( i_k) (\ql_{k-1}, \ql_k) $.

Let  $0\leq h\leq k-2$ and suppose that we can choose $\qm_{h+1}\in Y^{++} $ such that $T_{\qm_{h+1}}*H_{k-(h+1)}= T_{\qm_{h+1}}*T_{\ql_{h+1}}*T_{i_{h+2}}*\cdots *T_{i_k}*T_{\ql_k}*H$ can be written as a $R-$linear combination of normalizable  expressions $H'_{k-(h+1)}$ of length $\leq {k-(h+1)}$ and with $D(H'_{k-(h+1)})\subset \qm_{h+1}+ D(i_{h+2},\ldots, i_k) (\ql_{h+1}, \ldots  , \ql_k) $.
Let us write these  normalizable expressions  $H'_{k-(h+1)}= T_{\ql'_0}*T_{i'_1}*T_{\ql'_1}*T_{i'_2}*\cdots *T_{i'_{k'}}*T_{\ql'_{k'}}*H'$, where $k'\leq {k-(h+1)}$ and $(\ql'_0, \ldots, \ql'_{k'})$ satisfies (i) and (ii).
Consider $\qm_{h}^{min}\in Y^{++} $ such that $\qm_{h}^{min}-D(i'_{1},\ldots, i'_{k'}) (\ql'_{0}, \ql'_{1}, \ldots  , \ql'_{k'})\subset \overline{C^v_f}$ for all these expressions.  We take  $\qm_{h}= \qm_{h}^{min}+2\qm_{h+1} +r_{i_{h+1}} (\qm_{h+1})$, then $T_{\qm_{h}} *H_{k-h}=T_{\qm_{h}} *T_{\ql_h}*T_{i_{h+1}}*H_{k-(h+1)}=T_{\qm_{h}^{min}+\ql_{h}+\qm_{h+1}}*T_{\qm_{h+1}+r_{i_{h+1}} (\qm_{h+1})}*T_{i_{h+1}}*H_{k-(h+1)} $.

By (BLT), we have:

 $\begin{array}{cc}(E') \quad &q_{i_{h+1}}^{*(\qa_{i_{h+1}}(\qm_{h+1}))}T_{\qm_{h} }*H_{k-h}= T_{\qm_{h}^{min}+\ql_{h}+2\qm_{h+1}}*T_{i_{h+1}}*T_{\qm_{h+1}}*H_{k-(h+1)}\\
 &\qquad -\displaystyle {\sum_{j=0}^{\qa_{i_{h+1}}( \qm_{h+1})-1}}(q_{i_{h+1}}^{*(j+1)}-q_{i_{h+1}}^{*(j)})T_{\ql_{h}+\qm_{h}^{min}+2\qm_{h+1}-j\qa_{i_{h+1}}^\vee}*T_{\qm_{h+1}}*H_{k-(h+1)}\\
\end{array}$.

\noindent  The choice of  $\qm_{h}^{min}$ and the hypothesis on $T_{\qm_{h+1}}*H_{k-(h+1)}$   allow us to say that we have written $T_{\qm_{h} }*H_{k-h}$ as a $R-$linear combination of normalizable  expressions $H'_{k-h}$ of length $\leq {k-h}$ with $D(H'_{k-h})\subset \qm_{h}^{min}+2\qm_{h+1}+ D(i_{h+1},\ldots, i_k) (\ql_{h},\ql_{h+1}+\qm_{h+1}, \ldots  , \ql_k) $ for the first term and  $D(H'_{k-h})\subset \qm_{h}^{min}+2\qm_{h+1}-j\qa_{i_{h+1}}^\vee+ D(i_{h+1},\ldots, i_k) (\ql_{h},\ql_{h+1}+\qm_{h+1}, \ldots  , \ql_k) $ for the others.
We need to be more precise to prove $D(H'_{k-h})\subset \qm_{h}+ D(i_{h+1},\ldots, i_k) (\ql_{h}, \ldots  , \ql_k) $.

By the part I) of this proof and the hypothesis on $T_{\qm_{h+1}}*H_{k-(h+1)}$  we know that this element can be written $\displaystyle {\sum_{\QL} }c_{\QL}T_{\QL}*H^{{\QL}}$ with $\QL=  \qm_{h+1}+ \QL'$ where $\QL'\in D(i_{h+2},\ldots, i_k) (\ql_{h+1}, \ldots  , \ql_k)
$ $c_\QL\in R$, $H^{{\QL}}\in\shh_R(W^v)$. The first term of the right hand side of $(E')$ becomes:

 \noindent $T_{\qm_{h}^{min}+\ql_{h}+2\qm_{h+1}}*T_{i_{h+1}}*(\displaystyle {\sum_{\QL} }c_{\QL}T_{\QL}*H^{{\QL}})= T_{\ql_{h}+2\qm_{h+1}}*(\displaystyle {\sum_{\QL} }c_{\QL}T_{\qm_{h}^{min}}*T_{i_{h+1}}*T_{\QL}*H^{{\QL}})$.

  By the condition on $\qm_{h}^{min}$ and (BLT), we write it

\noindent $\begin{array}{cc}  &T_{\ql_{h}+2\qm_{h+1}}*\biggl {(}\!\displaystyle {\sum_{\Lambda} }\!c_{\Lambda} \biggl {(}q_{i_{h+1}}^{* (\qa_{i_{h+1}}{(}\Lambda)) }T_{\qm_{h}^{min}+r_{i_{h+1}(\Lambda)}}*T_{i_{h+1}}*H^{{\Lambda}}\biggr {)}\biggr{)}\\
 &+    T_{\ql_{h}+2\qm_{h+1}}*\biggl {(}\!\displaystyle {\sum_{\Lambda} }\!c_{\Lambda} \biggl {(} \,\displaystyle {\sum_{j=0}^{\qa_{i_{h+1}} (\Lambda)-1}}  \!(q_{i_{h+1}}^{*(j+1)}-q_{i_{h+1}}^{*(j)})T_{\qm_{h}^{min}+\Lambda-j\qa_{i_{h+1}}^\vee}*H^{{\Lambda}}\biggr {)}\biggr{)}.\\
 \end{array}$

\noindent The first term of this sum will be $\displaystyle {\sum_{\Lambda} }c_{\Lambda}q_{i_{h+1}}^{* (\qa_{i_{h+1}} (\Lambda)) }T_{\ql_{h}+2\qm_{h+1}+\qm_{h}^{min}+r_{i_{h+1}(\Lambda)}}*T_{i_{h+1}}*H^{{\Lambda}}$ and ${\ql_{h}+2\qm_{h+1}+\qm_{h}^{min}+r_{i_{h+1}}(\Lambda)}= \ql_{h}+2\qm_{h+1}+\qm_{h}^{min}+r_{i_{h+1}}( \qm_{h+1})+r_{i_{h+1}}( \Lambda ' )= \ql_{h}+\qm_{h}+r_{i_{h+1}}( \Lambda ' )$  is an element of  $\ql_{h}+\qm_{h}+r_{i_{h+1}}( D(i_{h+2},\ldots, i_k) (\ql_{h+1}, \ldots  , \ql_k) )$ which is included, as expected,  in $\qm_{h}+ D(i_{h+1}, i_{h+2},\ldots, i_k) (\ql_{h}, \ql_{h+1}, \ldots  , \ql_k) $.

\parni The second term  is $\displaystyle {\sum_{\Lambda} }c_{\Lambda} \biggl {(}\displaystyle {\sum_{j=0}^{\qa_{i_{h+1}} (\Lambda)-1}}  (q_{i_{h+1}}^{*(j+1)}-q_{i_{h+1}}^{*(j)})T_{\ql_{h}+2\qm_{h+1}+\qm_{h}^{min}+\Lambda-j\qa_{i_{h+1}}^\vee}*H^{{\Lambda}} \biggr {)}$.
And we see that in fact (E') becomes (E''):

\noindent $\begin{array}{cl}  q_{i_{h+1}}^{*(\qa_{i_{h+1}}(\qm_{h+1}))}T_{\qm_{h} }*H_{k-h} & = \displaystyle {\sum_{\Lambda} }c_{\Lambda}q_{i_{h+1}}^{* (\qa_{i_{h+1}} (\Lambda)) }T_{ \ql_{h}+\qm_{h}+r_{i_{h+1}}( \Lambda ' )}*T_{i_{h+1}}*H^{{\Lambda}} \\
&\, +\displaystyle {\sum_{\Lambda} }c_{\Lambda}\displaystyle {\sum_{j=0}^{\qa_{i_{h+1}} (\Lambda)-1}} (q_{i_{h+1}}^{*(j+1)}-q_{i_{h+1}}^{*(j)})T_{\ql_{h}+2\qm_{h+1}+\qm_{h}^{min}+\Lambda-j\qa_{i_{h+1}}^\vee}*H^{{\Lambda}}\\
&\, -\displaystyle {\sum_{\Lambda} }c_{\Lambda}\displaystyle {\sum_{j=0}^{\qa_{i_{h+1}}( \qm_{h+1})-1}}(q_{i_{h+1}}^{*(j+1)}-q_{i_{h+1}}^{*(j)})T_{\ql_{h}+\qm_{h}^{min}+2\qm_{h+1}-j\qa_{i_{h+1}^\vee}}*T_{\Lambda}*H^{{\Lambda}}\\
 & = \displaystyle {\sum_{\Lambda} }c_{\Lambda}q_{i_{h+1}}^{* (\qa_{i_{h+1}} (\Lambda)) }T_{ \ql_{h}+\qm_{h}+r_{i_{h+1}}( \Lambda ' )}*T_{i_{h+1}}*H^{{\Lambda}}\\
&\, +\displaystyle {\sum_{\Lambda} }c_{\Lambda}\varepsilon_\Lambda \displaystyle {\sum_{j}} (q_{i_{h+1}}^{*(j+1)}-q_{i_{h+1}}^{*(j)})T_{\ql_{h}+2\qm_{h+1}+\qm_{h}^{min}+\Lambda-j\qa_{i_{h+1}}^\vee}*H^{{\Lambda}},\\
\end{array}$

\noindent where $\qa_{i_{h+1}}( \qm_{h+1})\leq j\leq\qa_{i_{h+1}}( \Lambda)-1$ and $\varepsilon_\Lambda= 1$  if $\qa_{i_{h+1}}( \qm_{h+1})\leq \qa_{i_{h+1}}( \Lambda)$ (\ie $\qa_{i_{h+1}}( \Lambda') \geq 0$) and  $\qa_{i_{h+1}}( \Lambda)\leq j\leq\qa_{i_{h+1}}( \qm_{h+1})-1$ and $\varepsilon_\Lambda= -1$   if $\qa_{i_{h+1}}( \qm_{h+1})\geq \qa_{i_{h+1}}( \Lambda)$  (\ie $\qa_{i_{h+1}}( \Lambda') \leq 0$).
For these values of $j$, by using $\Lambda-j\qa_{i_{h+1}}^\vee=r_{i_{h+1}} (\qm_{h+1})+ j'\qa_{i_{h+1}}^\vee+\Lambda '$ with $j'= \qa_{i_{h+1}}(\qm_{h+1})-j$,  we have
 $\ql_{h}+2\qm_{h+1}+\qm_{h}^{min}+\Lambda-j\qa_{i_{h+1}}^\vee=\ql_{h}+\qm_{h}+ j'\qa_{i_{h+1}}^\vee+\Lambda '$.
  If $\qa_{i_{h+1}}( \qm_{h+1})\leq \qa_{i_{h+1}}( \Lambda)$, $\qa_{i_{h+1}}( \qm_{h+1})-\qa_{i_{h+1}}( \Lambda)+1 \leq j'\leq 0 $ that is  $-\qa_{i_{h+1}}( \Lambda')+1 \leq j'\leq 0 $.
  If $\qa_{i_{h+1}}( \qm_{h+1})\geq \qa_{i_{h+1}}( \Lambda)$, then $\qa_{i_{h+1}}( \qm_{h+1})-\qa_{i_{h+1}}( \Lambda) \geq j'\geq 1 $ that is  $-\qa_{i_{h+1}}( \Lambda') \geq j'\geq 1 $.
 In all cases,
  $ j'\qa_{i_{h+1}}^\vee+\Lambda '$ is between $\Lambda '$ and $r_{i_{h+1}} (\Lambda ')$ and so, as expected,   $\ql_{h}+2\qm_{h+1}+\qm_{h}^{min}+\Lambda-j\qa_{i_{h+1}}^\vee\in \qm_{h}+ D(i_{h+1}, i_{h+2},\ldots, i_k) (\ql_{h}, \ql_{h+1}, \ldots  , \ql_k) $.

 So we have proved that $T_{\mu_0}* H_k$ can be written as a $R$-linear combination of normalizable  expressions $H'_k$ of length $\leq k$ and with $D(H'_k)\subset \qm_0+ D(i_1,\ldots, i_k) (\ql_{0}, \ql_{1}, \ldots  , \ql_k) $.
 By the I) of the proof we can write it as a $R$-linear combination of elements $T_{\qn}*H_{\qn}$ with $\qn \in \qm_{0}+ D(i_1,\ldots, i_k) (\ql_{0}, \ql_{1}, \ldots  , \ql_k)$ and $H_{\qn}\in  \shh_R(W^v)$.

Like in I) we can say, if moreover the decomposition  $r_{i_1} r_{i_2}\cdots r_{i_k}$ is  reduced, that only the term $\displaystyle {\sum_{\Lambda} }c_{\Lambda}q_{i_{h+1}}^{*( \qa_{i_{h+1}}( \Lambda)) }T_{\ql_{h}+2\qm_{h+1}+\qm_{h}^{min}+r_{i_{h+1}(\Lambda)}}*T_{i_{h+1}}*H^{{\Lambda}}$ (which contains $T_{i_{h+1}}$)  in $(E'')$ can give us  a term of lowest degree  $\qm_{h}+\ql_{h}+r_{i_{h+1}} (\ql_{h+1})+\cdots + r_{i_{h+1}}\cdots r_{i_k} (\ql_{k})$. More precisely,  the term of lowest degree comes from the term with $\Lambda_0 = \qm_{h+1}+\ql_{h+1}+r_{i_{h+2}} (\ql_{h+2})+\cdots + r_{i_{h+2}}\cdots r_{i_k} (\ql_{k})$ for which we have $ \qa_{i_{k+1} } (\Lambda_{0})\geq  \qa_{i_{k+1} } (\qm_{h+1})$.
So, it's easy to see by induction  that the coefficient of that term is a primitive monomial in the $q_i, {q'_i}$.
 \end{proof}

 \begin{coro}\label{4.3}

a)  For $\ql\in Y^+$ and $\qm\in Y^{++}$ sufficiently great, we have

 $T_\qm *T_\ql=\sum_{\ql  \leq _{Q^\vee} \qn\leq _{Q^\vee} \ql^{++}}T_{\qm+\qn} *H^\qn$ with $H^\qn\in \shh_R(W^v)$.

 b) More precisely,  if $H^\qn \not=  0$ then $\qm+\qn\in Y^{++}$ and $\qn $ is in the convex hull $conv(W^v. \ql^{++})$ of $W^v. \ql^{++}$ or better in the convex hull $conv(W^v. \ql^{++}, \geq \ql)$ of all $w'.\ql^{++}$ for $w'\leq_{_{B}}w_\ql $, with $w_\ql$ the smallest element of $W^v$ such that $\ql= w_\ql . \ql^{++}$.

 c) For $\qn=\ql$, $H^\ql$ is a strictly positive integer $a_\ql$ which may be written as a primitive monomial in $q_i, q'_i$, $i\in I$ (depending only on $\A$).

 d) In a) above, we may write $H^\qn=\sum_{w\in W^v}\,a_{\qm,\ql}^{\qn,w}T_w$ and, then each $a_{\qm,\ql}^{\qn,w}$ is a Laurent polynomial in the parameters $q_i, q'_i$ with coefficients in $\Z$, depending only on $\A$ and $W$.
 \end{coro}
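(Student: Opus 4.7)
The plan is to combine Corollary \ref{3.3} with Proposition \ref{4.2}. Let $w_\ql \in W^v$ be the element of minimal length such that $\ql = w_\ql . \ql^{++}$, and fix a reduced decomposition $w_\ql = r_{i_1} \cdots r_{i_k}$. By Corollary \ref{3.3},
\[
T_\ql \;=\; T_{w_\ql} * T_{\ql^{++}} * T_{w_\ql}^{-1} \;=\; T_{i_1} * T_{i_2} * \cdots * T_{i_k} * T_{\ql^{++}} * T_{w_\ql}^{-1},
\]
with $T_{w_\ql}^{-1} \in \shh_R(W^v)$. Inserting the identity $T_0 = \mathrm{Id}$ between consecutive factors $T_{i_j}$ (we have $0\in Y^{++}$) puts $T_\ql$ in the form of the expression $H_k$ of Proposition \ref{4.2}, with $\ql_0 = \cdots = \ql_{k-1} = 0$, $\ql_k = \ql^{++}$ and $H = T_{w_\ql}^{-1}$. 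For $\qm \in Y^{++}$ sufficiently great, Proposition \ref{4.2} expands $T_\qm * T_\ql$ as $\sum_\qn T_{\qm+\qn} * H^\qn$ with $H^\qn \in \shh_R(W^v)$ and $\qn \in D(i_1,\ldots,i_k)(0,\ldots,0,\ql^{++})$, yielding a).

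For b), the remark following Lemma \ref{4.1} identifies this set of $\qn$ with $conv(\{w' . \ql^{++} \mid w' \leq_B w_\ql\}) = conv(W^v.\ql^{++}, \geq \ql)$. The lower bound $\ql \leq_{Q^\vee} \qn$ follows from Lemma \ref{4.1}b) applied to the reduced expression (observing $r_{i_1}\cdots r_{i_k}(\ql^{++}) = \ql$), upgraded from $\leq_{Q^\vee_\R}$ to $\leq_{Q^\vee}$ thanks to the freeness of $(\qa_i^\vee)_{i\in I}$; the upper bound $\qn \leq_{Q^\vee} \ql^{++}$ follows from Lemma \ref{1.10}a) together with the convexity of $\ql^{++} - Q^\vee_+$. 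Finally, $\qm+\qn\in Y^{++}$ holds because $\qn$ varies in the bounded set $conv(W^v.\ql^{++})$ while $\qm$ can be chosen arbitrarily deep inside $C^v_f$.

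Part c) is read off from the ``leading term'' part of Proposition \ref{4.2}: in the reduced-word case, the unique minimal index is
\[
\qn_0 \;=\; 0 + r_{i_1}(0) + \cdots + r_{i_1}\cdots r_{i_{k-1}}(0) + r_{i_1}\cdots r_{i_k}(\ql^{++}) \;=\; w_\ql . \ql^{++} \;=\; \ql,
\]
with $H^\ql \in R^\times \cdot T_{i_1} * \cdots * T_{i_k} * T_{w_\ql}^{-1} = R^\times \cdot T_{w_\ql} * T_{w_\ql}^{-1} = R^\times \cdot \mathrm{Id}$, the constant being a primitive monomial in the $q_i, q'_i$. Since the $q_i, q'_i$ are positive integers (cardinalities of sets of chambers covering a panel), this monomial is a strictly positive integer $a_\ql$. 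For d), direct inspection of the proof of Proposition \ref{4.2} shows that the coefficients produced by each application of the Bernstein-Lusztig relation (BLT) are polynomials in the $q_i, q'_i$ with integer coefficients depending only on $\A$ and $W$; the inversion $T_i^{-1} = q_i^{-1}(T_i - (q_i-1)\mathrm{Id})$ used in forming $T_{w_\ql}^{-1}$ introduces negative powers of the $q_i$, so each $a_{\qm,\ql}^{\qn,w}$ is a Laurent polynomial in the parameters with integer coefficients.

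The main subtlety is checking that, in spite of the many intermediate terms produced by the inductive applications of (BLT) in the proof of Proposition \ref{4.2} combined with the right multiplication by $T_{w_\ql}^{-1}$, the coefficient at $\qn_0 = \ql$ remains cleanly a primitive monomial times $\mathrm{Id}$; this is guaranteed by the uniqueness statement of Proposition \ref{4.2} (that $H_{\qn_0}$ is the only $H_\qn$ lying in $R \cdot T_{i_1}*\cdots*T_{i_k}*H$) and by the fact that every other index $\qn$ in the support is strictly greater than $\ql$ in the $\leq_{Q^\vee}$ order, so no cancellations can occur against the leading term.
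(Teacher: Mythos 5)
Your proof is correct and is essentially the paper's own argument: parts c) and d) are obtained, exactly as in the paper, by writing $T_\ql=T_{i_1}*\cdots*T_{i_k}*T_{\ql^{++}}*T_{i_k}^{-1}*\cdots*T_{i_1}^{-1}$ via Corollary \ref{3.3} and feeding this into Proposition \ref{4.2} with $H=T_{w_\ql}^{-1}$ (resp.\ $H=T_w$ for d)), the unique leading index $\qn_0=\ql$ yielding the primitive monomial. The only minor divergence is that the paper gets a) and b) directly from the geometric Propositions \ref{PrFinite1} and \ref{PrFinite2} (which give the integral bounds $\ql\leq_{Q^\vee}\qn\leq_{Q^\vee}\ql^{++}$ at once), whereas you re-derive them from Lemma \ref{4.1} and Lemma \ref{1.10}; both routes are sound.
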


 \begin {proof}
 Only the result c) is new (cf. Propositions \ref{PrFinite1} and \ref{PrFinite2}), and we already saw that the constant term in $ H^\ql $ is in $\Z_{> 0}$. We have to prove that $H^\ql\in \shh_R(W^v)$ is actually a constant (for $\qm $ sufficiently great).
Write $\ql= w_\ql (\ql^{++})$ (with $w_\ql$ minimal in $W^v$  for this property), choose a minimal decomposition $w_\ql=r_{i_1} r_{i_2}\cdots r_{i_k}$, by corollary \ref{3.3} we have $T_\ql=T_{i_1}*T_{i_2}*\cdots *T_{i_k}*T_{\ql^{++}} *T_{i_k}^{-1}*\cdots *T_{i_1}^{-1}$.
Then, by Proposition \ref{4.2}, for $\qm$ great, $T_\qm*T_\ql$  may be written as a $R$-linear combination of elements $T_{\qm+\qn}*(H^{\qn}_1*T_{i_k}^{-1}*\cdots * T_{i_1}^{-1})$ with
 $\qn \in  D(i_1,\ldots, i_k) (0,  \ldots ,0 , \ql^{++})$ and $H^{\qn}_1\in  \shh_R(W^v)$ with term of lowest degree $\qn_{0}= \ql$.
Moreover $H^{\ql}=H^{\ql}_1*T_{i_k}^{-1}*\cdots * T_{i_1}^{-1}$ is a primitive monomial in the $q_i, {q'_i}$.

\par To prove d), we remark that $T_{i_k}^{-1}*\cdots *T_{i_1}^{-1}$ may be written $\sum_{w\in W^v}\,a_wT_w$ with $a_w\in\Z[(q_i^{\pm 1})_{i\in I}]$ and we apply \ref{4.2} with $H=T_w$.
\end{proof}

 \begin{coro}\label{4.4}

 In $^I\shh_R$,  for $\qm\in Y^{++}$ the left multiplication by $T_\qm$ is injective.
\end{coro}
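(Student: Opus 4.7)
The plan is to reduce to a situation where Corollary \ref{4.3} applies and then extract a leading coefficient in the basis of \ref{3.5}.

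Suppose $f\in{}^I\shh_R$ satisfies $T_\qm *f=0$. Using the right $\shh_R(W^v)$-basis $\{T_\ql\mid\ql\in Y^+\}$ of \ref{3.5}, write $f=\sum_{\ql\in Y^+}T_\ql *H_\ql$ with $H_\ql\in\shh_R(W^v)$ and all but finitely many $H_\ql$ equal to $0$. Set $S=\{\ql\in Y^+\mid H_\ql\neq 0\}$; we must show $S=\emptyset$. Assume, for contradiction, that $S\neq\emptyset$. Because $(\qa_i^\vee)_{i\in I}$ is free, $\leq_{Q^\vee}$ is a genuine partial order on $Y$, and the finite set $S$ admits a $\leq_{Q^\vee}$-minimal element $\ql_0$.

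Next, pick $\qm'\in Y^{++}$ large enough so that $\qm_0:=\qm+\qm'$ is ``sufficiently great'' in the sense of Corollary \ref{4.3} simultaneously for every $\ql\in S$ (possible since $S$ is finite). By Theorem \ref{ThAlgebra}, $T_{\qm'} * T_\qm=T_{\qm_0}$ (both in $Y^{++}$), so the hypothesis $T_\qm *f=0$ gives
\[
T_{\qm_0}*f \;=\; T_{\qm'}*\bigl(T_\qm * f\bigr)\;=\;0.
\]
On the other hand, Corollary \ref{4.3}(a),(c) applied to each $\ql\in S$ yields
\[
T_{\qm_0}*T_\ql \;=\; \sum_{\ql\leq_{Q^\vee}\qn\leq_{Q^\vee}\ql^{++}} T_{\qm_0+\qn}*H^{\qn}_\ql,
\qquad H^{\ql}_\ql=a_\ql\in R^\times,
\]
since $a_\ql$ is a primitive monomial in the invertible parameters $q_i,q'_i$. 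Summing,
\[
T_{\qm_0}*f \;=\; \sum_{\ql\in S}\sum_{\qn}\, T_{\qm_0+\qn}*\bigl(H^{\qn}_\ql * H_\ql\bigr).
\]

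Now I read off the component of $T_{\qm_0}*f$ lying in $T_{\qm_0+\ql_0}*\shh_R(W^v)$, which is well-defined because $\{T_\mu * T_w\mid\mu\in Y^+,\,w\in W^v\}$ is a basis. A pair $(\ql,\qn)$ contributes only if $\qn=\ql_0$, which forces $\ql\leq_{Q^\vee}\qn=\ql_0$ with $\ql\in S$; by minimality of $\ql_0$ this forces $\ql=\ql_0$. Hence the component equals $a_{\ql_0}\cdot H_{\ql_0}$, which is nonzero since $a_{\ql_0}\in R^\times$ and $H_{\ql_0}\neq 0$. This contradicts $T_{\qm_0}*f=0$, completing the proof.

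The main obstacle is that Corollary \ref{4.3} only gives the triangular decomposition for $\qm$ \emph{sufficiently great}, whereas we want injectivity for an arbitrary $\qm\in Y^{++}$. This is bypassed by absorbing $\qm$ into a larger $\qm_0=\qm+\qm'\in Y^{++}$ via the semigroup identity $T_{\qm'}*T_\qm=T_{\qm+\qm'}$ of Theorem \ref{ThAlgebra}. Once this reduction is made, the leading-term argument is immediate from the fact that the diagonal coefficient $a_{\ql_0}$ in Corollary \ref{4.3}(c) is a unit in $R$.
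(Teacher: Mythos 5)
Your proof is correct and follows essentially the same route as the paper's: both reduce to $\qm$ sufficiently great via $T_{\qm'}*T_\qm=T_{\qm+\qm'}$, expand $f$ in the basis $\{T_\ql*H\}$ of \ref{3.5}, pick a $\leq_{Q^\vee}$-minimal $\ql_0$ in the support, and use the triangularity and unit diagonal coefficient of Corollary \ref{4.3} to see that the $T_{\qm_0+\ql_0}$-component of $T_{\qm_0}*f$ is $a_{\ql_0}H_{\ql_0}\neq 0$. Your write-up is in fact somewhat more explicit than the paper's about why no other pair $(\ql,\qn)$ can contribute to that component.
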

\begin {proof}

As $T_{\qm_1+\qm_2}=T_{\qm_1}*T_{\qm_2} $ for $\qm_1,\qm_2\in Y^{++}$, we may assume $\qm$ sufficiently great.
Let $H\in \, ^I\shh_R\setminus \{0\} $. We may write $H=\sum_{j\in J} T_{\ql_j}*H^j$ with $\ql_j\in Y^+$ and $0\not= H^j\in \shh_R(W^v)$. We choose $\ql_{j_0} $ minimal among the $\ql_j$ for $\leq _{Q^\vee}$.
Then  $T_\qm* H=\sum_{j\in J}\sum_{\qm+\ql_{j}\leq _{Q^\vee}\nu_j}T_{\qn_j}*H^{\qn_j, j}*H^j$. Hence $\qn_{j_0}=\qm+\ql_{j_0}$ is minimal for  $\leq _{Q^\vee}$ and $H^{\qn_{j_0}, j_0}$ is a monomial in $q_i$, $q'_i$; so $H^{\qn_{j_0}, j_0}*H^{j_0}\not = 0$ and $T_\qm*H\not= 0$.
\end{proof}

\begin{theo}\label{4.5}

\par 1) For any $\ql \in Y^+$, there is a unique $X^\ql\in\,  ^I\shh_R$ such that:
for all $\qm\in Y^{++}$ with $\ql+\qm \in Y^{++}$, we have $T_\qm *X^\ql=T_{\ql+\qm}$.

\par 2) More precisely, $$X^\ql =b_\ql T_\ql +\sum_{\qn} T_\qn*H'^\nu ,$$ where $H'^\qn\in \shh_R(W^v)$, $\qn\in conv(W^v. \ql^{++}, \geq \ql)\setminus \{\ql\}$ and $b_\ql $ is a primitive monomial in $q_i^{-1}, {q'_i}^{-1}$.

\par 3) For $\ql \in Y^{++}$, we have $X^\ql=T_\ql$.
For $\ql, \ql' \in Y^{+}$, $X^\ql * X^{\ql'}=X^{\ql + \ql'}=X^{\ql'}* X^{\ql}$.
\end{theo}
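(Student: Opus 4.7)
The plan is to prove all three parts simultaneously by a well-founded induction, exploiting Corollary \ref{4.3} (the expansion of $T_\mu * T_\lambda$) and Corollary \ref{4.4} (injectivity of left multiplication by $T_\mu$ for $\mu \in Y^{++}$). Uniqueness in part 1) is immediate: if $T_\mu * X = T_{\mu+\lambda} = T_\mu * X'$ for one admissible $\mu$, Corollary \ref{4.4} forces $X = X'$. For existence, I induct on $\lambda \in Y^+$ using the well-founded order whose ``smaller'' elements are those strictly greater than $\lambda$ in $\leq_{Q^\vee}$ inside the finite set $[\lambda, \lambda^{++}]_{Q^\vee} \cap Y$ (finite because $Y$ is discrete in $V$ and the interval bounded). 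The base case is $\lambda = \lambda^{++} \in Y^{++}$: set $X^\lambda := T_\lambda$; then $T_\mu * X^\lambda = T_{\mu+\lambda}$ by Theorem \ref{ThAlgebra} and part 2) holds trivially with $b_\lambda = 1$.

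For the inductive step, fix $\lambda \in Y^+ \setminus Y^{++}$ and assume that $X^\nu$ has been constructed and satisfies parts 1), 2) for every $\nu \in Y^+$ appearing in $\mathrm{conv}(W^v\lambda^{++}, \geq \lambda) \setminus \{\lambda\}$. Choose $\mu \in Y^{++}$ sufficiently large that Corollary \ref{4.3} applies and $T_\mu * X^\nu = T_{\mu + \nu}$ holds for each such $\nu$. Then
\[
T_\mu * T_\lambda = a_\lambda T_{\mu + \lambda} + \sum_{\nu >_{Q^\vee} \lambda} T_{\mu + \nu} * H^\nu = a_\lambda T_{\mu + \lambda} + \sum_{\nu >_{Q^\vee} \lambda} T_\mu * X^\nu * H^\nu,
\]
with $a_\lambda \in R^\times$ a primitive monomial in the $q_i, q_i'$ and $H^\nu \in \mathcal H_R(W^v)$. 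Solving for $T_{\mu+\lambda}$ and factoring $T_\mu$ out on the left yields
\[
T_{\mu + \lambda} = T_\mu * \Bigl( a_\lambda^{-1} T_\lambda - a_\lambda^{-1} \sum_{\nu >_{Q^\vee} \lambda} X^\nu * H^\nu \Bigr),
\]
and I define $X^\lambda$ to be the bracketed expression. Setting $b_\lambda := a_\lambda^{-1}$ (a primitive monomial in $q_i^{-1}, (q_i')^{-1}$) and unfolding the inductive form of each $X^\nu$ produces the expression in part 2), each surviving exponent remaining in $\mathrm{conv}(W^v\lambda^{++}, \geq \lambda) \setminus \{\lambda\}$.

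So far $T_\mu * X^\lambda = T_{\mu + \lambda}$ holds only for our chosen large $\mu$. For an arbitrary $\mu' \in Y^{++}$ with $\mu' + \lambda \in Y^{++}$, enlarge $\mu$ so the previous identity still applies; then
\[
T_\mu * T_{\mu'} * X^\lambda = T_{\mu + \mu'} * X^\lambda = T_{\mu'} * T_{\mu + \lambda} = T_{\mu + \mu' + \lambda} = T_\mu * T_{\mu' + \lambda}
\]
by Theorem \ref{ThAlgebra}, and Corollary \ref{4.4} gives $T_{\mu'} * X^\lambda = T_{\mu' + \lambda}$. For part 3), the identity $X^\lambda = T_\lambda$ for $\lambda \in Y^{++}$ is the base case. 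For $\lambda, \lambda' \in Y^+$, pick $\mu \in Y^{++}$ large enough that all needed identities hold and compute
\[
T_\mu * (X^\lambda * X^{\lambda'}) = T_{\mu + \lambda} * X^{\lambda'} = T_{\mu + \lambda + \lambda'} = T_\mu * X^{\lambda + \lambda'};
\]
injectivity yields $X^\lambda * X^{\lambda'} = X^{\lambda + \lambda'}$, and commutativity of $Y$ then gives $X^\lambda * X^{\lambda'} = X^{\lambda'} * X^\lambda$. The main obstacle is verifying that the exponents appearing through the recursive unfolding genuinely stay inside $\mathrm{conv}(W^v \lambda^{++}, \geq \lambda)$; this requires careful bookkeeping combining the constraints supplied by Corollary \ref{4.3} with the fact that the recursion is confined to the finite poset $[\lambda, \lambda^{++}]_{Q^\vee} \cap Y$.
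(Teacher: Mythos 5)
Your proof follows essentially the same route as the paper's: uniqueness and part 3) come from the injectivity of left multiplication by $T_\mu$ (Corollary \ref{4.4}), and existence comes from inverting the triangular system supplied by Corollary \ref{4.3}, with your well-founded induction on the finite poset $[\lambda,\lambda^{++}]_{Q^\vee}\cap Y$ being just a repackaging of the paper's induction on the height $ht(\lambda^{++}-\lambda)$ (the descent in both cases resting on $\nu^{++}\leq_{Q^\vee}\lambda^{++}$, which comes from the convex-hull constraint in Corollary \ref{4.3}.b). Your explicit reduction from one sufficiently large $\mu$ to every admissible $\mu'$, and your placement of the factor $a_\lambda^{-1}$ in front of the whole bracket (the paper's displayed formula appears to drop it from the sum), are both correct and, if anything, slightly more careful than the paper's write-up.
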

\begin{remas*} :
 \par a)  We have two bases for the free right $\shh_R(W^v)-$module $^I\shh_R$, $\{T_\ql\mid\ql\in Y^+\}$ and $\{X^\ql\mid\ql\in Y^+\}$. The change of bases matrix is triangular (for the order $\geq _{Q^\vee}$) with diagonal coefficients primitive monomials in $q_i^{-1}, {q'_i}^{-1}$.
  From \ref{4.3}.d  we get that all coefficients of this matrix are Laurent polynomials in the parameters  $q_i,q_i'$, with coefficients in $\Z$, depending only on $\A$ and on $W$.

 \par b) By 1) above and Corollary \ref{4.4}, it is clear that the left multiplication by $X^\ql$ is injective, for any $\ql\in Y^+$.
 \end{remas*}
 \begin{proof}

By Corollary \ref{4.4}, the uniqueness is clear and 3) follows from the relation $ T_\ql*T_\qm=T_{\ql+\qm }$ of the Theorem \ref{ThAlgebra}. We have just to prove 1) and 2) for a $\qm\in Y^{++}$ (chosen sufficiently great).

We argue by induction on the height $ht(\ql^{++}-\ql )$ of $\ql^{++}-\ql$ with respect to the free family $(\qa_i^\vee)$ in $Q^\vee$. When the height is 0, $\ql=\ql^{++}$ and $X^\ql=T_\ql$.
By Corollary \ref{4.3}, we write
$$T_\qm *T_\ql=a_\ql T_{\qm+\ql} +\displaystyle{\sum_{\ql  \leq _{Q^\vee} \qn\leq _{Q^\vee} \ql^{++};  \ql  \not=  \qn}T_{\qm+\qn} *H^\qn}
$$ with $H^\qn\in \shh_R(W^v)$ and $\qn\in conv (W^v.\ql^{++})$ hence $\qn^{++}\in conv (W^v.\ql^{++})$ (in particular $\qn^{++}\leq_{Q^\vee}\ql^{++}$ ). \cf Lemma \ref{1.10}.a).

So $ht(\qn^{++}-\qn )<ht(\ql^{++}-\ql )$. By induction and for $\mu$ sufficiently great, we can  consider the element $X^\qn$ such that $T_{\qm+\qn}=T_\qm *X^\qn $; we can write it $X^\qn=\displaystyle{\sum_{\qn  \leq _{Q^\vee} \qn'\leq _{Q^\vee} \qn^{++}}T_{\qn'} *H^{\qn',\nu}}  $  and we may take $X^\ql={a_\ql}^{-1}T_\ql- \bigg (\displaystyle{\sum_{\ql  \leq _{Q^\vee} \qn\leq _{Q^\vee} \ql^{++};  \ql  \not=  \qn}X^\qn*H^\nu} \bigg )\\ {\hskip 9em}
={a_\ql}^{-1}T_\ql- \biggl {(}\displaystyle{\displaystyle{\sum_{\ql  \leq _{Q^\vee} \qn\leq _{Q^\vee} \ql^{++};  \ql  \not=  \qn}\biggl {(}\sum_{\qn  \leq _{Q^\vee} \qn'\leq _{Q^\vee} \qn^{++}}}T_{\qn'} *H^{\qn',\nu}\biggr {)}*H^\nu}\biggr {)}$.

\end{proof}

\begin {prop} \label{4.6}

For $\ql\in Y^+$ and $i\in I$ we have the following relations :

\par a) If $\qa_i(\ql) \geq 0$, then  $T_i*X^\ql= q_i^{* (\qa_{i}(\ql))}X^{r_i(\ql)}*T_i+\displaystyle{\sum_{h=0}^{\qa_i(\ql)-1} (q_i^{* (h+1)}-q_i^{* (h)}) X^{\ql-h\qa_i^\vee}}.$

\par b) If $\qa_i(\ql)< 0$, then

$T_i*X^\ql= {1\over q_i^{* (-\qa_{i}(\ql))}}X^{r_i(\ql)}*T_i- {1\over q_i^{* (-\qa_{i}(\ql))}}\displaystyle{\sum_{h=\qa_i(\ql)}^{-1}} \Bigl{(}{q_i}^{* (-\qa_{i}(\ql)+h+1)}-{q_i}^{*{(-\qa_{i}(\ql)}+ h)}\Bigr {)} X^{\ql-h\qa_i^\vee}$. 
\end{prop}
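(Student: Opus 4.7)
The plan is to use the sandwich method: verify both identities after multiplying on the left by $T_\qm$ and on the right by $T_{\qm'}$, for $\qm, \qm' \in Y^{++}$ chosen sufficiently large, and then conclude via injectivity of these multiplications. Left-injectivity is Corollary \ref{4.4}; the parallel right-injectivity, for $\qm' \in Y^{++}$ strict and large, can be extracted from iterated Proposition \ref{3.1}.2 (which yields $T_w * T_{\qm'} = T_{w(\qm') . w}$) via a symmetric version of the domination argument used in Corollary \ref{4.3}. The two main computational ingredients are the defining property $T_\qm * X^\mu = T_{\qm + \mu}$ and its right companion $X^\mu * T_{\qm'} = X^{\mu + \qm'} = T_{\mu + \qm'}$ (both valid when the translations lie in $Y^{++}$, by Theorem \ref{4.5}.3), together with the Bernstein--Lusztig relation Theorem \ref{3.8}.

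For case (a), I would choose $\qm' \in Y^{++}$ large enough that $\ql + \qm'$ and each $\ql - h\qa_i^\vee + \qm'$ lie in $Y^{++}$, and $\qm \in Y^{++}$ with $\qa_i(\qm) \geq \qa_i(\ql) + \qa_i(\qm')$. The sandwich of the LHS reduces, via $X^\ql * T_{\qm'} = T_{\ql + \qm'}$, to $T_\qm * T_i * T_{\ql + \qm'}$, which Theorem \ref{3.8}(a) expands as
\[
q_i^{*N}\, T_{\qm + r_i(\ql) + r_i(\qm')} * T_i + \sum_{h=0}^{N-1}(q_i^{*(h+1)} - q_i^{*h})\, T_{\qm + \ql + \qm' - h\qa_i^\vee},
\]
with $N = \qa_i(\ql) + \qa_i(\qm')$. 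The sandwich of the right-hand side of (a), after a second application of Theorem \ref{3.8}(a) to $T_{\qm + r_i(\ql)} * T_i * T_{\qm'}$ (valid since $\qa_i(\qm + r_i(\ql)) = \qa_i(\qm) - \qa_i(\ql) \geq \qa_i(\qm')$), produces the same expression once the two subsums are combined via the reindexing $h \mapsto h + \qa_i(\ql)$ and the multiplicativity $q_i^{*a}\, q_i^{*b} = q_i^{*(a+b)}$. This multiplicativity holds because $q_i = q'_i$ whenever $\qa_i(Y)$ contains an odd integer, by \S \ref{1.3}.5.

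Case (b), $\qa_i(\ql) < 0$, follows the same sandwich pattern, now with $\qa_i(\ql + \qm') > 0$ for $\qm'$ large so that Theorem \ref{3.8}(a) still applies to both sides. The explicit prefactor $1/q_i^{*(-\qa_i(\ql))}$ in the right-hand side arises from matching the leading $T_\nu * T_i$ coefficient, and the subsum indexed by $h \in [\qa_i(\ql), -1]$ is present precisely to cancel the ``overshoot'' terms that appear in the BLT expansion of the sandwiched right-hand side (after reindexing to $h'' \in [-N, -1]$), leaving the positive-index terms to match the LHS. The main obstacle I foresee is the bookkeeping of the $q_i^{*n}$ factors across the different index ranges and the verification of the right-injectivity statement, both of which reduce to routine Bernstein--Lusztig manipulation using the multiplicativity above.
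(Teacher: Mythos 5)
Your computational core is sound: sandwiching (a) as $T_\qm*T_i*X^\ql*T_{\qm'}=T_\qm*T_i*T_{\ql+\qm'}$ and expanding both sides with Theorem \ref{3.8}(a) does produce matching expressions after the reindexing $h\mapsto h+\qa_i(\ql)$, the multiplicativity $q_i^{*a}q_i^{*b}=q_i^{*(a+b)}$ is indeed covered by $q_i=q'_i$ when $\qa_i(\ql)$ is odd, and the cancellation pattern you describe in (b) is correct. The genuine gap is the right-cancellation step at the end. Cancelling $T_{\qm'}$ on the right requires injectivity of right multiplication by $T_{\qm'}$, and the paper establishes only left injectivity (Corollary \ref{4.4}). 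Your proposed route to the right-handed statement --- a ``symmetric version of the domination argument used in Corollary \ref{4.3}'' --- does not go through as stated: the domination estimates of Lemma \ref{LeFinite}, Proposition \ref{PrFinite1} and Corollary \ref{4.3} (the sharp bounds $\ql+\qm\leq_{Q^\vee}\qn\leq_{Q^\vee}\ql+\qm^{++}$ and the unit leading coefficient) all require the \emph{left} factor to be dominant, because the underlying geometry (retraction toward the sector-germ opposite $C_x$, Hecke paths read from their left endpoint) is not left--right symmetric. Concretely, writing $H*T_{\qm'}=\sum_j T_{\ql_j}*(H^j*T_{\qm'})$ and using $T_w*T_{\qm'}=T_{w(\qm').w}$ leaves you needing to control $T_{\ql_j}*T_{\nu}$ with $\ql_j\in Y^+$ arbitrary on the left, which is exactly the case those estimates do not cover. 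Right injectivity is in fact true (one can prove it from the already-established dominant case of (a): iterating it shows $T_w*X^{\qm'}$ is a unit times $X^{w(\qm')}*T_w$ plus terms $X^\nu*T_u$ with $u$ strictly below $w$ in Bruhat order, and one then argues on a Bruhat-maximal $w$), but that is a different argument from the one you sketch and it must be supplied; the paper only records the two-sided Ore property much later, in \ref{5.3}, by which point it would be circular to invoke it here.

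For comparison, the paper sidesteps the issue by working entirely from the left: it first obtains the dominant case of (a) by cancelling $X^\ql$ on the left of $X^\ql*T_i*X^\ql$ computed via Theorem \ref{3.8}, and then, for general $\ql\in Y^+$, writes $\ql=\qm-\qn$ with $\qm,\qn\in Y^{++}$ and uses associativity in the form $T_i*X^{\qm}=(T_i*X^{\qn})*X^{\ql}$. Expanding $T_i*X^\qn$ by the dominant case isolates the single unknown term $X^{r_i(\qn)}*(T_i*X^\ql)$; after matching it against $X^{r_i(\qn)}*(\hbox{claimed right-hand side})$, only left injectivity (Remark \ref{4.5}.b) is needed. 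If you wish to keep your two-sided sandwich you must first prove right cancellability of $T_{\qm'}$; otherwise the paper's left-sided factorization removes the need for it.
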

\begin{NB} These relations are the Bernstein-Lusztig relations for the $X^\ql$, (BLX) for short.
\end{NB}
\cache {On remarque qu'en multipliant la formule du b) par un terme $ q_i^{*2N}$ pour N assez grand les d\'enominateurs de la seconde formule se simplifient et on obtient $ q_i^{*(2N)}T_i*X^\ql=  q_i^{*( 2N+\qa_{i}(\ql))}X^{r_i(\ql)}*T_i- \displaystyle{\sum_{h=\qa_i(\ql)}^{-1}} \Bigl{(}{q_i}^{* (2N+h+1)}-{q_i}^{*(2N+ h)}\Bigr {)} X^{\ql-h\qa_i^\vee}$ qui est  une formule ressemblant \`a celle obtenue \`a partir de a) par la m\^eme op\'eration. (on choisira la fa\c{c}on de pr\'esenter ceci en fonction des choix du paragraphe 5)

Si on compare a) pour $\ql$ avec $\qa_i (\ql)\geq 0$ et b) pour $r_i(\ql )$, on trouve $T_i*X^\ql- q_i^{* (\qa_{i}(\ql))}X^{r_i(\ql)}*T_i=X^\ql*T_i- q_i^{* (\qa_{i}(\ql))}T_i*X^{r_i(\ql)}$. }

\begin{proof}

If $\ql\in Y^{++}$, by Proposition \ref{3.8} a), we know that
$X^\ql*T_i*X^\ql=X^{\ql+\ql}*T_i$  when $\qa_i(\ql)=0$ and, when $\qa_i(\ql)>0$,
 $X^\ql*T_i*X^\ql=q_i^{*\qa_i(\ql)}X^{\ql+r_i(\ql)}*T_i+(q_i^{*(\qa_i(\ql))}-q_i^{*(\qa_i(\ql)-1)})X^{\ql+\ql-(\qa_i(\ql)-1)\qa_i^\vee}+ \cdots
+ (q_i^{*2}-q_i)X^{\ql+\ql-\qa_i^\vee}+(q_i-1)X^{\ql+\ql}$, so we have the result.
\medskip

In the general case, $\ql\in Y^+$,
we write $\ql= \qm-\qn$ with  $\qm, \qn$ chosen in $Y^{++}$. By Theorem \ref{4.5}, $X^\qn * X^\ql=X^\qm$. From (BLX) for $X^\qm$ and $X^\qn$, we have~:

\parni $$
  T_i * X^\qm = q_i^{* (\qa_{i}(\ql+\qn))}X^{r_i(\ql+\qn)}*T_i+\displaystyle{\sum_{h=0}^{\qa_i(\ql+\qn)-1} (q_i^{* (h+1)}-q_i^{* (h)}) X^{\qn+\ql-h\qa_i^\vee}}$$

which can also be written
$$
\begin{array}{rcl}
T_i*X^{\qn+\ql} =(T_i*X^\qn)*X^\ql & = &
\bigg ( q_i^{* (\qa_{i}(\qn))}X^{r_i(\qn)}*T_i+\displaystyle{\sum_{h=0}^{\qa_i(\qn)-1} (q_i^{* (h+1)}-q_i^{* (h)}) X^{\qn-h\qa_i^\vee}}\bigg )*X^\ql\\
 & = & q_i^{* (\qa_{i}(\qn))}X^{r_i(\qn)}*T_i*X^\ql+\displaystyle{\sum_{h=0}^{\qa_i(\qn)-1} (q_i^{* (h+1)}-q_i^{* (h)}) X^{\qn+\ql-h\qa_i^\vee}}.
\end{array}$$

\parni If  $\qa_i(\ql) \geq 0$, we obtain

$$
q_i^{* (\qa_{i}(\qn))}X^{r_i(\qn)}*T_i*X^\ql =  q_i^{* (\qa_{i}(\ql+\qn))}X^{r_i(\qm)}*T_i+ \displaystyle{\sum_{h=\qa_i(\qn)}^{\qa_i(\ql+\qn)-1} (q_i^{* (h+1)}-q_i^{* (h)}) X^{\qn+\ql-h\qa_i^\vee}}.
$$
We take $h'=h-\qa_i(\qn)$,  then $X^{\qn+\ql-h\qa_i^\vee}= X^{\qn-\qa_i(\qn)\qa_i^\vee+\ql-h'\qa_i^\vee}=X^{r_i(\qn)+\ql-h'\qa_i^\vee}$ and $q_i^{* (\qa_i(\qn)+h')}=q_i^{* \qa_i(\qn)}q_i^{*h'}$ (by $q_i=q_i'$ if $\qa_i(\qn)$ is odd, and an easy calculation  if $\qa_i(\qn)$ is even).
  So,  $q_i^{* (\qa_{i}(\qn))}X^{r_i(\qn)}*T_i*X^\ql=q_i^{* (\qa_{i}(\qn))}X^{r_i(\qn)} * \bigg (q_i^{* (\qa_{i}(\ql))}X^{r_i(\ql)}*T_i+\displaystyle{\sum_{h'=0}^{\qa_i(\ql)-1} (q_i^{* (h'+1)}-q_i^{* (h')}) X^{\ql-h'\qa_i^\vee}}\bigg )$. And we are done thanks to the injectivity of left multiplication by $X^{r_i(\nu)}$.

\parni If  $\qa_i(\ql) < 0$, we obtain
$$
q_i^{* (\qa_{i}(\qn))}X^{r_i(\qn)}*T_i*X^\ql =  q_i^{* (\qa_{i}(\ql+\qn))}X^{r_i(\ql+\qn)}*T_i- \displaystyle{\sum_{h=\qa_i(\ql+\qn)}^{\qa_i(\qn)-1} (q_i^{* (h+1)}-q_i^{* (h)}) X^{\qn+\ql-h\qa_i^\vee}}.
$$

  We have  $q_i^{* (\qa_{i}(\qn))}=q_i^{* (-\qa_{i}(\ql) )}q_i^{* (\qa_{i}(\ql+\qn))}$ by an easy calculus if  $\qa_{i}(\qn)$ and $\qa_{i}(\ql)$ are even and because $q_i=q'_i$ whenever $\qa_{i}(\qn)$ or $\qa_{i}(\ql)$ is odd. So,

  \parni $X^{r_i(\qn)}*T_i*X^\ql ={1\over q_i^{* (-\qa_{i}(\ql))}}X^{r_i(\ql+\qn)}*T_i -{1\over q_i^{* (\qa_{i}(\qn))}}  \displaystyle{\sum_{h=\qa_i(\ql+\qn)}^{\qa_i(\qn)-1} (q_i^{* (h+1)}-q_i^{* (h)}) X^{\qn+\ql-h\qa_i^\vee}}$ and we have (because of the injectivity of the left multiplication by $X^{r_i(\qn)}$):

 \parni $
 T_i*X^\ql ={1\over q_i^{* (-\qa_{i}(\ql))}}X^{r_i(\ql)}*T_i -{1\over q_i^{* (\qa_{i}(\qn))}}\displaystyle{\sum_{h=\qa_i(\ql+\qn)}^{\qa_i(\qn)-1} (q_i^{* (h+1)}-q_i^{* (h)}) X^{\ql+(\qa_i(\qn)-h)\qa_i^\vee}}\\
 ={ 1\over q_i^{* (-\qa_{i}(\ql))}}X^{r_i(\ql)}*T_i -{1\over q_i^{* (\qa_{i}(\qn))}q_i^{* (-\qa_{i}(\ql))}} \displaystyle{\sum_{h=\qa_i(\ql)}^{-1} (q_i^{* (\qa_{i}(\qn)-\qa_{i}(\ql)+h+1)}-q_i^{* (\qa_{i}(\qn)-\qa_{i}(\ql)+h)}) X^{\ql-h\qa_i^\vee}}\\
 ={ 1\over q_i^{* (-\qa_{i}(\ql))}}X^{r_i(\ql)}*T_i -{1\over q_i^{* (-\qa_{i}(\ql))}} \displaystyle{\sum_{h=\qa_i(\ql)}^{-1} (q_i^{* (-\qa_{i}(\ql)+h+1)}-q_i^{* (-\qa_{i}(\ql)+h)}) X^{\ql-h\qa_i^\vee}}$

  \end{proof}

\subsection{The classical Bernstein-Lusztig relation} \label{4.7}

\par The {\it module} $\qd: Q^\vee\to R$ is defined by $\qd(\sum_{i\in I} a_i\qa_i^\vee)=\prod_{i\in I} (q_iq'_i)^{a_i}$ \cite[5.3.2]{GR13}.
After replacing eventually $R$ by a bigger ring $R'$ containing some square roots $\sqrt{q_i},\sqrt{q'_i}$ of $q_i,q'_i$ (with  $\sqrt{q_i}=\sqrt{q'_i}$, if $q_i=q'_i$), we assume moreover that there exists an homomorphism $\qd^{1/2}:Y\to R^\times$, such that $\qd(\ql)=(\qd^{1/2}(\ql))^2$ for any $\ql\in Q^\vee$ and $\qd^{1/2}(\qa_i^\vee)=\sqrt{q_i}.\sqrt{q'_i}$.
In particular $\sqrt{q_i}^{\pm 1}$ and $\sqrt{q'_i}^{\pm 1}$ are well defined in $R^\times$. 
In the common example where $R=\R$ or $\C$, these expressions are chosen to be the classical ones:  $\qd^{1/2}(Y)\subset \R_+^*$.

\par We define $H_i= (\sqrt{q_i})^{-1} T_i$ and   $Z^\ql=\qd^{-1/2} (\ql) X^\ql$ for $\ql\in Y^+$.
 When $w=r_{i_1}\cdots r_{i_n}$ is a reduced decomposition, we set $H_w=H_{i_1}*\cdots *H_{i_n}$; this does not depend of the chosen decomposition of $w$.

\par We may translate the relations (BLX) for these elements.

\begin {prop*}

For $\ql\in Y^{++}$, we have the following relation:

 $ H_i*Z^\ql= Z^{r_i(\ql)}*H_i+\displaystyle{\sum_{k=0}^{\lfloor{\qa_i(\ql)-1\over 2}\rfloor} (\sqrt{q_i}-\sqrt{q_i}^{-1}){ Z^{\ql-(2k)\qa_i^\vee}}}+\displaystyle{\sum_{k=0}^{\lfloor{\qa_i(\ql)\over 2}\rfloor -1}(\sqrt{q'_i }-\sqrt{q'_i}\,^{-1}){ Z^{\ql-(2k+1)\qa_i^\vee}}}$.
 \end {prop*}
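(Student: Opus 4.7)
The plan is to deduce this directly from the $(BLX)$ relation of Proposition~\ref{4.6}~a), which applies because $\ql\in Y^{++}$ forces $\qa_i(\ql)\geq 0$. Setting $n=\qa_i(\ql)$ and substituting $T_i=\sqrt{q_i}\,H_i$ and $X^\qm=\qd^{1/2}(\qm)\,Z^\qm$ into
$$T_i*X^\ql=q_i^{*n}\,X^{r_i(\ql)}*T_i+\sum_{h=0}^{n-1}(q_i^{*(h+1)}-q_i^{*h})\,X^{\ql-h\qa_i^\vee},$$
then multiplying both sides by $\qd^{-1/2}(\ql)/\sqrt{q_i}$, turns the left-hand side into $H_i*Z^\ql$.

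For the leading term on the right, one picks up a scalar $q_i^{*n}\,(\sqrt{q_i}\sqrt{q_i'})^{-n}$ in front of $Z^{r_i(\ql)}*H_i$, using $\ql-r_i(\ql)=n\qa_i^\vee$, $\qd^{1/2}(\qa_i^\vee)=\sqrt{q_i}\sqrt{q_i'}$ and the multiplicativity of $\qd^{1/2}$. When $n$ is even this equals $(q_iq_i')^{n/2}(q_iq_i')^{-n/2}=1$. When $n$ is odd, \ref{1.3}.5 gives $q_i=q_i'$ (since $\qa_i(Y)$ then contains an odd integer), so $q_i^{*n}=q_i^n=(\sqrt{q_i}\sqrt{q_i'})^n$ and again the scalar is $1$. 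Hence the first term contributes exactly $Z^{r_i(\ql)}*H_i$.

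The bulk of the work is a parity split of the remaining sum. The coefficient of $Z^{\ql-h\qa_i^\vee}$ reduces to
$$c_h=\sqrt{q_i}^{-1}\,(q_i^{*(h+1)}-q_i^{*h})\,(\sqrt{q_i}\sqrt{q_i'})^{-h}.$$
Writing $h=2k$, one has $q_i^{*(2k+1)}-q_i^{*(2k)}=(q_iq_i')^k(q_i-1)$ and $(\sqrt{q_i}\sqrt{q_i'})^{-2k}=(q_iq_i')^{-k}$, so $c_{2k}$ collapses to $(q_i-1)/\sqrt{q_i}=\sqrt{q_i}-\sqrt{q_i}^{-1}$. Writing $h=2k+1$, one has $q_i^{*(2k+2)}-q_i^{*(2k+1)}=q_i^{k+1}(q_i')^k(q_i'-1)$, and after absorbing $(\sqrt{q_i}\sqrt{q_i'})^{-(2k+1)}/\sqrt{q_i}=q_i^{-(k+1)}(q_i')^{-k}\sqrt{q_i'}^{-1}$ one obtains $c_{2k+1}=(q_i'-1)/\sqrt{q_i'}=\sqrt{q_i'}-\sqrt{q_i'}^{-1}$. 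Splitting the index set $\{0,1,\dots,n-1\}$ into even $h=2k$ with $0\leq k\leq\lfloor(n-1)/2\rfloor$ and odd $h=2k+1$ with $0\leq k\leq\lfloor n/2\rfloor-1$ produces exactly the two ranges in the statement.

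The only subtlety is the bookkeeping between $q_i$ and $q_i'$ at odd exponents: since the only case where $q_i\neq q_i'$ matters is at even exponents (odd exponents of $\qa_i$-type quantities on $Y$ forcing $q_i=q_i'$ by \ref{1.3}.5), every formal expression $(\sqrt{q_i}\sqrt{q_i'})^n$ and $q_i^{*n}$ is unambiguous in $R^\times$ thanks to the choices fixed in~\ref{4.7}.
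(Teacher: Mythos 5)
Your proposal is correct and follows essentially the same route as the paper: both start from the relation (BLX) of Proposition \ref{4.6}.a, substitute $T_i=\sqrt{q_i}\,H_i$ and $X^\qm=\qd^{1/2}(\qm)Z^\qm$ with $\qd^{1/2}(\qa_i^\vee)=\sqrt{q_iq_i'}$, check that the leading coefficient collapses to $1$ (invoking $q_i=q_i'$ from \ref{1.3}.5 when $\qa_i(\ql)$ is odd), and sort the remaining terms by parity to get the coefficients $\sqrt{q_i}-\sqrt{q_i}^{-1}$ and $\sqrt{q_i'}-\sqrt{q_i'}^{-1}$. The only cosmetic difference is that you split on the parity of the summation index $h$ in a single computation, while the paper first splits into the cases $\qa_i(\ql)$ even or odd; the index bookkeeping $\lfloor(n-1)/2\rfloor$ and $\lfloor n/2\rfloor-1$ is verified correctly.
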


 \begin{remas*} 1) This is the Bernstein-Lusztig relation for the $Z^\ql$, (BLZ) for short.

 \par 2) In the following section, we shall consider an algebra containing $^I\shh_R$ and, for any $i\in I$, an element $Z^{-{\qa_i}^\vee}$ satisfying $Z^{\ql-h{\qa_i}^\vee}=Z^\ql*(Z^{-{\qa_i}^\vee})^h$ for $h\in\N$, $\ql,\ql-h\qa_i^\vee\in Y^+$.
 In such an algebra the relation (BLZ) may be rewritten (using that $\sqrt{q_i}=\sqrt{q_i'}$ if $\qa_i(\ql)$ is odd) as the classical Bernstein-Lusztig relation (BL):

\par $ H_i*Z^\ql=Z^{r_i(\ql)}*H_i+(\sqrt{q_i}-\sqrt{q_i}^{-1})\displaystyle{{Z^\ql-Z^{r_i(\ql)}}\over{1-Z^{-2\qa_i^\vee}}}+(\sqrt{q'_i }-\sqrt{q'_i}\,^{-1})\displaystyle{{Z^{\ql-\qa_i^\vee}-Z^{r_i(\ql)-\qa_i^\vee}}\over{1-Z^{-2\qa_i^\vee}}}$

\parni \ie $ H_i*Z^\ql-Z^{r_i(\ql)} *H_i= b (\sqrt{q_i}, \sqrt{q'_i}; Z^{-{\qa_i}^\vee}) (Z^\ql-Z^{r_i(\ql)})$
 where $b(t,u;z)= \displaystyle {{t-t^{-1}+(u-u^{-1})z}\over {1-z^2}}$.
 This is the same relation as in \cite[4.2]{Ma03}, up to the order; see below in 3).


 %


\par 3) Actually this relation (BLZ) is still true when $\ql\in Y^+$ and $\qa_i(\ql)\geq 0$ (same proof as below).
If $\qa_i(\ql)<0$, we leave to the reader the proof of the following relation:

\par $T_i*Z^\ql
 = Z^{r_i(\ql)}*T_i-\Biggl{(}\displaystyle{\sum_{h \, even, h=2}^{-\qa_i(\ql)}} \Bigl{(}{q_i}
-{1}\Bigr {)}  Z^{\ql+h\qa_i^\vee}+\displaystyle{\sum_{h\, odd, h=1}^{-\qa_i(\ql)}} \Bigl{(} \sqrt{q_i.q'_i}-{\sqrt{q_i.q'_i}\over q'_i}\Bigr {)}Z^{\ql+h\qa_i^\vee}\Biggr {)}$

\par In the situation of 2) above, it may be rewritten:

\par $ H_i*Z^\ql-Z^{r_i(\ql)}*H_i=(\sqrt{q_i}-\sqrt{q_i}^{-1})\displaystyle{{Z^\ql-Z^{r_i(\ql)}}\over{1-Z^{-2\qa_i^\vee}}}+(\sqrt{q'_i }-\sqrt{q'_i}\,^{-1})\displaystyle{{Z^{\ql-\qa_i^\vee}-Z^{r_i(\ql)-\qa_i^\vee}}\over{1-Z^{-2\qa_i^\vee}}}$

\par\qquad\qquad\qquad\qquad\quad$= b (\sqrt{q_i}, \sqrt{q'_i}; Z^{-{\qa_i}^\vee}) (Z^\ql-Z^{r_i(\ql)})$

\par It is the same relation (BLZ) as above. Moreover, it's easy to see  in the first equality that $ H_i*Z^\ql-Z^{r_i(\ql)}*H_i=Z^\ql*H_i-H_i*Z^{r_i(\ql)}$.
Actually we shall see in section \ref {s5} that this same relation is true for any $\ql\in Y$ in a greater algebra containing elements $Z^\ql$ for $\ql\in Y$.
\cache{Reformuler cette phrase apr\`es avoir \'ecrit le {\S{}} 5.}
 \end{remas*}
 \begin{proof}

From $Z^\ql=\qd^{-1/2} (\ql) X^\ql$ and $\qd^{1/2}(\qa_i^\vee)=\sqrt{q_i.q'_i}$ , we get

\parni $
\begin{array}{ccc}Z^{\ql-h\qa_i^\vee} & =&\qd^{-1/2} (\ql-h\qa_i^\vee) X^{\ql-h\qa_i^\vee}\\
& =&\qd^{-1/2} (\ql) (\qd^{1/2}( \qa_i^\vee))^h X^{\ql-h\qa_i^\vee}\\
& =&\qd^{-1/2} (\ql) (\sqrt{q_i.q'_i})^h X^{\ql-h\qa_i^\vee}\\
\end{array}$

\parni By  $\qa_i(\ql) \geq 0$ and  (BLX) we have

\parni  $$ T_i*Z^\ql= q_i^{* (\qa_{i}(\ql))}\bigl {(}\sqrt{q_i.q'_i}\,  \big {)}^{-\qa_i(\ql)}Z^{r_i(\ql)}*T_i+\displaystyle{\sum_{h=0}^{\qa_i(\ql)-1} (q_i^{* (h+1)}-q_i^{* h})}\bigl {(}\sqrt{q_i.q'_i}\, \bigr {)}^{(-h)}{ Z^{\ql-h\qa_i^\vee}}.$$
Moreover $q_i^{* h}=q_iq'_iq_i\cdots$ with $h$ terms in the product so $q_i^{* h}=\bigl {(}\sqrt{q_i.q'_i}\, \bigr {)}^{h}$ if $h$ is even and $q_i^{* h}=q_i\bigl {(}\sqrt{q_i.q'_i}\, \bigr {)}^{(h-1)}$ if $h$ is odd.  So,  if $\qa_i(\ql)$ is even,
 we have
$$T_i*Z^\ql= Z^{r_i(\ql)}*T_i+\displaystyle{\sum_{k=0}^{{\qa_i(\ql)-2}\over 2} (q_i-1){ Z^{\ql-(2k)\qa_i^\vee}}}+\displaystyle{\sum_{k=0}^{{\qa_i(\ql)-2\over 2}} (q_iq'_i-q_i)\bigl {(}\sqrt{q_iq'_i} \, \bigr {)}^{-1}{ Z^{\ql-(2k+1)\qa_i^\vee}}}.
$$ If $\qa_i(\ql)$ is odd, we have $q_i=q'_i$,  and we obtain  $T_i*Z^\ql= Z^{r_i(\ql)}*T_i+\displaystyle{\sum_{h=0}^{\qa_i(\ql)-1} }(q_i-1){ Z^{\ql-h\qa_i^\vee}}$.
  In both cases, by $H_i= (\sqrt{q_i})^{-1} T_i$, we get:

\parni $H_i*Z^\ql= Z^{r_i(\ql)}*H_i+\displaystyle{\sum_{k=0}^{\lfloor{\qa_i(\ql)-1\over 2}\rfloor} (\sqrt{q_i}-\sqrt{q_i}^{-1}){ Z^{\ql-(2k)\qa_i^\vee}}}+\displaystyle{\sum_{k=0}^{\lfloor{\qa_i(\ql)\over 2}\rfloor -1} (\sqrt{q'_i}-\sqrt{q'_i}\,^{-1}){ Z^{\ql-(2k+1)\qa_i^\vee}}}.
$

\end{proof}


\section{Bernstein-Lusztig-Hecke Algebras}\label{s5}

The aim of this section is to define, in a formal way, an  associative algebra
 $^{BL}\mathcal H_R$, called the Bernstein-Lusztig-Hecke algebra. This construction by generators and relations is motivated by the results obtained in the previous section (in particular \ref {4.6}) and we will be able next to identify $^I\shh_R$ and a subalgebra of  $^{BL}\mathcal H_R$ (up to some hypotheses on $R$).


 We use the same notations as before, even if the objects are somewhat different. This choice will be justified by the identification obtained at the end of this section.

 \par We consider $\A$ as in \ref{1.2} and $Aut(\A)\supset W=W^v\ltimes Y\supset W^a$, with $Y$ a discrete group of translations.

 \subsection{The module $^{BL}\mathcal H_{R_1}$ }\label{5.0}

 We consider now the ring $R_1=\Z[{(\qs_{i}}^{\pm 1}, {\qs'_{i}}^{\pm 1})_{ i\in I} ]$ where the indeterminates ${\qs_{i}}, {\qs'_{i}}$ satisfy the following relations (as $q_i$ and $q'_i$ in \ref{1.3}.5 because in the further identification, ${\qs_{i}}, {\qs'_{i}}$ will play the role of  $\sqrt{q_i}$ and $\sqrt {q'_i}$).

 If $\qa_i(Y)=\Z$, then ${\qs_{i}}={\qs'_{i}}$.

 If $r_i$ and $r_j$ are conjugated (i.e. if $\qa_i(\qa_j^\vee)=\qa_j(\qa_i^\vee)=-1$), then ${\qs_{i}}={\qs_{j}}={\qs'_{i}}={\qs'_{j}}$.

 We denote by  $^{BL}\mathcal H_{R_1}$ the free $R_1$-module with basis $(Z^\ql H_w)_{\ql\in Y, w\in W^v}$.
 For short, we write $H_i=H_{r_i}, H_w=Z^0H_w$ and $Z^\ql=Z^\ql H_1$.

\begin{theo}\label{5.1}
 There exists a unique multiplication $*$ on $^{BL}\mathcal H_{R_1}$ which makes it an associative unitary $R_1$-algebra with unity $H_1$ and satisfies the following conditions:

 (1) $ \forall \ql\in Y \quad  \forall w\in W^v \qquad Z^\ql*H_w= Z^\ql H_w$,

 \smallskip

  (2) $ \forall i\in I \quad \forall w\in W^v \qquad H_i*H_w= H_{r_iw}\, $ if $\,\ell (r_iw)>\ell (w)$

 \qquad\qquad\qquad\qquad\qquad\qquad\qquad \quad$ = ({\qs_{i}}-{\qs_{i}}^{-1}) H_w+H_{r_iw}\,$ if $\, \ell (r_iw)<\ell (w)$,

   \smallskip
  (3) $ \forall \ql\in Y \quad  \forall \qm\in Y \qquad Z^\ql*Z^\qm= Z^{\ql+\qm}$,

   \smallskip

  (4) $\forall \ql\in Y \quad  \forall i\in I \qquad H_i*Z^\ql-Z^{r_i(\ql)}*H_i=b(\qs_i, \qs'_i; Z^{-\qa_i^\vee}) (Z^\ql-Z^{r_i(\ql)})$; where $b(t,u; z)={{(t-t^{-1}) + (u-u^{-1})z}\over {1-z^2}}$.

  \end{theo}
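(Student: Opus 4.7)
\medskip

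\textbf{Proof plan.}

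\emph{Uniqueness.} Assuming associativity, relations (1)--(4) determine every product. By induction on $\ell(w)$, relation (2) yields $H_w = H_{i_1}*H_{i_2}*\cdots *H_{i_n}$ for any reduced decomposition $w=r_{i_1}\cdots r_{i_n}$. For a product $(Z^\ql H_w)*(Z^\qm H_v)=Z^\ql *H_w*Z^\qm *H_v$, I would then use relation (4) iteratively to commute $Z^\qm$ to the left past each $H_{i_k}$, picking up error terms of the form $b(\qs_{i_k},\qs'_{i_k};Z^{-\qa_{i_k}^\vee})(Z^\qn-Z^{r_{i_k}(\qn)})$ which lie in the span of the $Z^\qx$; then (3) combines the $Z$-parts and (2) reduces the remaining $H$-part into normal form. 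This gives an explicit formula for the product in the basis $(Z^\qn H_u)$, hence uniqueness.

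\emph{Existence.} The plan is to realize $^{BL}\shh_{R_1}$ as a subalgebra of $\mathrm{End}_{R_1}(M)$ where $M$ denotes the module $^{BL}\shh_{R_1}$ itself. On $M$, I introduce two families of $R_1$-linear operators: for $\qm\in Y$, the translation operator $L_\qm(Z^\ql H_w)=Z^{\ql+\qm}H_w$; and for $i\in I$, an operator $\Phi_i$ mimicking ``left multiplication by $H_i$'' dictated by (2) and (4), namely
\[
\Phi_i(Z^\ql H_w) \;=\; Z^{r_i(\ql)}\,H_i\!\cdot\! H_w \;+\; b(\qs_i,\qs'_i;Z^{-\qa_i^\vee})(Z^\ql-Z^{r_i(\ql)})\,H_w,
\]
where $H_i\cdot H_w$ is the expression on the right of (2). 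A preliminary check, using the hypothesis $\qs_i=\qs'_i$ when $\qa_i(Y)=\Z$, shows that the fraction $b(\qs_i,\qs'_i;Z^{-\qa_i^\vee})(Z^\ql-Z^{r_i(\ql)})$ is genuinely an element of $R_1[Y]$ (not merely of its localization), so $\Phi_i$ is well-defined.

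\emph{Verification and the main obstacle.} I would then verify that these operators satisfy: (i) $L_\qm L_\qn=L_{\qm+\qn}$ and the $L_\qm$ mutually commute (immediate); (ii) the quadratic Hecke relation $\Phi_i^2=(\qs_i-\qs_i^{-1})\Phi_i+\mathrm{id}$ (a short direct computation using the definition); (iii) the cross relation $\Phi_i L_\ql - L_{r_i(\ql)} \Phi_i = b(\qs_i,\qs'_i;Z^{-\qa_i^\vee})(Z^\ql-Z^{r_i(\ql)})\cdot$, which is just (4) by design; and (iv) the braid relations $\underbrace{\Phi_i\Phi_j\Phi_i\cdots}_{m_{ij}}=\underbrace{\Phi_j\Phi_i\Phi_j\cdots}_{m_{ij}}$ whenever $m_{ij}<\infty$. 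Step (iv) is the genuine obstacle: it reduces to a rank-two calculation on an arbitrary $Z^\ql H_w$, which by splitting the action can be further reduced to the case $w=1$, and then to a computation inside $R_1[Y]$. The constraint $\qs_i=\qs_j=\qs'_i=\qs'_j$ when $r_i,r_j$ are conjugate (which is exactly the constraint imposed on the $\qs_i,\qs'_i$ in \ref{5.0}) is precisely what is needed to make the unequal-parameter Demazure--Lusztig calculation go through; in the equal-parameter case this is classical (Lusztig), and the two-parameter version follows the same pattern.

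\emph{Conclusion.} Once (i)--(iv) hold, the $R_1$-subalgebra $\SHA\subset\mathrm{End}_{R_1}(M)$ generated by the $L_\qm$ and $\Phi_i$ acts on the distinguished vector $H_1\in M$ by $L_\qm\Phi_{i_1}\cdots\Phi_{i_n}(H_1)=Z^\qm H_{r_{i_1}\cdots r_{i_n}}$ (for reduced decompositions), so the orbit map $\SHA\to M$, $\varphi\mapsto \varphi(H_1)$, is an $R_1$-module isomorphism. Transporting the associative algebra structure of $\SHA$ through this bijection produces the desired multiplication $*$ on $M={}^{BL}\shh_{R_1}$, and relations (1)--(4) are satisfied by construction.
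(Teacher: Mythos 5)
Your architecture is essentially the one the paper uses: uniqueness by commuting $Z^\qm$ leftwards through a reduced word via (4), and existence by realizing left multiplication by $H_i$ as an explicit operator $\Phi_i$ (the paper's $L_i$) on the module $^{BL}\mathcal H_{R_1}$ and checking the quadratic, cross and braid relations. Your observation that $b(\qs_i,\qs'_i;Z^{-\qa_i^\vee})(Z^\ql-Z^{r_i(\ql)})$ lies in the group algebra precisely because $\qs_i=\qs'_i$ whenever $\qa_i(Y)=\Z$ is correct and is the right preliminary check. The orbit-map packaging (transporting the product from the subalgebra of $\mathrm{End}_{R_1}(M)$ through $\qf\mapsto\qf(H_1)$) is a slightly cleaner way to get associativity than the paper's explicit verification of the identities (R1), (R2), (A), (B); but note that to see the orbit map is bijective you still need the straightening fact that each $\Phi_i L_\ql$ is an $R_1$-combination of operators $L_\qn\Phi_u$, which is the same content as the paper's step (A)/(B), so the saving is mostly in bookkeeping.

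The genuine gap is step (iv). You correctly identify the braid relations for the two-parameter operators $\Phi_i$ as ``the genuine obstacle,'' reduce them to the case $w=1$ and to a rank-two sublattice of $Y$, and then assert that ``the two-parameter version follows the same pattern'' as Lusztig's equal-parameter computation. That assertion is exactly the hard point and is not a proof: for $m_{ij}=4$ or $6$ with distinct parameters $\qs_i,\qs'_i,\qs_j,\qs'_j$ the Demazure--Lusztig braid identity is a substantial computation, and it is not a formal consequence of the equal-parameter case. The paper closes this step differently: after splitting $Y=Y'\oplus Y''$ with $Y'=\ker\qa_i\cap\ker\qa_j\cap Y$ (on $Z^{\ql'}$, $\ql'\in Y'$, the operators act trivially and commute) and $Y''$ a complement containing $\qa_i^\vee,\qa_j^\vee$, it identifies the rank-two situation with Macdonald's affine Hecke algebra $\mathcal H(W(\QF_{i,j},Y''))$, matching the parameters $\qt_i,\qt_j,\qt_0$ to $\qs_i,\qs_j,\qs'_i$ or $\qs'_j$ (this is where the relations imposed on the $\qs$'s in \ref{5.0} are used), and then deduces the braid relations from the known associativity of Macdonald's algebra. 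You need either to carry out the rank-two computation for all finite dihedral types or to cite such a result; as written, the decisive step is asserted rather than established.
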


 \begin {remas}\label{5.1a}

  1) It is already known (see \eg \cite[Th. 7.1]{Hu90} or \cite[IV {\S{}} 2 exer. 23]{Bo68}) that the free submodule with basis $(H_w)_{w\in W^v}$ can be equipped, in a unique way, with a multiplication $*$ that satisfies (2) and gives it a structure of an associative unitary algebra called the "Hecke algebra of the group $W^v$ over $R_1$" and denoted by $\mathcal H_{R_1} (W^v)$.

  2) The submodule $\mathcal H_{R_1} (Y)$ with basis $(Z^\ql)_{\ql\in Y}$ will be a commutative subalgebra.

  \par 3)  When all $\qs_i,\qs'_i$ are equal, the existence of this algebra $^{BL}\mathcal H$  is stated in \cite{GaG95}  and justified by an action on some Grothendieck group.

  \par 4) This $R_1-$algebra depends only on $\A$ and $Y$ (\ie $\A$ and $W$).
  We call it the Bernstein-Lusztig-Hecke algebra over $R_1$ (associated to $\A$ and $W$).
\end{remas}

 \subsection{Proof of  Theorem \ref{5.1}}\label{5.1b}

  1) The uniqueness of the multiplication $*$ is clear: by associativity and distributivity, we have only to identify $H_w*Z^\qm$. If $w=r_{i_1}r_{i_2} \cdots r_{i_n}$ is a reduced decomposition, then, by (2), (4) and remark 1), $H_w*Z^\qm=H_{i_1}*(H_{i_2}*(\cdots *(H_{i_n}*Z^\qm)\cdots ))$ has to be a well defined linear combination of terms $Z^\qn H_u$ : $H_w*Z^\qm=\sum_{k} a_k Z^{\qn_k}H_{u_k}$ with $a_k\in R_1$, $\qn_k\in Y, u_k\in W^v$.

  2) Construction of $*$. We define $H_w*Z^\qm$ as above and we have to prove that it does not depend on the reduced decomposition $w=r_{i_1}r_{i_2} \cdots r_{i_n}$.

  a) We define $L_i\in End_{R_1} (^{BL}\mathcal H_{R_1})$ by : 

  $L_i(Z^\qm H_w)= H_i*(Z^\qm H_w)=Z^{r_i(\qm)} (H_i*H_w)+b(\qs_i, \qs'_i; Z^{-\qa_i^\vee}) (Z^\qm-Z^{r_i(\qm)})*H_w$

  \noindent  where $H_i*H_w=H_{r_iw}$ if $\ell (r_iw)>\ell (w)$
and  $H_i*H_w=({\qs_{i}}-{\qs_{i}}^{-1}) H_w+H_{r_iw}$ if $\ell (r_iw)<\ell (w)$.

By Matsumoto's theorem \cite[IV 1.5 prop. 5]{Bo68}, the expected independence will be a consequence of the braid relations, i.e.:

$(*)\qquad L_i(L_j(L_i(\ldots (Z^\ql H_w)\ldots)))=L_j(L_i(L_j(\ldots (Z^\ql H_w)\ldots)))$ (with $m_{i,j}$ factors $L$ on each side), whenever the order $m_{i,j}$ of $r_ir_j$ is finite.

As $\mathcal H_{R_1} (W^v)$ is known to be an algebra, it is enough  to prove $(*)$ for $w=1$. We may also suppose $ \qa_j(\qa_i^\vee)\not=0$ as otherwise $L_i$ and $L_j$ commute clearly.

We choose $i, j\in I$ with $m_{i,j}$ finite, then $\pm \qa_i, \pm \qa_j$ generate a finite root system $\Phi_{i,j}$ of rank 2 (or 1 if $i=j$). Moreover, $Y'=\ker (\qa_i)\cap \ker  (\qa_j)\cap Y$ is cotorsion free in Y; let $Y''$ be a supplementary module containing $\qa_i^\vee$ and $ \qa_j^\vee$. $Y''$ is a lattice (of rank 2 or 1) between the lattices $Q_{i,j}^\vee$ of coroots and $P_{i,j}^\vee$ of coweights, associated to $\Phi_{i,j}$.

Any $\ql\in Y$ may be written $\ql = \ql'+\ql''$ with $\ql'\in Y'$ and $\ql''\in Y''$. By (4), $L_i (Z^{\ql'})=Z^{\ql'}H_i$ and $L_j(Z^{\ql'})=Z^{\ql'}H_j$. So we have to prove $(*)$ for $\ql=\ql''\in Y''$. We shall do it by comparing with some Macdonald's results.

b) In  \cite{Ma03} Macdonald builds affine Hecke algebras $\cal{H}(W(R, L')) $ over $\R$, associated to any finite irreducible root system $R$ and any lattice $L'$ between the lattices of coroots and coweights; more precisely this algebra is associated to the extended affine Weyl group $ W( R,L') =W(R) \ltimes L'$. It is defined by generators and relations, but it is proven that it is endowed with a basis $(Y^\ql T(w))_{\ql\in L', w\in W(R)}$  [\lc; 4.2.7] and satisfies relations analogous to (1), (2), (3), (4) as above. There are parameters $(\qt_i)_{i\in I}$ and $\qt_0$ which are reals (but may be algebraically independent over $\Q$, so may be considered as indeterminates) and satisfy $\qt_i= \qt_j$ if  $\qa_i(\qa_j^\vee)=\qa_j(\qa_i^\vee)=-1$. The relation (4) is satisfied with $\qs_i=\qt_i$ and $\qs'_i=\qt_i$ when $\qa_i(L')=\Z$, $\qs'_i=\qt_0$ when $\qa_i(L')=2\Z$.

c) In the case $R=\Phi_{i,j}$, irreducible, $L'=Y''$,  we may choose $\qt_i, \qt_j $ and $\qt_0$ such that the relations (4) are the same, for us and Macdonald: either  $\qa_i(\qa_j^\vee)=-1$ or $\qa_j(\qa_i^\vee)=-1$, so $\qt_0= \qs_i'$ or $\qt_0= \qs_j'$.  In particular $R_1$ may be identified to a subring of $\R$. The operators $L_i$ and $L_j$ of both theories coincide on the elements $Z^\ql H_v$ (identified with $Y^\ql T(v)$ in Macdonald's work) for $\ql\in L'=Y''$ and $v\in \langle r_i, r_j\rangle$. So $(*)$ is satisfied as $\cal{H}(W(R, L')) $ is an associative algebra.

d) So, if $H_w*Z^\qm=\sum_{k} a_k Z^{\qn_k}H_{u_k}$, with $a_k\in R_1, \qn_k\in Y, u_k\in W^v$, we define the product of $Z^\ql H_w$ and $Z^\qm H_v$ by: $(Z^\ql H_w)*(Z^\qm H_v)=\sum_{k} a_k Z^{\ql +\qn_k}*(H_{u_k}*H_v)$. We get a distributive multiplication on $^{BL}\mathcal H_{R_1}$  with unit $H_1$.

3) Associativity.

a) Using the associativity in $\mathcal H_{R_1}(Y)$ and $\mathcal H_{R_1}(W^v)$ and the formula 2.d above, it is clear that, for any $\ql \in Y, w\in W^v, E_1 ,E_2\in {^{BL}\mathcal H}_{R_1}$ , we have:

(R1) \quad $Z^\ql *(E_1 *E_2)= (Z^\ql *E_1 )*E_2,$

(R2) \quad $E_1 *(E_2*H_w)= (E_1 *E_2)*H_w.$

\parni We need also to prove (for $\ql_1, \ql_2\in Y, w, w_1, w_2\in W^v,E\in  {^{BL}\mathcal H}_{R_1}$),

(A) \quad $H_w*(Z^{\ql_1}*Z^{\ql_2})=(H_w*Z^{\ql_1})*Z^{\ql_2}$,

(B) \quad $H_{w_1}*(H_{w_2}*E)=(H_{w_1}*H_{w_2})*E$.

\parni Then the general associativity will follow : using (R1), (R2), (A), (B) and the formula 2d for the product,  it is not too difficult (and left to the reader) to prove that :

$$
\begin{array}{rcl}
(Z^{\ql_1}H_{w_1})*\big ( (Z^{\ql_2}H_{w_2})*(Z^{\ql_3}H_{w_3})\big) & = & Z^{\ql_1}*(H_{w_1}*\big ( (Z^{\ql_2}H_{w_2})*Z^{\ql_3})\big )*H_{w_3}\\
& = & Z^{\ql_1}*\big ((H_{w_1}*Z^{\ql_2})*(H_{w_2}*Z^{\ql_3})\big )*H_{w_3}\\
& = & Z^{\ql_1}*\big((H_{w_1}*(Z^{\ql_2}H_{w_2}))*Z^{\ql_3}\big )*H_{w_3}\\
& = & \big ((Z^{\ql_1}H_{w_1})*(Z^{\ql_2}H_{w_2})\big)*(Z^{\ql_3}H_{w_3}).\\
\end{array}
$$

b) Proof of (B). This condition is equivalent to the fact that the left multiplication by $\mathcal H_{R_1}(W^v)$ on $ {^{BL}\mathcal H}_{R_1}$ is an action. But the associative algebra  $\mathcal H_{R_1}(W^v)$  is generated by the $H_i$ with relations the braid relations and $H_i^2=(\qs_i-\qs_i^{-1})H_i+H_1$. As $L_i$ is the left multiplication by $H_i$, we have (B) if, and only if, these $L_i$ satisfy the relation $(*)$ in 2.a and

$(**) \quad L_i(L_i (Z^\ql H_v))=(\qs_i-\qs_i^{-1})L_i (Z^\ql H_v)+ Z^\ql H_v.$

As in 2b, we reduce the verification of $(**)$ to the case $v=1$ and $\ql\in Y''$ (associated to $i=j$) i.e. $\ql \in Y''=\Q\qa_i^\vee\cap Y$. Then we look at Macdonald's construction of
$\cal{H}(W(\{\pm \qa_i\}, Y'')) $ with $\qt_i=\qs_i$, $\qt_0=\qs'_i$. We conclude, as in 2.c that $(**)$ is satisfied.

\medskip
c) The proof of (A) is by induction on $\ell (w).$

If $w=r_i$, we have:

\noindent $(H_i*Z^{\ql_1})*Z^{\ql_2}=(Z^{r_i(\ql_1)}H_i)*Z^{\ql_2}+(b(\qs_i, \qs'_i; Z^{-\qa_i^\vee}) (Z^{\ql_1}-Z^{r_i(\ql_1)}))*Z^{\ql_2}\\
= Z^{r_i(\ql_1)}*(Z^{r_i(\ql_2)}H_i+(b(\qs_i, \qs'_i; Z^{-\qa_i^\vee}) (Z^{\ql_2}-Z^{r_i(\ql_2)}))+b(\qs_i, \qs'_i; Z^{-\qa_i^\vee}) (Z^{\ql_1+\ql_2}-Z^{r_i(\ql_1)+\ql_2})\\
= Z^{r_i(\ql_1+\ql_2)}H_i+b(\qs_i, \qs'_i; Z^{-\qa_i^\vee}) (Z^{r_i(\ql_1)+\ql_2}-Z^{r_i(\ql_1)+r_i(\ql_2)})+b(\qs_i, \qs'_i; Z^{-\qa_i^\vee}) (Z^{\ql_1+\ql_2}-Z^{r_i(\ql_1)+\ql_2})\\
= Z^{r_i(\ql_1+\ql_2)}H_i+b(\qs_i, \qs'_i; Z^{-\qa_i^\vee}) (Z^{\ql_1+\ql_2}-Z^{r_i(\ql_1+\ql_2)})\\
= H_i*(Z^{\ql_1}*Z^{\ql_2})$

If the result is known when $\ell(w)=n$. Let us consider $w=w'r_i$ with $\ell(w)=n+1$ and $\ell(w')=n$, then

\noindent $H_w*(Z^{\ql_1}*Z^{\ql_2})=H_{w'}*(H_i*Z^{\ql_1+\ql_2}) $ (left multiplication by $\mathcal H_{R_1}(W^v)$  is an action)

\noindent $ {\hskip 7em}= H_{w'}*((H_i*Z^{\ql_1})*Z^{\ql_2} )$ (case $\ell (w)=1$)

\noindent $  {\hskip 7em}= H_{w'}*\big ((Z^{r_i(\ql_1)}H_i)*Z^{\ql_2}+(b(\qs_i, \qs'_i; Z^{-\qa_i^\vee}) (Z^{\ql_1}-Z^{r_i(\ql_1)}))*Z^{\ql_2} \big ).$

On the other hand, we have

\noindent $(H_w*Z^{\ql_1})*Z^{\ql_2}=(H_{w'}*(H_i*Z^{\ql_1}))*Z^{\ql_2}\\
 {\hskip 7em}=\big (H_{w'}*(Z^{r_i(\ql_1)}H_i+b(\qs_i, \qs'_i;  Z^{-\qa_i^\vee}) (Z^{\ql_1}-Z^{r_i(\ql_1)})\big )*Z^{\ql_2}\\
{\hskip 7em}=\big (H_{w'}*(Z^{r_i(\ql_1)}H_i)\big )*Z^{\ql_2}+\big (H_{w'}*(b(\qs_i, \qs'_i; Z^{-\qa_i^\vee}) (Z^{\ql_1}-Z^{r_i(\ql_1)}))\big )*Z^{\ql_2}.$

The second term of the right hand side is a $R_1$-linear combination of $(H_{w'}*Z^{\ql_1+k\qa_i^\vee})*Z^{\ql_2}$ and we see by induction that it is the same as $H_{w'}*((b(\qs_i, \qs'_i;  Z^{-\qa_i^\vee}) (Z^{\ql_1}-Z^{r_i(\ql_1)}))*Z^{\ql_2} )$ in $H_w*(Z^{\ql_1}*Z^{\ql_2})$.

In the first term, $(H_{w'}*(Z^{r_i(\ql_1)}H_i))*Z^{\ql_2}=((H_{w'}*Z^{r_i(\ql_1)})*H_i))*Z^{\ql_2}$, we can write  $H_{w'}*Z^{r_i(\ql_1)}=\sum_k c_kZ^{\ql_k}H_{w_k}$ and we will use later in the same way $H_i*Z^{\ql_2}=\sum_h a_hZ^{\qm_h}H_{v_h}$ with $c_k, a_h\in R_1, \ql_k, \qm_h\in Y$ and $w_k, v_h\in W^v$. So, we have :

$((\sum_k c_kZ^{\ql_k}H_{w_k})*H_i)*Z^{\ql_2}= (\sum_k c_k(Z^{\ql_k}*(H_{w_k}*H_i)))*Z^{\ql_2}$\qquad (by (R2))

 $  {\hskip 12em}=\sum_k c_k Z^{\ql_k}*((H_{w_k}*H_i)*Z^{\ql_2})$\qquad (by formula 2d)

 $  {\hskip 12em}=\sum_k c_k Z^{\ql_k}*(H_{w_k}*(H_i*Z^{\ql_2}))$\qquad (by (B))

  $  {\hskip 12em}=\sum_k c_k (Z^{\ql_k}*H_{w_k})*(H_i*Z^{\ql_2})$ \qquad (by (R1))

    $  {\hskip 12em}=\sum_k c_k (Z^{\ql_k}*H_{w_k})*(\sum_h a_hZ^{\qm_h}H_{v_h})$

      $  {\hskip 12em}=\sum_{k,h} c_k a_h(Z^{\ql_k}*H_{w_k})*(Z^{\qm_h}*H_{v_h})$

            $  {\hskip 12em}=\sum_{k,h} c_k a_h(((Z^{\ql_k}*H_{w_k})*Z^{\qm_h})*H_{v_h})$ \quad (by  (R2))

     $  {\hskip 12em}=\sum_{h} a_h(((H_{w'}*Z^{r_i(\ql_1)})*Z^{\qm_h})*H_{v_h})$

       $  {\hskip 12em}=\sum_{h} a_h((H_{w'}*(Z^{r_i(\ql_1)}*Z^{\qm_h}))*H_{v_h})$\qquad (by induction)

 $  {\hskip 12em}=\sum_{h} a_hH_{w'}*((Z^{r_i(\ql_1)}*Z^{\qm_h})*H_{v_h})$ \qquad (by (R2))

  $  {\hskip 12em}=H_{w'}*(Z^{r_i(\ql_1)}*(H_i*Z^{\ql_2}))$.  \qquad (by (R1))

   This corresponds to the term $ H_{w'}*((Z^{r_i(\ql_1)}H_i)*Z^{\ql_2})$ in $H_w*(Z^{\ql_1}*Z^{\ql_2})$ so we obtain the equality when $\ell(w)=n+1$.   \qed

   \subsection{Change of scalars} \label{5.2}

\parni{\bf 1)}  Let us suppose that we are given a morphism  $ \varphi$ from $R_1$ to a ring $R$, then we are able to consider, by extension of scalars,  $^{BL}\mathcal H_{R}=R{\otimes_{R_1}} ^{BL}\mathcal H_{R_1}$ as  an $R$-associative algebra. The family $(Z^\ql H_w)_{\ql\in Y, w\in W^v}$ is still a basis of the $R$-module $^{BL}\mathcal H_{R}$.

\parni{\bf 2)}  In order to consider elements similar to the $X^\ql$ of section 4, we are going to define a ring $R_3$ containing $R_1$ such that there exists a group homomorphism $\qd^{1/2}:Y\to {R_3^\times}$ with $\qd(\ql)=\qd^{1/2}(\ql)^2$ for any $\ql\in Q^\vee$ and $\qd^{1/2}(\qa_i^\vee)=\qs_i.\qs'_i$.

Since $Q^\vee$ is a submodule of the free $\Z$-module $Y$, by the elementary divisor theorem, if we denote $m$ the biggest elementary divisor, then for any $\qm\in Y\cap (Q^\vee{\otimes_{\Z}}\R)$, we have $m\qm\in Q^\vee$. Let us consider the ring $R_3=\Z[({\qt_i}^{\pm 1}, {\qt'_i}^{\pm 1})_{i \in I}]$ (with $\qt_i,\qt_i'$ satisfying conditions similar to those of \ref{5.0}) and the identification of  $R_1$ as a subring of $R_3$ given by $\qt_i^m=\qs_i$ and ${\qt'_i}^m=\qs'_i$.
Then, for $\ql \in Y $ we have $m\ql=\sum_{i\in I} a_i \qa_i^\vee +\ql_0$ with the $a_i\in \Z$ and  $\ql_0\notin Q^\vee{\otimes_{\Z}}\R$ and we can define $\qd^{1/2}(\ql)=\Pi_{i\in I} (\qt_i\qt'_i)^{a_i}$ and obtain a group homomorphism from $Y$ to $R_3$, with the wanted properties.

In  $^{BL}\mathcal H_{R_3}$, let us consider $X^\ql= \qd^{1/2}(\ql)Z^\ql$ for $\ql\in Y$ and $T_i=\sigma_i H_i= (\qt_i)^m H_i$. It's easy to see that $T_w=T_{i_1}*T_{i_2}*\cdots*T_{i_n}$ is independent of the choice of a reduced decomposition $r_{i_1}r_{i_2}\cdots r_{i_n}$ of $w$.
It is clear that the family $(X^\ql* T_w)_{\ql\in Y, w\in W^v} $ is a new basis of the $R_3$-module $^{BL}\mathcal H_{R_3}$.

\parni{\bf 3)}  We can give new formulas to define $*$ in terms of these generators.
The relation (4) of the definition of $^{BL}\mathcal H_{R_3}$ can be written as previously :

\parni If $\qa_i(\ql) \geq 0$, then

\parni (BLZ$+$)\quad  $ H_i*Z^\ql= Z^{r_i(\ql)}*H_i+\displaystyle{\sum_{k \, even, \,k=0}^{\qa_i(\ql)-1}  ({\qs_i}-{\qs_i}^{-1}){ Z^{\ql-k\qa_i^\vee}}}+\displaystyle{\sum_{k\, odd, \, k=0}^{\qa_i(\ql)-1} ({\qs'_i}-{\qs'_i}\,^{-1}){ Z^{\ql-k\qa_i^\vee}}}$,

\parni If $\qa_i(\ql)<0$, then

\parni  (BLZ$-$) \quad  $ H_i*Z^\ql= Z^{r_i(\ql)}*H_i-\displaystyle{\sum_{k\, even, \,k=2}^{-\qa_i(\ql)} ({\qs_i}-{\qs_i}^{-1}){ Z^{\ql +k\qa_i^\vee}}}-\displaystyle{\sum_{k \, odd, \, k=1}^{-\qa_i(\ql)} ({\qs'_i}-{\qs'_i}\,^{-1}){ Z^{\ql+k\qa_i^\vee}}}$.

With the same arguments as in  \ref{4.7}, these relations (after changing the variables and writing $ (\qs_i^2)^{* n}=\qs_i^2{\qs'_i}^2\qs_i^2{\qs'_i}^2 \cdots $ with $n$ terms in this product) become :

\noindent (BLX$+$) If $\qa_i(\ql) \geq 0$, then  $T_i*X^\ql= (\qs_i^2)^{* (\qa_{i}(\ql))}X^{r_i(\ql)}*T_i+\displaystyle{\sum_{h=0}^{\qa_i(\ql)-1} ((\qs_i^2)^{* (h+1)}-(\qs_i^2)^{* (h)}) X^{\ql-h\qa_i^\vee}},$

\noindent (BLX$-$) If $\qa_i(\ql)< 0$, then, 

\noindent $T_i*X^\ql= {1\over (\qs_i^2)^{* (-\qa_{i}(\ql))}}X^{r_i(\ql)}*T_i- {1\over (\qs_i^2)^{* (-\qa_{i}(\ql))}}\displaystyle{\sum_{h=\qa_i(\ql)}^{-1}} \Bigl{(}{(\qs_i^2)}^{* (-\qa_{i}(\ql)+h+1)}-{q_i}^{*{(-\qa_{i}(\ql)}+ h)}\Bigr {)} X^{\ql-h\qa_i^\vee}$.

   \smallskip
The other formulas give easily:
\goodbreak
\parni (2') $ \forall i\in I \quad \forall w\in W^v \qquad T_i*T_w= T_{r_iw}\, $ if $\,\ell (r_iw)>\ell (w)$

$\qquad\qquad\qquad\qquad\qquad\qquad\quad\,\,=(\qs_i^2 -1)T_w+\qs_i^2 T_{r_iw}\,$ if $\, \ell (r_iw)<\ell (w)$,

   \smallskip
 \parni (3') $ \forall \ql\in Y \quad  \forall \qm\in Y \qquad X^\ql*X^\qm= X^{\ql+\qm}$.

 In all these relations, we can see that the coefficients are in the subring $R_2=\Z[(\qs_i^{\pm 2},{ \qs'}_i^{\pm 2})_{i\in I}] $ of $R_1$. So, if we consider $^{BLX}\mathcal H_{R_2}$ the $R_2-$submodule with basis $(X^\ql* T_w)_{\ql\in Y, w\in W^v} $, the multiplication $*$ gives it a structure of associative unitary algebra over $R_2$.

 \subsection{The positive Bernstein-Lusztig-Hecke algebra} \label{5.3}

If we consider in $^{BLX}\mathcal H_{R_2}$, the submodule with basis $(X^\ql* T_w)_{\ql\in Y^+, w\in W^v} $,  it is stable by  multiplication  $*$ (in (BLX$+$) and (BLX$-$) if $\ql \in Y^+$ all the $\ql\pm h\qa_i^\vee$ written are also in $Y^+$).
  We denote by $^{BL}{\mathcal H}^+_{R_2}$ this $R_2-$subalgebra of $^{BLX}\mathcal H_{R_2}$.
   Actually, we can define  such  positive Hecke subalgebras inside all algebras in \ref{5.2}.

 Like before, if we are given a morphism  $ \varphi$ from $R_2$ to a ring $R$, we are able to consider, by extension of scalars,  $^{BL}\mathcal H^+_{R}=R{\otimes_{R_2}} ^{BL}\mathcal H^+_{R_2}$.
 Let us consider the ring $R$ of the section 4 (such that $\Z\subset R$ and all $q_i, q'_i$ are invertible in $R$), we can construct a morphism $\phi $ from $R_2$ to $R$ by $\phi (\qs_i^{ 2}  )=q_i$ and $\phi ({\qs'}_i^{ 2}  )=q'_i$. So, we obtain an algebra $^{BL}\mathcal H^+_{R}$ with basis $(X^\ql* T_w)_{\ql\in Y^+, w\in W^v} $ and the same relations as in   $^{I}\mathcal H_{R}$. So :

   \begin{prop*} Over $R$, the Iwahori-Hecke algebra $^{I}\mathcal H_{R}$ and the positive Bernstein-Lusztig-Hecke algebra  $^{BL}\mathcal H^+_{R}$ are isomorphic.
 \end{prop*}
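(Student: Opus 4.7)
The plan is to exhibit an explicit $R$-linear bijection between the two algebras, and then check that the multiplication tables coincide by appealing to the relations already established in Sections~3 and~4. Concretely, define
\[
\Phi : \,^{BL}\mathcal H^+_R \longrightarrow \,^{I}\mathcal H_R
\]
as the unique $R$-linear map sending the basis element $X^\lambda * T_w$ of $^{BL}\mathcal H^+_R$ (in the sense of \ref{5.3}) to the homonymous element $X^\lambda * T_w$ of $^I\mathcal H_R$ (constructed in Theorem \ref{4.5}), for all $(\lambda,w)\in Y^+\times W^v$. Since both modules are free with basis indexed by $Y^+\times W^v$, the map $\Phi$ is an $R$-module isomorphism; it sends $1$ to $1$ by construction.

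To see that $\Phi$ is multiplicative, it suffices to show that a complete list of defining relations for $^{BL}\mathcal H^+_R$ is also satisfied by the images in $^I\mathcal H_R$. By \ref{5.3}, the product on $^{BL}\mathcal H^+_R$ is entirely determined by the following four families of relations among $(T_w)_{w\in W^v}$ and $(X^\lambda)_{\lambda\in Y^+}$, after applying the morphism $\varphi : R_2\to R$ with $\varphi(\sigma_i^2)=q_i$, $\varphi(\sigma_i'^2)=q_i'$: the quadratic and braid-like relations~(2$'$) for the $T_i$; the commutative law $X^\lambda * X^\mu = X^{\lambda+\mu}$; and the positive and negative Bernstein--Lusztig relations (BLX$+$) and (BLX$-$) for each $i\in I$ and $\lambda\in Y^+$. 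On the $^I\mathcal H_R$ side, (2$'$) holds by \ref{3.2}, the commutativity $X^\lambda * X^\mu = X^{\lambda+\mu}$ is part of Theorem \ref{4.5}(3), and (BLX$\pm$) is exactly Proposition \ref{4.6}. Hence $\Phi$ respects all these relations.

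It remains to argue that these relations, together with the choice of basis, really do determine the product in $^{BL}\mathcal H^+_R$ (so that checking them on generators suffices). This is the step to handle with a little care: one has to verify that any product $(X^\lambda * T_w)*(X^\mu * T_v)$ can be rewritten as an $R$-linear combination of basis elements $X^\nu * T_u$ purely by iterated application of (2$'$), (BLX$\pm$) and $X^\lambda * X^\mu = X^{\lambda+\mu}$. This is done by induction on $\ell(w)$: writing $T_w = T_{i_1}*\cdots *T_{i_n}$ for a reduced decomposition, one uses (BLX$\pm$) to move each $T_{i_k}$ past $X^\mu$, noting that at each step the resulting new exponents remain in $Y^+$ (since the chain of shifts by $-h\alpha_i^\vee$ with $0\le h\le \alpha_i(\lambda)$ stays in $Y^+$ when $\lambda\in Y^+$), so the process remains inside $^{BL}\mathcal H^+_R$; the final factor $T_w * T_v$ is then reduced using (2$'$) inside the Hecke subalgebra $\mathcal H_R(W^v)$.

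The main (minor) obstacle is thus bookkeeping: checking that the rewriting procedure in $^{BL}\mathcal H^+_R$ stays within $Y^+$, and that the same procedure applied in $^I\mathcal H_R$ using Proposition \ref{4.6} yields identical coefficients. Once these two parallel reduction algorithms are seen to produce identical structure constants in the basis $(X^\nu * T_u)$, the map $\Phi$ is an algebra isomorphism and the proposition follows.
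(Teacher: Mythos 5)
Your proof is correct and follows essentially the same route as the paper, which simply observes that both algebras are free $R$-modules on the basis $(X^\lambda * T_w)_{\lambda\in Y^+,\,w\in W^v}$ satisfying the same relations (2$'$), (3$'$) and (BLX$\pm$). Your third paragraph (that these relations determine the product via a rewriting that stays in $Y^+$) makes explicit a point the paper leaves implicit, relying on the uniqueness argument of Theorem \ref{5.1} and the remark in \ref{5.3} that the exponents $\lambda\pm h\alpha_i^\vee$ remain in $Y^+$.
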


 \begin{rema*} $^{BLX}\mathcal H_{R}$ is a ring of quotients of $^{BL}\mathcal H^+_{R}\simeq {^{I}\mathcal H_{R}}$, as we added, in it, inverses of the $X^\ql=T_\ql$ for $\ql\in Y^{++}$. Actually, from 5.2, 5.4 and similar results, one may prove that $S = \{T_\lambda, \lambda\in Y^{++}\}$ satisfies the right and left \?Ore condition and that the map from $^{BL}\mathcal H^+_{R}$ to the corresponding quotient ring is injective (see e.g. \cite{McCR01} 2.1.6 and 2.1.12).
 \end{rema*}

 \subsection{Structure constants} \label{5.7}

 \par From Proposition \ref{5.3}, we get that the structure constants of the convolution product $*$ of $^{I}\mathcal H_{R}$, in the basis $(X^\ql* T_w)_{\ql\in Y^+, w\in W^v} $, are Laurent polynomials in the parameters $q_i,q_i'$, with coefficients in $\Z$, depending only on $\A$ and $W$.
 By Remark \ref{4.5}.a, we get the same result for the structure constants in the basis $(T_\ql* T_w)_{\ql\in Y^+, w\in W^v} $ and then still the same result for the structure constants $a_{\mathbf w, \mathbf v}^{\mathbf u}$ in the basis $(T_{\mathbf w})_{\mathbf w\in W^+}$ (by \ref{3.5}).

 \par This last result is not as precise as the one expected in the conjecture of {\S{}} \ref{s2}.
 But there is at least one case where we can prove it:

 \begin{rema*} Suppose $\SHI$ is the hovel associated to a split Kac-Moody group $G$ over a local field $\shk$, \cf \cite[{\S{}}  3]{GR13}.
 Then all parameters $q_i,q_i'$ are equal to the cardinality $q$ of the residue field; moreover we know that each $a_{\mathbf w, \mathbf v}^{\mathbf u}$ is an integer and a Laurent polynomial in $q$, with coefficients in $\Z$, depending only on $\A$ and $W$.
 But, as $G$ is split, the same thing is true (without changing $\A$ and $W$) for all unramified extensions of the field $\shk$, hence for infinitely many $q$.
 So the Laurent polynomial $a_{\mathbf w, \mathbf v}^{\mathbf u}$ is an integer for infinitely many integral values of the variable $q$: it has to be a true polynomial.
 \end{rema*}

\section{Extended affine cases and DAHAs}\label{s7}

In this section, we define the extended Iwahori-Hecke algebras and explore their relationship with the Double Affine Hecke Algebras introduced by Cherednik.

 \subsection{Extended groups of automorphisms} \label{7.1}
 \par  We may  consider a group $\widetilde G$ containing the group $G$ of \ref{1.3} and an extension to $\widetilde G$ of the action of $G$ on $\SHI$.
  We assume that $\widetilde G$ permutes the apartments and induces isomorphisms between them, hence $\widetilde G$ is equal to $G.\widetilde N$ where $\widetilde N\supset N$ is  the stabilizer  of $\A$ in $\widetilde G$.
  This group $\widetilde N$ has almost the same properties as the group $N$ described in \ref{1.3}.4 above. But we  assume now $\widetilde W=\qn(\widetilde N)\subset Aut(\A)$ only positive for its action on the vectorial faces; this means that the associated linear map $\vect w$ of any $w\in \widetilde W$ is in $Aut^+(\A^v)$.
  We assume moreover  that $\widetilde W$ may be written $\widetilde W=\widetilde W^v\ltimes Y$, where $\widetilde W^v$ fixes the origin $0$ of $\mathbb A$ and $Y$ is the same group of translations as for $G$ (\cf \ref{1.3}.4 above).
  In particular, $\widetilde W^v$ is isomorphic to the group $\{\vect w\mid w\in\widetilde W\}$ and may be written $\widetilde W^v=\QO\ltimes W^v$ (\cf \ref{1.1} above); moreover $\widetilde W=\QO\ltimes W$, where $\QO$ is the stabilizer of $C_f^v$ in $W^v$.
  Finally, we assume that $G$ contains the fixer $Ker\qn$ of $\A$ in $\widetilde G$; so that $G\lhd \widetilde G$ is the subgroup of all ``vectorially-Weyl'' automorphisms in $\widetilde G$ and $\widetilde G/G\simeq\QO$.

\par As $\widetilde W$ is positive, $\widetilde G$ preserves the preorder ${\leq}$ on $\SHI$.
So $\widetilde G^+=\{g\in\widetilde G\mid 0{\leq}g.0\}$ is a semi-group with $\widetilde G^+\cap G=G^+$.
And $\widetilde W^+=\QO\ltimes W^+=\widetilde W^v\ltimes Y^+\subset \widetilde W$ is also a semigroup, with $\widetilde W^+\cap W=W^+$.

 \subsection{Examples: Kac-Moody and loop groups} \label{7.2}

 \par 1) One considers a field $\shk$ complete for a normalized, discrete valuation with a finite residue field (of cardinality $q$).
 If $\g G$ is an almost split Kac-Moody group-scheme over $\shk$, then the Kac-Moody group $G=\g G(\shk)$ acts on an affine ordered hovel $\SHI$, with the properties described in \ref{1.3}.
See  \cite{R12}, \cite[{{\S}} 3]{GR13} in the split case (where all $q_i,q'_i$ are equal to $q$) and \cite{Ch10}, \cite{Ch11} or \cite{R13} in general.

\par 2) Let $\g G_0$ be a simply connected, almost simple, split, semi-simple algebraic group of rank $r$ over $\shk$.
Its fundamental maximal torus $\g T_0$ is $Q_0^\vee\otimes_\Z\g{Mult}$, where $Q_0^\vee$ (resp. $P_0^\vee$) is the coroot lattice (resp. coweight lattice) of the root system $\QF_0\subset V_0^*$ with Weyl group $W_0^v$.

Then some central extension by $\shk^\times$ of (a subgroup of) the loop group $\g G_0(\shk[t,t^{-1}])\rtimes\shk^\times$ (where $x\in\shk^\times$ acts on $\g G_0(\shk[t,t^{-1}])$ via $t\mapsto xt$) is $G=\g G(\shk)$ for the most popular example $\g G$ of an untwisted, affine, split, Kac-Moody group-scheme over $\shk$.
Its fundamental, maximal torus $\g T$ is $\g{Mult}\times\g T_0\times\g{Mult}=Y\otimes_\Z\g{Mult}$, with cocharacter group $Y=\Z\g c\oplus Q_0^\vee\oplus\Z d$, where $\g c$ is the canonical central element and $d$ the scaling element.

The set $\QF$ of real roots is $\{\qa_0+n\qd\mid\qa_0\in\QF_0,n\in\Z\}$ in the dual $V^*$ of $V=Y\otimes_\Z\R=\R\g c\oplus V_0\oplus\R d$, where $\qd(a\g c+v_0+bd)=b$ and $\qa_0(a\g c+v_0+bd)=\qa_0(v_0)$.
The corresponding Weyl group $W^v$ is actually the affine Weyl group $W_0^a=W_0^v\ltimes Q_0^\vee$ acting linearly on $V$; its action on the hyperplane $d+V_0$ of $V/\R\g c$ is affine: $W_0^v$ acts linearly on $V_0$ and $Q_0^\vee$ acts by translations.
The group $G$ is generated by $T=\g T(\shk)$ and root groups $U_\qa\simeq \shk=\g{Add}(\shk)$ for $\qa\in\QF$; if $\qa=\qa_0+n\qd$, then $U_\qa=\g U_{\qa_0}(t^n.\shk)$.

\par The fundamental apartment $\A$ of the associated hovel is as described in \ref{1.2} with $W=W^v\ltimes Y$ containing the affine Weyl group $W^a=W^v\ltimes Q^\vee$, with $Q^\vee=\Z\g c\oplus Q_0^\vee$.

\par This is the situation considered in \cite{BrKP14}.
We saw in \cite[Rem. 3.4]{GR13} that our group $K$ is the same as the $K$ of \cite{BrKP14}.
It is clear that the Iwahori group $I$ of \lc is included in our group $K_I$.
But from \ref{1.3}.2 and [\lc 3.1.2], we get two Bruhat decompositions $K=\bigsqcup_{w\in W^v}\, K_I.w.K_I=\bigsqcup_{w\in W^v}\, I.w.I$.
So $K_I=I$ and, in this case, our results are the same as those of \lc

\par 3) Let us consider a central schematical quotient $\g G_{00}$ of  $\g G_{0}$.
It is determined by the cocharacter group $Y_{00}$ of its fundamental torus $\g T_{00}$: $Q_0^\vee\subset Y_{00}\subset P_0^\vee$ and $\g T_{00}=Y_{00}\otimes_\Z\g{Mult}$.
The root system $\QF_0\subset V_0^*$ and the Weyl group $W_0^v\subset GL(V_0)$ are the same.

\par We get a more general untwisted, affine, split Kac-Moody scheme $\g G_1$ by "amalgamating" $\g G$ and the $\shk-$split torus $\g T_1=Y_1\otimes_\Z\g{Mult}$ (with $Y_1=\Z\g c\oplus Y_{00}\oplus\Z d$) along $\g T$.
A little more precisely the Kac-Moody group $G_1=\g G_1(\shk)$ is a quotient of the free product of $G$ and $T_{00}=\g T_{00}(\shk)=Y_{00}\otimes_\Z\shk^\times$ by some relations; essentially $T_{00}$ normalizes $T$ and each $U_\qa$ (hence also $G$) and one identifies both copies of $T_{0}$,  \cf \cite[1.8]{R12}.
The new fundamental torus is $\g T_1$. We keep the same $V,\QF,W^v,\A$ and $\SHI$, but now $W_1=W^v\ltimes Y_1\supset W\supset W^a$.

\par 4) We may consider a central extension by $\shk^\times$ of (a subgroup of) the loop group $\g G_{00}(\shk[t,t^{-1}])\rtimes\shk^\times$.
We get thus an extended Kac-Moody group $\widetilde G_2$ (not among the Kac-Moody groups of \cite{T87} or \cite{R12}) which may also be described by amalgamation:
$\widetilde G$  is a quotient of the free product of $G$ and $Y_{00}\otimes_\Z\shk[t,t^{-1}]^*$ by relations similar to those above; in particular the conjugation by $\ql\otimes xt^n$ sends $U_{\qa_0+p\qd}$ to $U_{\qa_0+(p+n\qa(\ql))\qd}$.
The group $\widetilde G_2$ contains $G_1$ as a normal subgroup, its fundamental torus is $T_1=Y_1\otimes_\Z\shk^\times$, with normalizer $\widetilde N_2=N_{\widetilde G_2}(T_1)$ containing $Y_{00}\otimes_\Z\shk[t,t^{-1}]^*\supset Y_{00}\otimes_\Z t^\Z=:t^{Y_{00}}$.

\par The group $\widetilde G_2$ is generated by $t^{Y_{00}}$ and $G_1$ (which contains $N_1=N_2\cap G_1\supset t^{Q_{0}^\vee}$); in particular $\widetilde G_2/G_1\simeq Y_{00}/Q_0^\vee$.
We keep the same $V$ and $\QF$, but now the corresponding vectorial Weyl group is $\widetilde W_2^v=N_2/T_1=W_0^v\ltimes Y_{00}$.
 As in \ref{1.1}, we may also write $\widetilde W_2^v=\QO_2\ltimes W^v$, where $\QO_2$ is the stabilizer in $\widetilde W_2^v$ of $C^v_f$.
 It is well known that $\QO_2$ is a finite group isomorphic to $Y_{00}/Q_0^\vee$; it is isomorphic to its image in the permutation group of the affine Dynkin diagram of $\g G_{00}$ or  $\g G_{0}$ (indexed by $I$) and acts simply transitively on the special vertices of this diagram.

 \par It is not too difficult to extend to $\widetilde G_2$ the action of $G_1$ on the hovel $\SHI$.
 The group $\widetilde N_2$ is the stabilizer of $\A$; it acts through $\widetilde W_2=\widetilde W_2^v\ltimes Y_1\supset W\supset W^a$.
 We are exactly in the situation of \ref{7.1} with $(\widetilde G_2,G_1)$.

 \par 5) We may get new couples $(\widetilde G_j,G_j)$ satisfying \ref{7.1} for the same hovel $\SHI$:

\par We may enlarge $\widetilde G_2$ and $G_1$ by amalgamating them with $T_3=Y_3\otimes_\Z\shk^\times$ along $T_1$ (or with $T_{000}=Y_{000}\otimes_\Z\shk^\times$ along $T_{00}$), where $Y_{00}\subset Y_{000}\subset P_0^\vee$ and $Y_3=\Z.(1/m).\g c\oplus Y_{000}\oplus\Z d$, with $m\in\Z_{>0}$.
Then $\widetilde W_3^v=\widetilde W_2^v$, $\QO_3=\QO_2$, $\widetilde W_3=\widetilde W_2^v\ltimes Y_3$ and $G_3$ is still a Kac-Moody group with maximal torus $T_3$.

\par We may keep $G_1$ (or $G_3$) and take a semi-direct product of $\widetilde G_2$ (or $\widetilde G_3$) by a group $\QG$ of automorphisms of the Dynkin diagram of $\g G_0$, stabilizing $Y_{00}$ (or $Y_{00}$ and $Y_{000}$).
Then   $\widetilde W_4^v=\QG\ltimes \widetilde W_2^v$, $\QO_4=\QG\ltimes\QO_2$ and $\widetilde W_4=\widetilde W_4^v\ltimes Y_2$ (or $\widetilde W_4=\widetilde W_4^v\ltimes Y_3$).

\par 6) We may also add a split torus as direct factor to any of the preceding groups $\widetilde G_i$ or $G_i$, enlarge $\SHI$ by a trivial euclidean factor of the same dimension as the torus and add to $\widetilde W^v$ and $\QO$, as a direct factor, any automorphism group (possibly infinite) of this torus.

 \subsection{Marked chambers} \label{7.3}

 \par We come back to the general situation of \ref{7.1}. We want a set of "geometric objects" in $\SHI$ on which $\widetilde G$ acts with the Iwahori subgroup $K_I$ as one of the isotropy groups.

 \par 1) A {\it marked chamber} in the hovel $\SHI$ is the class of an isomorphism $\qf:\A\to A\in\sha$ sending the fundamental chamber $C_0^+$ to some local chamber $C_x$, modulo the equivalence $\qf_1\simeq\qf_2\iff \exists S\in C_0^+, \qf_1\vert_S=\qf_2\vert_S$. It is simply written $\qf:C_0^+\to C_x$;
 this does not depend on $A$.

\par The group  $\widetilde G$ permutes the marked chambers; for $g\in\widetilde G$ and $\qf$ as above, $g.\qf=\qf$ if, and only if, $g$ fixes (pointwise) $C_x$.
In particular the isotropy group in $\widetilde G$ of $\widetilde C_0^+=Id:C_0^+\to C_0^+\subset\A\subset\SHI$ is $K_I\subset G$.

\par A local chamber of type $0$, $C_x\in\SHC_0^+$ determines a unique marked chamber $\widetilde C_x^0:C_0^+\to C_x$ (said normalized) which is the restriction of some $\qf\in Isom^W_\R(\A,A)$ (\cf \ref{1.13}).
 These normalized marked chambers are permuted transitively by $G$.

 \par 2) A marked chamber is said {\it of type $0$} if it is in the orbit under $\widetilde G$ of any of  those $\widetilde C_x^0$.
 So the set $\widetilde\SHC_0^+$ of marked chambers of type $0$ is $\widetilde G/K_I$.

\par By hypothesis $\widetilde G$ may be written $G.\widetilde \QO$, where  $\widetilde \QO=\qn^{-1}(\QO)\subset\widetilde N$ stabilizes $C_0^+$ (considered as in $\SHI$) and induces $\QO$ on it.
So $\widetilde\SHC_0^+=\{\widetilde C_x=\widetilde C_x^0\circ\qo^{-1} \mid C_x\in \SHC_0^+,\qo\in\QO\}$.

 \subsection{$\widetilde W-$distance} \label{7.4}

\par 1) Let $\widetilde C_x:C_0^+\to C_x$, $\widetilde C_y:C_0^+\to C_y$  be in $\widetilde\SHC_0^+$ with $x{\leq}y$.
There is an apartment $A$ containing $C_x$ and $C_y$ so $\widetilde C_x,\widetilde C_y$ may be extended to $\qf,\psi\in Isom(\A,A)$.
 We "identify" $(\A,C_0^+)$ with $(A,C_x)$ via $\qf$.
 Then $\qf^{-1}(y){\geq}0$ and, as $\widetilde C_x,\widetilde C_y$ are in a same orbit of $\widetilde G$, there is $\widetilde {\mathbf w}\in \widetilde  W^+$ such that $\psi=\qf\circ \widetilde {\mathbf w}$.
 This $\widetilde {\mathbf w}$ does not depend on the choice of $A$ by \ref{1.12}.c.

\par We define the {\it $\widetilde W-$distance} between the marked chambers $\widetilde C_x$ and $\widetilde C_y$ as this unique element: $d^W(\widetilde C_x,\widetilde C_y)=\widetilde {\mathbf w}\in \widetilde  W^+$. So we get a $\widetilde G-$invariant map
$$
d^W:\widetilde\SHC_0^+\times_{\leq}\widetilde\SHC_0^+=\{ (\widetilde C_x,\widetilde C_y)\in \widetilde\SHC_0^+\times\widetilde\SHC_0^+\mid x{\leq}y\}\to \widetilde  W^+.
$$

\par 2) For $(C_x,C_y)\in \SHC_0^+\times_{\leq}\SHC_0^+$, we have $d^W(\widetilde C_x^0,\widetilde C_y^0)=d^W(C_x,C_y)$ and, more generally, for
$\qo_x,\qo_y\in\QO$, we have $(\widetilde C_x^0\circ\qo_x^{-1},\widetilde C_y^0\circ\qo_y^{-1})\in \widetilde\SHC_0^+\times_{\leq}\widetilde\SHC_0^+$ and $d^W(\widetilde C_x^0\circ\qo_x^{-1},\widetilde C_y^0\circ\qo_y^{-1})=\qo_x. d^W(C_x,C_y).\qo_y^{-1}\in\widetilde W^+$.
 For $(\widetilde C_x,\widetilde C_y)\in \widetilde\SHC_0^+\times_{\leq}\widetilde\SHC_0^+$ and $\qo_x,\qo_y\in\QO$, we have also  $d^W(\widetilde C_x\circ\qo_x^{-1},\widetilde C_y\circ\qo_y^{-1})=\qo_x. d^W(\widetilde C_x,\widetilde C_y).\qo_y^{-1}\in\widetilde W^+$.

 \medskip
 \par We deduce from this some interesting consequences:

\par 3) If $\widetilde C_x,\widetilde C_y,\widetilde C_z$, with $x{\leq}y{\leq}z$, are in a same apartment, we have the Chasles relation: $d^W(\widetilde C_x,\widetilde C_z)=d^W(\widetilde C_x,\widetilde C_y).d^W(\widetilde C_y,\widetilde C_z)$.

\par 4) For  $(\widetilde C_x,\widetilde C_y)\in \widetilde\SHC_0^+\times_{\leq}\widetilde\SHC_0^+$, if $\widetilde C_x$ (resp. $\widetilde C_y$) is normalized, then $d^W(\widetilde C_x,\widetilde C_y)\in W^+$ if, and only if, $\widetilde C_y$ (resp. $\widetilde C_x$) is normalized.

\par 5) For  $(\widetilde C_x,\widetilde C_y)\in \widetilde\SHC_0^+\times_{\leq}\widetilde\SHC_0^+$, then $d^W(\widetilde C_x,\widetilde C_y)=\qo\in\QO\iff \widetilde C_y=\widetilde C_x\circ\qo$; in particular $\widetilde C_y$ is uniquely determined by $\widetilde C_x$ and $\qo$, moreover $C_y=C_x$.

\par 6) If $(C_x,C_y)\in \SHC_0^+\times_{\leq}\SHC_0^+$ and $d^W(C_x,C_y)=r_i\in W^v$ (resp. $=\ql\in Y^+$) and $\qo\in\QO$, then $d^W(\widetilde C_x^0\circ\qo^{-1},\widetilde C_y^0\circ\qo^{-1})=\qo.r_i.\qo^{-1}=r_{\qo(i)}$ (resp. $=\qo(\ql)\in Y^+$), where we consider the action of $\QO$ on $I$ (resp. $Y$).

\par 7) When $\widetilde C_x=\widetilde C_0^+$ and $\widetilde C_y=g.\widetilde C_0^+$ (with $g\in \widetilde G^+$), then $d^W(\widetilde C_x,\widetilde C_y)$ is the only $\widetilde {\mathbf w}\in \widetilde W^+$ such that $g\in K_I.\widetilde {\mathbf w}.K_I$.
There is a Bruhat decomposition $\widetilde G^+=\bigsqcup_{\widetilde {\mathbf w}\in \widetilde W^+}\,K_I.\widetilde {\mathbf w}.K_I$.

\par The $\widetilde W-$distance classifies the orbits of $K_I$ on $\{ \widetilde C_y\in\widetilde \SHC_0^+\mid y{\geq}0\}$, hence also the orbits of $\widetilde G$ on $\widetilde\SHC_0^+\times_{\leq}\widetilde\SHC_0^+$.

 \subsection{The extended Iwahori-Hecke algebra} \label{7.5}

\par 1) We define this extended algebra for $\widetilde G$ as we did in {{\S}} \ref{s2} for $G$:

\par  To each $\widetilde {\mathbf w}\in \widetilde W^+$, we associate a function $T_{\widetilde {\mathbf w}}:\widetilde {\mathscr C}_0^+\times_\leq\widetilde {\mathscr C}_0^+\to R$ defined by
$$
T_{\widetilde {\mathbf w}}(\widetilde C,\widetilde C') =
\left\{
\begin{array}{l}
1 \quad\hbox{ if } d^W(\widetilde C,\widetilde C') = \widetilde {\mathbf w},\\
0 \quad\hbox{ otherwise.}
\end{array}
\right.
$$
And we  consider the following free $R-$module of functions $\widetilde {\mathscr C}_0^+\times_\leq\widetilde {\mathscr C}_0^+\to R$:
$$^I\widetilde \shh_R^\SHI = \{\qf = \sum_{\widetilde {\mathbf w}\in \widetilde W^+} a_{\widetilde {\mathbf w}}T_{\widetilde {\mathbf w}}\mid a_{\widetilde {\mathbf w}}\in R,\ a_{\widetilde {\mathbf w}} = 0 \hbox{ except for a finite number}\},
$$ We endow this $R-$module with the convolution product:

$$
(\qf*\psi)(\widetilde C_x,\widetilde C_y)=\sum_{\widetilde C_z}\,\qf(\widetilde C_x,\widetilde C_z)\psi(\widetilde C_z,\widetilde C_y).
$$ where $\widetilde C_z\in \widetilde {\mathscr C}^+_0$ is such that $x\leq z \leq y$.
This product is associative and $R-$bilinear.  We prove below that it is well defined.

\par As in {{\S}} \ref{s2}, we see easily that $^I\widetilde \shh_R^\SHI $ is the natural convolution algebra of the functions $\widetilde G^+\to R$, bi-invariant under $K_I$ and with finite support.

\par 2) For $\qo\in\QO$, $\widetilde {\mathbf w}\in \widetilde W^+$, the products $T_\qo*T_{\widetilde {\mathbf w}}$ and $T_{\widetilde {\mathbf w}}*T_\qo$ are well defined: actually $T_\qo*T_{\widetilde {\mathbf w}}=T_{\qo.\widetilde {\mathbf w}}$ and $T_{\widetilde {\mathbf w}}*T_\qo=T_{\widetilde {\mathbf w}.\qo}$, see \ref{7.4}.3 and \ref{7.4}.5.

\par 3) As the formula for $\qf*\psi$ is clearly $\widetilde G-$invariant, we may fix $\widetilde C_x$ normalized to calculate $\qf*\psi$.
 From \ref{7.4}.4, we deduce that, when $\mathbf w,\mathbf v\in W^+$, $T_{\mathbf w}*T_{\mathbf v}$ may be computed using only normalized marked chambers. So it is well defined and the same as in $^I \shh_R^\SHI $.

\par From 2) we deduce now that the convolution product is well defined in $^I\widetilde \shh_R^\SHI $:

\begin{prop*} For any ring $R$, $^I\widetilde \shh_R^\SHI $ is an algebra; it contains $^I\shh_R^\SHI $ as a subalgebra.
\end{prop*}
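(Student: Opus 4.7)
\medskip
\noindent\textbf{Proof proposal.}
The plan is to reduce everything to the already-established algebra structure of $^I\shh_R^\SHI$ by exploiting the semidirect product decomposition $\widetilde W^+=\QO\ltimes W^+$. First I would observe that any $\widetilde{\mathbf w}\in\widetilde W^+$ has a unique factorization $\widetilde{\mathbf w}=\qo.\mathbf w$ with $\qo\in\QO$ and $\mathbf w\in W^+$, and, by \ref{7.5}.2, $T_{\widetilde{\mathbf w}}=T_\qo*T_{\mathbf w}$ (with a parallel factorization $T_{\widetilde{\mathbf w}}=T_{\mathbf w'}*T_\qo$ on the other side). Combined with the fact that conjugation $\mathbf w\mapsto \qo^{-1}\mathbf w\qo$ permutes $W^+$ (\cf \ref{7.4}.6), this lets me push all $\Omega$-factors to one end of any word in the $T_{\widetilde{\mathbf w}}$'s.

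Concretely, given $\widetilde{\mathbf w}=\qo_1.\mathbf w$ and $\widetilde{\mathbf v}=\qo_2.\mathbf v$, I would compute
$$T_{\widetilde{\mathbf w}}*T_{\widetilde{\mathbf v}}=T_{\qo_1}*T_{\mathbf w}*T_{\qo_2}*T_{\mathbf v}=T_{\qo_1\qo_2}*T_{\qo_2^{-1}(\mathbf w)}*T_{\mathbf v},$$
using $T_{\mathbf w}*T_{\qo_2}=T_{\mathbf w.\qo_2}=T_{\qo_2}*T_{\qo_2^{-1}(\mathbf w)}$. The rightmost factor $T_{\qo_2^{-1}(\mathbf w)}*T_{\mathbf v}$ lies in $^I\shh_R^\SHI$ and is a finite combination of $T_{\mathbf u}$'s by Theorem \ref{ThAlgebra}; prepending $T_{\qo_1\qo_2}$ then expresses $T_{\widetilde{\mathbf w}}*T_{\widetilde{\mathbf v}}$ as a finite $R$-linear combination of $T_{\qo_1\qo_2.\mathbf u}$. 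This simultaneously shows that the convolution sum defining the product is finite (hence the product is well defined), and it yields an explicit formula.

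Associativity is then obtained by moving all $\QO$-factors to the leftmost position of a triple product via the above rewriting, which reduces associativity of $(T_{\widetilde{\mathbf w}_1}*T_{\widetilde{\mathbf w}_2})*T_{\widetilde{\mathbf w}_3}$ and $T_{\widetilde{\mathbf w}_1}*(T_{\widetilde{\mathbf w}_2}*T_{\widetilde{\mathbf w}_3})$ to the associativity already known inside $^I\shh_R^\SHI$ (plus the trivial fact that left- and right-multiplication by $T_\qo$ is an $R$-linear bijection of the spaces of functions). The unit is $T_1$, since $1\in W^+\subset\widetilde W^+$ and $T_1*T_{\widetilde{\mathbf w}}=T_{\widetilde{\mathbf w}}=T_{\widetilde{\mathbf w}}*T_1$ from the same rewriting rules.

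Finally, for the subalgebra statement, the $R$-submodule $M=\bigoplus_{\mathbf w\in W^+}R.T_{\mathbf w}$ of $^I\widetilde\shh_R^\SHI$ is closed under $*$ by the case $\qo_1=\qo_2=1$ of the computation above, and by \ref{7.5}.3 the resulting product on $M$ coincides with the convolution product of $^I\shh_R^\SHI$. Hence $M$ is a subalgebra of $^I\widetilde\shh_R^\SHI$ canonically isomorphic to $^I\shh_R^\SHI$. I do not foresee any serious obstacle; the only delicate point is bookkeeping the identification $T_{\qo.\mathbf w}=T_\qo*T_{\mathbf w}$ and the $\QO$-equivariance of $W^+$, both of which are provided by \ref{7.4} and \ref{7.5}.2.
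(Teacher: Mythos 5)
Your proof is correct and follows essentially the same route as the paper: reduce to $^I\shh_R^\SHI$ via the factorization $T_{\qo.\mathbf w}=T_\qo*T_{\mathbf w}$ from \ref{7.5}.2, the identification of $T_{\mathbf w}*T_{\mathbf v}$ (for $\mathbf w,\mathbf v\in W^+$) with the product in $^I\shh_R^\SHI$ via normalized marked chambers, and the commutation $T_{\mathbf w}*T_{\qo}=T_{\qo}*T_{\qo^{-1}(\mathbf w)}$, which is exactly the semi-direct tensor product computation the paper records in \ref{7.6}.1.
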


 \begin{defi*}The algebra $^I\widetilde \shh_R^\SHI $ is the {\it extended Iwahori-Hecke algebra} associated to $\SHI$ and $\widetilde G$ with coefficients in $R$.
  \end{defi*}

 \subsection{Relations} \label{7.6}

 \par 1) From \ref{7.5} we see that $^I\widetilde \shh_R^\SHI $ contains the algebra $R[\QO]=\oplus_{\qo\in\QO}\,R.T_\qo$ of the group $\QO$. Moreover, as an $R-$module, $^I\widetilde \shh_R^\SHI $ is a tensor product, $^I\widetilde \shh_R^\SHI =R[\QO]\otimes_R {^I \shh_R^\SHI }$: we identify $T_{\qo. {\mathbf w}}=T_\qo*T_{ {\mathbf w}}$ and $T_\qo\otimes T_{ {\mathbf w}}$ for $\qo\in\QO$ and $\mathbf w\in W^+$.

 \par The multiplication in this tensor product is semi-direct:

 \par\qquad\qquad $(T_\qo\otimes T_{ {\mathbf w}}).(T_{\qo'}\otimes T_{ {\mathbf v}})=T_\qo* T_{ {\mathbf w}}*T_{\qo'}* T_{ {\mathbf v}}= T_{\qo. {\mathbf w}.\qo'}* T_{ {\mathbf v}}$

 \par\qquad\qquad $=T_{\qo.\qo'. {\mathbf w'}}* T_{ {\mathbf v}}=T_{\qo.\qo'}*T_{ {\mathbf w'}}* T_{ {\mathbf v}}=T_{\qo.\qo'}\otimes (T_{ {\mathbf w'}}*T_{\mathbf v})$

 \par where $\mathbf w'={\qo'}^{-1}.\mathbf w.\qo'=:{\qo'}^{-1}(\mathbf w)\in W^+$.

 \parni  In particular, we get the following relations among some elements:

 \par 2) For $\qo\in\QO$, $\mathbf w\in W^+$, $T_\qo*T_{\mathbf w}*T_\qo^{-1}=T_{\qo(\mathbf w)}$,

  \par\qquad if moreover $\mathbf w=r_i\in W^v$, $\qo(r_i)=r_{\qo(i)}$ hence $T_\qo*T_{i}*T_\qo^{-1}=T_{\qo(i)}$,

 \par\qquad if now $\mathbf w=\ql\in Y^+$, $T_\qo*T_{\ql}*T_\qo^{-1}=T_{\qo(\ql)}$, with $\qo(\ql)\in Y^+$.

\par 3) From \ref{4.5}.1 and 2) above, it is clear that $T_\qo*X^{\ql}*T_\qo^{-1}=X^{\qo(\ql)}$ if $\qo\in\QO$ and $\ql\in Y^+$ (as $\QO$ stabilizes $Y^{++}=Y\cap C^v_f$).

\par 4) As the action of $\QO$ on $\A$ is induced by automorphisms of $\SHI$, we have $q_i=q_{\qo(i)}$ and $q_i'=q_{\qo(i)}'$ for $\qo\in\QO$ and $i\in I$.
 We may also choose the homomorphism $\qd^{1/2}:Y\to R^*$ of \ref{4.7} invariant by $\QO$ (for $R$ great enough).
  So, for $\qo\in\QO$, $w,r_i\in W^v$ and $\ql\in Y$, we have:

 \par $T_\qo*H_w*T_\qo^{-1}=H_{\qo(w)}$\quad,\qquad $T_\qo*H_i*T_\qo^{-1}=H_{\qo(i)}$\qquad and \qquad$T_\qo*Z^{\ql}*T_\qo^{-1}=Z^{\qo(\ql)}$.

 \subsection{The extended Bernstein-Lusztig-Hecke algebra} \label{7.7}

Notations of 7.1 are still in use.
But we no longer assume the existence of a group $\widetilde G$ or $G$.
 The group $W=W^v\ltimes Y\lhd\widetilde W$ satisfies $\widetilde W=\QO\ltimes W$ and the conditions of {{\S}} \ref{s5}.

 \par We consider the ring $\widetilde R=\Z[{(\widetilde \qs_{i}}^{\pm 1}, ({\widetilde \qs}'_{i})^{\pm 1})_{ i\in I} ]$, where the indeterminates $\widetilde \qs_{i}, {\widetilde \qs'_{i}}$ satisfy the same relations as $\qs_i,\qs'_i$ in \ref{5.0} and the following additional relation (see \ref{7.6}.4 above):

 \par\qquad If $\qo(i)=j$ for some $\qo\in\QO$, then $\widetilde \qs_i=\widetilde \qs_j$ and $\widetilde \qs'_i=\widetilde \qs'_j$.

 \par We denote by $^{BL}\widetilde\shh_{\widetilde R}$ the free $\widetilde R-$module with basis $(T_\qo Z^\ql H_w)_{\qo\in\QO,\ql\in Y,w\in W^v}$ and write $H_w=T_1Z^0H_w$, $H_i=T_1Z^0H_i$, $Z^\ql=T_1Z^\ql H_1$ and $T_\qo=T_\qo Z^0H_1$.

 \begin{prop*} There exists a unique multiplication $*$ on $^{BL}\widetilde \shh_{\widetilde R} $ which makes it an associative, unitary $\widetilde R-$algebra with unity $H_1=T_1=Z^0$ and satisfies the conditions (1), (2), (3), (4) of Theorem \ref{5.1} plus the following:

 \parni (5) For $\qo,\qo'\in\QO$, $i\in I$ and $\ql\in Y$, $T_\qo*T_{\qo'}=T_{\qo.\qo'}$,  $T_\qo*T_{i}*T_\qo^{-1}=T_{\qo(i)}$, $T_\qo*T_{\ql}*T_\qo^{-1}=T_{\qo(\ql)}$.
 \end{prop*}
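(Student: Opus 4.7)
The plan is to realize $^{BL}\widetilde\shh_{\widetilde R}$ as a crossed product (skew group algebra) of $^{BL}\shh_{\widetilde R}$ by the finite group $\QO$ acting through $\widetilde R$-algebra automorphisms. Once $\QO$ is shown to act by such automorphisms, the module $R[\QO]\otimes_{\widetilde R}{^{BL}\shh_{\widetilde R}}$, whose natural basis $(T_\qo\otimes Z^\ql H_w)$ matches the prescribed basis $(T_\qo Z^\ql H_w)$, will carry a unique associative multiplication given by the semidirect rule
\[
(T_\qo\otimes E)*(T_{\qo'}\otimes E')=T_{\qo\qo'}\otimes\bigl((\qo')^{-1}(E)\cdot E'\bigr),
\]
which specializes on $\{1\}\otimes {^{BL}\shh_{\widetilde R}}$ to the multiplication of Theorem \ref{5.1}.

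For uniqueness, any product $(T_\qo Z^\ql H_w)*(T_{\qo'}Z^\qm H_v)$ is forced by (1)--(5). Writing $H_w=H_{i_1}*\cdots *H_{i_n}$ via a reduced expression and iterating (5) yields $T_\qo*H_w*T_\qo^{-1}=H_{\qo(w)}$; here we observe that the $T_i$-relation in (5) translates to the same relation for $H_i$, since these two elements differ only by a scalar which the added identifications $\widetilde\qs_i=\widetilde\qs_{\qo(i)}$, $\widetilde\qs'_i=\widetilde\qs'_{\qo(i)}$ make $\QO$-invariant. Thus $T_{\qo'}$ may be moved leftwards past $Z^\ql$ and $H_w$, then combined with $T_\qo$ via $T_\qo *T_{\qo'}=T_{\qo\qo'}$; what remains lies in $^{BL}\shh_{\widetilde R}$ and is determined by Theorem \ref{5.1}.

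For existence, I define endomorphisms $\qo_\bullet:{^{BL}\shh_{\widetilde R}}\to{^{BL}\shh_{\widetilde R}}$ for $\qo\in\QO$ by $\qo_\bullet(Z^\ql H_w):=Z^{\qo(\ql)}H_{\qo(w)}$ and extend $\widetilde R$-linearly; this is well defined because $\QO$ acts on $W^v$ by length-preserving Coxeter diagram automorphisms (so $H_{\qo(w)}$ is unambiguous) and on $Y$ by $\Z$-linear automorphisms. The main step is to verify that each $\qo_\bullet$ is an algebra automorphism, equivalently that it preserves relations (1)--(4). Conditions (1), (2), (3) are immediate. Condition (4) is the crux: applying $\qo_\bullet$ to
\[
H_i*Z^\ql-Z^{r_i(\ql)}*H_i = b(\widetilde\qs_i,\widetilde\qs'_i;Z^{-\qa_i^\vee})\,(Z^\ql-Z^{r_i(\ql)})
\]
yields the analogous identity with $i$ replaced by $\qo(i)$ and $\ql$ by $\qo(\ql)$, using $\qo(\qa_i^\vee)=\qa_{\qo(i)}^\vee$ and $r_{\qo(i)}\circ\qo=\qo\circ r_i$; the two sides match precisely because of the extra relations $\widetilde\qs_i=\widetilde\qs_{\qo(i)}$ and $\widetilde\qs'_i=\widetilde\qs'_{\qo(i)}$ imposed on $\widetilde R$.

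Once $\qo\mapsto\qo_\bullet$ is a group homomorphism $\QO\to\mathrm{Aut}_{\widetilde R\text{-alg}}({^{BL}\shh_{\widetilde R}})$, associativity of the crossed product follows by the standard formal computation for skew group algebras. The hard part is really only the verification of (4) under the $\QO$-action; the delicate associativity work already carried out in the proof of Theorem \ref{5.1} is reused verbatim for the $^{BL}\shh_{\widetilde R}$-factor, and the extension by $\QO$ is purely formal.
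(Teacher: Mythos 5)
Your proof is correct and follows essentially the same route as the paper, which also realizes $^{BL}\widetilde\shh_{\widetilde R}$ as $\widetilde R[\QO]\otimes{^{BL}\shh_{\widetilde R}}$ with the classical product on $\widetilde R[\QO]$, the product of Theorem \ref{5.1} on $^{BL}\shh_{\widetilde R}$, and the semi-direct rule for mixed terms. The paper states this in one line; you merely make explicit the step it leaves implicit, namely that the extra identifications $\widetilde\qs_i=\widetilde\qs_{\qo(i)}$, $\widetilde\qs'_i=\widetilde\qs'_{\qo(i)}$ ensure that $\QO$ acts on $^{BL}\shh_{\widetilde R}$ by algebra automorphisms (relation (4) being the only nontrivial check), after which the crossed-product associativity is formal.
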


 \begin{proof} As $\widetilde R-$modules, $^{BL}\widetilde \shh_{\widetilde R} =\widetilde R[\QO]\otimes {^{BL} \shh_{\widetilde R}}$, where the homomorphism $R_1\to\widetilde R$ is given by $\qs_i\mapsto\widetilde \qs_i,\qs_i'\mapsto\widetilde \qs'_i$.
 Now the multiplication is classical on $\widetilde R[\QO]$, given by \ref{5.1} on $^{BL} \shh_{\widetilde R}$ and semi-direct for general elements.
 \end{proof}

 \begin{defi*} This $\widetilde R-$algebra $^{BL}\widetilde \shh_{\widetilde R} $ depends only on $\A$, $Y$ and $\QO$ (\ie on $\A$ and $\widetilde W$). We call it  the {\it extended Bernstein-Lusztig-Hecke algebra} associated to $\A$ and $\widetilde W$ with coefficients in $\widetilde R$.
 \end{defi*}

 \par As in \ref{5.3}, we may identify, up to an extension of scalars, a subalgebra $^{BL}\widetilde \shh_{\widetilde R} ^+$ of $^{BL}\widetilde \shh_{\widetilde R} $ with the extended Iwahori-Hecke algebra $^{I}\widetilde \shh_{R} ^\SHI$.

  \subsection{The affine case} \label{7.8}

  \par 1) We suppose now $(\A^v,W^v)$ affine.
  So there is a smallest positive imaginary root $\qd=\sum a_i\qa_i\in\QD^+_{im}\subset Q^+$ satisfying $\qd(\qa_i^\vee)=0, \forall i\in I$ and a canonical central element $\g c=\sum a_i^\vee\qa_i^\vee\in Q_+^\vee$ satisfying $\qa_i(\g c)=0, \forall i\in I$.
  In particular $\qd$ and $\g c$ are fixed by $W^v$ and $\widetilde W^v$.

\par As $\qd\in Q^+$, it takes integral values on $Y$.
For $n\in\Z$, we define $Y^n=\{\ql\in Y\mid\qd(\ql)=n\}$ which is stable under $W^v$ and $\widetilde W^v$.
We have $Y=\bigsqcup_{n\in\Z}\,Y^n$ and $Y^+=(\bigsqcup_{n>0}\,Y^n)\bigsqcup Y^0_c$, with $Y^0_c=Y^0\cap Y^+=Y\cap\Q\g c$.
We write $\ql_c=(1/m)\g c$ a generator of $Y^0_c$ (with $m\in\Z_{>0}$).
 As $\qd(Q^\vee)=0$, we have $\qd(\ql)=\qd(\qm)$ whenever $\qm{\leq}_{Q^\vee}\ql$ or $\qm{\leq}_{Q^\vee_\R}\ql$ in $Y$.

 \par 2) Considering \ref{PrFinite1} and \ref{4.5}.2, we get the following gradations of algebras (for a suitable $R$):
$$
{^{I} \shh_{ R}^{\SHI }}=\bigoplus_{n{\geq}0}\,{^{I} \shh_{ R}^{\SHI n}},
$$ where ${^{I} \shh_{ R}^{\SHI n}}$ has for $R-$basis the $T_\ql*T_w$ (resp. $X^\ql*T_w$, $Z^\ql*H_w$) for $\ql\in Y^n$ (in $Y^0_c$ if $n=0$) and $w\in W^v$.

$$
{^{I}\widetilde  \shh_{ R}^{\SHI }}=\bigoplus_{n{\geq}0}\,{^{I}\widetilde  \shh_{ R}^{\SHI n}},
$$ where ${^{I} \widetilde \shh_{ R}^{\SHI n}}$ has for $R-$basis the $T_\qo*T_\ql*T_w$ (resp. $T_\qo*X^\ql*T_w$, $T_\qo*Z^\ql*H_w$) for $\qo\in\QO$, $\ql\in Y^n$ ($Y^0_c$ if $n=0$) and $w\in W^v$.

$$
{^{BL} \shh}_{ R_1}=\bigoplus_{n\in\Z}\,{^{BL} \shh}_{ R_1}^{ n},
$$ where ${^{BL} \shh}_{ R_1}^{ n}$ has for $R_1-$basis the $Z^\ql H_w$ for $\ql\in Y^n$  and $w\in W^v$.

$$
{^{BL}\widetilde \shh}_{\widetilde R}=\bigoplus_{n\in\Z}\,{^{BL}\widetilde \shh}_{\widetilde R}^{n},
$$ where ${^{BL}\widetilde \shh}_{\widetilde R}^{n}$ has for $\widetilde R-$basis the $T_\qo Z^\ql H_w$ for $\qo\in\QO$, $\ql\in Y^n$  and $w\in W^v$.

  \par These gradations are compatible with the identifications explained in \ref{5.3} or \ref{7.7}.

  \par 3) For any $\widetilde C_x\in\widetilde\SHC_0^+$ and any $\ql\in\ Y_c^0=\Z\ql_c$, there is a unique $\widetilde C_y\in\widetilde\SHC_0^+$ with $d^W(\widetilde C_x,\widetilde C_y)=\ql$:
  the translation by $\ql$ in $\A$ stabilizes all enclosed sets and extends to $\SHI$ as a translation in any apartment.
  From this we see that $T_\ql*T_\qm=T_{\ql+\qm}=T_\qm*T_\ql$ (for $\qm\in Y^+$), $T_\ql*X^\qm=X^{\ql+\qm}=X^\qm*T_\ql$ (for $\qm\in Y$) and $T_\ql*T_w=T_{\ql .w}=T_{w.\ql}=T_w*T_\ql$ (for $w\in W^v$).
  Such a $T_\ql$ is central and invertible in ${^{I} \shh_{ R}^{\SHI }}$, ${^{I}\widetilde  \shh_{ R}^{\SHI }}$, ${^{BL} \shh}_{ R_1}$ or ${^{BL}\widetilde \shh}_{\widetilde R}$.

  \par Actually ${^{I} \shh_{ R}^{\SHI 0}}$  is the tensor product $R[Y^0_c]\otimes_R\shh_R(W^v)$ with a direct multiplication (factor by factor) and ${^{I}\widetilde  \shh_{ R}^{\SHI 0}}=R[Y^0_c]\otimes_R(R[\QO]\otimes_R\shh_R(W^v))$ with a semi-direct multiplication.

   \subsection{The double affine Hecke algebra} \label{7.9}

\par The subalgebra ${^{BL}\widetilde \shh}_{\widetilde R}^{0}$ is well known as the Cherednik's double affine Hecke algebra (DAHA).
More precisely in \cite{Che92} and \cite{Che95}, Cherednik considers an untwisted affine root system, as in \cite[Ch. 7]{K90}; but, as he works with roots instead of coroots, we write $\QF^\vee$ this system.
He considers the case where $\widetilde W^v$ is the full extended Weyl group ($\widetilde W^v=W_0^v\ltimes P_0^\vee$ with the notations of \ref{7.2}) \ie $\QO\simeq P_0^\vee/Q_0^\vee$ acts on the extended Dynkin diagram, simply transitively on its ``special'' vertices.
His choice for $Y^0$ is $Y^0=\Z.(1/m).\g c\oplus P_0^\vee\subset P^\vee$ (and $Y=Y^0\oplus\Z d$ {\it e.g.}), where $m\in\Z_{{\geq}1}$ is suitably chosen.
He then defines the DAHA as an algebra over a field of rational functions $\C({{\underline\qd}},(q_\qn)_{\qn\in\qn_R})$ with generators $(T_i)_{i\in I}$, $(X_\qb)_{\qb\in P_0^\vee}$ and some relations.
It is easy to see that this DAHA is, up to scalar changes, a ring of quotients of our ${^{BL}\widetilde \shh}_{\widetilde R}^{0}$ (for $\A,\widetilde W$ as described above):
actually ${\underline\qd}$ stands for our $Z^{\ql_c}$.
Here is a partial dictionary to translate from \cite{Che92} and \cite{Che95} to our article: roots $\leftrightarrow$ coroots, $X_\qb\mapsto Z^\qb$, $T_i\mapsto H_i$, $q_i\mapsto \qs_i$, $\QP\mapsto\QO$, $\qp_r\mapsto T_\qo$, ${\underline\qd}\mapsto T_{\ql_c}$ and $\underline\QD=\underline\qd^m\mapsto T_{\g c}$.

\par In  \cite{Che92} there is another presentation of the same DAHA using the Bernstein presentation of $\shh_R(W^v)$.
This is also the point of view of \cite{Ma03}, where the framework is more general.

\bigskip

\medskip

\noindent Universit\'e de Lyon, Institut Camille Jordan (UMR 5208)\\
Universit\'e Jean Monnet, Saint-Etienne, F-42023, France

E-mail: Stephane.Gaussent@univ-st-etienne.fr
\medskip
\par
\noindent Universit\'e de Lorraine, Institut \'Elie Cartan de Lorraine, UMR 7502, and \\
CNRS, Institut \'Elie Cartan de Lorraine, UMR 7502,
\\Vand\oe uvre l\`es Nancy, F-54506, France

E-mail: Nicole.Panse@univ-lorraine.fr ; Guy.Rousseau@univ-lorraine.fr

\bigskip
\noindent {\bf Acknowledgement:} The second author acknowledges support of the ANR grants ANR-2010-BLAN-110-02 and ANR-13-BS01-0001-01.

\end{document}